\let\frak\mathfrak
\let\Bbb\mathbb
\def\>{\relax\ifmmode\mskip.666667\thinmuskip\relax\else\kern.111111em\fi}
\def\<{\relax\ifmmode\mskip-.333333\thinmuskip\relax\else\kern-.0555556em\fi}
\def\vsk#1>{\vskip#1\baselineskip}
\def\vv#1>{\vadjust{\vsk#1>}\ignorespaces}
\def\vvn#1>{\vadjust{\nobreak\vsk#1>\nobreak}\ignorespaces}
  \let\ssize\scriptstyle
\let\sssize\scriptscriptstyle
\let\Medskip\medskip
\def\medskip{\par\Medskip}
\let\Bigskip\bigskip
\def\bigskip{\par\Bigskip}
\let\Maketitle\maketitle
\def\maketitle{\Maketitle\thispagestyle{empty}\let\maketitle\empty}
\newtheorem{thm}{Theorem}[section]
\newtheorem{cor}[thm]{Corollary}
\newtheorem{lem}[thm]{Lemma}
\newtheorem{prop}[thm]{Proposition}
\numberwithin{equation}{section}
\theoremstyle{definition}
\newtheorem*{rem}{Remark}
\newtheorem*{example}{Example}
\let\mc\mathcal
\let\nc\newcommand
\let\al\alpha
\let\la\lambda
\let\La\Lambda
\let\phi\varphi
\let\Si\Sigma
\let\der\partial
\let\geq\geqslant
\let\leq\leqslant
\let\on\operatorname
\let\bi\bibitem
\let\bs\boldsymbol
\def\C{{\mathbb C}}
\def\Z{{\mathbb Z}}
\def\B{{\mc B}}
\def\Bb{{\mc B}}
\def\F{{\mc F}}
\def\+#1{^{\{#1\}}}
\def\End{\on{End}}
\def\GR{{\on{Gr}_0(H)}}
\def\Gr{{\on{Gr}_{0,0}(H)}}
\def\Wr{\on{Wr}}
\def\sln{\mathfrak{sl}_N}
\def\beq{\begin{equation}}
\def\eeq{\end{equation}}
\def\be{\begin{equation*}}
\def\ee{\end{equation*}}
\nc{\bea}{\begin{eqnarray*}}
\nc{\eea}{\end{eqnarray*}}
\nc{\bean}{\begin{eqnarray}}
\nc{\eean}{\end{eqnarray}}
\nc{\Ref}[1]{{\rm(\ref{#1})}}
\def\g{{\mathfrak g}}
\def\h{{\mathfrak h}}
\let\ga\gamma
\let\Ga\Gamma
\nc{\Il}{{\mc I_{\bs\la}}}
\nc{\bla}{{\bs\la}}
\nc{\Fla}{\F_\bla}
\nc{\tfl}{{T^*\Fla}}
\nc{\GL}{{GL_n(\C)}}
\nc{\GLC}{{GL_n(\C)\times\C^*}}
\let\sd s 
\def\ddk_#1{\kk_{#1}\<\>\frac\der{\der\<\>\kk_{#1}}}
\def\bul{\mathbin{\raise.2ex\hbox{$\sssize\bullet$}}}
\def\intt{\mathchoice
{\mathop{\raise.2ex\rlap{$\,\,\ssize\backslash$}{\intop}}\nolimits}
{\mathop{\raise.3ex\rlap{$\,\sssize\backslash$}{\intop}}\nolimits}
{\mathop{\raise.1ex\rlap{$\sssize\>\backslash$}{\intop}}\nolimits}
{\mathop{\rlap{$\sssize\<\>\backslash$}{\intop}}\nolimits}}
\let\kk q 
\let\cc c
\let\Ko K
\def\GZ/{Gelfand-Zetlin}
\def\KZ/{{\slshape KZ\/}}
\def\qKZ/{{\slshape qKZ\/}}
\def\XXX/{{\slshape XXX\/}}
\nc{\slnl}{{\sln (\lambda)}}
\nc{\PCN}{{   (\C[x])^N   }}
\nc{\di}{\text{Diag}}
\nc{\dio}{\text{Diag}_0}
\nc{\Mm}{{\mc M}}
\nc{\Nn}{{\mc N}}
\nc{\A}{{\mathbb A}}
\nc{\PCr}{{  P  (\C[x])^n   }}
\nc{\Pk}{{(\bs{P}^1)^k}}
\def\hg{{\widehat{\frak{sl}}_N}}
\nc{\N}{{\Bbb N}}
\def\D{{\mc D}}
\nc{\Ll}{{\mc L}}
\nc{\ord}{{\text{ord}\,}}
\nc{\GM}{{\on{Gr}_{mKdV}}}
\begin{document}

\hrule width0pt
\vsk->

\title[Critical points of master functions and integrable hierarchies]
{Critical points of master functions and integrable hierarchies}

\author
[A.\,Varchenko and D.\,Wright]
{ A.\,Varchenko$\>^{\star}$ and D.\,Wright}

\maketitle

\begin{center}
{\it Department of Mathematics, University of North Carolina
at Chapel Hill\\ Chapel Hill, NC 27599-3250, USA\/}
\end{center}

\bigskip

\hfill
{\it To the memory of Andrei Zelevinsky, 1953-2013 }

\bigskip

{\let\thefootnote\relax
\footnotetext{\vsk-.8>\noindent
$^\star$ Corresponding author.
 {\it E-mail address:} anv@email.unc.edu }}

\medskip
\begin{abstract}

We consider the population of critical points
generated from the trivial critical point of the master function with no variables
and  associated with the trivial representation of the affine Lie algebra
$\widehat{\frak{sl}}_N$.
We show that the critical points of this population define
rational solutions of the equations of the mKdV hierarchy associated with $\widehat{\frak{sl}}_N$.

We also construct critical points from suitable $N$-tuples of tau-functions. The construction is based on
a Wronskian identity for tau-functions. In particular, we construct critical points
 from suitable $N$-tuples of Schur polynomials and prove a Wronskian identity for Schur polynomials.

\end{abstract}

\bigskip
\noindent
MSC2010:\ {}\ 37K20 (17B80, 81R10)

\medskip
\noindent
Keywords:  critical points, master functions, mKdV hierarchies,
Miura opers, tau-functions, Schur polynomials, Bethe ansatz, affine Lie algebras


\setcounter{footnote}{0}
\renewcommand{\thefootnote}{\arabic{footnote}}


\section{Introduction}

\subsection{ Master functions}
\label{sec master function}

Let $\g$ be a Kac-Moody algebra with invariant scalar product $(\,,\,)$,
 $\h\subset \g$ the Cartan subalgebra,
  $\al_1,\dots,\al_N$ simple roots,
$\Lambda_1,\dots,\Lambda_n$ dominant integral weights,
 $k_1,\dots, k_N$
nonnegative integers, $k:=k_1+\dots+k_N$.

Consider $\C^n$ with coordinates $z=(z_1,\dots,z_n)$.
Consider $\C^k$ with coordinates $u$ collected into $N$ groups, the $j$-th group consists of $k_j$ variables,
\bea
u=(u^{(1)},\dots,u^{(N)}),
\qquad
u^{(j)} = (u^{(j)}_1,\dots,u^{(j)}_{k_j}).
\eea
The {\it master function} is a multivalued function on $\C^k\times\C^n$,
\bea
&&
\Phi(u,z) = \sum_{a<b} (\La_a,\La_b) \log (z_a-z_b)
- \sum_{a,i,j} (\al_j,\La_a)\log (u^{(j)}_i-z_a) +
\\
&&
+ \sum_{j< j'} \sum_{i,i'} (\al_j,\al_{j'})
\log (u^{(j)}_i-u^{(j')}_{i'})
+  \sum_{j} \sum_{i<i'} (\al_j,\al_{j})
\log (u^{(j)}_i-u^{(j)}_{i'}),
\eea
with singularities at the places where the arguments of the logarithms are equal to zero.
A point in $\C^k\times \C^n$ can be interpreted as a collection of particles in $\C$:\ $z_a, u^{(j)}_i$.
A particle $z_a$ has weight $\La_a$, a particle $u^{(j)}_i$ has weight $-\al_j$.
The particles interact pairwise. The interaction of two particles is proportional to
the scalar product of the weights.
The master function is the "total energy" of the collection of particles.

Notice that all scalar products are integers. So the master function is the logarithm
of a rational function. From a "physical" point of view, all interactions are integer multiples of
a certain unit of measurement. This is important for what will follow.

The variables $u$ are the {\it true} variables, variables $z$ are {\it parameters}.
We will think that the positions of $z$-particles are fixed and $u$-particles can
move.

There are "global" characteristics of this situation,
\bea
I(z,\kappa) = \int e^{\Phi(u,z)/\kappa} A(u,z) du ,
\eea
where $A(u,z)$ is a suitable density function, $\kappa$ a parameter,
and
there are "local" characteristics  -- critical points
of the master function with respect to $u$ variables,
\bea
d_u \Phi(u,z)=0 .
\eea
A critical point is an equilibrium position of the $u$-particles  for fixed positions of the $z$-particles.

Examples of master functions associated with  $\g=\frak{sl}_2$
were considered by Stieltjes and Heine in 19th century, see for example \cite{Sz}.
Master functions we introduced in \cite{SV}
to construct integral representations for solutions of the KZ equations, see also \cite{V1, V2}.

\subsection{ KZ equations and Gaudin model}
\label{sec KZ}
Consider the tensor product $L=\otimes_{a=1}^n L_{\La_a}$ of irreducible
highest weight representations of $\g$ with highest weights $\La_1,\dots,\La_n$.
The KZ equations for an $L$-valued  function $I(z_1,\dots,z_n)$
is a system of differential equations of the form
\bea
\kappa \frac{\der I}{\der z_a} = H_a(z) I,
\qquad
a=1,\dots,n,
\eea
where $\kappa$ is a parameter of the equation and $H_a$ are some linear operators on $L$
defined in terms of the $\g$-action  and called {\it Gaudin Hamiltonians.}
The KZ equations were introduced in CFT, see \cite{KZ}.

In \cite{SV} the KZ equations were identified with a suitable Gauss-Manin connection
and solved in multidimensional hypergeometric integrals.

More precisely, choose nonnegative integers $k_1,\dots,k_N$, consider the associated master
function $\Phi(u,z)$. There is  a rational $L$-valued function $A_{k_1,\dots,k_N}(u,z)$, called
the {\it weight function}, such
that
\bea
I(z,\kappa) = \int_{\gamma(z)} e^{\Phi(u,z)/\kappa} A_{k_1,\dots,k_N}(u,z) du
\eea
is a solution of the KZ equation. Here $\gamma(z)$ is an element of a suitable homology
group. Different  $\gamma$ give different solutions of the KZ equations.

The KZ equations define a flat connection for any $\kappa$. In particular,
the Gaudin Hamiltonians commute, $[H_a(z),H_b(z)]=0$.

The KZ equations are closely related to the  quantum integrable Gaudin model  on $L$, see
\cite{G1, G2}.
In the Gaudin model, one considers the commutative  subalgebra of $ \End(L)$ generated by
the Gaudin Hamiltonians. The subalgebra is called the {\it Bethe algebra}.
 The problem is to diagonalize the operators of the subalgebra.

The integral representations for solutions give a method to diagonalize
the Gaudin Hamiltonians. The method is called the {\it Bethe ansatz}. Let $(u^0,z)$ be a critical point
of the master function with respect to $u$, then the vector $A_{k_1,\dots,k_N}(u^0, z)$ is an eigenvector
of the Gaudin Hamiltonians,
\bea
H_a(z) A_{k_1,\dots,k_N}(u^0,z) = \frac {\der \Phi}{\der z_a}(u^0,z) A_{k_1,\dots,k_N}(u^0,z), \qquad a=1,\dots, n,
\eea
with eigenvalues described by the master function, see \cite{BF, RV, V3}. The natural (Shapovalov) norm of the Bethe vector is given
by the Hessian of the master function at the critical point,
\bea
S(A_{k_1,\dots,k_N}(u^0,z),A_{k_1,\dots,k_N}(u^0,z)) = \text{Hess}_u\, \Phi(u^0,z),
\eea
see \cite{MV3, V3, V4}.

In a reasonable way one can identify the algebra of functions on the critical set
with the Bethe algebra, generated by the Gaudin Hamiltonians, see \cite{MTV1, MTV2}.

The situation is analogous to the situation in quantum cohomology and classical mirror symmetry
in Givental's spirit. There one has the quantum differential equation depending on a parameter $\kappa$,
which is a flat connection for every value of $\kappa$. As $\kappa \to 0$ the asymptotics of the quantum differential equation
are described by idempotents of the quantum cohomology algebra.
The solutions of the quantum differential equation
can be represented by oscillatory integrals. In a reasonable way
the quantum cohomology algebra can be identified with the algebra of functions on the critical set of the
phase function of the oscillatory integral.

\subsection{ Generation of new critical points}
\label{sec Gen new cr points}

Having a critical point $(u,z)$ of a master function one can generate new critical points
by changing $u$ and not changing $z$.

Consider the master function $\Phi$ associated with $\g$, $\Lambda_1,\dots,\La_n$, $k_1,\dots,k_N$.
Assume that $(u,z)$ is a critical point of $\Phi$ with respect to $u$ variables,
$u=(u^{(1)},\dots,u^{(N)}), \ u^{(j)} = (u^{(j)}_1,\dots,u^{(j)}_{k_j}),\ j=1,\dots,N.$

\begin{thm}[\cite{ScV, MV1}]
\label{thm ScV}
 Choose any one group  $u^{(j)}$ of these numbers (particles).
Then there exists a unique algebraic one-parameter deformation $u^{(j)}(c),\ c\in \Bbb P^1$, of that group, such that
$(u^{(1)},\dots,u^{(j)}(c),\dots, u^{(N)},z)$ is a critical point.
For exactly one value $c\in \Bbb P^1$, the number of particles in the group
$u^{(j)}(c)$ drops. For that $c$ some particles of the group disappear
at infinity.
\end{thm}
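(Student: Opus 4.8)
The plan is to reduce the statement to a single group of particles and to recast the critical point equations for that group as a Wronskian equation between two polynomials. Fix the index $j$ and encode each group by its monic generating polynomial $y_{j'}(x)=\prod_i(x-u^{(j')}_i)$. Since the relevant algebra $\hg$ is simply laced we have $(\al_j,\al_j)=2$ and $(\al_j,\al_{j'})\in\{0,-1\}$ for $j'\ne j$, so the quantity
\be
T_j(x)=\prod_a (x-z_a)^{(\al_j,\La_a)}\prod_{j'\ne j} y_{j'}(x)^{-(\al_j,\al_{j'})}
\ee
is an honest polynomial, divisible by $y_{j'}$ for every neighbor $j'$ of $j$ in the Dynkin diagram. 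A direct computation of $\der\Phi/\der u^{(j)}_i$, together with the elementary identity $y_j''(u^{(j)}_i)/y_j'(u^{(j)}_i)=2\sum_{i'\ne i}(u^{(j)}_i-u^{(j)}_{i'})^{-1}$, shows that the group-$j$ critical point equations are equivalent to $y_j''(u^{(j)}_i)\,T_j(u^{(j)}_i)-y_j'(u^{(j)}_i)\,T_j'(u^{(j)}_i)=0$ at each root of $y_j$; equivalently, all residues $\Res_{x=u^{(j)}_i}\bigl(T_j/y_j^2\bigr)$ vanish.

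Next I would solve the first-order linear equation $\Wr(y_j,Y)=y_jY'-y_j'Y=T_j$ for a polynomial $Y$. Its general solution is $Y=y_j\int T_j/y_j^2\,dx$, and the residue-vanishing just established is exactly the condition that this antiderivative be rational; hence there is a polynomial solution $\tilde y_j$, unique up to $\tilde y_j\mapsto\tilde y_j+c\,y_j$. This furnishes the pencil
\be
y_j(x,c)=\tilde y_j(x)+c\,y_j(x),\qquad c\in\C,
\ee
completed at $c=\infty$ by $y_j(x,\infty)=y_j(x)$; its roots define the candidate deformation $u^{(j)}(c)$, an algebraic family over $\Pone$ passing through the original group at $c=\infty$.

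It remains to verify that $(u^{(1)},\dots,u^{(j)}(c),\dots,u^{(N)},z)$ is critical for every $c$. For the group $j$ itself this is immediate, since $\Wr(y_j(\cdot,c),y_j)=\Wr(\tilde y_j,y_j)=-T_j$, so the residue/Wronskian criterion applies with $-y_j$ as partner polynomial. For a group $j'\ne j$ the only $c$-dependence of $\der\Phi/\der u^{(j')}_{i'}$ enters through the term $(\al_{j'},\al_j)\,y_j(\cdot,c)'(u^{(j')}_{i'})/y_j(\cdot,c)(u^{(j')}_{i'})$, and one computes
\be
\frac{\der}{\der c}\,\frac{y_j(x,c)'}{y_j(x,c)}=\frac{-\Wr(y_j,\tilde y_j)}{y_j(x,c)^2}=\frac{-T_j(x)}{y_j(x,c)^2}.
\ee
Because $T_j$ is divisible by $y_{j'}$ whenever $(\al_{j'},\al_j)\ne 0$, this derivative vanishes at every root $u^{(j')}_{i'}$; hence the group-$j'$ equations are independent of $c$ and continue to hold. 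Finally, writing the family projectively as $[\al:\bt]\mapsto\al\,\tilde y_j+\bt\,y_j$, the top-degree coefficient is a nonzero linear functional of $[\al:\bt]$, so it vanishes at exactly one point of $\Pone$; there the degree of $y_j(\cdot,c)$ drops, i.e.\ some roots escape to infinity and the number of particles in the group decreases, while at every other $c$ the particle count is the generic one.

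The main obstacle is the Wronskian reformulation in the first step: one must check that the purely local residue condition coming from the critical point equations is precisely the global condition guaranteeing that the antiderivative $\int T_j/y_j^2\,dx$ is rational, so that $\tilde y_j$ is a genuine polynomial rather than a multivalued function. The second delicate point, conceptually, is the preservation of the neighboring groups' equations; this is not a coincidence but a consequence of the divisibility of $T_j$ by the $y_{j'}$, which is in turn forced by the integrality of the scalar products $(\al_j,\al_{j'})$ emphasized in the introduction. Uniqueness of the algebraic deformation then follows because any polynomial family through $y_j$ solving $\Wr(\,\cdot\,,Y)=T_j$ with $T_j$ fixed must lie in the pencil spanned by $y_j$ and $\tilde y_j$.
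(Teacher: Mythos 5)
Your construction of the deformation is sound, and it is essentially the argument behind this theorem in \cite{ScV} and \cite{MV1}; note that the paper itself does not prove Theorem \ref{thm ScV} but cites those works, and then re-derives exactly this machinery for $\widehat{\frak{sl}}_N$ in Sections 1.4 and 3. Your reduction of the group-$j$ equations to the vanishing of the residues of $T_j/y_j^2$, the pencil $\tilde y_j+c\,y_j$ of solutions of $\Wr(y_j,\cdot)=T_j$, and the degree count showing exactly one point of $\Pone$ where particles escape to infinity are all correct. Your direct check that the neighboring groups' equations survive the deformation --- via $\der_c(Y'/Y)=-T_j/Y^2$ and divisibility of $T_j$ by $y_{j'}$ --- is a nice elementary substitute for the route in \cite{MV1}, which instead shows that descendants of fertile tuples are fertile using Wronskian identities (Lemma \ref{lem MV}) and then invokes the equivalence ``generic and fertile $\Leftrightarrow$ critical'' (Theorem \ref{fertile cor}). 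Two caveats: the theorem is stated for an arbitrary Kac--Moody algebra, so you should work with coroots, $T_j=\prod_a(x-z_a)^{\langle\La_a,\al_j^\vee\rangle}\prod_{j'\neq j}y_{j'}^{-\langle\al_{j'},\al_j^\vee\rangle}$, rather than assume $(\al_j,\al_j)=2$ and $(\al_j,\al_{j'})\in\{0,-1\}$; and ``critical for every $c$'' must be weakened to ``for all $c$ outside the finite set where the deformed tuple fails to be generic,'' exactly as in the paper's Theorem \ref{tHm MV}.

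The genuine gap is uniqueness. You assert that ``any polynomial family through $y_j$ solving $\Wr(\cdot\,,Y)=T_j$ with $T_j$ fixed must lie in the pencil spanned by $y_j$ and $\tilde y_j$,'' but this conflates two different conditions. What a critical deformation $Y_c$ of the group actually satisfies is: there exists \emph{some} partner $Z_c$ with $\Wr(Y_c,Z_c)=T_j$; it need not satisfy $\Wr(y_j,Y_c)=T_j$, which is what would trivially place it in the pencil. The set of polynomials admitting such a partner is the full critical set of the group-$j$ master function, which is in general a union of many pairwise disjoint pencils (one per population), not a single pencil. To close the gap you need the two facts that are the real content of \cite{ScV}: (i) each critical point lies in exactly one pencil --- if $\Wr(Y,Z_1)=c_1T_j$ and $\Wr(Y,Z_2)=c_2T_j$ with $c_1,c_2\neq 0$, then $\Wr(Y,(c_1/c_2)Z_2-Z_1)=0$, so $\on{span}(Y,Z_1)=\on{span}(Y,Z_2)$; and (ii) there are only finitely many two-dimensional spaces of polynomials whose Wronskian is proportional to $T_j$ (finiteness of the Wronski map, the fact quoted in Section \ref{subs in all} of the paper). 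Given (i) and (ii), an irreducible algebraic one-parameter family of critical points through $y_j$ is contained in a finite union of irreducible curves (the pencils), hence coincides with one of them, hence with the unique pencil containing $y_j$. Without these two inputs your final sentence does not follow.
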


All points of this one-parameter family but one are critical points of the same master function,
and the one with the smaller number of particles is a critical point of the master function
with a smaller number of variables.

This theorem gives us a way to generate critical points. Starting with a critical point
we can generate $N$ one-parameter families of critical points. Then we may apply the same procedure
to each of the obtained critical points and so on. The set of all critical points obtained from a given one
is called the {\it population} of critical points \cite{MV1}.
\medskip

{\bf Question:}\ What does a population look like?

\begin{thm}
If $\g$ is a simple Lie algebra, then every population is isomorphic to the flag variety of the Langlands
dual Lie algebra $\g^L$.
\end{thm}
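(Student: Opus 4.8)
The plan is to understand the combinatorial structure of a population and match it to the cell structure (Bruhat decomposition) of the flag variety $G^L/B^L$. The starting point is Theorem~\ref{thm ScV}: from any critical point, choosing a group $u^{(j)}$ produces a $\Pone$-family of critical points, with a distinguished point where particles of that group escape to infinity. I would first reformulate the generation procedure algebraically. Following \cite{ScV, MV1}, a critical point of the master function is encoded by an $N$-tuple of polynomials $(y_1,\dots,y_N)$ whose roots are the coordinates $u^{(j)}_i$, and the deformation of the $j$-th group is governed by a first-order linear differential equation whose solution space is two-dimensional; the deformation replaces $y_j$ by $c\>y_j + \Tilde y_j$ for a second solution $\Tilde y_j$. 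Thus each generation step is a choice of line in a $2$-dimensional space, i.e.\ a point of $\Pone$, and the special value $c=\infty$ (where particles vanish) corresponds to the ``reverse'' step. This already suggests that a population is built from $\Pone$-bundles indexed by the simple roots $\al_1,\dots,\al_N$, exactly like an iterated Bott--Samelson / tower-of-$\Pone$-bundles description of the flag variety.

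Next I would make the Langlands-dual bookkeeping precise. Because the generation in the $j$-th direction is attached to the simple root $\al_j$ of $\g$, and the change in weight at each step reflects the action of the simple reflection $s_j$, the sequences of generation directions correspond to words in the Weyl group $W$. The key structural claim is that two words yield the same critical point exactly when they are related by the braid and length-reducing relations of $W$, so that distinct points of the population biject with elements of $W$ (for the open cells) assembled into a variety whose cells are indexed by $W$. The reason the dual algebra $\g^L$ appears is that the integers entering the master function are the values $(\al_j,\La_a)$ and the Cartan integers $(\al_i,\al_j)$; under the generation procedure the natural weights that label the polynomials $y_j$ transform by the \emph{transpose} Cartan matrix, which is the Cartan matrix of $\g^L$. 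I would verify this by tracking how the degrees $k_j = \deg y_j$ change under a single generation step in direction $i$: the new degree vector is obtained from the old by the reflection $s_i$ acting via the dual Cartan matrix, so the orbit lives in the weight lattice of $\g^L$ and the stabilizer/incidence data match $B^L$.

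With these two ingredients in place, the proof assembles as follows. First establish that the population, as a set with its natural topology (inherited from the spaces of polynomial tuples of bounded degree), is covered by charts each isomorphic to a product of affine lines $\C^{\ell(w)}$ indexed by reduced words, glued along the $\Pone$-families of Theorem~\ref{thm ScV}. Then show this gluing data is precisely the Bruhat cell structure of $G^L/B^L$: the closure relations among cells are forced by the $c=\infty$ degeneration (one particle group losing a particle corresponds to passing to a boundary stratum, i.e.\ a smaller Schubert cell). Finally, construct an explicit isomorphism by sending a critical point obtained via a reduced word $s_{i_1}\cdots s_{i_\ell}$ with parameters $(c_1,\dots,c_\ell)$ to the corresponding point in the Bott--Samelson resolution, and check it descends to a well-defined bijective morphism onto $G^L/B^L$ that is compatible with the two cell decompositions.

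The main obstacle I anticipate is proving that the population is exactly the flag variety and not merely something mapping to it: one must show the generation procedure produces \emph{no relations beyond} the Weyl-group relations (so the map is injective on cells) and \emph{all} critical points in the population are reached (surjectivity), together with algebraicity and the correct gluing at the $c=\infty$ loci. Injectivity requires a uniqueness statement — that a critical point determines the reduced-word data up to braid relations — which is the delicate combinatorial heart; surjectivity and smoothness then follow from the $\Pone$-bundle tower structure. Checking that the degenerate fibers glue to honest Schubert varieties (rather than, say, introducing extra components or singularities) is where the Wronskian/tau-function description of critical points, emphasized elsewhere in the paper, would do the technical work of controlling what happens as particles escape to infinity.
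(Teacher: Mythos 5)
The theorem you are proving is not proved in this paper at all: it is quoted from \cite{MV1} (types A, B, C) and \cite{MV2, F} (the remaining types), so the comparison must be with those proofs. Your proposal diverges from them in an essential way, and the divergence is exactly where the gap is. Your plan is to rebuild the flag variety abstractly: charts $\C^{\ell(w)}$ from reduced words, glued along the $\Pone$-families of Theorem \ref{thm ScV}, with Bruhat closure relations coming from the $c=\infty$ degenerations. But the object this tower construction produces naturally is a Bott--Samelson-type iterated $\Pone$-bundle, which is \emph{not} isomorphic to $G^L/B^L$; it only maps onto it. To collapse the tower correctly you need to know which points of different branches coincide in the population, i.e.\ precisely the injectivity/descent statement you yourself flag as ``the delicate combinatorial heart'' --- and your proposal offers no mechanism for it. Cell-plus-closure data alone does not determine a variety, so without an a priori map from the population to a concretely constructed flag variety, the argument cannot close; it is circular to ``check the gluing is that of $G^L/B^L$'' when $G^L/B^L$ has not yet been produced from the data.

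The proofs in \cite{MV1} supply exactly the missing ingredient, and it is linear algebra rather than cell geometry: from a population (say for type A) one constructs a single scalar differential operator $D=(\der-\log'(y_N/y_{N-1}))\cdots(\der-\log'(y_1/y_0))$ which is the \emph{same} for every tuple in the population; its kernel is a fixed finite-dimensional space $V$ of polynomials, and each tuple $(y_1,\dots,y_N)$ is recovered from a complete flag $V_1\subset\dots\subset V_N\subset V$ by $y_i=\Wr(u_1,\dots,u_i)$ for a basis $u_1,\dots,u_i$ of $V_i$. This gives an explicit algebraic bijection between the population and the flag variety of $V$, with inverse given by Wronskians, so injectivity and surjectivity are never in doubt; Langlands duality enters through the degree bookkeeping (your observation about the transposed Cartan matrix is correct and is the one sound structural point of your sketch, cf.\ the shifted Weyl-group action on degree vectors cited in Section \ref{sec degree transf}), and types B, C and the general case follow by folding/self-duality arguments (\cite{MV2, F}). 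Note that the present paper's affine analogue follows the same philosophy: Theorem \ref{thm mkdv spaces} and Corollary \ref{cor last} identify the population for $\hg$ with complete flags in $W/z^NW$ via tau-functions, again by exhibiting the linear-algebraic object whose flags the population parametrizes --- not by gluing cells.
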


This theorem was proved in \cite{MV1} for the Lie algebras of type A, B, C and extended to other simple Lie algebras
in \cite{MV2, F}.

For example, for $\g=\frak{sl}_2$ each critical point generates a projective line $\Bbb P^1$ of critical points.
For $\g=\frak{sl}_3$ each population is isomorphic to the variety $\F(\C^3)$
of complete flags in $\C^3$.
The steps of the generation procedure correspond to the building of $\F(\C^3)$ from Schubert
cells labeled by elements of the symmetric group $\Sigma_3$.
We start with the  zero-dimensional cell $C_{id}$ corresponding to the initial critical point.
Then we add two one-dimensional cells $C_{s_1}$,
$C_{s_2}$. Then add two two-dimensional cells $C_{s_1s_2}$, $C_{s_2s_1}$,
and finally add one three-dimensional cell $C_{s_2s_1s_2}=C_{s_1s_2s_1}$.

\medskip
{\bf Question:}  What is the number of populations
originated from critical points of a given master function?
\medskip

Here is an informal answer:
{\it The number of populations originated from critical points  of the function $\Phi(\,\cdot\,,z)$, associated
with $\La_1,\dots,\La_n, k_1,\dots,k_N$, does not depend on $z$ and equals the multiplicity
of the representation $L_{\La_\infty}$ in $\otimes_{a=1}^n L_{\La_a}$, where }
$\La_\infty$ {\it is the highest weight in the orbit of the weight  $ \sum_{a=1}^n\La_a - \sum_{j=1}^Nk_j\al_j $
under the shifted action of the Weyl group}.

A precise statement for $\g=\frak{sl}_N$ can be found in \cite{MV1}, \cite{MTV3}. For other simple Lie algebras  a proof
of  this statement is not written yet.
This statement on the number of populations can be considered as a relation between linear algebra (the multiplicity of a representation)
and  geometry (the number of populations).

\medskip
We have observed above that master functions
are related to interesting  objects and have nice properties.
Now we will explain how the critical points of master functions  are related to integrable hierarchies.
We expect that these relations are more general than the results that are reported in this paper, see
the statement in  Section \ref{MA}.

\subsection{ Generation for $\hg$  }
\label{sec gene for gh}

In this paper we consider the case $\g=\hg$, $n=0$. Then the master function does not depend on $z$ and is a function
of $
u=(u^{(1)},\dots,u^{(N)}),
\
u^{(j)} = (u^{(j)}_1,\dots,u^{(j)}_{k_j})
$
only,
\bean
\label{Master}
\phantom{aaaaaa}
\Phi(u) =
2 \sum_{j=1}^N  \sum_{i<i'}
\log (u^{(j)}_i-u^{(j)}_{i'})
- \sum_{j=1}^{N-1} \sum_{i,i'}\log (u^{(j)}_i-u^{(j+1)}_{i'}) - \sum_{i,i'}\log (u^{(N)}_i-u^{(1)}_{i'}).
\eean
We may also say that this master function corresponds to the case of an arbitrary $n$ and $\La_1=\dots=\La_n=0$.

Given $u$, introduce an $N$-tuple of polynomials of a new variable $x$,
\bea
y=(y_1(x),\dots,y_N(x)),
\qquad
y^j(x) = \prod_{i=1}^{k_j}(x-u^{(j)}_i).
\eea
For functions $f(x),g(x)$, denote
\bea
\Wr(f,g) = f(x)g'(x)-f'(x)g(x)
\eea
the Wronskian determinant.
An $N$-tuple of polynomials $y=(y_1(x),\dots,y_N(x))$ is called {\it generic} if each polynomial has no multiple roots
and for any $i$ the polynomials $y_i$ and $y_{i+1}$ have no common roots. Here $y_{N+1}=y_1$.

\begin{thm} [\cite{MV1}]
\label{thm a}

 A generic $N$-tuple $y$ represents a critical point if and only if for any $j=1,\dots,N$,
there exists a polynomial $\tilde y_j(x)$ satisfying
\bean
\label{Wr eqn}
\Wr(y_j, \tilde y_j)= y_{j-1}y_{j+1}.
\eean
For $j=1$, this is $\Wr(y_1, \tilde y_1)= y_{N}y_{2}$
and for  $j=N$, this is $\Wr(y_N, \tilde y_N)= y_{N-1}y_{1}$.

\end{thm}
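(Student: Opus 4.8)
The plan is to reduce the theorem to a residue computation. First I would write out the critical point equations explicitly. Differentiating the master function \Ref{Master} in the variable $u_i^{(j)}$ gives, for each $j$ and each $i$,
\be
2\sum_{i'\ne i}\frac1{u_i^{(j)}-u_{i'}^{(j)}}
=\sum_{i'}\frac1{u_i^{(j)}-u_{i'}^{(j-1)}}
+\sum_{i'}\frac1{u_i^{(j)}-u_{i'}^{(j+1)}},
\ee
with the cyclic convention $y_0=y_N$, $y_{N+1}=y_1$ on the groups; these are the Bethe-ansatz-type equations characterizing a critical point.

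Next I would rewrite the Wronskian equation as a first-order linear ODE for $\tilde y_j$. Dividing $\Wr(y_j,\tilde y_j)=y_{j-1}y_{j+1}$ by $y_j^2$ gives $(\tilde y_j/y_j)'=y_{j-1}y_{j+1}/y_j^2$, so any solution has the form
\be
\tilde y_j = y_j\int \frac{y_{j-1}y_{j+1}}{y_j^2}\,dx.
\ee
Thus a polynomial $\tilde y_j$ exists if and only if the rational function $y_{j-1}y_{j+1}/y_j^2$ admits a rational antiderivative, i.e.\ its partial-fraction expansion carries no logarithmic part. Here genericity is essential: since $y_j$ has only simple roots and $y_{j\pm1}$ do not vanish at them, the poles of $y_{j-1}y_{j+1}/y_j^2$ are exactly double poles at the roots $u_i^{(j)}$ of $y_j$, so the only obstruction is the vanishing of the residue at each such root.

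I would then compute these residues. At a simple root $a=u_i^{(j)}$ of $g=y_j$, writing $f=y_{j-1}y_{j+1}$, the standard formula gives $\Res_{x=a}(f/g^2)=f'(a)/g'(a)^2-f(a)g''(a)/g'(a)^3$, which vanishes (using $f(a)\ne0$) precisely when $f'(a)/f(a)=g''(a)/g'(a)$. Expanding the logarithmic derivatives, $f'(a)/f(a)=\sum_{i'}(a-u_{i'}^{(j-1)})^{-1}+\sum_{i'}(a-u_{i'}^{(j+1)})^{-1}$ while $g''(a)/g'(a)=2\sum_{i'\ne i}(a-u_{i'}^{(j)})^{-1}$, so the residue at $u_i^{(j)}$ vanishes exactly when the critical point equation at $u_i^{(j)}$ holds. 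Ranging over all $j$ and $i$ identifies ``all residues vanish'' with ``$u$ is a critical point,'' and the claimed equivalence follows in both directions.

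The main point to treat with care is the passage from ``the antiderivative is rational'' to ``$\tilde y_j$ is a genuine polynomial.'' Once every finite residue of $y_{j-1}y_{j+1}/y_j^2$ vanishes, its rational antiderivative has at worst simple poles at the roots of $y_j$, which are cancelled by the prefactor $y_j$, so $\tilde y_j$ has no finite poles; and since the sum of all residues of a rational function is zero, the residue at infinity vanishes as well, ruling out a logarithmic term there and forcing $\tilde y_j$ to be polynomial. This behavior-at-infinity bookkeeping, together with the clean use of genericity to guarantee that the poles are exactly double poles at simple roots, is the only delicate part; the heart of the argument is the residue-equals-critical-point-equation identity above.
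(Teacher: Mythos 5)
Your proof is correct and follows essentially the same route as the paper: the paper's own justification (given right after the theorem) solves the Wronskian equation as $\tilde y_j = y_j\int \frac{y_{j-1}y_{j+1}}{y_j^2}\,dx + cy_j$, observes that the integral is rational precisely when all residues of $y_{j-1}y_{j+1}/y_j^2$ vanish, and identifies those residue conditions with the critical point equations \Ref{Bethe eqn 1}, which is exactly your argument with the residue computation written out. Your closing remark about the residue at infinity is harmless but unnecessary, since a rational function whose finite residues all vanish already has a rational antiderivative, and a rational function with no finite poles is automatically a polynomial.
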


Equation \Ref{Wr eqn} is a first order inhomogeneous differential equation with respect to $\tilde y_j$.
Its solutions are
\bea
\tilde y_j = y_j\int \frac{y_{j-1}y_{j+1}}{y_j^2} dx + c y_j,
\eea
where $c$ is any number. For arbitrary polynomials $y_{j-1}, y_{j+1}, y_j$, the integral will
contain logarithms. The integral is a rational function if the residues of the ratio
 are all equal to zero. The requirement that the residues are all equal
to zero are exactly the critical point equations
for the master function.

\begin{thm}
[\cite{MV1}]
\label{tHm MV}
If a generic $N$-tuple $y$ represents a critical point, then for almost all $c\in \Bbb P^1$,
the $N$-tuple $(y_1,\dots, \tilde y_j,\dots,y_N)$ represents a critical point. The set of exceptional $c$ is finite and consists
of all $c$ for which the $N$-tuple $y$ is not generic.
\end{thm}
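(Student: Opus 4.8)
The plan is to test the deformed tuple against the Wronskian characterization of Theorem \ref{thm a}. Write $y' = (y_1, \dots, \tilde y_j, \dots, y_N)$ for the deformed tuple, so its $m$-th entry equals $y_m$ for $m \neq j$ and equals $\tilde y_j = y_j\int (y_{j-1}y_{j+1}/y_j^2)\,dx + c\,y_j$ for $m = j$. By Theorem \ref{thm a}, once $y'$ is generic it represents a critical point precisely when, for every index $m$, the equation $\Wr(y'_m, Y_m) = y'_{m-1}y'_{m+1}$ admits a polynomial solution $Y_m$. For $m \notin \{j-1,j,j+1\}$ all three entries $y'_{m-1}, y'_m, y'_{m+1}$ coincide with those of $y$, so the required $Y_m$ is inherited from the hypothesis that $y$ is a critical point. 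For $m = j$ the right-hand side is $y_{j-1}y_{j+1} = \Wr(y_j,\tilde y_j)$, so by antisymmetry of the Wronskian $Y_j = -y_j$ solves $\Wr(\tilde y_j, Y_j) = y_{j-1}y_{j+1}$. Everything thus reduces to the two neighboring indices $m = j\pm1$.

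The heart of the argument is the index $m = j-1$, where I must produce a polynomial $Y_{j-1}$ with $\Wr(y_{j-1}, Y_{j-1}) = y_{j-2}\tilde y_j$; the case $m = j+1$ is identical after the reflection $j-1 \leftrightarrow j+1$. As recalled after Theorem \ref{thm a}, such a polynomial exists iff every residue of $y_{j-2}\tilde y_j/y_{j-1}^2$ vanishes, the only possible poles sitting at the roots of $y_{j-1}$, which are simple by the genericity of $y$. Set $\phi = \tilde y_j/y_j$; the defining relation $\Wr(y_j,\tilde y_j) = y_{j-1}y_{j+1}$ says exactly that $\phi' = y_{j-1}y_{j+1}/y_j^2$. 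Let $u$ be a root of $y_{j-1}$. Since $y_{j-1}$ and $y_j$ share no root, $y_j(u) \neq 0$, so $\phi$ is regular at $u$ and, crucially, $\phi'(u) = y_{j-1}(u)y_{j+1}(u)/y_j(u)^2 = 0$. Writing $y_{j-2}\tilde y_j/y_{j-1}^2 = \phi\cdot(y_{j-2}y_j/y_{j-1}^2)$ and expanding at $u$, the second factor has the Laurent form $A(x-u)^{-2} + B(x-u)^{-1} + \cdots$ with $B = 0$, because $y$ itself is a critical point and this is precisely its index-$(j-1)$ condition. The $(x-u)^{-1}$ coefficient of the product is then $\phi(u)\,B + \phi'(u)\,A = 0$, so the residue vanishes. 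As this holds at every root of $y_{j-1}$, the antiderivative $\int (y_{j-2}\tilde y_j/y_{j-1}^2)\,dx$ is rational with at worst simple poles there, and multiplying by $y_{j-1}$ clears them; hence $Y_{j-1} = y_{j-1}\int(y_{j-2}\tilde y_j/y_{j-1}^2)\,dx$ is a genuine polynomial. This computation is valid for every value of $c$, since $c$ only shifts $\phi$ by a constant and the two facts used, $\phi'(u)=0$ and $B=0$, are $c$-independent.

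It remains to control genericity, which is where the phrase ``almost all $c$'' and the finite exceptional set enter. Only the entry $\tilde y_j = y_j(\phi_0 + c)$ depends on $c$, where $\phi_0 = \int(y_{j-1}y_{j+1}/y_j^2)\,dx$; the other entries, and all adjacency conditions not involving $\tilde y_j$, are generic by the hypothesis on $y$. For a root $u$ of $y_{j-1}$ one has $\tilde y_j(u) = y_j(u)(\phi_0(u)+c)$, which vanishes only for the single value $c = -\phi_0(u)$; the same holds at roots of $y_{j+1}$, and the requirement that $\tilde y_j$ have a repeated root is the vanishing of its discriminant, a polynomial in $c$ that is not identically zero. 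Hence $y'$ fails to be generic only for finitely many $c$, and for every other $c$ the Wronskian conditions verified above combine with Theorem \ref{thm a} to show that $y'$ represents a critical point.

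I expect the neighboring-index computation of the second paragraph to be the main obstacle: the nonobvious point is that replacing $y_j$ by $\tilde y_j$ does not destroy the residue (integrability) conditions at the roots of $y_{j-1}$ and $y_{j+1}$, and the mechanism that rescues them is the vanishing of $\phi'$ exactly at those roots, forced by the Wronskian relation defining $\tilde y_j$. A secondary point requiring care is the passage from ``all residues vanish'' to ``$Y_{j-1}$ is a polynomial'' rather than merely rational, which rests on the poles of the antiderivative being simple and located at the simple zeros of $y_{j-1}$. I would also verify that the discriminant of $\tilde y_j$ is not identically zero in $c$, equivalently that $\tilde y_j$ is separable for some $c$, since otherwise the exceptional set could a priori fail to be finite.
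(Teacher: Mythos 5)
The paper itself contains no proof of Theorem~\ref{tHm MV}: the statement is imported from \cite{MV1}, and the only indication of its mechanism is the remark preceding it that the integral $\int y_{j-1}y_{j+1}/y_j^2\,dx$ is rational exactly when the residues of the integrand vanish, these residue conditions being the critical point equations. Your argument is a correct, self-contained implementation of precisely that mechanism, so it is faithful to the route the paper alludes to: reduce to fertility via Theorem~\ref{thm a}; inherit the Wronskian solutions at indices away from $j-1,j,j+1$; use antisymmetry of the Wronskian at $j$; and at $j\pm1$ kill each residue by combining $\phi'(u)=0$ (forced at roots $u$ of $y_{j\pm1}$ by $\Wr(y_j,\tilde y_j)=y_{j-1}y_{j+1}$) with the vanishing of the residue of $y_{j-2}y_j/y_{j-1}^2$, which is exactly the fertility of $y$ at the neighboring index. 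Your passage from vanishing residues to actual polynomiality of $Y_{j\pm1}$ (the antiderivative has at worst simple poles at the simple roots of $y_{j\pm1}$, cleared by multiplication) is also correct.

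Two loose ends, both minor and both closable. (a) The separability of $\tilde y_j$, which you flagged as unverified, needs no discriminant argument at all: if $x_0$ were a common root of $\tilde y_j$ and $\tilde y_j'$, then $\Wr(y_j,\tilde y_j)(x_0)=0$, hence $y_{j-1}(x_0)\,y_{j+1}(x_0)=0$; but you have already discarded the finitely many $c$ for which $\tilde y_j$ vanishes at a root of $y_{j-1}y_{j+1}$, so for every remaining $c$ the polynomial $\tilde y_j$ automatically has no multiple roots, and the exceptional set is finite. (b) Your neighbor computation tacitly assumes that $y_{j\mp2}$ is an undeformed entry, which fails when $N=2$: there the right-hand side at the other index is $\tilde y_j^{\,2}$, so the deformed polynomial enters quadratically. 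The same argument still works with $\phi$ replaced by $\phi^2$, since $(\phi^2)'(u)=2\phi(u)\phi'(u)=0$, but the case should be stated. Finally, for the exceptional set to \emph{consist of} the non-generic values of $c$ (not merely be contained in them), add the trivial converse that a non-generic tuple cannot represent a critical point, since the Bethe equations \Ref{Bethe eqn 1} already presuppose genericity, as remarked in Section~\ref{PRCP}.
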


We get a one parameter family of critical points parameterized by $c\in \Bbb
P^1$. For exactly one value
$c$ the degree of the $j$-th polynomial drops,
\bea
\tilde k_j + k_j -1 = k_{j-1}+k_{j+1} ,
\eea
where $k_j, \tilde k_j$ are possible degrees of the $j$-th polynomial.

Theorem \ref{tHm MV} describes the generation procedure of critical points for the master function
\Ref{Master}, this generation  was described  in general terms in Theorem \ref{thm ScV}.

Let us choose a starting critical point to generate a population of critical points.
Namely, let us choose the starting $N$-tuple to be
\bea
y^\emptyset = (1,\dots,1).
\eea
Each of the polynomials of the tuple does not have roots. The tuple $y^\emptyset$ represents
the critical point of the master function with no variables. Choose a sequence of integers
$J=(j_1,\dots,j_m),$ $ j_a\in \{1,\dots,N\}$, and apply to the tuple $y^\emptyset$
the sequence of generations  at
the indices of $J$. As a result we will obtain an $m$-parameter family
of $N$-tuples of polynomials
\bea
y^J(x,c)\,=\,(y_1(x,c),\dots,y_N(x,c)),
\eea
 depending on $m$ integration
constants $c=(c_1,\dots,c_m)$.

\begin{example}
 Let $N=3$, $J=(1,2,3,1,2,3,...)$. Then
$y^\emptyset=(1,1,1)$ is transformed to
\bea
(x+c_1,1,1),
\eea
 then to
\bea
(x+c_1, \frac{(x+c_1)^2}2 + c_2, 1),
\eea
then to
\bea
(x+c_1, \frac{(x+c_1)^2}2 + c_2, \frac{(x+c_1)^4}8 + \frac{(x+c_1)^2}2 c_2 + c_3),
\eea
and so on.

The triple $(1,1,1)$ represents the critical point of the master function with no variables.
The triple $(x+c_1,1,1)$ represents critical points of the master function of one variable,
namely, the constant function $\Phi : u^{(1)}_1\mapsto 1$. All points of the line are critical points.
The triple $(x+c_1, \frac{(x+c_1)^2}2 + c_2, 1)$ represents the critical points of the master function
\bea
\Phi = \log \left(\frac{(u^{(2)}_1-u^{(2)}_2)^2} {(u^{(2)}_1-u^{(1)}_1)(u^{(2)}_2-u^{(1)}_1)}
\right).
\eea
\end{example}

\begin{rem}  The Bethe ansatz applied to master functions \Ref{Master} allows us to construct eigenvectors of
$\hg$  Gaudin Hamiltonians $H_a(z)$  acting on a tensor power $M_0^{\otimes n}$ of the $\hg$ Verma modules with zero highest weight.

\end{rem}

\subsection{All critical points of master functions
\Ref{Master} lie in the population generated from $y^\emptyset$}

In this paper we shall study  the population  generated  from $y^\emptyset$ of critical points of the master functions
\Ref{Master}. The following theorem
says that the critical points of this population exhaust all critical points of the considered master functions.

\begin{thm}
[\cite{MV4}]
\label{thm last}

 If a tuple $(y_1,\dots,y_N)$ represents a critical point of the master function
\Ref{Master} for some parameters $k_1,\dots,k_N$, then $(y_1,\dots,y_N)$ is a point
of the population of tuples
 generated  from $y^\emptyset$.

\end{thm}

We will not use this theorem in this paper.


\subsection{ Miura opers and mKdV hierarchy}

An $N\times N$ {\it Miura oper} is a differential operator of the form
\bean
\label{mi oper}
\Ll =\frac d{dx} + \Lambda + V,
\eean
where  $V = \text{diag}\,(f_1(x),\dots,f_N(x))$,
\bean
\label{LAM}
\Lambda = \begin{pmatrix}
0 & 0& \dots & 0 & \la
\\
1 & 0 &\dots & 0 & 0
\\
0 & 1 &\dots & 0 & 0
\\
\dots & \dots  &\dots & \dots & \dots
\\
\dots & \dots  &\dots & \dots & \dots
\\
0 & 0 &\dots & 1 & 0
\end{pmatrix},
\eean
and
 $\lambda$ is a parameter of the differential operator.
Drinfeld and Sokolov in 1984 in \cite{DS} considered the space  of Miura opers
and defined on it an infinite sequence of commuting flows $\der_{t_r}, r=1,2,\dots$,
called the {\it mKdV hierarchy} of $\widehat{\frak{sl}}_N$-type.

More precisely, for a Miura oper $\Ll$  there exists $T = 1 + \sum_{i < 0}T_i(x) \Lambda^i$,
 where $T_i(x)$ are diagonal matrices, such that
 $T^{-1} \Ll T$ has the form
$
 \frac d{dx} + \Lambda + \sum_{i \leq 0} b_i(x) \Lambda^i,
 $  where $b_i(x)$ are scalar functions, see \cite{DS}.
Then for $r\in\Bbb N$, the differential equation
\bean
\label{Mkdv}
\frac{\partial \Ll}{\partial t_r}  = [\Ll,\left(T \Lambda^r T^{-1} \right)^+]
\eean
 is called  the $r$-th  mKdV equation.
 The notation $()^+$ has the following meaning:
 given $M=\sum_{i\in\Bbb Z}d_i\Lambda^i$ with diagonal matrices $d_i$, then
 $M^+=\sum_{i\geq 0}d_i\Lambda^i$.

Equation \Ref{Mkdv} defines vector fields $\der_{t_r}$ on the space of Miura opers, the vector fields commute.

\subsection{Main result}
\label{MA}

To every $N$-tuple $y$ of polynomials we assign the Miura oper $\Ll$ with
\bea
V= \text{diag} \left(\log'\left(\frac{y_1}{y_N}\right),\log'\left(\frac{y_2}{y_1}\right),
\dots, \log'\left(\frac{y_N}{y_{N-1}}\right)\right).
\eea
 For a sequence $J=(j_1,\dots,j_m)$, we consider the corresponding $m$-parameter family
 of critical points of the master function, represented by the family
 of $N$-tuples of polynomials $y^J(x,c)$. Let $\Ll^J(c)$ be the corresponding
 family of Miura opers.

\medskip

In this paper we show that {\it this family  $\Ll^J(c)$
of Miura opers is invariant under every flow of the mKdV hierarchy and is
point-wise fixed by every flow $\der_{t_r}$ with $r> 2m$.}

This statement is proved under the assumption
that $J$ is a degree increasing sequence, see the definition  in
Section \ref{sec degree transf}. This is a technical assumption which simplifies the exposition.

\medskip

Our starting point was the famous paper by Adler and Moser (1978) \cite{AM}  where this theorem was proved for $N=2$.

\subsection{ Identities for  Schur polynomials}

In this paper we describe two proofs of the main result. The first proof is straightforward: for any flow of the
hierarchy we deform the constants of integration of the generation procedure
to move the oper in the direction of the flow. This proof is similar to the proof in \cite{AM}.
The second proof uses tau-functions and, in particular, Schur polynomials. It is based on a Wronskian identity
for tau-functions and, in particular, for Schur polynomials.

Schur polynomials $F_\la(t_1,t)$ are labeled by partitions $\la=(\la_0\geq\la_1\geq\dots\geq\la_n\geq 0)$
and are polynomials in $t_1$ and $t=(t_2,t_3,\dots)$.
Let us define polynomials $h_i(t_1,t)$, $i=0,1,\dots$, by the relation
$
\text{exp}\,(-\sum_{j=1}^\infty t_jz^j)\,=\, \sum_{i=0}^\infty h_iz^i ,
$
and set
$
F_\la\, =\, \text{det}_{i,j=0}^n\,(h_{\la_i-i+j}).
$
In this paper we prove that certain 4-tuples of Schur polynomials satisfy the Wronskian identity
\bean
\label{ONE}
\Wr_{t_1}(F_{\la^1}, F_{\la^2})\ =\ F_{\la^3}F_{\la^4},
\eean
where $\Wr_{t_1}$ denotes the Wronskian determinant with respect to $t_1$.
For example,
\\
$W_{t_1}(F_{(2,1)},F_{(0)}) = F_{(1)}F_{(1)}$ and
$\Wr_{t_1}(F_{(4,2,1)}, F_{(2,2,1)}) = F_{(3,2,2,1)}F_{(2,1)}.$

We also prove that for any $N$, certain $N$-tuples of Schur polynomials
\bean
\label{TWO}
y(t_1,t)\, =\, \left(F_{\la^1}(t_1,t),\dots, F_{\la^N}(t_1,t)\right)
\eean
are such that for a fixed generic  $t$, the tuple $y(x,t)$ represents a critical point of the master
function \Ref{Master}.
In other words, we show that for a fixed generic  $t$, the roots of the polynomials
$(F_{\la^1}(t_1,t),\dots, F_{\la^N}(t_1,t))$ with respect to $t_1$ satisfy the critical point equations
\Ref{Bethe eqn 1}.  For example, for $N=3$ the triple
$
(F_{(1,1)}, F_{(2,1,1)}, F_{(1)})
$
 is such a triple.

\subsection{Exposition of the material}

In Section \ref{sec kdv and mkdv} we remind the construction of the mKdV and KdV hierarchies from \cite{DS}.
In Section \ref{CP} we remind the construction of the generation of critical points from \cite{MV1}.
In Section \ref{MOCP} we assign a Miura oper to a critical point as in \cite{MV2} and discuss properties of this assignment.
In Section \ref{Vector fields} we formulate and prove one of the main results of the paper, Corollary \ref{cor Main}.
Corollary \ref{cor Main}  says that
the family $\Ll^J$ of Miura opers, associated with the generation of critical points in the direction of
a degree increasing sequence $J$,
is invariant with respect to all mKdV flows.

In Section \ref{sec SchuR poly} we discuss properties of Schur polynomials.
Schur polynomials are labeled by partitions. Following \cite{SW} we consider partitions
$\la=(\la_0\geq \la_1\geq\dots)$ and associated subsets $S=\{s_0<s_1<\dots\} \subset\Z$ of virtual cardinal zero.
Since partitions are in a one-to-one correspondence with subsets of virtual cardinal zero, we label Schur polynomials
by subsets too.
Theorem \ref{thm new identity} gives a Wronskian identity for Schur polynomials associated with four suitable
subsets $S_1, S_2, S_3, S_4$ of virtual cardinal zero:\
\bea
\Wr_{t_1}(F_{S_1},F_{S_2}) = F_{S_3}F_{S_4},
\eea
 see Section \ref{Subsets of virtual cardinal zero}. More general Wronskian identities for Schur polynomials see in
 Section \ref{sec more general}.

We fix a natural number $N>1$ and introduce KdV subsets $S\subset \Z$ as subsets of virtual cardinal zero with the property $S+N\subset S$.
An example is $S^\emptyset=\{0<1<\dots\}$.
We introduce mKdV tuples $\bs S=(S_1,\dots,S_N)$ as tuples of KdV subsets with the property  $S_i+1\subset S_{i+1}$ for all $i=1,\dots,N$.
An example is $\bs S^\emptyset = (S^\emptyset,\dots, S^\emptyset)$. We denote $\mc S_{mKdV}$ the set of all mKdV tuples of subsets.
For any  $\bs S=(S_1,\dots,S_N)\in \mc S_{mKdV}$  and $i=1,\dots,N$ we define a mutation $w_i: \bs S\mapsto \bs S^{(i)}=(S_1,\dots,\tilde S_i,\dots,S_N)$.
The mutations $w_1,\dots,w_N$ satisfy the relations of the affine Weyl group $\widehat W_{A_{N-1}}$ and define a transitive action of
 $\widehat W_{A_{N-1}}$ on $\mc S_{mKdV}$, see Theorems \ref{thm mutation all} and \ref{lem weyl action}.

Theorem \ref{thm schur-crit} gives a relation of mKdV tuples of subsets with critical points. Namely, if
$\bs S=(S_1,\dots,S_N)\in \mc S_{mKdV}$  and $(F_{S_1}(t_1,t),\dots,F_{S_N}(t_1,t))$ is the tuple
of the corresponding Schur polynomials, then for a fixed generic $t$, the tuple $(F_{S_1}(x,t),$ $\dots,F_{S_N}(x,t))$
represents a critical point of a master function. Theorem \ref {thm schur induced} says that every family of critical
points provided by Theorem \ref{thm schur-crit}
appears as a subfamily of critical points generated from the tuple $y^\emptyset$.

Let $\bs S=(S_1,\dots,S_N)\in \mc S_{mKdV}$. Consider the differential operator
\bean
\label{THREE}
&&  \D_S = \left(\frac d{dx} - \log'\left(\frac{F_{S_N}(x,t)}{F_{S_{N-1}}(x,t)}\right)\right) \times
\\
\notag
&&
\phantom{
ghjjjjjjjjjjj\dots}
\times\left(\frac d{dx} - \log'\left(\frac{F_{S_{N-1}}(x,t)}{F_{S_{N-2}}(x,t)}\right)\right)
\dots
 \left(\frac d{dx} - \log'\left(\frac{F_{S_{1}}(x,t)}{F_{S_{N}}(x,t)}\right)\right)
\eean
with respect to $x$. Theorem \ref{thm diff oper SCur} says that this differential operator depends  on $S_N$ only.
Theorem \ref{thm diff oper SCur} also describes the kernel of this differential operator as the span of ratios of suitable Schur polynomials.

In Section \ref{Critical points and tau functions} following \cite{SW} we consider the Hilbert  space
 $H=L^2(S^1)$, the subspace $H_+$, which is the closure of the span of $\{z^j\}_{j\geq 0}$,
 and  the set $\GR$ of all  closed subspaces $W \subset H$ such that
$z^q H_+ \subset W \subset z^{-q} H_+$  for some $q>0$. We define $\Gr\subset \GR$ as the subset of subspaces of virtual dimension zero.
 Following \cite{SW} we define the tau-functions
$\tau_W(t_1,t)$ of subspaces $W \in \Gr$. The tau-functions are polynomials in $t_1,t$.
Theorem \ref{finiteness} says that the tau-function $\tau_W(t_1,t=0)$ determines $W$ up to a finite number of possibilities.

Theorem \ref{thm new identity tau} gives a Wronskian identity for tau-functions associated with four suitable
subspaces $W_1, W_2, W_3, W_4\in\Gr$:\
\bea
\Wr_{t_1}(\tau_{W_1}, \tau_{W_2})= \on{const}\, \tau_{W_3}\tau_{W_4},
\eea
 see Section \ref{proP}.

 We fix a natural number $N>1$ and
 define
KdV subspaces as subspaces $W\in\Gr$ such that $z^NW\subset W$. An example is $H_+$.
We define mKdV tuples $\bs W=(W_1,\dots,W_N)$ of subspace as tuples of KdV subspaces with the property $zW_i\subset W_{i+1}$ for
all $i$. An example is $\bs W^\emptyset=(H_+,\dots,H_+)$.

In Section \ref{sec mkdv tuples of subspaces} we formulate an important theorem by G.\,Wilson, see \cite{W1}. Let
 $\bs W=(W_1,\dots,W_N)\in\GM$. Let  $(\tau_{W_1},\dots,\tau_{W_N})$ be the tuple of the
corresponding tau-functions.
Consider the Miura oper $\Ll_{\bs W} = \frac d{dx} + \Lambda + V$ where
\bea
V = \on{diag} \left( \log'\left(\frac{ \tau_{W_1}(x+t_1,t)}{ \tau_{W_N}(x+t_1,t)}\right),
\log'\left(\frac{ \tau_{W_2}(x+t_1,t)}{ \tau_{W_1}(x+t_1,t)}\right),\dots,
\log'\left(\frac{ \tau_{W_N}(x+t_1,t)}{ \tau_{W_{N-1}}(x+t_1,t)}\right)
\right).
\eea
Then $\Ll_W$  satisfies all mKdV equations, see Theorem \ref{thm Wilson}.

In Sections \ref{sec Gentions of  mKdV subspaces } and \ref{sec norm gen} we define the generation of new mKdV tuples of subspaces starting
with a given mKdV tuple.
Theorem \ref{thm exist}
 says that the generation acts transitively on the set of mKdV tuples of subspaces.
Theorem \ref{thm induction} describes a relation of the generation of new mKdV tuples of subspaces and
the generation of new critical points  of master functions. Theorem \ref{thm induction} together
with G. Wilson's Theorem \ref{thm Wilson} give a new proof of the main result of the paper,  Corollary \ref{cor Main}.

Corollary \ref{cor last} says that points of the set of all mKdV tuples are in one-to-one correspondence with the
tuples $(y_1,\dots,y_N)$ of the population of tuples of polynomials in $x$ generated from $y^\emptyset=(1,\dots,1)$, the definition of the population
see in Section \ref{sec generation procedure}.
 The correspondence is
$(W_1,\dots,W_n)\mapsto (\tilde\tau_{W_1}(t_1=x,t=0),\dots,\tilde \tau_{W_N}(t_1=x,t=0))$,
where $\tilde \tau_W(t_1,t)$ is the normalized tau-function of $W$ defined in Section \ref{proP}.

\section{KdV and mKdV hierarchies}
\label{sec kdv and mkdv}

\subsection{Algebra $\text{Mat}$}
\label{sec algebra}

Denote $\Bb$ the space of complex-valued functions of one variable $x$.  Given a finite dimensional vector space $U$, denote $\Bb(U)$ the space of
$U$-valued functions of $x$.
Let $\C((\lambda^{-1}))$ be the algebra of formal Laurent series
$\sum_{i \in \Z} c_i \lambda^{i}, c_i \in \C$, where each sum has finitely many terms with $i>0$.

Denote $\text{Mat}$ the algebra of $N \times N$ matrices $\sum_{i,j = 1}^N c_{i,j} e_{i,j}$
with basis $e_{ij}$, coefficients $c_{i,j}$ in $\C((\lambda^{-1}))$
and the product $e_{i,j} e_{k,l} = \delta_{j,k} e_{i,l}$.
Denote $\text{Diag}\subset \text{Mat}$ the subalgebra of diagonal matrices
 $\sum_{i=1}^N c_i e_{i,i}$
 with $c_i \in \C$. Denote $\text{Diag}_0\subset \text{Diag}$ the vector subspace of matrices with zero trace.
Denote
$
\Lambda = e_{2,1} + \dots + e_{N,N-1} + \lambda e_{1,N}
\ \in  \text{Mat}$,
see \Ref{LAM}.

\begin{lem}[\cite{DS}] \label{decomplemma}
Each element $M\in \text{Mat}$ can be uniquely presented in the form $\sum_{i \in \Z} d_i \Lambda^{i}$
with only finitely many terms $i > 0$ and all $d_i \in \text{Diag}$.
\end{lem}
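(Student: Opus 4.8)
My plan is to reduce the statement to an entrywise bijection by exploiting the explicit structure of the powers of $\Lambda$.

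The first step is to record the basic structural fact that $\Lambda$ is invertible in $\text{Mat}$: a direct computation from $e_{i,j}e_{k,l}=\delta_{j,k}e_{i,l}$ gives $\Lambda^N=\lambda\cdot I$, so that $\Lambda^{-1}=\lambda^{-1}\Lambda^{N-1}$ and $\Lambda^i$ is a well-defined element of $\text{Mat}$ for every $i\in\Z$. Next I would pin down the support and coefficients of each $\Lambda^i$. The cleanest formulation is: for every pair $(a,b)\in\{1,\dots,N\}^2$ the entry of $\Lambda^i$ in position $(a,b)$ equals $\lambda^{(i-(a-b))/N}$ when $i\equiv a-b\pmod N$, and $0$ otherwise. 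I would prove this first for $0\le i\le N-1$ by induction (each $\Lambda^r$ sits on the $r$-th cyclic subdiagonal, with coefficient $1$ on genuine subdiagonal entries and coefficient $\lambda$ on the wrapped-around entries), and then extend to all $i\in\Z$ using $\Lambda^{i+N}=\lambda\>\Lambda^i$.

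With the entry formula in hand I would translate the desired decomposition into a statement about coefficients. Writing $d_i=\sum_{a=1}^N d_{i,a}\>e_{a,a}$ with $d_{i,a}\in\C$, left multiplication by $d_i$ scales rows, so the $(a,b)$-entry of $\sum_i d_i\Lambda^i$ equals
\[
\sum_{m\in\Z} d_{(a-b)+mN,\,a}\>\lambda^m ,
\]
after reindexing $i=(a-b)+mN$. On the other hand an arbitrary $M\in\text{Mat}$ has $(a,b)$-entry a Laurent series $\sum_m c^{(m)}_{a,b}\lambda^m$ with finitely many positive powers. Matching the coefficient of $\lambda^m$, the equation $M=\sum_i d_i\Lambda^i$ becomes the family of scalar equations $d_{(a-b)+mN,\,a}=c^{(m)}_{a,b}$. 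The crux is then a counting bijection: for each fixed row index $a$ and each exponent $i\in\Z$ there is a \emph{unique} column $b\in\{1,\dots,N\}$ together with $m\in\Z$ satisfying $i=(a-b)+mN$, because $\{a-b:1\le b\le N\}$ is a complete residue system modulo $N$. Hence each scalar $d_{i,a}$ is determined by exactly one coefficient $c^{(m)}_{a,b}$, and conversely every $c^{(m)}_{a,b}$ fixes exactly one $d_{i,a}$. This yields existence and uniqueness of the $d_i$ simultaneously.

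Finally I would verify the finiteness clause. Under the correspondence $i=(a-b)+mN$ with $a-b$ bounded, a large positive $i$ forces a large positive $m$; since each Laurent series $c_{a,b}(\lambda)$ has only finitely many positive powers and there are finitely many pairs $(a,b)$, only finitely many $d_i$ with $i>0$ are nonzero, which is precisely the stated requirement. The only genuine obstacle is the bookkeeping in the first step: correctly attaching the power of $\lambda$ to each entry of $\Lambda^i$ and handling the wrap-around carefully. Once the entry formula and the complete-residue-system observation are in place, existence, uniqueness, and finiteness all follow from the same index bijection.
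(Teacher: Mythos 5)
Your proof is correct. Note that the paper itself gives no proof of this lemma --- it is quoted from Drinfeld--Sokolov \cite{DS} --- so there is no in-paper argument to compare against; your entrywise verification is a complete and self-contained substitute. The key points all check out: $\Lambda^N=\lambda I$ makes $\Lambda$ invertible in $\text{Mat}$; the entry formula $(\Lambda^i)_{a,b}=\lambda^{(i-(a-b))/N}$ for $i\equiv a-b \pmod N$ (and $0$ otherwise) is right; left multiplication by $d_i$ scales rows; and since $\{a-b : 1\le b\le N\}$ is a complete residue system modulo $N$, the matching of Laurent coefficients $d_{(a-b)+mN,\,a}=c^{(m)}_{a,b}$ is a bijection, giving existence and uniqueness at once. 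Your index bookkeeping also implicitly shows the infinite sum $\sum_{i\in\Z}d_i\Lambda^i$ is well defined as an element of $\text{Mat}$ (each matrix entry collects terms from distinct residue classes, hence is a genuine Laurent series with finitely many positive powers), and the finiteness clause follows exactly as you say from the boundedness of $a-b$ and the finitely many positive powers in each of the finitely many entries.
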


Given
$M = \sum_{i \in \Z} d_i \Lambda^{i}$
with $d_i\in \di$, define $M^+ = \sum_{i \geq 0} d_i \Lambda^{i}$, $M^- = \sum_{i < 0} d_i \Lambda^{i}$, and
\bean
\label{0}
M^0 = d_0.
\eean

Define the following elements of $\dio$:\ {} $H_i=e_{i,i}-e_{i+1,i+1}$,
$i=1,\dots,N-1$, and $H_N=e_{N,N}-e_{1,1}$.
Define elements $\al_i\in \dio^*$ for $i=1,\dots,N$ by
\bea
&&
\langle \alpha_i, H_i \rangle = 2, \qquad
 \langle \alpha_i, H_{j} \rangle = -1
 \quad\text{if}
 \quad i - j= \pm 1 \ \text{mod}\, N,
 \\
&&
\phantom{aaaaa}
 \langle \alpha_i, H_j \rangle = 0\quad\text{if}
 \quad i\ne j\ \text{and}\ i-j \neq \pm 1 \ \text{mod}\, N.
\eea

 Introduce the following elements of $\text{Mat}$:
  \ $E_i=e_{i,i+1}$, $F_i=e_{i+1,1}$ for $i=1,\dots,N-1$ and $E_N=\lambda^{-1} e_{N,1}$,
   $F_N=\lambda e_{1,N}$. For $j=1,\dots,N$, denote  $\Nn^+_j$ the subalgebra of
   $\text{Mat}$ generated by the
elements $E_i$ with  $i \in \{1,\dots,N\}$ and $i\neq j$.

Throughout this paper, we consider all indices modulo $N$ when appropriate.
Thus we will write $H_0, E_0$ or $H_N, E_N$ interchangeably.

\begin{lem}
\label{lem exp}
 Let $g \in \Bb$ and $j \in \{1,\dots,N\}$. Then
 \bean
 \label{formula exp}
 e^{gE_j} = 1 + ge_{j,j}\Lambda^{-1}.
 \eean
 \qed
\end{lem}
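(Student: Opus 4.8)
The goal is to establish the identity
\[
e^{gE_j} = 1 + g\>e_{j,j}\Lambda^{-1}
\]
for any $g\in\Bb$ and $j\in\{1,\dots,N\}$, where $E_j$ is the off-diagonal generator introduced above. The plan is to show that $E_j$ is a \emph{square-zero} element of $\text{Mat}$, so that the exponential series truncates after the linear term, and then to compute $E_j$ itself as a multiple of $\Lambda^{-1}$.

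\emph{First step: nilpotency.} I would compute $E_j^2$ directly from the product rule $e_{i,k}e_{k',l}=\dl_{k,k'}e_{i,l}$. For $j\in\{1,\dots,N-1\}$ we have $E_j=e_{j,j+1}$, so $E_j^2=e_{j,j+1}e_{j,j+1}=\dl_{j+1,j}e_{j,j+1}=0$ since $j+1\ne j$. For $j=N$ we have $E_N=\la^{-1}e_{N,1}$, and $E_N^2=\la^{-2}e_{N,1}e_{N,1}=\la^{-2}\dl_{1,N}e_{N,1}=0$ because $1\ne N$ (recall $N>1$). Hence $E_j^2=0$ in all cases. Consequently, for any $g\in\Bb$ (a scalar function commuting with the matrix entries), $(gE_j)^2=g^2E_j^2=0$, and the exponential series collapses to
\[
e^{gE_j}=\sum_{m\ge 0}\frac{(gE_j)^m}{m!}=1+gE_j .
\]

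\emph{Second step: rewriting $E_j$ as $e_{j,j}\Lambda^{-1}$.} It remains to check $E_j=e_{j,j}\Lambda^{-1}$. From $\Lambda=e_{2,1}+\dots+e_{N,N-1}+\la\>e_{1,N}$ one reads off the inverse $\Lambda^{-1}=e_{1,2}+e_{2,3}+\dots+e_{N-1,N}+\la^{-1}e_{N,1}$; one verifies this is a genuine inverse by multiplying out $\Lambda\Lambda^{-1}$ and using the product rule, the $\la$ and $\la^{-1}$ factors cancelling in the wrap-around term. Then $e_{j,j}\Lambda^{-1}=e_{j,j}\bigl(\sum_{k}e_{k,k+1}+\la^{-1}e_{N,1}\bigr)$ picks out only the term with left index $j$: for $j\le N-1$ this gives $e_{j,j}e_{j,j+1}=e_{j,j+1}=E_j$, and for $j=N$ it gives $\la^{-1}e_{N,N}e_{N,1}=\la^{-1}e_{N,1}=E_N$. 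Substituting into $e^{gE_j}=1+gE_j$ yields the claimed formula $e^{gE_j}=1+g\>e_{j,j}\Lambda^{-1}$.

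There is essentially no obstacle here: the statement is a routine verification resting on the single structural fact that each $E_j$ squares to zero. The only point demanding a little care is the bookkeeping of indices modulo $N$ in the wrap-around case $j=N$, where the factors of $\la$ and $\la^{-1}$ must be tracked correctly; everything else is a direct application of the multiplication rule $e_{i,k}e_{k',l}=\dl_{k,k'}e_{i,l}$.
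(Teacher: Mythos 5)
Your proof is correct and is exactly the routine verification the paper has in mind: the paper states this lemma with no proof (marking it \qed as immediate), and the intended argument is precisely yours — $E_j^2=0$ truncates the exponential, and $e_{j,j}\Lambda^{-1}=E_j$ follows from the formula for $\Lambda^{-1}$ recorded in the paper's Lemma \ref{lem lambda}. Both steps are carried out accurately, including the wrap-around case $j=N$ where the $\la^{\pm1}$ factors cancel.
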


\begin{lem}
\label{lem lambda} We have
$\La^{-1} = e_{1,2}+\dots+ e_{N-1,N} + \la^{-1}e_{N,1}$, and
\bean
\label{formula La}
e_{i+1,i+1}\La = \La e_{i,i}, \qquad
e_{i,i}\La^{-1} = \La^{-1}e_{i+1,i+1}
\eean
for all $i$.
 \qed
\end{lem}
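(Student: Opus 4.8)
The plan is to verify all three assertions by direct computation from the multiplication rule $e_{a,b}e_{c,d}=\delta_{b,c}e_{a,d}$, keeping track of the cyclic structure that the $\lambda$-entries encode. Note first that the claimed inverse lives in $\text{Mat}$ at all, since its $e_{N,1}$ coefficient $\lambda^{-1}$ belongs to $\C((\lambda^{-1}))$.

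First I would confirm the formula for $\Lambda^{-1}$. Writing $M=\sum_{i=1}^{N-1}e_{i,i+1}+\lambda^{-1}e_{N,1}$ for the claimed inverse and $\Lambda=\sum_{k=1}^{N-1}e_{k+1,k}+\lambda e_{1,N}$, I would compute $\Lambda M$ by multiplying each summand of $\Lambda$ against each summand of $M$. The only surviving contributions are $e_{k+1,k}e_{k,k+1}=e_{k+1,k+1}$ for $k=1,\dots,N-1$, giving $e_{2,2}+\dots+e_{N,N}$, together with the boundary product $(\lambda e_{1,N})(\lambda^{-1}e_{N,1})=e_{1,1}$; summing yields the identity $\sum_{i=1}^N e_{i,i}$. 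The analogous bookkeeping for $M\Lambda$ produces $\sum_{i=1}^{N-1}e_{i,i}$ from the diagonal terms and $e_{N,N}$ from the boundary term, again the identity. Hence $M=\Lambda^{-1}$.

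Next I would establish $e_{i+1,i+1}\Lambda=\Lambda e_{i,i}$. For $1\le i\le N-1$ both sides equal $e_{i+1,i}$: on the left only $e_{i+1,i+1}e_{i+1,i}=e_{i+1,i}$ survives (the term $\lambda e_{1,N}$ is killed because $i+1\ge 2$), and on the right only $e_{i+1,i}e_{i,i}=e_{i+1,i}$ survives. The one delicate point is the cyclic case $i=N$, where the modular convention reads $e_{i+1,i+1}=e_{1,1}$: here the left side collapses to $e_{1,1}(\lambda e_{1,N})=\lambda e_{1,N}$ and the right side to $(\lambda e_{1,N})e_{N,N}=\lambda e_{1,N}$, so the identity persists through the $\lambda$-entry. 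This boundary computation is really the only place where care is needed, since it is exactly where the cyclic wrap-around of $\Lambda$ meets the diagonal idempotents; everything else is a routine $\delta$-count.

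Finally, I would obtain the companion relation $e_{i,i}\Lambda^{-1}=\Lambda^{-1}e_{i+1,i+1}$ without any separate computation: multiplying $e_{i+1,i+1}\Lambda=\Lambda e_{i,i}$ on both sides by $\Lambda^{-1}$ (available from the first step) and cancelling gives $\Lambda^{-1}e_{i+1,i+1}=e_{i,i}\Lambda^{-1}$ at once. Thus all the claims reduce to the single direct check of the inverse formula together with the one boundary case of the conjugation identity.
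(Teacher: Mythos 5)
Your proof is correct: the paper states this lemma without proof (the \qed indicates it is regarded as an immediate computation), and your direct verification via the rule $e_{a,b}e_{c,d}=\delta_{b,c}e_{a,d}$, including the careful treatment of the cyclic case $i=N$ and the derivation of the second relation by conjugating with $\Lambda^{-1}$, is exactly the routine check the paper takes for granted.
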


\subsection{Generalized mKdV Hierarchy, \cite{DS}}
Denote $\der$ the differential operator $\frac{d}{dx}$.
For $j \in\{ 1,\dots,N\}$, a \em $j$-oper \em is a differential operator of the form
\bea
\Ll = \der + \Lambda + V + W
\eea
with $V \in \Bb(\dio)$ and $W \in \Bb(\Nn^+_j)$.
For $w \in \Bb(\Nn^+_j)$ and a $j$-oper $\Ll$, the differential operator
\bea
e^{\text{ad} \, w} \Ll = \Ll + [w,\Ll] + \frac{1}{2}[w,[w,\Ll]] + \dots
\eea
is a $j$-oper.  The $j$-opers $\Ll$ and $e^{\text{ad} \, w} \Ll$ are called \em $j$-gauge equivalent. \em
As $\Nn^+_j$ is nilpotent, $e^w$ and $e^{-w}$ are well defined and
\bea
e^{\text{ad}\,  w} \Ll = e^w \Ll e^{-w}.
\eea
 A \em Miura oper \em is a differential operator of the form
\bea
\Ll = \der + \Lambda + V
\eea
with $V \in \Bb(\dio)$.  A Miura oper is a $j$-oper for any $j$.
 Denote by $\mc{M}$ the space of all Miura opers.

\begin{lem}[\cite{DS}]
\label{Tlemma}
Let $\Ll$ be a Miura oper.

\begin{enumerate}
\item[(i)]
Then there exists $T = 1 + \sum_{i < 0} T_i \Lambda^i$ with $T_i \in \Bb(\di)$ such that
 $T^{-1} \Ll T$ has the form $\der + \Lambda + \sum_{i \leq 0} b_i \Lambda^i$ with $b_i\in \Bb$.

\item[(ii)]
The matrix $T$ is not unique.  If both $T$ and $\widetilde{T}$ have property (i), then \\
$\widetilde{T} =
 (1 + \sum_{i < 0} q_i \Lambda^i)T$ with $q_i \in \Bb$.  In particular, for any $r \in \N$ the matrix $T\Lambda^rT^{-1}$ does not depend on
the choice of $T$.
\end{enumerate}
\end{lem}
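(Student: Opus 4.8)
The plan is to prove Lemma~\ref{Tlemma} by a direct recursive construction, extracting the diagonal matrices $T_i$ one $\Lambda$-degree at a time. The key structural fact I would use throughout is Lemma~\ref{decomplemma}: every element of $\text{Mat}$ has a unique expansion $\sum_{i\in\Z} d_i\Lambda^i$ with $d_i$ diagonal. First I would set $T = 1 + \sum_{i<0} T_i\Lambda^i$ with undetermined $T_i\in\Bb(\di)$ and compute $\Ll T = T\big(\der + \Lambda + \sum_{i\le 0} b_i\Lambda^i\big)$, which is equivalent to $T^{-1}\Ll T$ having the required reduced form. Expanding $\Ll T = (\der+\Lambda+V)T$ and comparing with $T(\der+\Lambda+\sum b_i\Lambda^i)$, each coefficient of $\Lambda^i$ produces one equation. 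The commutation rules $e_{i+1,i+1}\Lambda = \Lambda e_{i,i}$ from Lemma~\ref{lem lambda} let me move diagonal matrices past powers of $\Lambda$ at the cost of a cyclic index shift, so multiplying a diagonal $d$ by $\Lambda$ on the left versus right differs by this shift.

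The heart of the argument is the recursion solving for the $T_i$. Comparing coefficients of $\Lambda^{i+1}$, the leading new unknown $T_i$ enters through the commutator-type combination $\Lambda T_i - T_i^{\,\sigma}\Lambda$, where $T_i^{\,\sigma}$ denotes the cyclic shift induced by pushing $\Lambda$ through. The crucial point is that for a diagonal matrix $d = \sum_k c_k e_{k,k}$, the expression $\Lambda d - d'\Lambda$ (with $d'$ the appropriate shift) lands in the off-diagonal part, and the map $d \mapsto$ (diagonal part of the resulting equation) has a triangular, invertible structure precisely because $\Lambda$ and $\Lambda^{-1}$ act by index shifts with the spectral parameter $\la$ tracking the cyclic wrap-around. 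I would show that at each step the equation for $T_i$ splits into a part determining $T_i$ uniquely up to a scalar ambiguity and a part determining the scalar function $b_i$; the already-known data $V, T_{i+1}, \dots, T_{-1}$ and $b_{i+1},\dots,b_0$ feed in as inhomogeneous terms. This establishes existence by induction downward in $i$, proving part~(i).

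For part~(ii), the uniqueness analysis, I would suppose both $T$ and $\widetilde T$ reduce $\Ll$ to the stated form, and consider $S = \widetilde T\, T^{-1} = 1 + \sum_{i<0} q_i\Lambda^i$. The two reduced operators $\der+\Lambda+\sum b_i\Lambda^i$ and $\der+\Lambda+\sum \tilde b_i\Lambda^i$ are then conjugate by $S$, and I would show this forces $S$ to commute appropriately so that its coefficients $q_i$ can be taken to be \emph{scalar} (i.e.\ in $\Bb$ rather than $\Bb(\di)$); the scalar matrices are exactly those commuting with the index-shift action of $\Lambda$. Concretely, the equation $S(\der+\Lambda+\sum b_i\Lambda^i) = (\der+\Lambda+\sum\tilde b_i\Lambda^i)S$ at each $\Lambda$-degree shows the diagonal freedom in $S$ is exactly the scalar multiples of the identity in each degree. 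Once $S = 1 + \sum_{i<0} q_i\Lambda^i$ with $q_i\in\Bb$ scalar, the final claim follows immediately: since scalars commute with everything in $\text{Mat}$, we get $\widetilde T\Lambda^r\widetilde T^{-1} = S\,T\Lambda^r T^{-1}S^{-1} = T\Lambda^r T^{-1}$, because $S$ commutes with $T\Lambda^r T^{-1}$ — indeed $T\Lambda^rT^{-1}$ is built from $\Lambda$ and the reduced coefficients and commutes with scalar series in $\Lambda$.

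I expect the main obstacle to be the bookkeeping in the recursion of part~(i): verifying that at each $\Lambda$-degree the linear map sending the unknown diagonal $T_i$ to the off-diagonal correction is surjective onto the off-diagonal part (so that the off-diagonal terms can always be killed), while the leftover diagonal freedom is precisely a single scalar function absorbed into $b_i$. This hinges on the operator $d\mapsto \Lambda d\Lambda^{-1} - d$ (the shift-minus-identity map on $\di$) having kernel exactly the scalars and image exactly the trace-zero diagonals $\dio$, which is where the hypothesis $V\in\Bb(\dio)$ and the cyclic structure of $\Lambda$ are essential. Once that linear-algebra lemma about the shift operator on $\di$ is isolated and proved, both existence and the scalar-ambiguity description of uniqueness fall out cleanly, and the $r$-independence statement for $T\Lambda^r T^{-1}$ is then a one-line consequence.
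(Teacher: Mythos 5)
The paper offers no proof of this lemma (it is quoted from \cite{DS}), so your attempt must be judged on its own merits. Part (i) of your proposal is the standard Drinfeld--Sokolov recursion and is essentially correct: expanding $\Ll T=T\bigl(\der+\Lambda+\sum_{i\le 0}b_i\Lambda^i\bigr)$ degree by degree, the unknown $T_i$ enters through $(\Lambda T_i\Lambda^{-1}-T_i)\Lambda^{i+1}$, and your key lemma --- that $d\mapsto\Lambda d\Lambda^{-1}-d$ on $\di$ has kernel the scalars and image $\dio$ --- is exactly what closes the recursion: the trace part of the known terms fixes $b_{i+1}$, and the trace-free part determines $T_i$ up to a scalar function. (Two minor points: the relevant splitting is scalar versus trace-zero \emph{diagonal}, not ``diagonal versus off-diagonal''; and the hypothesis $V\in\Bb(\dio)$ is not needed here, since a trace part of $V$ would simply be absorbed into $b_0$.)

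Part (ii) has a genuine gap, located precisely at the placement of your factor $S$. If $T^{-1}\Ll T=\Ll_1$ and $\tilde T^{-1}\Ll\tilde T=\Ll_2$ are both reduced, the element intertwining the two reduced operators is $R=T^{-1}\tilde T$, since $\Ll=T\Ll_1T^{-1}=\tilde T\Ll_2\tilde T^{-1}$ gives $\Ll_1R=R\Ll_2$. Your $S=\tilde TT^{-1}$ satisfies no such relation (one computes $S^{-1}\Ll S=T\Ll_2T^{-1}$, which is not reduced), so the degree-by-degree equation you propose to analyze is written for the wrong element. Your induction does work when applied to $R$: at each degree one finds $\Lambda R_i\Lambda^{-1}-R_i$ equal to a scalar matrix, and since the left side is trace-zero both sides vanish, so $R_i$ is scalar. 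But the conclusion is then $\tilde T=T\bigl(1+\sum_{i<0}q_i\Lambda^i\bigr)$: multiplication on the \emph{right}, not the left. (As written, the paper's statement carries the same ordering slip relative to \cite{DS}; left multiplication $\tilde T=ST$ by a nontrivial scalar series in fact destroys property (i), because $S^{-1}\Ll S=\Ll+S^{-1}S'+S^{-1}[V,S]$ and $[V,S]\ne 0$ in general.) The distinction is not cosmetic: your proof of the last claim rests on the assertion that $S$ commutes with $T\Lambda^rT^{-1}$ because the latter ``commutes with scalar series in $\Lambda$'', and that assertion is false --- $T\Lambda^rT^{-1}$ lies in the centralizer of $T\Lambda T^{-1}$, not of $\Lambda$; already $T\Lambda T^{-1}=\Lambda+(T_{-1}-\Lambda T_{-1}\Lambda^{-1})+\dots$ has a non-scalar degree-zero coefficient. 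With the right-multiplication form the claim is a one-liner: $\tilde T\Lambda^r\tilde T^{-1}=TR\Lambda^rR^{-1}T^{-1}=T\Lambda^rT^{-1}$ because $[R,\Lambda]=0$. So the repair is: define $R=T^{-1}\tilde T$, run your induction on $R$, state (ii) with $\tilde T=TR$, and deduce the final claim from the fact that $R$ commutes with $\Lambda$.
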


Let $\Ll$ be a Miura oper and $r\in\N$. The differential equation
\bean
\label{mKdVr}
\frac{\partial \Ll}{\partial t_r}  = [\Ll,\left(T \Lambda^r T^{-1} \right)^+]
\eean
 is called \em the $r$-th  mKdV equation\em, see \cite{DS}.
  The classical mKdV equation corresponds to the case $N=2$, $r=3$.

Equation \Ref{mKdVr}
defines vector fields $\frac{\partial}{\partial t_r}$ on the space $\mc M$ of Miura opers.
 For all $r,s \in \N$ the vector fields $\frac{\partial}{\partial t_r}$ and $\frac{\partial}{\partial t_s}$ commute, see \cite{DS}.

\begin{lem}[\cite{DS}]
\label{lem der}
We have
\bean
\label{mKdVr}
\frac{\partial\Ll}{\partial t_r}  = \frac d{dx}(T \Lambda^r T^{-1})^0,
\eean
where ${}^0$ is defined in \Ref{0}.

\end{lem}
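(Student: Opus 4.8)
The plan is to introduce the abbreviation $M = T\Lambda^r T^{-1}$ and to exploit the fact that $M$ commutes with $\Ll$. Writing $\Ll_0 = T^{-1}\Ll T = \der + \Lambda + \sum_{i\le 0} b_i\Lambda^i$ as provided by Lemma \ref{Tlemma}, every summand of $\Ll_0$ is an $x$-dependent scalar multiple of a power of $\Lambda$, and $\der$ commutes with $\Lambda$ since $\Lambda$ has $x$-independent entries; hence $[\Ll_0,\Lambda^r]=0$. Conjugating by $T$ gives $[\Ll, M] = T[\Ll_0,\Lambda^r]T^{-1} = 0$. Therefore $[\Ll, M^+] = -[\Ll, M^-]$, so the right-hand side of the $r$-th mKdV equation \Ref{mKdVr} may be computed from whichever of $M^+$, $M^-$ is convenient.

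The key step is then a degree count in $\Lambda$. Since $M^+ = \sum_{i\ge 0} d_i\Lambda^i$ carries only nonnegative powers of $\Lambda$, and since $\der$ and multiplication by $V\in\Bb(\dio)$ preserve the $\Lambda$-degree while $[\Lambda,\,\cdot\,]$ raises it by one, every term of $[\Ll, M^+]$ has $\Lambda$-degree $\ge 0$. Dually, $M^- = \sum_{i<0}d_i\Lambda^i$ carries only negative powers, so every term of $[\Ll, M^-]$ has $\Lambda$-degree $\le 0$. Because $[\Ll, M^+] = -[\Ll, M^-]$, this common element has $\Lambda$-degree both $\ge 0$ and $\le 0$; hence it is concentrated in degree $0$, in particular a diagonal matrix function, consistent with $\frac{\der\Ll}{\der t_r}=\frac{\der V}{\der t_r}\in\Bb(\dio)$.

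It remains to identify this degree-$0$ component. Expanding $[\Ll, M^+] = (M^+)' + [\Lambda, M^+] + [V, M^+]$, I read off the $\Lambda^0$-coefficient of each term. Using the relation $e_{i+1,i+1}\Lambda = \Lambda e_{i,i}$ of Lemma \ref{lem lambda}, one checks that $\Lambda d_i\Lambda^i$ and $d_i\Lambda^{i+1}$ both have $\Lambda$-degree $i+1\ge 1$, so $[\Lambda, M^+]$ contributes nothing in degree $0$; likewise, for $i\ge 1$ the commutator $[V, d_i\Lambda^i]$ has degree $i\ge 1$, while for $i=0$ it vanishes because $V$ and $d_0$ are both diagonal. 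Thus the only degree-$0$ contribution comes from $(M^+)' = \sum_{i\ge 0} d_i'\Lambda^i$, whose $\Lambda^0$-part is $d_0' = \frac{d}{dx}(M^0) = \frac{d}{dx}(T\Lambda^r T^{-1})^0$, where $M^0$ is defined as in \Ref{0}. Combining with the previous paragraph yields $\frac{\der\Ll}{\der t_r}=\frac{d}{dx}(T\Lambda^r T^{-1})^0$, as claimed.

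The main obstacle is really only the bookkeeping of $\Lambda$-degrees together with the noncommutativity encoded in $e_{i+1,i+1}\Lambda=\Lambda e_{i,i}$; the conceptual content is the commutativity $[\Ll, M]=0$, which turns the one-sided computation of $[\Ll, M^+]$ into a statement squeezed between the degree bounds carried by $M^+$ and $M^-$.
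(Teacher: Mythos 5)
Your proof is correct and complete. Note that the paper itself offers no proof of this lemma at all: it is stated with a citation to Drinfeld--Sokolov \cite{DS}, so there is nothing internal to compare against. Your argument --- conjugating to get $[\Ll,M]=0$ with $M=T\Lambda^rT^{-1}$, splitting $M=M^++M^-$, squeezing $[\Ll,M^+]=-[\Ll,M^-]$ between the two $\Lambda$-degree bounds to conclude it is concentrated in degree $0$, and then reading off the degree-$0$ part as $d_0'=\frac d{dx}(T\Lambda^rT^{-1})^0$ --- is precisely the standard argument from the cited reference, carried out carefully with the commutation rule $e_{i+1,i+1}\Lambda=\Lambda e_{i,i}$ of Lemma \ref{lem lambda} and the uniqueness of the decomposition in Lemma \ref{decomplemma}, which is what makes the degree bookkeeping legitimate.
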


\subsection{Generalized KdV Hierarchy, \cite{DS}}
Let $\Bb((\der^{-1}))$ be the algebra of formal pseudodifferential operators of the form
 $a=\sum_{i \in \Z} a_i \der^i$, with $a_i \in \Bb$ and finitely many terms with $i > 0$.
 The relations in this algebra are
\bea
\partial^k u - u \partial^k = \sum_{i = 1}^\infty k(k-1)\dots(k-i+1)\frac{d^i u}{dx^i}\partial^{k-i}
\eea
for any $k\in\Z$ and $u\in\Bb$.
For $a = \sum_{i \in \Z} a_i \der^i \in \Bb((\der^{-1}))$, define $a^+ = \sum_{i \geq 0} a_i \der^i$.

Denote $\Bb[\der] \subset \Bb((\der^{-1}))$ the subalgebra of differential operators $a = \sum_{i=0}^m a_i \der^i$
with $m\in\Z_{\geq 0}$. Denote $\mc D \subset \Bb[\der]$ the affine subspace of the differential operators of the
form $L=\der^N + \sum_{i = 0}^{n-2} u_i \der^i$.

For $L \in \D$, there exists a unique $L^{\frac{1}{N}} =\der + \sum_{i\leq 0}a_i\der^i \in \Bb((\der^{-1}))$
 such that  $(L^{\frac{1}{N}})^N = L$.
For $r\in \N$, the differential equation
\beq
\label{KdVr}
\frac{\partial L}{\partial t_r} = [(L^{\frac{r}{N}})^+,L]
\eeq
is called  \em the $r$-th  KdV equation \em.
The classical KdV equation corresponds to the case $N=2$, $r=3$.

 Equation \Ref{KdVr} defines flows $\frac{\partial}{\partial t_r}$ on the space $\D$.
  For all $r,s \in \N$ the flows $\frac{\partial}{\partial t_r}$ and $\frac{\partial}{\partial t_s}$ commute, see \cite{DS}.

\subsection{Miura maps}
\label{sec Miura maps}

Let $\Ll = \der + \Lambda + V$ be a Miura oper with $V = \sum_{k = 1}^N v_k e_{k,k}$.  For $i=1,\dots,N$, define
a scalar differential operator $L_i\in\D$ by the formula
\bean \label{miuramap}
L_i &=& (\der - v_i)(\der - v_{i-1})\dots(\der - v_1)(\der - v_N)\dots(\der - v_{i - 2})(\der - v_{i-1})
\\
 &=& \der^N + \sum_{k = 0}^{N-2}u_{k,i}\der^k.
\notag
\eean
Recall that  we have $\sum_{k=1}^Nv_k=0$
by the definition of a Miura oper.

\begin{thm}[\cite{DS}] \label{thm mkdvtokdv}
Let a Miura oper $\Ll$ satisfy the mKdV equation \Ref{mKdVr} for some r.  Then for every $i=1,\dots,N$ the differential operator
$L_i$ satisfies the KdV equation \Ref{KdVr}.
\end{thm}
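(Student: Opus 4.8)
The plan is to realize each scalar operator $L_i$ as the result of eliminating all but the $i$-th component from the first-order matrix system attached to $\Ll$, and then to transport the matrix Lax equation \Ref{mKdVr} to the scalar Lax equation \Ref{KdVr} through this reduction. The whole argument rests on the cyclic identity $\Lambda^N=\lambda$ (the corner entry of $\Lambda$ makes its $N$-th power scalar), which lets me trade powers of the spectral parameter $\lambda$ on the matrix side for powers of $L_i$ on the scalar side.

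First I would record the basic correspondence. Writing $\Ll\Psi=0$ for a column vector $\Psi=(\psi_1,\dots,\psi_N)$ and using the cyclic-plus-diagonal shape of $\Lambda+V$, all rows other than the $i$-th let me solve recursively for the remaining components in terms of $\psi_i$; substituting into the one leftover row and using $\Lambda^N=\lambda$ produces, after matching the sign convention in \Ref{miuramap}, the scalar relation $L_i\,\psi_i=\lambda\,\psi_i$, with $L_i$ the ordered product \Ref{miuramap}. The different cyclic orderings $L_1,\dots,L_N$ are exactly the operators obtained by eliminating in favor of the different components, which is why one scalar operator appears per index $i$. In particular the spectral parameter of the Miura oper plays the role of the eigenvalue of $L_i$, and formally $\Lambda$ acts on a kernel vector as $L_i^{1/N}$ acts on $\psi_i$ (both have $N$-th power equal to $\lambda$). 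This is the dictionary $\Lambda^{\,r}\leftrightarrow L_i^{r/N}$ that I want to exploit.

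Next I would transport the Lax pair. By a direct flatness computation, \Ref{mKdVr} is the compatibility condition for the linear system $\Ll\Psi=0$, $\der_{t_r}\Psi=(T\Lambda^rT^{-1})^+\Psi$. Under the elimination above, the induced evolution of the cyclic component reads $\der_{t_r}\psi_i=Q_r\psi_i$ for a scalar differential operator $Q_r$, and compatibility of the scalar pair $L_i\psi_i=\lambda\psi_i$, $\der_{t_r}\psi_i=Q_r\psi_i$ forces $\der_{t_r}L_i=[Q_r,L_i]$ modulo left multiples of $L_i-\lambda$ (operators that annihilate the kernel). To identify $Q_r$, I would match the two dressings: the matrix $T=1+\sum_{i<0}T_i\Lambda^i$ of Lemma \ref{Tlemma} conjugates $\Ll$ to $\der+\Lambda+\sum_{i\le 0}b_i\Lambda^i$, an operator commuting with every power of $\Lambda$; transporting this through the dictionary gives the scalar dressing conjugating $L_i$ to a constant-coefficient $N$-th root, so that $T\Lambda^rT^{-1}$ corresponds to $L_i^{r/N}$. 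Since $[L_i,L_i^{r/N}]=0$ gives $[L_i,(L_i^{r/N})^+]=-[L_i,(L_i^{r/N})^-]$, it then suffices to match the differential part of $Q_r$ with $(L_i^{r/N})^+$ up to sign and up to terms commuting with $L_i$ (the orientation of the times being fixed as in \cite{DS}), which turns $\der_{t_r}L_i=[Q_r,L_i]$ into \Ref{KdVr}.

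The main obstacle will be the compatibility of the two positive-part projections, since the $\Lambda$-grading on $\text{Mat}$ and the differential-order grading on $\Bb((\der^{-1}))$ are a priori different filtrations, matched only up to lower-order corrections coming from $V$ and from the dressing. I must show that passing from the matrix resolvent to the scalar one carries the decomposition $M=M^++M^-$ of Lemma \ref{decomplemma} to the decomposition $a=a^++a^-$ of pseudodifferential operators, so that the non-negative parts correspond and $Q_r=(L_i^{r/N})^+$. A clean way to keep this under control is to use Lemma \ref{Tlemma}(ii): the ambiguity in $T$ alters $Q_r$ only by a left multiple of $L_i-\lambda$, which drops out of the scalar Lax equation, so $Q_r$ is well defined modulo the kernel. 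Combined with the order count that $[L_i,(L_i^{r/N})^+]$ has order $\le N-2$ (so that $\der_{t_r}L_i$ indeed stays in $\D$), this pins down $Q_r=(L_i^{r/N})^+$ and completes the proof.
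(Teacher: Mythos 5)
The paper does not actually prove this statement: Theorem \ref{thm mkdvtokdv} is quoted from Drinfeld and Sokolov \cite{DS} with no argument given, so there is no in-paper proof to compare against. Your proposal is in effect a reconstruction of the original Drinfeld--Sokolov argument, and its skeleton is the right one: eliminating all components but $\psi_i$ from $\Ll\Psi=0$ via the cyclic structure and $\Lambda^N=\lambda$ to get $L_i\psi_i=\lambda\psi_i$; reading \Ref{mKdVr} as the zero-curvature condition of the pair $\Ll\Psi=0$, $\der_{t_r}\Psi=(T\Lambda^rT^{-1})^+\Psi$; and inducing a scalar pair, hence a scalar Lax equation $\der_{t_r}L_i=[L_i,Q_r]$ (up to sign conventions) for the induced operator $Q_r$.

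The gap is in the final identification $Q_r=(L_i^{r/N})^+$, which is also the only genuinely hard step. You correctly flag that the matrix decomposition $M=M^++M^-$ of Lemma \ref{decomplemma} (graded by powers of $\Lambda$) and the pseudodifferential decomposition $a=a^++a^-$ (graded by powers of $\der$) are different filtrations, but the two tools you invoke do not bridge them. Lemma \ref{Tlemma}(ii) only says that $T\Lambda^rT^{-1}$ is independent of the choice of $T$; that gives well-definedness of $Q_r$, not its value. And the order count that $[L_i,(L_i^{r/N})^+]$ has order $\le N-2$ shows the KdV flow preserves $\D$, but it cannot ``pin down'' $Q_r$: any differential operator differing from $(L_i^{r/N})^+$ by something commuting with $L_i$ satisfies the same bound, so what must be proved is that $Q_r-(L_i^{r/N})^+$ commutes with $L_i$ --- or better, vanishes. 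The missing lemma, which is the actual content of the proof in \cite{DS}, is that under the elimination map the matrix-negative part $(T\Lambda^rT^{-1})^-$ induces a scalar operator of \emph{negative differential order} modulo the left ideal generated by $L_i-\lambda$; equivalently, one must match the matrix dressing $T$ with the scalar Sato dressing $K$ realizing $L_i^{1/N}=K\der K^{-1}$, for instance by evaluating both sides on a formal Baker function $\psi=(1+O(z^{-1}))e^{xz}$ with $\lambda=z^N$, where order in $\der$ corresponds to order in $z$. Granting that lemma, your conclusion follows exactly as you indicate: $Q_r-L_i^{r/N}$ has negative order, so $Q_r-(L_i^{r/N})^+=(Q_r-L_i^{r/N})+(L_i^{r/N})^-$ is simultaneously a differential operator and of negative order, hence zero. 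Without it, the chain of reductions stops one lemma short of the theorem.
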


We define the {\it  $i$-th Miura map} by the formula
\bea
\frak{ m}_i \ : \ \mc M\  \to \ \D,
\quad
 \Ll \ \mapsto \ L_i,
 \eea
  see \Ref{miuramap}.

\begin{thm}
[\cite{DS}]
\label{thm gaugemiura}
If Miura opers $\Ll$ and $\tilde \Ll$ are $i$-gauge equivalent, then
$\frak m_i(\Ll) = \frak m_i(\tilde\Ll)$.
\end{thm}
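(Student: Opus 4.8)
The plan is to reduce the finite $i$-gauge equivalence to its one–parameter generators and to exploit that, in the ordered product defining $\frak m_i(\Ll)=L_i$, the factors touched by such a generator occur as adjacent neighbours, so that invariance becomes a single first–order identity governed by a Riccati equation.

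First I would reduce to elementary moves. Since $\Nn^+_i$ is nilpotent, the gauge group $\exp(\Bb(\Nn^+_i))$ is generated by the subgroups $e^{gE_j}$ with $j\neq i$ and $g\in\Bb$, and by Lemma \ref{lem exp} these are the honest finite expressions $e^{gE_j}=1+gE_j$ with $E_j^2=0$. Using $[E_j,\Lambda]=H_j$, $[E_j,V]=(v_{j+1}-v_j)E_j$, $E_j(\Lambda+V)E_j=E_j$, together with $e^{gE_j}\der e^{-gE_j}=\der-g'E_j$, a direct conjugation gives
\bea
e^{\text{ad}(gE_j)}\Ll=\der+\Lambda+(V+gH_j)+\bigl(g(v_{j+1}-v_j)-g'-g^2\bigr)E_j .
\eea
Hence $e^{\text{ad}(gE_j)}\Ll$ is again a Miura oper exactly when $g$ solves the Riccati equation $g'=g(v_{j+1}-v_j)-g^2$, and in that case the whole effect on the potential is the trace–preserving change $v_j\mapsto v_j+g$, $v_{j+1}\mapsto v_{j+1}-g$, with all other $v_k$ fixed. (I verified this against the explicit $2\times2$ conjugation, so the signs are firm.)

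Second, I would verify that $L_i$ is unchanged by such a move. With the conventions of Section \ref{sec algebra} the hypothesis $j\neq i$ is precisely what forces the two affected factors $(\der-v_{j+1})$ and $(\der-v_j)$ to appear as consecutive neighbours in $L_i=(\der-v_{i-1})\cdots(\der-v_{i+1})(\der-v_i)$, rather than split between the two ends of the product; this adjacency bookkeeping is the point that must be checked carefully against the indexing of $\Nn^+_i$ and of the product. Granting it, the entire content is the elementary identity
\bea
(\der-v_{j+1}+g)(\der-v_j-g)=(\der-v_{j+1})(\der-v_j),
\eea
which is obtained by expanding both sides and substituting exactly $g'=g(v_{j+1}-v_j)-g^2$. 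As every other factor of $L_i$ is left untouched, each elementary Miura–preserving move fixes $\frak m_i$.

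The main obstacle is the reduction itself: a generic $w\in\Bb(\Nn^+_i)$ drives a Miura oper out of $\mc M$, so I cannot simply compose elementary moves, and I must show that two $i$-gauge–equivalent Miura opers are joined through Miura opers only. The clean way is to describe the residual freedom: the Miura opers inside a fixed $i$-gauge class form a single orbit of the solvable "Riccati'' group generated by the moves above, because the uniqueness statement of Lemma \ref{Tlemma} pins down the diagonal data once the $\Nn^+_i$-direction is normalized. Equivalently, one proves a canonical–form statement for $i$-opers under $\Nn^+_i$-gauge, of which Miura opers are the $W=0$ representatives, so that $L_i$ is manifestly an invariant of the class. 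Establishing this orbit/canonical–form description is where the real work lies; once it is available, the local factor identity above completes the proof.
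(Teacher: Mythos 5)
The paper itself contains no proof of Theorem \ref{thm gaugemiura} (it is quoted from \cite{DS}), so your proposal has to stand on its own; it breaks exactly at the point you yourself flagged as ``the point that must be checked carefully.''

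Your local computations are fine: your conjugation formula is the paper's \Ref{adeq}, your Riccati equation is \Ref{Ric}, and the identity $(\der-v_{j+1}+g)(\der-v_j-g)=(\der-v_{j+1})(\der-v_j)$ is indeed equivalent to \Ref{Ric}. But the adjacency claim is false. In \Ref{miuramap} the factors of $L_i$ read, from left to right, $v_{i-1},v_{i-2},\dots,v_1,v_N,\dots,v_{i+1},v_i$, so $(\der-v_{j+1})$ sits immediately to the left of $(\der-v_j)$ precisely when $j\neq i-1$, not when $j\neq i$. The generator $E_{i-1}$ does lie in $\Nn^+_i$, and the pair it moves consists of the two \emph{outermost} factors of $L_i$ (this happens for every $N$). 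Moreover, this is not reparable bookkeeping: for that generator, invariance of $\frak m_i$ genuinely fails. Take $N=2$, $i=2$, $\Ll=\der+\Lambda$ (so $v_1=v_2=0$) and $g=1/(x+c_1)$, which solves \Ref{Ric}; then $e^{\text{ad}(gE_1)}\Ll=\der+\Lambda+gH_1$ is the Miura oper associated with the tuple $(x+c_1,1)$, the two opers are $2$-gauge equivalent, and yet
\be
\frak m_2(\Ll)=\der^2,
\qquad
\frak m_2\bigl(e^{\text{ad}(gE_1)}\Ll\bigr)=(\der-g)(\der+g)=\der^2-\frac 2{(x+c_1)^2},
\ee
while $\frak m_1$ is unchanged. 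What your two-factor identity actually proves is that conjugation by $e^{gE_j}$ preserves $\frak m_i$ for every $i\neq j+1$; equivalently, $i$-gauge equivalence preserves $\frak m_{i+1}$. With the conventions printed in the paper (the definition of $\Nn^+_i$ and the order of factors in \Ref{miuramap}) that index-shifted statement is the one that is true; the shift is harmless for the later applications such as Lemma \ref{lem j j'}, which only need all Miura maps but one to be preserved, but your write-up asserts the unshifted adjacency, which is simply wrong.

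The second gap is the one you concede: reducing a general $i$-gauge equivalence between two Miura opers to a chain of elementary Riccati moves that never leaves $\mc M$. This is not a finishing touch; it is essentially the orbit/population structure of \cite{MV2}, and nothing in your sketch establishes it (Lemma \ref{Tlemma} normalizes along diagonal power series in $\Lambda^{-1}$, it does not control the unipotent ambiguity you need). The proof in \cite{DS} needs neither that reduction nor any adjacency bookkeeping, because the scalar operator is characterized gauge-invariantly. Every bracket monomial in the generators $E_j$, $j\neq i$, is a multiple of a matrix unit $e_{k,l}$ (times a power of $\lambda$) whose path $k\to k+1\to\dots\to l$ avoids the edge $i\to i+1$; hence every $w\in\Bb(\Nn^+_i)$, and every power of $w$, has vanishing $(i+1)$-st column, so $(e^{-w})_{k,i+1}=\delta_{k,i+1}$. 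Consequently, on row solutions of $\phi'=\phi\,(\Lambda+V)$, the gauge action $\phi\mapsto\phi\,e^{-w}$ leaves the component $\phi_{i+1}$ untouched. The recursions $(\der-v_k)\phi_k=\phi_{k+1}$ and $(\der-v_N)\phi_N=\lambda\phi_1$ show that $L_{i+1}\phi_{i+1}=\lambda\phi_{i+1}$, that $\phi\mapsto\phi_{i+1}$ is injective on solutions when $\lambda\neq0$, and hence that these components fill out the $N$-dimensional kernel of $L_{i+1}-\lambda$; since a monic operator of order $N$ is determined by an $N$-dimensional space of functions it annihilates, $L_{i+1}$ is constant on the whole $i$-gauge class, for arbitrary $w$, in one step. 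I recommend dropping the elementary-move strategy, writing up this invariant-kernel argument, and stating the theorem with the index convention that actually matches \Ref{miuramap}.
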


\section{Critical Points of Master Functions, \cite{MV1}}
\label{CP}
\subsection{Master function}
Choose a collection of nonnegative integers $\bs{k} = (k_1,\dots,k_{N}).$
Consider variables $ u = (u_i^{(j)})$, where $j = 1,\dots,N,$ and $i = 1,\dots,k_j$.
We adopt the notations $k_{N+1} = k_{1}$ and  $u_i^{(N+1)} = u_i^{(1)}$ for all $i$.
 The {\it master function} $\Phi(u; \bs k)$
is defined by formula \Ref{Master}:
\bea
\Phi(u,\bs k) =
2 \sum_{j=1}^N  \sum_{i<i'}
\log (u^{(j)}_i-u^{(j)}_{i'})
- \sum_{j=1}^{N} \sum_{i,i'}\log (u^{(j)}_i-u^{(j+1)}_{i'}) .
\eea
The product of symmetric groups
$\Sigma_{\bs k}=\Si_{k_1}\times \dots \times \Si_{k_N}$ acts on the set of variables
by permuting the coordinates with the same upper index.
The  function $\Phi$ is symmetric with
respect to the $\Si_{\bs k}$-action.

A point $u$ is a {\it critical point} if $d\Phi=0$ at $u$. In other words, $u$ is a critical point if and only if
\bean
\label{Bethe eqn 1}
\sum_{i' \neq i}\frac {2}{ u_i^{(j)} - u_{i'}^{(j)}}
-\sum_{i=1}^{k_{j+1}} \frac{1}{ u_i^{(j)} -u_{i'}^{(j+1)}} -\sum_{i=1}^{k_{j-1}} \frac{1}{ u_i^{(j)} -u_{i'}^{(j-1)}}
= 0,
\eean
for $j = 1, \dots , N$ and $i = 1, \dots , k_j$. The critical set is $\Si_{\bs k}$-invariant.
All orbits have the same cardinality $k_1! \cdots k_N!$\ .
We do not make distinction between critical points in the same orbit.

\subsection{Polynomials representing critical points}
\label{PRCP}

Let $u = (u_i^{(j)})$ be a critical point of the master function
$\Phi$.
Introduce an $N$-tuple of polynomials $ y=( y_1(x),$ $\dots ,$ $ y_{N}(x))$,
\bean\label{y}
y_j(x)\ =\ \prod_{i=1}^{k_j}(x-u_i^{(j)}).
\eean
We adopt the notations $y_{N+1} = y_{1}$ and $y_0 = y_N$.  Each polynomial is considered up to multiplication
by a nonzero number.
The tuple defines a point in the direct product
$\PCN$.
We say that the tuple {\it represents the
critical point}.

It is convenient to think that the tuple $\bs{y}^\emptyset = (1, \dots , 1)$ of constant polynomials
represents  in $\PCN$ the critical point of
the master function with no variables.
This corresponds to the case  $\bs k = (0, \dots , 0)$.

We say that a given tuple $ y$ is {\it generic} if each polynomial $y_j(x)$ has no multiple roots and
the polynomials  $y_j(x)$ and
 $y_{j+1}(x)$ have no common roots for $j=1,\dots, N$. If a tuple represents a critical point, then it is generic,
 see \Ref{Bethe eqn 1}.  For example, the tuple  ${y}^\emptyset $ is generic.

\subsection{Elementary generation}
\label{Elementary generation}

A tuple $ y$ is called {\it fertile} if for every $j = 1,\dots,N$ there exists a polynomial $\tilde y_j$ such that
\bean
\label{wronskian-critical eqn}
\Wr(y_j, \tilde y_j)\ =\ y_{j - 1} y_{j + 1}.
\eean
For example, the tuple $y^\emptyset$ is fertile.

Equation \Ref{wronskian-critical eqn} is a first order inhomogeneous differential equation with respect to $\tilde y_j$.
Its solutions are
\bean
\label{deG}
\tilde y_j = y_j\int \frac{y_{j-1}y_{j+1}}{y_j^2} dx + c y_j,
\eean
where $c$ is any number.
The tuples
\bean
\label{simple}
y^{(j)}(x,c) = (y_1(x), \dots , \tilde y_j(x,c),\dots, y_{N}(x))
\quad \in  \quad \PCN \
\eean
 form a one-parameter family.  This family  is called
 the {\it generation  of tuples  from $ y$ in the $j$-th direction}.
 A tuple of this family is called an  immediate descendant of $ y$ in the $j$-th direction.

\begin{thm}
[\cite{MV1}]
\label{fertile cor}
${}$

\begin{enumerate}
\item[(i)]
A generic tuple $y = (y_1, \dots , y_{N})$
represents a critical point of the master function
if and only if $y$ is fertile.

\item[(ii)] If $ y$ represents a critical point,
$j \in \{1,\dots,N\}$, and $ y^{(j)}$ is
an immediate descendant of $y$, then $y^{(j)}$ is fertile.

\item[(iii)]
If $y$ is generic and fertile, then for almost all values of the parameter
 $c\in \C$ the corresponding $n$-tuple ${y}^{(j)}$ is generic.
The exceptions form a finite set in $\C^1$.

\item[(iv)]

Assume that a sequence ${y}_i, i = 1,2,\dots$ of fertile $N$-tuples
has a limit ${y}_\infty$ in $\PCN$ as $i$ tends to infinity.
\begin{enumerate}
\item[(a)]
Then the limiting $N$-tuple ${y}_\infty$ is fertile.
\item[(b)]
For $j = 1,\dots,N$, let ${y}_\infty^{(j)}$ be an immediate
descendant of ${y}_\infty$.
Then for all $j$ there exist immediate descendants
${y}_i^{(j)}$ of ${y}_i$ such that ${y}_\infty^{(j)}$
is the limit of ${y}_i^{(j)}$ as $i$ tends to infinity.

\end{enumerate}

\end{enumerate}
\end{thm}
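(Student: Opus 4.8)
The common thread is the analytic observation recorded just before the statement: by the solution formula \Ref{deG}, the inhomogeneous equation $\Wr(B,z)=A$ (equation \Ref{wronskian-critical eqn} for the relevant roles) has a \emph{polynomial} solution $z$ if and only if $\int A/B^2\,dx$ is rational, i.e. if and only if the residue of $A/B^2$ vanishes at every root of $B$; when this holds, $z=B\int A/B^2\,dx+cB$. Thus fertility at index $j$ is exactly the vanishing of all residues of $y_{j-1}y_{j+1}/y_j^2$. For part (i) I would compute this residue at a simple root $u_i^{(j)}$ of $y_j$. Writing $y_j=(x-u_i^{(j)})g$ with $g(u_i^{(j)})\neq0$, the function $y_{j-1}y_{j+1}/y_j^2$ has a double pole at $u_i^{(j)}$, and $\Res_{x=u_i^{(j)}}$ equals the value of
\[
\frac{y_{j-1}'}{y_{j-1}}+\frac{y_{j+1}'}{y_{j+1}}-2\,\frac{g'}{g}
\]
at $u_i^{(j)}$ times the (nonzero) value of $y_{j-1}y_{j+1}/g^2$ there. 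Since $y_{j-1}'/y_{j-1}$, $y_{j+1}'/y_{j+1}$, $g'/g$ evaluate to the three sums appearing in \Ref{Bethe eqn 1}, this residue vanishes if and only if the $(j,i)$ critical point equation holds. Hence $y$ is a critical point $\iff$ all residues vanish $\iff$ $y$ is fertile, which is (i). (Genericity guarantees the pole is genuinely double.)

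For part (ii), only the indices $l=j\pm1$ require work: for $l\notin\{j-1,j,j+1\}$ the relevant three entries of $y^{(j)}$ agree with those of $y$, so fertility is inherited from $y$; and at $l=j$ one checks $\Wr(\tilde y_j,-y_j)=\Wr(y_j,\tilde y_j)=y_{j-1}y_{j+1}$, so $-y_j$ is the required partner of $\tilde y_j$. For $l=j-1$ I must produce a polynomial $z$ with $\Wr(y_{j-1},z)=y_{j-2}\tilde y_j$, which by the reduction above means killing the residues of $y_{j-2}\tilde y_j/y_{j-1}^2$ at the roots $b$ of $y_{j-1}$. The key point is that $\Wr(y_j,\tilde y_j)=y_{j-1}y_{j+1}$ vanishes at such $b$, so $\tilde y_j'/\tilde y_j=y_j'/y_j$ at $b$; therefore the logarithmic-derivative combination controlling $\Res_{x=b}\,y_{j-2}\tilde y_j/y_{j-1}^2$ is identical to the one controlling $\Res_{x=b}\,y_{j-2}y_j/y_{j-1}^2$. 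The latter vanishes because $y$ is fertile at $j-1$. The case $l=j+1$ is symmetric, and the possibility $\tilde y_j(b)=0$ only removes the pole.

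For part (iii) I would write the family as $\tilde y_j(x,c)=\tilde y_j(x,0)+c\,y_j(x)$ and show each genericity failure excludes only finitely many $c$. If $x_0$ is a multiple root of $\tilde y_j(\cdot,c)$, then evaluating $\Wr(y_j,\tilde y_j)=y_{j-1}y_{j+1}$ at $x_0$ forces $y_{j-1}(x_0)y_{j+1}(x_0)=0$, so $x_0$ lies in the fixed finite set of roots of $y_{j\pm1}$; for each such $x_0$ the equation $\tilde y_j(x_0,c)=0$ determines $c$ uniquely, since $y_j(x_0)\neq0$ by genericity of $y$. Similarly, a common root of $\tilde y_j(\cdot,c)$ with $y_{j-1}$ or $y_{j+1}$ pins down $c$. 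Hence all but finitely many $c$ yield a generic $y^{(j)}$.

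Part (iv) is the main obstacle. Since the weight $\bs k$ is fixed, every $y_i$ has the prescribed degrees and every immediate descendant $\tilde y_{i,j}(\cdot,0)$ has degree $k_{j-1}+k_{j+1}-k_j+1$, so all the descendant pencils $P_i=\langle y_{i,j},\tilde y_{i,j}(\cdot,0)\rangle$ lie in one fixed finite-dimensional $\mathbb P(\C[x]_{\le D})$; by compactness a subsequence of pencils converges, $P_i\to P_\infty$. For (a) I fix affine representatives with $\Wr(y_{i,j},\tilde y_{i,j})=y_{i,j-1}y_{i,j+1}$ exactly and pass to the limit: when the coefficients stay bounded this gives a polynomial $z\in P_\infty$ with $\Wr(y_{\infty,j},z)=y_{\infty,j-1}y_{\infty,j+1}$, i.e. fertility of $y_\infty$. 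For (b), given a prescribed $\tilde y_{\infty,j}\in P_\infty$, I parameterize the pencils compatibly by $\Pone$ and take the points $\tilde y_{i,j}\in P_i$ with the matching parameter; these are genuine immediate descendants of $y_i$ converging to $\tilde y_{\infty,j}$. The delicate point — where I expect to spend the real effort — is the bookkeeping of scalings and of degree drops as roots escape to infinity: one must rule out (or renormalize away) the degenerate limit in which the coefficients blow up and the Wronskian relation collapses to $\Wr(y_{\infty,j},\,\cdot\,)=0$, so that the limiting member of the pencil solves the correct inhomogeneous equation rather than the homogeneous one.
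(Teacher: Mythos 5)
The paper itself gives no proof of this theorem: it is imported verbatim from \cite{MV1}, so there is nothing internal to compare against, and your proposal has to stand on its own. Your parts (i)--(iii) do: they are correct and follow the standard mechanism, which the paper sketches in the discussion around \Ref{deG} --- fertility at $j$ is equivalent to the vanishing of all residues of $y_{j-1}y_{j+1}/y_j^2$; at a simple root of $y_j$ that residue is a nonzero multiple of the left-hand side of \Ref{Bethe eqn 1}, which gives (i); the identity $\tilde y_j'/\tilde y_j=y_j'/y_j$ at roots of $y_{j\pm1}$, forced by $\Wr(y_j,\tilde y_j)=y_{j-1}y_{j+1}$ (including the degenerate case $\tilde y_j(b)=0$, which then forces $\tilde y_j'(b)=0$), transfers the residue conditions from $y$ to $y^{(j)}$, giving (ii); and the Wronskian relation confines multiple roots of $\tilde y_j(\cdot,c)$ and its common roots with $y_{j\pm1}$ to the finitely many roots of $y_{j-1}y_{j+1}$, each excluding at most one $c$, giving (iii).

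Part (iv), however, contains a genuine gap, and you flag it yourself without closing it. Compactness of the Grassmannian of pencils produces a limit pencil $P_\infty$, but it does not exclude the degenerate scenario in which the properly rescaled particular solutions $\tilde y_{i,j}$ blow up; in that case the limiting second direction of $P_\infty$ lies in $\C\, y_{\infty,j}$, the limit of the Wronskian relations is the homogeneous equation $\Wr(y_{\infty,j},z)=0$, and no fertility of $y_\infty$ has been established. A second, smaller, inaccuracy: the hypothesis does not fix the degree vector along the sequence --- convergence is in $\PCN$, where each entry is a polynomial up to scale, so degrees may vary with $i$ and drop in the limit; one must extract a subsequence with constant degrees, and then, in (b), repair the passage from that subsequence back to the full sequence, since (b) demands descendants of every $y_i$.

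The missing step can be supplied by a constant-rank argument. Fix $D$ bounding all relevant degrees and normalized representatives $\hat y_{i,l}\to\hat y_{\infty,l}\ne 0$, and consider the linear maps $T_i:\C_{\le D}[x]\to\C[x]$, $z\mapsto \Wr(\hat y_{i,j},z)$. Since the Wronskian of two polynomials vanishes exactly when they are proportional, $\ker T_i=\C\,\hat y_{i,j}$ for every $i$ \emph{and} for $i=\infty$ (degree drop of $\hat y_{\infty,j}$ is harmless here), so all $T_i$ have one and the same rank. Membership $v\in\on{Im}(T_i)$ is then a closed condition along the family: it says that the augmented matrix $[\,T_i\mid v\,]$ has the same rank as $T_i$, i.e.\ that all minors of the next size vanish, and the rank of $T_\infty$ has not dropped. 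Hence $\hat y_{i,j-1}\hat y_{i,j+1}\in\on{Im}(T_i)$ for all $i$ implies $\hat y_{\infty,j-1}\hat y_{\infty,j+1}\in\on{Im}(T_\infty)$, which is (a). More constructively: choosing the particular solutions in complements $U_i$ of the kernels (say orthogonal complements in a fixed inner product on coefficients), the restricted maps $T_i|_{U_i}$ are injective with injective limit, so their inverses converge; thus $\tilde y_{i,j}:=(T_i|_{U_i})^{-1}(\hat y_{i,j-1}\hat y_{i,j+1})$ is defined for \emph{every} $i$, stays bounded, and converges to a particular solution for $y_\infty$. This rules out your blow-up scenario and also settles (b) for the full sequence: if the $j$-th entry of $y_\infty^{(j)}$ is $\tilde y_{\infty,j}+c_\infty\hat y_{\infty,j}$, take $y_i^{(j)}$ with $j$-th entry $\tilde y_{i,j}+c_\infty\hat y_{i,j}$.
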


\subsection{Degree increasing generation}
\label{Degree increasing generation}

For $j=1,\dots,N$, define $k_i=\deg y_j$. The polynomial $\tilde y_j$ in \Ref{deG}
is of degree $k_j$ or
$\tilde k_j=k_{j-1} + k_{j+1}+ 1 - k_j$. We say that the {\it generation is
  degree increasing }  if $\tilde k_j > k_j$. In that
case $\deg \tilde y_j=\tilde k_j$ for all $c$.

If the generation is degree increasing we will normalize family
 \Ref{simple} and construct a map
$ Y_{y,j} : \C \to (\C[x])^N$ as follows. First we multiply the polynomials $y_1,\dots,y_N$ by numbers to make them monic.
 Then choose a monic polynomial $ y_{j,0}$ satisfying the equation $\Wr(y_j,  y_{j,0})$
 $=\,\on{const}\,y_{j-1}y_{j+1}$
 and such that the coefficient of $x^{k_j}$ in $\tilde y_{j,0}$ equals zero. Set
 \bean
 \label{tilde y}
 \tilde y_j(x,c)=y_{j,0}(x) + cy_j(x)
 \eean
 and define
\bean
\label{normalized}
 Y_{y,j} \ :\ \C\ \to\ (\C[x])^N, \qquad c \mapsto\ y^{(j)}(x,c) = (y_1(x),\dots,\tilde y_j(x,c),\dots, y_N(x)).
 \eean
All polynomials of the tuple $y^{(j)}(x,c)$ are monic.

\begin{lem}
\label{lem exist increa}
If  $k_j$ is a minimal number among $k_1,\dots,k_N$, then
the generation of tuples from $y$ in the $j$-th direction is degree
increasing.
\qed
\end{lem}

\subsection{Degree-transformations and generation of vectors of integers}
\label{sec degree transf}

For $j=1,\dots,N$, the degree-transformation
\bean
\label{l-transformation}
\phantom{aaaaaa}
\bs k=(k_1,\dots,k_N)\ \mapsto
\bs k^{(j)}=(k_1,\dots,k_{j-1},k_{j-1}+k_{j+1}-k_j+1,k_{j+1},\dots, k_N)
\eean
corresponds to the shifted action of the affine reflection $ w_j\in \widehat W_{A_{N-1}}$,
where $\widehat W_{A_{N-1}}$ is the affine Weyl group of type $A_{N-1}$ and $w_1,\dots, w_N$
are its standard generators, see Lemma 3.11 in \cite{MV1} for more detail.

We take formula \Ref{l-transformation} as the definition of {\it degree-transformations}:
\bean
\label{s_i l-transf}
 w_j\ :\ \bs k=(k_1,\dots,k_N)\ \mapsto
\bs k^{(j)}=(k_1,\dots,k_{j-1}+k_{j+1}-k_j+1,\dots, k_N)
\eean
for $j=1,\dots,N,$ acting on arbitrary vectors $\bs k=(k_1,\dots,k_N)$.
Here we consider the indices of
the coordinates modulo $N$, thus we have $k_0=k_N, k_{N+1}=k_1$ and so on.

\medskip

We start with the vector $\bs k^\emptyset=(0,\dots,0)$ and a sequence $J=(j_1,j_2,\dots,j_n)$ of integers,
$1\leq j_i\leq N$. We apply the corresponding degree transformations to  $\bs k^\emptyset$ and obtain
a sequence of vectors $\bs k^\emptyset,$\ $  \bs k^{(j_1)} =w_{j_1}\bs k^\emptyset, $\ $
  \bs k^{(j_1,j_2)} = w_{j_2} w_{j_1}\bs k^\emptyset$,\dots
\bean
\label{gen vector}
\bs k^J  = w_{j_m}\dots w_{j_2} w_{j_1}\bs k^\emptyset .
\eean
We say that the {\it vector $\bs k^J $ is generated from $(0,\dots,0)$ in the direction of $J$}.

\medskip
For example, for $N=3$ and $J=(1,2,3,1,2,3)$ we get the sequence
$(0,0,0),$ $ (1,0,0),$ $ (1,2,0),$ $ (1,2,4), (6,2,4), (6,9,4),\ \bs k^J = (6,9,12)$.
\medskip
We call the sequence $J$ {\it degree increasing} if for every $i$ the transformation
$w_{j_i}$ applied to  $  w_{j_{i-1}}\dots w_{j_1}\bs k^\emptyset$
increases the $j_i$-th coordinate.

The sequence $J=(j_1,\dots,j_m)$ with  $1\leq j_i \leq N$ will be called {\it cyclic} if
$j_1=1$ and $j_{i+1} = j_i+1$ mod $N$ for all $i$.

\begin{lem}
\label{lem cyclic increas}
A cyclic sequence $J=(j_1,\dots,j_m)$ is degree increasing.
If $m = p(N-1) + q$ for some integers $p,q$ such that
$0 \leq q \leq N - 2$, then the last changed coordinate of $\bs k^J$
has index equal to $m$ mod $N$ and this coordinate equals
\bea
 (N-1)\frac{p(p+1)}{2} + (p+1)q.
\eea
\qed
\end{lem}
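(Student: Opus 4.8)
The plan is to follow the cyclic generation step by step and reduce it to a one-term linear recurrence. For $i=0,1,\dots,m$ let $\bs k^{(i)}=w_{j_i}\cdots w_{j_1}\bs k^\emptyset$ be the vector after $i$ steps, so $\bs k^{(0)}=\bs k^\emptyset$. Since only the $j_i$-th coordinate changes at step $i$, write $a_i$ for the new value of that coordinate, $a_i=(\bs k^{(i)})_{j_i}$, with the convention $a_0:=0$. Because $J$ is cyclic we have $j_i=((i-1)\bmod N)+1$, so coordinate $j$ is touched exactly at the steps $i\equiv j\pmod N$; in particular $j_m\equiv m\pmod N$, which already gives the asserted index of the last changed coordinate.

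First I would determine, at step $i$, the three coordinate values entering \Ref{s_i l-transf}. The left neighbor $j_i-1=j_{i-1}$ was updated at the previous step, hence equals $a_{i-1}$; the coordinate $j_i$ itself was last updated $N$ steps earlier, hence equals $a_{i-N}$; and the right neighbor $j_i+1=j_{i+1}$ was last updated at step $i+1-N$, hence equals $a_{i-N+1}$. Substituting into \Ref{s_i l-transf} gives, for $i\ge N$,
\[
 a_i = a_{i-1} + a_{i-N+1} - a_{i-N} + 1 ,
\]
while for $1\le i\le N-1$ the coordinate $j_i$ and its right neighbor are still untouched, so $a_i=a_{i-1}+1$; with $a_0=0$ the displayed relation in fact also covers $i=N$. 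This bookkeeping of ``when was each neighbor last changed,'' together with the boundary behaviour during the first cycle $i\le N$, is the one genuinely delicate point and the main obstacle; everything afterward is formal.

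Passing to first differences $b_i:=a_i-a_{i-1}$ turns the recurrence into $b_i=b_{i-(N-1)}+1$ for $i\ge N$, together with $b_1=\dots=b_{N-1}=1$. An immediate induction gives the closed form $b_i=1+\lfloor (i-1)/(N-1)\rfloor$. Since every $b_i\ge 1$, the sequence $a_i=\sum_{l=1}^i b_l$ is strictly increasing and satisfies $a_i\ge i>0$ and $a_i-a_{i-N}=\sum_{l=i-N+1}^{i}b_l\ge N>0$. Hence at each step the modified coordinate strictly increases, whether it is touched for the first time (old value $0$, so the increase is $a_i>0$) or again later (old value $a_{i-N}$, so the increase is $a_i-a_{i-N}>0$); this proves that a cyclic sequence is degree increasing.

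Finally I would sum the differences:
\[
 a_m=\sum_{i=1}^m b_i = m+\sum_{s=0}^{m-1}\Big\lfloor \tfrac{s}{N-1}\Big\rfloor .
\]
Writing $m=p(N-1)+q$ with $0\le q\le N-2$, the last sum is the standard step-function count $(N-1)\tfrac{p(p-1)}2+pq$, and the routine simplification of $m+(N-1)\tfrac{p(p-1)}2+pq$ collapses to $(N-1)\tfrac{p(p+1)}2+(p+1)q$, which is exactly the claimed value of the last changed coordinate. I expect no difficulty beyond the initial neighbor-tracking: the recurrence, the difference trick, and the summation are all elementary, and the $N=3$, $J=(1,2,3,1,2,3)$ computation in the text serves as a sanity check ($p=3$, $q=0$ yields $12$).
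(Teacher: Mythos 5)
Your proof is correct. There is nothing in the paper to compare it against: Lemma \ref{lem cyclic increas} is stated with the end-of-proof mark and no argument, the computation being left to the reader, so your write-up fills in precisely what the authors treat as routine. Your reduction is the natural one and it is complete: because the sequence is cyclic, at step $i$ the left neighbor was last updated at step $i-1$, the coordinate itself at step $i-N$, and the right neighbor at step $i-N+1$, so the whole process collapses to the scalar recurrence $a_i=a_{i-1}+a_{i-N+1}-a_{i-N}+1$ (valid for $i\ge N$ with the convention $a_0=0$, as you note, and with $a_i=a_{i-1}+1$ during the first cycle); the first differences then satisfy $b_i=b_{i-(N-1)}+1$, giving $b_i=1+\lfloor (i-1)/(N-1)\rfloor\ge 1$, which yields both the degree-increasing property (the increment at step $i$ is $a_i>0$ for $i\le N$ and $a_i-a_{i-N}\ge N$ for $i>N$) and, upon summing, the claimed value $(N-1)\tfrac{p(p+1)}2+(p+1)q$. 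Two small points in your favor: your substitution automatically covers the degenerate case $N=2$, where the two neighbors coincide (both equal $a_{i-1}$, giving $a_i=2a_{i-1}-a_{i-2}+1$, consistent with the Adler--Moser degrees $m(m+1)/2$); and the closed form reproduces the paper's example $(6,9,12)$ for $N=3$, $m=6$, $p=3$, $q=0$.
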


For example, for $N=3$,  $J=(1,2,3,1,2,3)$, $\bs k^J = (6,9,12)$, we have $p=3, q=0$,
and the last changed third coordinate equals 12.

\subsection{Multistep generation}
\label{sec generation procedure}

Let $J = (j_1,\dots,j_m)$ be a degree increasing sequence of integers.
Starting from $y^\emptyset=(1,\dots,1)$ and $J$, we will construct by induction on $m$
a map
\bea
Y^J : \C^m \to (\C[x])^N.
\eea
If $J=\emptyset$, the map $Y^\emptyset$ is the map $\C^0=(pt)\ \mapsto y^\emptyset$.
If $m=1$ and $J=(j_1)$,  the map
$Y^{(j_1)} :  \C \to (\C[x])^N$ is given by formula \Ref{normalized}
for $y=y^\emptyset$ and $j=j_1$. More precisely, equation \Ref{wronskian-critical eqn}
takes the form $\Wr(1, \tilde y_{j_1}) =1$. Then $\tilde y_{j_1,0}= x$ and
\bea
Y^{(j_1)}\ :\ \C \mapsto (\C[x])^N, \qquad
c \mapsto (1,\dots,1,x+c, 1\dots,1),
\eea
where $x+c$ stands at the $j_1$-th position. By Theorem \ref{fertile cor}
all tuples in the image are fertile and almost all tuples are generic
(in this example all tuples are generic).

Assume that for ${\tilde J} = (j_1,\dots,j_{m-1})$,  the map
$Y^{{\tilde J}}$ is constructed. To obtain  $Y^J$ we apply the
generation procedure in the $j_m$-th
direction to every tuple of the image of $Y^{{\tilde J}}$. More precisely, if
\bean
\label{J'}
Y^{{\tilde J}}\ : \
{\tilde c}=(c_1,\dots,c_{m-1}) \ \mapsto \ (y_1(x,{\tilde c}),\dots, y_N(x,{\tilde c})).
\eean
Then
\bean
\label{Ja}
&&
{}
\\
\notag
&&
Y^{J} : \C^m \mapsto (\C[x])^N, \quad
({\tilde c},c_m) \mapsto
(y_1(x,{\tilde c}),\dots,  y_{j_m,0}(x,{\tilde c}) + c_m y_{j_m}(x,{\tilde c}),\dots,
y_N(x,{\tilde c})),
\eean
see formula \Ref{tilde y}.
The map  $Y^J$ is called  the {\it generation  of $N$-tuples   from $ y^\emptyset$ in the $J$-th direction}.

\begin{lem}
\label{lem gen procedure}
All tuples in the image of $Y^J$ are fertile and almost all tuples are generic. For any $c\in\C^m$
the $N$-tuple $Y^J(c)$ consists of monic polynomials. The degree vector of this tuple
equals $\bs k^J$, see \Ref{gen vector}.
\qed
\end{lem}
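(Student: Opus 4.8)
The plan is to prove Lemma \ref{lem gen procedure} by induction on the length $m$ of the degree increasing sequence $J=(j_1,\dots,j_m)$, matching the inductive construction of the map $Y^J$ itself. The base cases $J=\emptyset$ and $m=1$ are handled directly in the text: the tuple $y^\emptyset=(1,\dots,1)$ is fertile and generic, has all polynomials monic, and realizes the degree vector $\bs k^\emptyset=(0,\dots,0)$. For $m=1$ one checks directly that $Y^{(j_1)}(c)=(1,\dots,1,x+c,1,\dots,1)$ is monic, generic, fertile by Theorem \ref{fertile cor}, and has degree vector $w_{j_1}\bs k^\emptyset$.

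\smallskip

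For the inductive step, assume the claim for $\tilde J=(j_1,\dots,j_{m-1})$, so that every tuple $y(x,\tilde c)=Y^{\tilde J}(\tilde c)$ in the image is fertile, consists of monic polynomials, and has degree vector $\bs k^{\tilde J}$. By the inductive hypothesis almost all such tuples are generic. Now I would apply the degree increasing generation in the $j_m$-th direction described in Section \ref{Degree increasing generation}. There are four assertions to verify for the extended map $Y^J$ of \Ref{Ja}, and I would treat them in the following order. First, \emph{monicity}: since $J$ is degree increasing, by definition $\tilde k_{j_m}>k_{j_m}$, so by the remark following \Ref{tilde y} we have $\deg\tilde y_{j_m}=\tilde k_{j_m}$ for all $c_m$, and the normalization \Ref{tilde y} makes $\tilde y_{j_m}(x,c_m)=y_{j_m,0}(x)+c_m y_{j_m}(x)$ monic while leaving all other polynomials unchanged and monic. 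Second, the \emph{degree vector}: the new $j_m$-th degree is exactly $\tilde k_{j_m}=k_{j_m-1}+k_{j_m+1}-k_{j_m}+1$, which is the $j_m$-th coordinate of $w_{j_m}\bs k^{\tilde J}=\bs k^J$ by \Ref{s_i l-transf}, and the other coordinates are untouched. Third, \emph{fertility}: for generic members of the image of $Y^{\tilde J}$, the tuple $y(x,\tilde c)$ represents a critical point by Theorem \ref{fertile cor}(i), so its immediate descendant $y^{(j_m)}(x,\tilde c,c_m)$ is fertile by Theorem \ref{fertile cor}(ii); fertility for the non-generic members is then obtained by a limiting argument using Theorem \ref{fertile cor}(iv)(a), approximating a non-generic tuple in the image by generic ones.

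\smallskip

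Fourth, \emph{genericity of almost all tuples}: for a fixed generic $\tilde c$, Theorem \ref{fertile cor}(iii) guarantees that the descendant $y^{(j_m)}(x,\tilde c,c_m)$ is generic for all but finitely many $c_m\in\C$. Combining this fiberwise statement with the inductive hypothesis that the generic locus in the image of $Y^{\tilde J}$ is the complement of a proper closed (measure-zero) subset, the non-generic locus in $\C^m$ is contained in a proper closed subset, so almost all tuples $Y^J(c)$ are generic. I would phrase the "almost all" claim as: the non-generic locus is contained in a proper Zariski-closed subset of $\C^m$, which follows because genericity is an open condition (non-vanishing of discriminants and of resultants of consecutive polynomials) and these resultants/discriminants are not identically zero, as witnessed by the generic fibers just produced.

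\smallskip

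The main obstacle I anticipate is the interplay between the fertility/genericity bookkeeping for the \emph{non-generic} tuples in the image of $Y^{\tilde J}$. Fertility was only directly supplied by Theorem \ref{fertile cor}(ii) for descendants of tuples that genuinely represent critical points, i.e. generic fertile tuples; extending fertility to descendants of the boundary (non-generic) members of the family requires the continuity/limit machinery of Theorem \ref{fertile cor}(iv), and one must check that every non-generic tuple in the image really does arise as a limit of generic fertile tuples within the family and that the chosen immediate descendants converge compatibly. Carrying out this limiting argument cleanly—ensuring the normalization \Ref{tilde y} survives the limit and that no polynomial degenerates in degree because the generation is degree increasing—is the delicate technical point; everything else is a direct consequence of the degree-transformation formula \Ref{s_i l-transf} and the normalization conventions.
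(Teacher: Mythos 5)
Your proof is correct and takes essentially the approach the paper intends: the paper states Lemma \ref{lem gen procedure} with no written proof (it is marked \qed as immediate from the inductive construction of $Y^J$ in Section \ref{sec generation procedure} together with Theorem \ref{fertile cor}), and your induction on $m$ — parts (i)–(iii) of Theorem \ref{fertile cor} on the generic locus, part (iv) with degree preservation (monicity) for the non-generic boundary, and the degree-transformation formula \Ref{s_i l-transf} for the degree vector — is exactly that routine verification made explicit. The one point you flag, continuity of $\tilde c\mapsto y_{j_m,0}(x,\tilde c)$ needed for the limiting argument, is indeed fillable: the normalized solution of the linear Wronskian equation exists and is unique at every $\tilde c$ by fertility and the normalization, so it depends polynomially (hence continuously) on $\tilde c$.
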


\begin{lem}
\label{lem uniqeness}
The map  $Y^J$ sends distinct points of $\C^m$ to distinct points of  $(\C[x])^N$.
\end{lem}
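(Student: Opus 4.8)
The plan is to induct on the length $m$ of the sequence $J$. For $m=0$ the domain of $Y^\emptyset$ is a single point, so there is nothing to prove. For the inductive step I would write $\tilde J=(j_1,\dots,j_{m-1})$, assume $Y^{\tilde J}$ is injective, and suppose $Y^J(\tilde c,c_m)=Y^J(\tilde c',c_m')$ for $\tilde c,\tilde c'\in\C^{m-1}$. By the construction \Ref{Ja}, every coordinate of $Y^J$ with index $\neq j_m$ equals the corresponding coordinate of $Y^{\tilde J}$; hence $y_i(x,\tilde c)=y_i(x,\tilde c')$ for all $i\neq j_m$. Since the neighboring indices $j_m\pm1$ are different from $j_m$, the two neighbors of the $j_m$-th polynomial agree, so the monic product $Q:=y_{j_m-1}y_{j_m+1}$ is the same for $\tilde c$ and for $\tilde c'$. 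What remains is to pin down the $j_m$-th coordinate.

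\textbf{Key step (reverse generation).} The crux would be to recover the \emph{parent} polynomial $y_{j_m}(x,\tilde c)$ from the common descendant $P:=\tilde y_{j_m}(x,\tilde c,c_m)=\tilde y_{j_m}(x,\tilde c',c_m')$ together with the neighbors. Here the degree-increasing hypothesis on $J$ is essential: it gives $\deg P=\tilde k_{j_m}=k_{j_m-1}+k_{j_m+1}-k_{j_m}+1>k_{j_m}$. By \Ref{wronskian-critical eqn} both $y_{j_m}(x,\tilde c)$ and $y_{j_m}(x,\tilde c')$ are monic of degree $k_{j_m}$ and satisfy $\Wr(z,P)=C\cdot Q$ for a constant $C$. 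Comparing leading coefficients (each $z$ is monic of degree $k_{j_m}$ and $P$ is monic of degree $\tilde k_{j_m}$) shows that in both cases $C=\tilde k_{j_m}-k_{j_m}$ is the same nonzero constant. Hence their difference $w=y_{j_m}(x,\tilde c)-y_{j_m}(x,\tilde c')$ satisfies $\Wr(w,P)=0$, so $w$ is a scalar multiple of $P$; since $\deg w\le k_{j_m}<\tilde k_{j_m}=\deg P$, I conclude $w=0$, i.e. $y_{j_m}(x,\tilde c)=y_{j_m}(x,\tilde c')$.

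\textbf{Conclusion.} At this point all $N$ coordinates of $Y^{\tilde J}(\tilde c)$ and $Y^{\tilde J}(\tilde c')$ coincide, so the inductive hypothesis yields $\tilde c=\tilde c'$. With $\tilde c=\tilde c'$ the $j_m$-th identity $y_{j_m,0}(x,\tilde c)+c_m\,y_{j_m}(x,\tilde c)=y_{j_m,0}(x,\tilde c')+c_m'\,y_{j_m}(x,\tilde c')$ collapses to $(c_m-c_m')\,y_{j_m}(x,\tilde c)=0$, whence $c_m=c_m'$. (Independently of the previous step, $c_m$ can also be read off directly as the coefficient of $x^{k_{j_m}}$ in $P$, because the coefficient of $x^{k_{j_m}}$ in $y_{j_m,0}$ was normalized to vanish while $y_{j_m}$ is monic of degree $k_{j_m}$.) Thus $(\tilde c,c_m)=(\tilde c',c_m')$, giving injectivity of $Y^J$.

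I expect the main obstacle to be the reverse-generation step, namely recovering $y_{j_m}$ from its descendant $P$: this requires combining the Wronskian relation \Ref{wronskian-critical eqn} with the strict degree inequality $\tilde k_{j_m}>k_{j_m}$ supplied by the degree-increasing hypothesis. Without that inequality the degree comparison no longer isolates a unique low-degree solution, and the parent tuple is not determined by its image, so this is exactly the place where the technical assumption on $J$ must be used.
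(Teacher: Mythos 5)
Your proof is correct and follows exactly the route the paper indicates: the paper's own proof is just the one-line remark that the lemma "is easily proved by induction on $m$," and your argument is a valid realization of that induction. In particular, your reverse-generation step — recovering the parent polynomial $y_{j_m}$ from the descendant $P$ via $\Wr(z,P)=(\tilde k_{j_m}-k_{j_m})\,y_{j_m-1}y_{j_m+1}$ and the degree bound $\deg(z-z')\le k_{j_m}<\tilde k_{j_m}=\deg P$ — correctly supplies the detail the paper omits, and it is precisely where the degree-increasing hypothesis on $J$ is used.
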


\begin{proof}
The lemma is easily proved by induction on $m$.
\end{proof}

\begin{example} \label{example AM}
Let $N=2$, $J=(1,2)$. Then
\bea
Y^{(1)}(c_1)= (x+c_1,1), \qquad
Y^{(1,2)}(c_1,c_2)=
(x+c_1, x^3+3c_1x^2+3c_1^2x + c_2),
\eea
cf. with the first two Adler-Moser polynomials in \cite{AM}.
\end{example}

\begin{example}
 Let $N=3$, $J=(1,2,3)$. Then
\bea
Y^{(1)}(c_1)= (x+c_1,1,1), \qquad
Y^{(1,2)}(c_1,c_2)=
(x+c_1, x^2+2c_1x+ c_2, 1),
\eea
\bea
Y^{(1,2,3)}(c_1,c_2,c_3)=
(x+c_1, x^2+2c_1x+ c_2,
x^4+4c_1x^3+(2c_2+4c_1^2)x^2+4c_1c_2x+ c_3),
\eea
cf. example in Section \ref{sec gene for gh}.
\end{example}

The set of all tuples $(y_1,\dots,y_N)\in (\C[x])^N$ obtain from $y^\emptyset=(1,\dots,1)$
by generations in all degree increasing directions will be called the {\it population of tuples}
generated from $y^\emptyset$.

\section{Miura opers and  critical points, \cite{MV2}}
\label{MOCP}

\subsection{Deformations of Miura opers}

\begin{lem}[\cite{MV2}]
Let $\Ll = \der + \Lambda + V$ be a Miura oper.  Let $g \in \Bb$ and $j \in \{1,\dots,N\}$.  Then
\bean \label{adeq}
e^{\text{ad} (g E_j)} \Ll = \der + \Lambda + (V + g H_j) - (g^\prime + \langle \alpha_j , V \rangle g + g^2)E_j.
\eean
\end{lem}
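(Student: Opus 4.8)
The plan is to expand the defining series $e^{\text{ad}(gE_j)}\Ll = \Ll + [gE_j,\Ll] + \tfrac12[gE_j,[gE_j,\Ll]] + \dots$ and to show that, because of the nilpotency $E_j^2=0$, it truncates after the quadratic term. The two inputs I will isolate are the commutators of $E_j$ with the three constituents $\der,\Lambda,V$ of the Miura oper, and the relation $E_j^2=0$ that forces the truncation.

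First I would treat the linear term by splitting $[gE_j,\Ll] = [gE_j,\der] + g[E_j,\Lambda] + g[E_j,V]$. Viewing $E_j$ as a constant matrix and $\der=d/dx$ as a differential operator, the operator identity $\der\circ(gE_j) = (gE_j)\circ\der + g'E_j$ gives $[gE_j,\der] = -g'E_j$, while $\Lambda$ and $V$ are multiplication operators so the remaining two brackets reduce to matrix commutators. Writing $\Lambda = \sum_i F_i$ and using the Chevalley-type relation $[E_j,F_i]=\delta_{ij}H_j$ (read off directly from $e_{a,b}e_{c,d}=\delta_{bc}e_{a,d}$), I get $[E_j,\Lambda] = H_j$. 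For $V = \sum_k v_k e_{k,k}\in\Bb(\dio)$ a one-line computation gives $E_jV=v_{j+1}E_j$ and $VE_j=v_jE_j$, hence $[E_j,V]=(v_{j+1}-v_j)E_j$; since $E_j$ is by construction the root vector of $\al_j$, i.e. $\langle\al_j,V\rangle = v_j-v_{j+1}$ (consistent with the listed values of $\langle\al_j,H_i\rangle$), this is exactly $[E_j,V]=-\langle\al_j,V\rangle E_j$. Thus $[gE_j,\Ll] = gH_j - (g'+\langle\al_j,V\rangle g)E_j$.

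For the quadratic term I note $E_j^2=0$, so $E_j$ commutes with every function-multiple $hE_j$; consequently the two $E_j$-terms just produced are killed by $\text{ad}(gE_j)$ and only $gH_j$ contributes, giving $[gE_j,gH_j]=g^2[E_j,H_j]=-2g^2E_j$ via $[H_j,E_j]=\langle\al_j,H_j\rangle E_j=2E_j$. Hence $\tfrac12[gE_j,[gE_j,\Ll]]=-g^2E_j$, and the cubic term $[gE_j,-2g^2E_j]=0$ by the same nilpotency, so the series stops. Collecting the corrections yields $\der+\Lambda+(V+gH_j)-(g'+\langle\al_j,V\rangle g+g^2)E_j$, which is \Ref{adeq}. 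The computation is elementary; the only points demanding care are the operator commutator $[gE_j,\der]$ and verifying that $[E_j,F_i]=\delta_{ij}H_j$ and $[E_j,H_j]=-2E_j$ hold uniformly in the wrapped-around case $j=N$, where the factors $\lambda^{-1}$ in $E_N=\lambda^{-1}e_{N,1}$ and $\lambda$ in $F_N,\Lambda$ must cancel; I expect that index-$N$ check to be the only place requiring genuine (if minor) attention.
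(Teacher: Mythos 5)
Your proof is correct. Note that the paper itself gives no argument for this lemma at all — it is quoted from [MV2] with a citation and no proof — so there is nothing internal to compare against; your direct expansion of $e^{\text{ad}(gE_j)}\Ll$ is the natural (and presumably the original) computation, and it checks out line by line: $[gE_j,\der]=-g'E_j$, $[E_j,\Lambda]=H_j$, $[E_j,V]=-\langle\al_j,V\rangle E_j$, and the series truncates after the quadratic term $-g^2E_j$ because $[gE_j,hE_j]=0$. You are also right that the only delicate point is the wrap-around index $j=N$, where $E_N=\lambda^{-1}e_{N,1}$ and $F_N=\lambda e_{1,N}$: the $\lambda$ factors cancel in $[E_N,F_N]=e_{N,N}-e_{1,1}=H_N$ and in $[H_N,E_N]=2E_N$, so the formula holds uniformly. (One incidental remark: the paper's definition $F_i=e_{i+1,1}$ is a typo for $e_{i+1,i}$, which is the reading you correctly used, as required by $\Lambda=\sum_i F_i$.)
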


The opers $\Ll$ and $e^{\text{ad}( g E_j )} \Ll$ are $i$-gauge equivalent for every $i\neq j$.

\begin{cor}[\cite{MV2}]
Let $\Ll = \der + \Lambda + V$ be a Miura oper. Then $e^{\text{ad}( g E_j )} \Ll$
is a Miura oper if and only if the scalar function
$g$ satisfies the Ricatti equation
\bean
\label{Ric}
g' + \langle \alpha_j , V \rangle g +  g^2 = 0 \ .
\eean
\end{cor}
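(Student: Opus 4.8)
The plan is to deduce the statement directly from the preceding Lemma, equation \Ref{adeq}, which expresses
$$
e^{\text{ad}(gE_j)}\Ll = \der + \Lambda + (V + gH_j) - (g' + \langle\al_j,V\rangle g + g^2)E_j .
$$
The entire argument reduces to identifying which part of this expression obstructs $e^{\text{ad}(gE_j)}\Ll$ from being a Miura oper, so the proof is short and there is no genuine obstacle to overcome.

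First I would verify that the diagonal part is already of the correct type. Since $V\in\Bb(\dio)$ by the definition of a Miura oper and $H_j=e_{j,j}-e_{j+1,j+1}$ is traceless and diagonal, the sum $V+gH_j$ again lies in $\Bb(\dio)$ for any scalar function $g$. Hence the only term preventing $e^{\text{ad}(gE_j)}\Ll$ from having the Miura form $\der+\Lambda+(\text{traceless diagonal})$ is the summand $-(g'+\langle\al_j,V\rangle g+g^2)E_j$, which lies in $\Bb(\Nn^+_j)$.

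Next I would use that $E_j$ is a nonzero element of $\text{Mat}$ lying outside $\dio$ (it is $e_{j,j+1}$ for $j<N$ and $\la^{-1}e_{N,1}$ for $j=N$). By the uniqueness of the diagonal-plus-off-diagonal decomposition guaranteed by Lemma \ref{decomplemma}, the coefficient of $E_j$ cannot be absorbed into the diagonal part. Therefore the $E_j$-term vanishes if and only if its scalar coefficient is identically zero, that is,
$$
g'+\langle\al_j,V\rangle g+g^2=0,
$$
which is precisely the Riccati equation \Ref{Ric}. Conversely, when this equation holds the formula collapses to $e^{\text{ad}(gE_j)}\Ll=\der+\Lambda+(V+gH_j)$, manifestly a Miura oper. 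The only points meriting care are the observations that $H_j$ is traceless, so the diagonal part stays in $\dio$, and that $E_j$ is genuinely off-diagonal, so its coefficient must vanish on its own; both are transparent from the explicit matrices defined just above.
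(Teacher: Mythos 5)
Your proof is correct and follows the same route the paper intends: the corollary is an immediate consequence of formula \Ref{adeq}, since the diagonal part $V+gH_j$ stays in $\Bb(\dio)$ and the transformed operator is a Miura oper exactly when the coefficient of the off-diagonal element $E_j$ vanishes, which is the Riccati equation \Ref{Ric}. Nothing is missing.
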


Let $\Ll = \der + \Lambda + V$ be a Miura oper with $V=\sum_{i=1}^{N} v_ie_{i,i}$. Assume that the functions $v_i$ are rational functions of $x$.
For $j\in\{1,\dots,N\}$, we say that $\Ll$ is  {\it deformable} in the $j$-th direction
if equation \Ref{Ric} has a nonzero solution $g$, which is a rational function.

\subsection{Miura oper associated with a tuple}
Define a map
\bea
\mu \ : \ (\C[x])^N\ \to\ \mc M ,
\eea
which sends a tuple $y=(y_1,\dots,y_N)$ to the Miura oper $\Ll=\der+\Lambda+V$ with
\bea
V \ =\ \sum_{k=1}^{N} \log'(y_k)\,H_k\  =\  \sum_{k=1}^N \log'(y_k/y_{k-1})\, e_{k,k} .
\eea
Here $\log'(f)$ denotes $f'/f$. The Miura oper $\mu(y)$ determines $ y$ uniquely if $ y$ is generic.
The Miura oper $\mu( y)$ is called  {\it associated} with $y$.
For example,
\bea
\Ll^\emptyset = \der + \Lambda
\eea
is associated with $y^\emptyset = (1,\dots,1)$.

\begin{thm}
[\cite{MV2}]
\label{thm deform}
Let a Miura oper $\Ll$ be associated with $y=(y_1,\dots,y_N)$. Let $j\in\{1,\dots,N\}$. Then
$\Ll$ is deformable in the $j$-th direction if and only if there exists a polynomial $\tilde y_j$ satisfying
equation \Ref{wronskian-critical eqn}. Moreover, in that case any nozero rational solution $g$ of equation \Ref{Ric} has the form
$g=\log'(\tilde y_j/y_j)$, where $\tilde y_j$ is a solution of equation \Ref{wronskian-critical eqn}.
If $g=\log'(\tilde y_j/y_j)$, then the Miura oper
\bea
e^{\text{ad}(gE_j)}\Ll = \Ll + gH_j
\eea
is associated with the tuple $y^{(j)}=(y_1,\dots,\tilde y_j,\dots,y_N)$, see Section \ref{Elementary generation}.

\end{thm}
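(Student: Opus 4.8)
The plan is to prove the three assertions in sequence, exploiting the Ricatti equation \Ref{Ric} together with the explicit formula \Ref{adeq} for the deformed oper. The crucial observation is that the substitution $g=\log'(\tilde y_j/y_j)=\tilde y_j'/\tilde y_j-y_j'/y_j$ linearizes the Ricatti equation. First I would compute $\langle\al_j,V\rangle$ for $\Ll$ associated with $y$. Since $V=\sum_k\log'(y_k/y_{k-1})e_{k,k}$ and the pairing $\langle\al_j,H_i\rangle$ is the affine Cartan matrix, one gets $\langle\al_j,V\rangle = 2\log'(y_j)-\log'(y_{j-1})-\log'(y_{j+1})=\log'\bigl(y_j^2/(y_{j-1}y_{j+1})\bigr)$. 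This is the quantity that enters \Ref{Ric}, and it is precisely the logarithmic derivative tying the Ricatti equation to the Wronskian equation \Ref{wronskian-critical eqn}.

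Next I would establish the dictionary between rational solutions $g$ of \Ref{Ric} and polynomials $\tilde y_j$ solving \Ref{wronskian-critical eqn}. The standard Ricatti-to-linear trick is to write $g=h'/h$ for an auxiliary function $h$; here the natural choice is $h=\tilde y_j/y_j$, so that $g=\log'(\tilde y_j/y_j)$. Substituting into \Ref{Ric} and using the formula for $\langle\al_j,V\rangle$ above, a direct calculation collapses the Ricatti equation to the statement that $\Wr(y_j,\tilde y_j)$ is a constant multiple of $y_{j-1}y_{j+1}$, i.e.\ to \Ref{wronskian-critical eqn} up to normalization of $\tilde y_j$. This gives both directions of the ``if and only if'': a rational solution $g$ produces $\tilde y_j:=y_j\exp(\int g\,dx)$, which I must argue is in fact a \emph{polynomial} (the residues of $g$ being integers because $V$ has integer residues coming from the roots of the $y_k$, so $h$ is single-valued rational and $y_j h$ is polynomial), and conversely a polynomial $\tilde y_j$ solving the Wronskian equation yields the rational $g=\log'(\tilde y_j/y_j)$. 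I would also remark that any nonzero rational solution arises this way and that the freedom $\tilde y_j\mapsto\tilde y_j+cy_j$ from \Ref{deG} corresponds to $h\mapsto h+c$, leaving $g$'s pole structure unchanged.

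Finally, for the last assertion I would substitute $g=\log'(\tilde y_j/y_j)$ back into \Ref{adeq}. Because $g$ satisfies the Ricatti equation, the coefficient $(g'+\langle\al_j,V\rangle g+g^2)$ of $E_j$ vanishes, so $e^{\on{ad}(gE_j)}\Ll=\der+\Lambda+(V+gH_j)$ is again a Miura oper, confirming $e^{\on{ad}(gE_j)}\Ll=\Ll+gH_j$. It then remains to identify $V+gH_j$ with the diagonal part attached to the tuple $y^{(j)}=(y_1,\dots,\tilde y_j,\dots,y_N)$. Writing $H_j=e_{j,j}-e_{j+1,j+1}$ and $g=\log'(\tilde y_j)-\log'(y_j)$, adding $gH_j$ to $V$ exactly replaces $\log'(y_j)$ by $\log'(\tilde y_j)$ in the $e_{j,j}$ and $e_{j+1,j+1}$ entries, which is the diagonal matrix $\mu(y^{(j)})-\der-\Lambda$ by definition of $\mu$. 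I expect the main obstacle to be the rationality/polynomiality argument in the middle step: one must verify that a rational solution $g$ of \Ref{Ric} integrates to a genuine rational function $h$ with $y_jh$ polynomial, which rests on controlling the residues of $g$ at the roots of $y_j$ (and at $\infty$), rather than on any formal manipulation.
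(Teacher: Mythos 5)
The paper you were given contains no proof of Theorem~\ref{thm deform}: it is quoted from \cite{MV2}. So your proposal can only be measured against the standard argument (which is the one in \cite{MV2}), and your skeleton is exactly that argument: compute $\langle\al_j,V\rangle=2\log'(y_j)-\log'(y_{j-1})-\log'(y_{j+1})$, linearize the Ricatti equation \Ref{Ric} by $g=\log' h$ so that it becomes $h''+\langle\al_j,V\rangle\,h'=0$, i.e. $h'=c\,y_{j-1}y_{j+1}/y_j^2$, i.e. $\Wr(y_j,y_jh)=c\,y_{j-1}y_{j+1}$ for $\tilde y_j=y_jh$; and the last clause is the substitution of the Ricatti equation into \Ref{adeq} followed by the observation that adding $gH_j$ replaces $\log'(y_j)$ by $\log'(\tilde y_j)$ in the $e_{j,j}$ and $e_{j+1,j+1}$ entries. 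Your first and third steps are complete and correct.

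The gap is in the middle step, precisely where you put "the main obstacle", and as sketched it does not close: the implication "residues of $g$ are integers, so $h$ is rational and $y_jh$ is polynomial" is false without a genericity hypothesis on $y$, a hypothesis which the statement as reproduced here suppresses but which your write-up would have to make explicit. Concretely, for $N\ge 3$ take $y_j=x^2$, $y_{j\pm1}=1$; then $\langle\al_j,V\rangle=4/x$ and $g=-3/x$ is a nonzero rational solution of \Ref{Ric} (indeed $3/x^2-12/x^2+9/x^2=0$), so $\Ll$ is deformable in the $j$-th direction; yet $\Wr(x^2,\tilde y_j)=1$ has no polynomial solution, since its left-hand side vanishes at $x=0$, and here $g=\log'(h)$ only for $h=x^{-3}$, i.e. $\tilde y_j=y_jh=x^{-1}$. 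So the "only if" direction genuinely fails for non-generic $y$. A correct completion of your step is: (i) from \Ref{Ric}, every pole of a rational solution $g$ is simple, with residue $r$ satisfying $r^2+(a_{-1}-1)r=0$, where $a_{-1}\in\Z$ is the residue of $\langle\al_j,V\rangle$ at that point, and $g$ vanishes at $\infty$ (else the $g^2$ term dominates); hence $g=\sum_i r_i/(x-x_i)$ with all $r_i\in\Z$ and $h=\prod_i(x-x_i)^{r_i}$ is rational. (ii) Now invoke genericity: if $y_j$ has simple roots and no common roots with $y_{j\pm1}$, then $a_{-1}=2$ at each root of $y_j$, forcing $r\in\{0,-1\}$ there, while $a_{-1}\le 0$ away from the roots of $y_j$, forcing $r\ge 0$ there; so $h$ has at most simple poles, all at roots of $y_j$, and $\tilde y_j=y_jh$ is a rational function without poles, i.e. a polynomial. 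With the genericity hypothesis added (it holds in every application in this paper, since tuples representing critical points are generic), your outline becomes a proof; without it, the statement you set out to prove is false, so no argument could have succeeded.
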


In particular, if $\Ll$ is associated with a fertile tuple, then $\Ll$ is deformable in all directions.

\begin{lem}
\label{lem v' v}
Let an oper $\der +\La+\sum_{k=1}^N v_k e_{k,k}$ be associated with an $N$-tuple of polynomials $y$, that is assume
that $v_k = \log'(y_k/y_{k-1})$. If $v'_k=0$ for some $k$, then $v_k=0$ and $y_k=a y_{k-1}$ for some $a\in\C$.
\end{lem}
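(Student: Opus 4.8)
The plan is to work directly from the definition $v_k=\log'(y_k/y_{k-1})=y_k'/y_k-y_{k-1}'/y_{k-1}$, exploiting the fact that the logarithmic derivative of a polynomial is a very restricted kind of rational function. Since $y_k$ and $y_{k-1}$ are polynomials, $v_k$ is a rational function of $x$. I would first record the elementary fact that a rational function on $\C$ whose derivative vanishes identically must be a constant; thus the hypothesis $v_k'=0$ says precisely that $v_k=c$ for some $c\in\C$.

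Next I would analyze the behavior of $v_k$ at infinity. For a nonzero polynomial $p$ of degree $d$ with roots $r_1,\dots,r_d$ one has $p'/p=\sum_i 1/(x-r_i)$, so $p'/p$ is a proper rational function that vanishes like $d/x$ as $x\to\infty$. Applying this to $y_k$ and to $y_{k-1}$ shows that
\bea
v_k=\frac{y_k'}{y_k}-\frac{y_{k-1}'}{y_{k-1}}\ \longrightarrow\ 0
\qquad\text{as } x\to\infty.
\eea
Since $v_k$ is the constant $c$, this forces $c=0$, that is $v_k=0$. Equivalently, one may integrate the relation $f'/f=c$ for $f=y_k/y_{k-1}$ to obtain $f=\mathrm{const}\cdot e^{cx}$, which is a rational function only when $c=0$; I would likely present the decay-at-infinity argument as the primary one since it avoids invoking the exponential.

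Finally, with $v_k=0$ in hand I would conclude immediately. The equation $\log'(y_k/y_{k-1})=0$ means that the rational function $y_k/y_{k-1}$ has identically vanishing derivative, hence equals a constant $a\in\C$, so $y_k=a\,y_{k-1}$, as claimed. The argument is short and I do not anticipate a genuine obstacle; the one point that must not be skipped is the passage from $v_k$ being constant to $v_k=0$, which is exactly where the polynomial (rather than merely rational) nature of $y_k$ and $y_{k-1}$ enters, either through the decay $p'/p\to 0$ at infinity or through the non-rationality of $e^{cx}$.
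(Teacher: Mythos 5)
Your proof is correct. The overall skeleton matches the paper's: from $v_k'=0$ conclude $v_k$ is a constant, force that constant to be zero using the polynomiality of $y_k,y_{k-1}$, and then integrate $\log'(y_k/y_{k-1})=0$ to get $y_k=ay_{k-1}$. Where you differ is in the mechanism for the middle step. The paper's proof is the one you relegate to a side remark: it integrates the relation $\log'(y_k/y_{k-1})=b$ to obtain $y_k=ae^{bx}y_{k-1}$ and then observes that $b=0$ because an exponential factor is incompatible with both sides being polynomials. Your primary argument instead notes that for any nonzero polynomial $p$ one has $p'/p=\sum_i 1/(x-r_i)\to 0$ as $x\to\infty$, so the constant $v_k$ must vanish; this avoids solving the ODE and invoking $e^{bx}$ altogether, at the cost of a (very mild) asymptotic estimate. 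Both arguments are complete and elementary; yours is marginally more self-contained in that it never leaves the category of rational functions, while the paper's is a one-line integration. One small point worth making explicit in your write-up: the identity $p'/p=\sum_i 1/(x-r_i)$ presupposes $p\not\equiv 0$, which holds here since the $y_j$ are (possibly constant) monic-type polynomials representing a tuple in $(\C[x])^N$, and it also covers the case $\deg p=0$, where the sum is empty and $p'/p=0$.
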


\begin{proof}
If $v_k'=0$, then $v_k=b$ for some $b\in\C$. Then $y_k=ae^{bx}y_{k-1}$ for some $a\in\C$. Hence $b=0$ since $y_k,y_{k-1}$ are
polynomials.
\end{proof}

\subsection{Miura opers associated with the generation procedure}

\label{Miura opers with cr points}

Let $J = (j_1,\dots,j_m)$ be a degree increasing sequence of integers.
Let $Y^J : \C^m \to \PCN $
be the generation of tuples from $y^\emptyset$ in the $J$-th direction.
We define the associated family of Miura opers by the formula:
\bea
\mu^J \  :\ \C^m\ \to\ \mc M, \qquad c \ \mapsto \ \mu(Y^J(c)) .
\eea
The map $\mu^J$ is called the {\it generation of Miura opers from $\Ll^\emptyset$
in the $J$-th direction.}

For $\ell=1,\dots,m$, denote $J_\ell=(j_1,...,j_\ell)$ the beginning $\ell$-interval of
the sequence $J$. Consider the associated map $Y^{J_\ell} : \C^\ell\to\PCN$. Denote
\bea
Y^{J_\ell}(c_1,\dots,c_\ell) = (y_1(x,c_1,\dots,c_\ell, \ell),
\dots,y_N(x,c_1,\dots,c_\ell, \ell)).
\eea
Introduce
\bean
\label{g's}
g_1(x,c_1,\dots,c_m) &=&
\log'(y_{j_1}(x,c_1,1)) ,
\\
\notag
g_\ell(x,c_1,\dots,c_m) &=&
\log'(y_{j_\ell}(x,c_1.\dots,c_\ell,\ell)) - \log'(y_{j_\ell}(x,c_1,\dots,c_{\ell-1},\ell-1)),
\eean
for $\ell=2,\dots,m$.
For $c\in\C^m$, denote
\bean
\label{T}
T^J(x,c) = e^{g_m(x,c) E_{j_m}}\cdots e^{g_1(x,c) E_{j_1}}.
\eean

\begin{lem}
\label{lem formula} For $c\in\C^m$, we have
\bean
\label{operformula}
\mu^J(c)\ = \ T^J(x,c) \,\Ll^\emptyset \,(T^J(x,c))^{-1}
\eean
 and
\bean
\label{oper2}
\mu^J(c)\ =\
  \der + \Lambda + \sum_{\ell=1}^m g_\ell(x,c) H_{j_\ell}.
  \eean
\end{lem}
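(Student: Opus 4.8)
The plan is to prove both formulas \Ref{operformula} and \Ref{oper2} simultaneously by induction on the length $m$ of the sequence $J$, using the key structural result Theorem \ref{thm deform}, which tells us exactly how each elementary generation step acts on the associated Miura oper. The base case $m=0$ is immediate: $Y^\emptyset$ produces $y^\emptyset=(1,\dots,1)$, so $\mu^\emptyset=\Ll^\emptyset=\der+\Lambda$, and both \Ref{operformula} (with $T^\emptyset=1$) and \Ref{oper2} (with an empty sum) hold trivially.

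For the inductive step, I would assume both formulas for $\tilde J=(j_1,\dots,j_{m-1})$ and examine how passing from $\tilde J$ to $J$ changes the oper. By the construction of the multistep generation in \Ref{Ja}, the tuple $Y^J(c)$ is an immediate descendant of $Y^{\tilde J}(\tilde c)$ in the $j_m$-th direction, obtained by replacing $y_{j_m}$ with $\tilde y_{j_m}=y_{j_m,0}+c_m y_{j_m}$. Theorem \ref{thm deform} applies directly: since the tuple is fertile (Lemma \ref{lem gen procedure}), the oper is deformable in the $j_m$-th direction, and the descendant oper is exactly
\bea
\mu^J(c)\ =\ e^{\on{ad}(g_m E_{j_m})}\,\mu^{\tilde J}(\tilde c)\ =\ \mu^{\tilde J}(\tilde c)+g_m H_{j_m},
\eea
where $g_m=\log'(\tilde y_{j_m}/y_{j_m})$. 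I must check that this $g_m$ agrees with the definition in \Ref{g's}: since $\log'(\tilde y_{j_m}/y_{j_m})=\log'(\tilde y_{j_m})-\log'(y_{j_m})$ and $\tilde y_{j_m}$ is precisely the $j_m$-th polynomial at stage $m$ while $y_{j_m}$ is the $j_m$-th polynomial at stage $m-1$, this matches \Ref{g's} exactly. Combining with the inductive form of \Ref{oper2} gives
\bea
\mu^J(c)\ =\ \der+\Lambda+\sum_{\ell=1}^{m-1} g_\ell H_{j_\ell}+g_m H_{j_m}\ =\ \der+\Lambda+\sum_{\ell=1}^{m} g_\ell H_{j_\ell},
\eea
which is \Ref{oper2} for $J$.

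For \Ref{operformula}, I would use the conjugation form of the gauge action. Theorem \ref{thm deform} realizes the descendant as $e^{\on{ad}(g_m E_{j_m})}\mu^{\tilde J}(\tilde c)=e^{g_m E_{j_m}}\,\mu^{\tilde J}(\tilde c)\,e^{-g_m E_{j_m}}$, the last equality being valid because $\Nn^+_{j_m}$ is nilpotent (as noted in the discussion of $j$-gauge equivalence). Substituting the inductive hypothesis $\mu^{\tilde J}(\tilde c)=T^{\tilde J}\,\Ll^\emptyset\,(T^{\tilde J})^{-1}$ and using the definition \Ref{T}, namely $T^J=e^{g_m E_{j_m}}T^{\tilde J}$, yields
\bea
\mu^J(c)\ =\ e^{g_m E_{j_m}}\,T^{\tilde J}\,\Ll^\emptyset\,(T^{\tilde J})^{-1}e^{-g_m E_{j_m}}\ =\ T^J\,\Ll^\emptyset\,(T^J)^{-1},
\eea
which is \Ref{operformula}. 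The main obstacle, and the point demanding the most care, is the bookkeeping in identifying the $g_\ell$ of \Ref{g's} with the logarithmic derivatives produced by successive applications of Theorem \ref{thm deform}: one must track carefully which polynomials appear at stage $\ell$ versus stage $\ell-1$, and confirm that the degree-increasing hypothesis guarantees each $\tilde y_{j_\ell}$ is a genuine polynomial so that all the $g_\ell$ are rational and the Riccati solutions in Theorem \ref{thm deform} exist. Once this identification is pinned down, the remaining algebra is the routine nilpotent-exponential manipulation indicated above.
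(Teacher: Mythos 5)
Your proof is correct and is essentially the paper's own argument: the paper disposes of this lemma with the single line ``The lemma follows from Theorem \ref{thm deform},'' and your induction on $m$ — applying Theorem \ref{thm deform} at the $j_m$-th step, identifying $g_m=\log'(\tilde y_{j_m}/y_{j_m})$ with the definition in \Ref{g's}, and unwinding $e^{\operatorname{ad}(g_mE_{j_m})}$ as conjugation to build up $T^J$ — is precisely the elaboration the paper leaves implicit. No discrepancy in approach or substance.
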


\begin{proof}
The lemma follows from Theorem \ref{thm deform}.
\end{proof}

\begin{example}

Let $N=2$, $J=(1,2)$. Then $\mu^J(c_1,c_2) =\der +\La + \text{diag}(v_1,v_2)$,
where
\bea
v_1 = -v_2 = \log'(x+c_1) - \log'(x^3+3c_1x^2+3c_1^2x + c_2).
\eea
We have $g_1 = \log'(x+c_1),$\ $ g_2 = \log'(x^3+3c_1x^2+3c_1^2x + c_2)$ and
\bea
\mu^J(c)= e^{g_2(x,c) E_{2}} e^{g_1(x,c) E_{1}}
\,\Ll^\emptyset\,e^{-g_1(x,c) E_{1}} e^{-g_2(x,c) E_{2}}.
\eea
\end{example}

\begin{cor}
\label{cor der}
For any $r\in\N$ and $c\in\C^m$, let $\frac {\der\phantom{a}}{\der t_r}\big|_{\mu^J(c)}$ be the value at $\mu^J(c)$ of the vector field of the
$r$-th mKdV flow on the space $\mc M$, see \Ref{mKdVr}. Then
\bean
\label{T mkdv}
\frac {\der\phantom{a}}{\der t_r}\big|_{\mu^J(c)} = \frac {\der\phantom{a}}{\der x}(T^J(x,c) \Lambda^r (T^J(x,c))^{-1})^0,
\eean
where ${}^0$ is defined in \Ref{0}.

\end{cor}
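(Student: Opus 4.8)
The plan is to show that the matrix $T^J(x,c)$ from \Ref{T} is an admissible choice of dressing matrix $T$ for the Miura oper $\mu^J(c)$ in the sense of Lemma~\ref{Tlemma}(i), and then to invoke Lemma~\ref{lem der} together with the uniqueness statement of Lemma~\ref{Tlemma}(ii). Once $T^J$ is known to be admissible, both facts combine immediately to give \Ref{T mkdv}.

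First I would verify that $T^J(x,c)$ has the shape $1 + \sum_{i<0} T_i \Lambda^i$ with $T_i \in \Bb(\di)$ required of $T$. By Lemma~\ref{lem exp} each factor is $e^{g_\ell E_{j_\ell}} = 1 + g_\ell e_{j_\ell,j_\ell}\Lambda^{-1}$, which is of the form $1 + d\,\Lambda^{-1}$ with $d\in\Bb(\di)$. The claim is that a product of two such factors is again of this form. The only point requiring the commutation relations is the cross term: to move $\Lambda^{-1}$ past a diagonal matrix one uses the identity $e_{k,k}\Lambda^{-1} = \Lambda^{-1} e_{k+1,k+1}$ of Lemma~\ref{lem lambda}, which gives $\Lambda^{-1} b = \tilde b\,\Lambda^{-1}$ for $b\in\Bb(\di)$ with $\tilde b\in\Bb(\di)$ a shift of $b$. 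Hence $(1+a\Lambda^{-1})(1+b\Lambda^{-1}) = 1 + (a+b)\Lambda^{-1} + a\tilde b\,\Lambda^{-2}$, again of the form $1 + \sum_{i<0}(\text{diagonal})\Lambda^i$; the uniqueness of the decomposition in Lemma~\ref{decomplemma} pins down the coefficients, and induction on the number of factors gives the claim for $T^J$.

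Next I would observe that $T^J$ is in fact a dressing matrix for $\mu^J(c)$. By Lemma~\ref{lem formula} we have $\mu^J(c) = T^J \Ll^\emptyset (T^J)^{-1}$ with $\Ll^\emptyset = \der + \Lambda$, hence $(T^J)^{-1}\mu^J(c)\,T^J = \der + \Lambda$. This is of the form $\der + \Lambda + \sum_{i\le 0} b_i \Lambda^i$ demanded in Lemma~\ref{Tlemma}(i), with all scalar coefficients $b_i$ equal to zero. Together with the previous paragraph this shows $T^J$ is an admissible $T$ for the Miura oper $\mu^J(c)$.

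Finally, by Lemma~\ref{Tlemma}(ii) the matrix $T\Lambda^r T^{-1}$ is independent of the admissible choice of $T$, so $T^J\Lambda^r (T^J)^{-1}$ coincides with the quantity $T\Lambda^r T^{-1}$ appearing in Lemma~\ref{lem der}. Applying Lemma~\ref{lem der} to $\Ll = \mu^J(c)$ then yields $\frac{\der}{\der t_r}\big|_{\mu^J(c)} = \frac{d}{dx}(T^J\Lambda^r(T^J)^{-1})^0$, which is precisely \Ref{T mkdv}. The only genuinely computational step is the verification of the shape of $T^J$, and even there the sole subtlety is the commutation of $\Lambda^{-1}$ with diagonal matrices supplied by Lemma~\ref{lem lambda}; everything else is a direct assembly of previously established lemmas, so I expect no serious obstacle.
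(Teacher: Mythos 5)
Your proof is correct and follows essentially the same route as the paper: the paper's own proof is the one-line observation that the corollary follows from Lemma~\ref{lem der} and formula \Ref{operformula}, which is exactly the argument you assemble. The additional details you supply (that $T^J$ has the shape $1+\sum_{i<0}T_i\Lambda^i$ via Lemmas~\ref{lem exp} and \ref{lem lambda}, and that the uniqueness in Lemma~\ref{Tlemma}(ii) makes $T^J\Lambda^r(T^J)^{-1}$ the canonical dressing quantity) are precisely the verifications the paper leaves implicit.
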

\begin{proof}
The corollary follows from \Ref{mKdVr} and \Ref{operformula}.
\end{proof}

Denote ${\tilde J}=(j_1,\dots,j_{m-1})$. Consider the associated family
$\mu^{{\tilde J}} : \C^{m-1}\to\mc M$.
Let $\frak m_i : \mc M\to \D,\ \Ll \mapsto L_i,$ be Miura maps
defined in Section \ref{sec Miura maps} for  $i=1,\dots,N$.
Denote ${\tilde c}=(c_1,\dots,c_{m-1})$.

\begin{lem}
\label{lem j j'}
We have $\frak m_i\circ \mu^J({\tilde c},c_m) = \frak m_i\circ \mu^{{\tilde J}}({\tilde c})$ for all $i\neq j_m$
and $({\tilde c},c_m)\in\C^m$.

\end{lem}

\begin{proof}
The lemma follows from formula \Ref{operformula} and Theorem \ref{thm gaugemiura}.
\end{proof}

\begin{lem}
\label{lem der c-m}
\bean
\label{form der c-m}
\frac{\der \mu^J}{\der c_m}({\tilde c},c_m) \ =\ a\
\frac{y_{j_m-1}(x,{\tilde c}, m-1) y_{j_m+1}(x,{\tilde c},m-1)}
{y_{j_m}(x,{\tilde c},c_m,m)^2} H_{j_m}
\eean
for some $a\in\C$.
\end{lem}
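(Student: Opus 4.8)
The plan is to differentiate the closed form \Ref{oper2} directly. By Lemma \ref{lem formula} we have $\mu^J(c) = \der + \Lambda + \sum_{\ell=1}^m g_\ell(x,c)\,H_{j_\ell}$, and by the definitions \Ref{g's} each $g_\ell$ with $\ell<m$ involves only the polynomials built at steps $\le \ell$, hence depends on $c_1,\dots,c_\ell$ but not on $c_m$. Therefore the only summand varying with $c_m$ is $g_m H_{j_m}$, so that
$$
\frac{\der\mu^J}{\der c_m}(\tilde c,c_m)\ =\ \frac{\der g_m}{\der c_m}(x,c)\,H_{j_m}.
$$
This already yields the factor $H_{j_m}$ in \Ref{form der c-m}, and reduces the whole statement to computing the scalar $\der g_m/\der c_m$.

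For that scalar, recall from \Ref{Ja} that at the last generation step only the $j_m$-th polynomial changes. Write $b=y_{j_m}(x,\tilde c,m-1)$ for the $(m-1)$-step polynomial and $y_{j_m,0}$ for the normalized particular solution, so that the $m$-step polynomial is $\tilde y_{j_m}=y_{j_m}(x,\tilde c,c_m,m)=y_{j_m,0}+c_m b$. By \Ref{g's}, $g_m=\log'(\tilde y_{j_m})-\log'(b)$ with $\log'(f)=f'/f$ in $x$, and the subtracted term is independent of $c_m$. Since $\der/\der c_m$ commutes with $\der/\der x$ and $\der\tilde y_{j_m}/\der c_m=b$, a direct differentiation gives
$$
\frac{\der g_m}{\der c_m}\ =\ \frac{\der}{\der c_m}\,\frac{\tilde y_{j_m}'}{\tilde y_{j_m}}\ =\ \frac{b'\,\tilde y_{j_m}-\tilde y_{j_m}'\,b}{\tilde y_{j_m}^{\,2}}\ =\ \frac{-\Wr(b,\tilde y_{j_m})}{\tilde y_{j_m}^{\,2}}.
$$

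Finally I would use bilinearity of the Wronskian together with the normalization of Section \ref{Degree increasing generation}. Since $\Wr(b,b)=0$, we have $\Wr(b,\tilde y_{j_m})=\Wr(b,y_{j_m,0}+c_m b)=\Wr(y_{j_m},y_{j_m,0})$, which by construction equals $\on{const}\,y_{j_m-1}y_{j_m+1}$; moreover the neighbors $y_{j_m\pm1}$ are untouched by the last generation, so they equal $y_{j_m\pm1}(x,\tilde c,m-1)$. Substituting yields $\der g_m/\der c_m=a\,y_{j_m-1}y_{j_m+1}/\tilde y_{j_m}^{\,2}$ with $a\in\C$, which is exactly \Ref{form der c-m}. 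I do not expect a genuine obstacle here: the computation is routine. The only points requiring care are (i) confirming that $g_1,\dots,g_{m-1}$ carry no $c_m$-dependence and that $c_m$ survives only through $\tilde y_{j_m}$ and its denominator, and (ii) keeping the Wronskian sign convention consistent so that the $c_m$-linear term in the numerator cancels cleanly, leaving the constant $a$.
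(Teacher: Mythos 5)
Your proposal is correct and follows essentially the same route as the paper's own proof: both reduce the claim via \Ref{oper2} to computing $\der g_m/\der c_m$, evaluate that derivative by the quotient rule to produce $\Wr(y_{j_m,0},y_{j_m}(x,\tilde c,m-1))$ over $y_{j_m}(x,\tilde c,c_m,m)^2$, and then invoke the defining Wronskian relation \Ref{tilde y} for $y_{j_m,0}$. The sign bookkeeping you flag is harmless, since the antisymmetry of the Wronskian only changes the constant $a$.
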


\begin{proof}
 We have $ y_{j_m}(x,{\tilde c},c_m,m)=y_{j_m,0}(x,{\tilde c}) + c_m y_{j_m}(x,{\tilde c},m-1),$
where $y_{j_m,0}(x,{\tilde c})$ is such that
 \bea
 \Wr (y_{j_m}(x,{\tilde c},m-1), y_{j_m,0}(x,{\tilde c})) \ =\ a\ y_{j_m-1}(x,{\tilde c},m-1)y_{j_m+1}(x,{\tilde c},m-1),
 \eea
 for some $a\in\C$, see \Ref{tilde y}.
We have $ g_m = \log'(y_{j_m}(x,{\tilde c},c_m,m))- \log' (y_{j_m}(x,{\tilde c},m-1))$.

By formula \Ref{oper2}, we have
\bea
\frac{\der \mu^J}{\der c_m}({\tilde c},c_m) = \frac{\der g_m}{\der c_m}({\tilde c},c_m) H_{j_m}
\eea
and
\bea
\frac{\der g_m}{\der c_m}({\tilde c},c_m) &=& \frac \der{\der c_m}\left(
\frac{y_{j_m,0}'(x,{\tilde c}) + c_m y_{j_m}'(x,{\tilde c},m-1)}{y_{j_m,0}(x,{\tilde c}) + c_m y_{j_m}(x,{\tilde c},m-1)}
\right)
\\
&=&
\frac{\Wr ( y_{j_m,0}(x,{\tilde c}), y_{j_m}(x,{\tilde c},m-1))}{(y_{j_m,0}(x,{\tilde c}) + c_m y_{j_m}(x,{\tilde c},m-1))^2}.
\eea
\end{proof}

\section{Vector fields}
\label{Vector fields}

\subsection{Statement}
\label{sec statement}

Recall that  we denote by $ \frac \der{\der t_r}$ the $r$-th mKdV vector field on $\mc M$ and
also  the $r$-th KdV vector field on $\mc D$.
For a Miura map $\frak m_i : \mc M\to \D,\ \Ll \mapsto L_i,$ denote by $d\frak m_i$  the associated
derivative map $T\mc M \to T\D$.  By Theorem
\ref{thm mkdvtokdv} we have $d\frak m_i:  \frac \der{\der t_r}\big|_{\Ll}
\mapsto \frac \der{\der t_r}\big|_{L_i}$.

Fix a degree increasing sequence $J=(j_1,\dots, j_m)$. Consider the associated family $\mu^J:\C^m\to \mc M$
of Miura opers.
For a vector field $\Ga $ on $\C^m$, denote by
$\frac {\der \mu^J}{\der \Ga}$ the derivative of $\mu^J$ along the vector field.
The derivative is well-defined since $\mc M$ is an affine space.

\begin{thm}
\label{thm main}
For every $r\in\N$ there exists a polynomial vector field $\Ga_r$ on $\C^m$
such that
\bean
\label{formula main}
\frac \der{\der t_r}\big|_{\mu^J(c)} = \frac {\der \mu^J}{\der \Ga_r}(c)
\eean
for all $c\in\C^m$. If $r> 2m$, then $\frac \der{\der t_r}\big|_{\mu^J(c)}=0$
for all $c\in\C^m$ and $\Ga_r=0$.

\end{thm}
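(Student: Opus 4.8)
The plan is to show that each mKdV vector field at a point $\mu^J(c)$ of our family is tangent to the family, and to do so by comparing two descriptions of the vector field. On one side, Corollary \ref{cor der} gives
\bean
\label{plan T}
\frac {\der\phantom{a}}{\der t_r}\big|_{\mu^J(c)} = \frac {\der\phantom{a}}{\der x}\bigl(T^J(x,c) \Lambda^r (T^J(x,c))^{-1}\bigr)^0,
\eean
where by \Ref{T} the matrix $T^J(x,c)=e^{g_m E_{j_m}}\cdots e^{g_1 E_{j_1}}$ is an explicit product of exponentials. Using Lemma \ref{lem exp}, each factor is $1+g_\ell e_{j_\ell,j_\ell}\Lambda^{-1}$, so $T^J$ is a polynomial in $\Lambda^{-1}$ whose coefficients are the rational functions $g_\ell$ of $x$ and $c$. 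On the other side, since $\mc M$ is an affine space with tangent space $\Bb(\dio)$, formula \Ref{oper2} writes $\mu^J(c)=\der+\Lambda+\sum_\ell g_\ell H_{j_\ell}$, so the tangent vectors to the family are spanned by the derivatives $\frac{\der\mu^J}{\der c_\ell}=\frac{\der g_\ell}{\der c_\ell}H_{j_\ell}$ (together with contributions from how earlier $g$'s depend on later $c$'s).

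First I would compute the diagonal part $(T^J\Lambda^r (T^J)^{-1})^0$ explicitly in terms of the $g_\ell$. The key structural fact is that conjugating $\Lambda^r$ by the unipotent $T^J$ produces, after taking the $\Lambda^0$-component, a diagonal matrix whose entries are polynomial expressions in the $g_\ell$ and their $x$-derivatives; then \Ref{plan T} takes the $x$-derivative of this. I would then match this against a vector field $\Ga_r=\sum_\ell \Ga_{r,\ell}\,\der/\der c_\ell$ on $\C^m$ by requiring $\frac{\der\mu^J}{\der\Ga_r}=\sum_\ell \Ga_{r,\ell}\frac{\der\mu^J}{\der c_\ell}$ to equal \Ref{plan T}. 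Here Lemma \ref{lem der c-m} is the essential tool: it identifies $\frac{\der\mu^J}{\der c_m}$ as $a\,y_{j_m-1}y_{j_m+1}/y_{j_m}^2\,H_{j_m}$, an explicit rational $H_{j_m}$-valued function, and by the inductive structure of the generation the same applies to each $\frac{\der\mu^J}{\der c_\ell}$ along $H_{j_\ell}$. Thus I would proceed by induction on $m$: assuming the result for $\Tilde J=(j_1,\dots,j_{m-1})$, I would use Lemma \ref{lem j j'} (which says the Miura images $\frak m_i\circ\mu^J$ for $i\neq j_m$ are independent of $c_m$, hence determined by the $\Tilde J$ family) to reduce the $t_r$-flow in all but the $j_m$-direction to the inductive hypothesis, and then solve for the single remaining coefficient $\Ga_{r,m}$ in the $H_{j_m}$-direction using Lemma \ref{lem der c-m}.

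The polynomiality of $\Ga_r$ and the vanishing for $r>2m$ require a degree count. Since the generation is degree increasing, Lemma \ref{lem gen procedure} gives the degree vector $\bs k^J$ of $Y^J(c)$, and the growth of these degrees under the cyclic-type generation is controlled by Lemma \ref{lem cyclic increas}. I would track the order in $\Lambda^{-1}$, equivalently the degree in $x$ of the coefficients of $T^J$, to show that $(T^J\Lambda^r(T^J)^{-1})^0$ vanishes identically once $r$ exceeds twice the number of generation steps; intuitively each factor $e^{g_\ell E_{j_\ell}}$ contributes one power of $\Lambda^{-1}$, so there are only $m$ such factors and conjugating $\Lambda^r$ cannot return to the $\Lambda^0$-stratum once $r$ is large relative to $m$. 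Finally, matching coefficients of like powers of $x$ (using that all polynomials involved are monic of the known degrees) yields that $\Ga_{r,\ell}$ are polynomials in $c$.

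The main obstacle I expect is proving polynomiality of the $\Ga_{r,\ell}$ and obtaining the sharp bound $r>2m$. The vector field \Ref{plan T} is a priori only rational in $c$ (the $g_\ell$ are ratios of polynomials), so establishing that the solution $\Ga_r$ of the linear system $\frac{\der\mu^J}{\der\Ga_r}=\frac{\der}{\der t_r}|_{\mu^J(c)}$ has polynomial entries is delicate: one must show that the apparent denominators cancel. I anticipate that the cancellation follows from the Wronskian structure of Lemma \ref{lem der c-m} — the factor $y_{j_m-1}y_{j_m+1}/y_{j_m}^2$ matching exactly the shape of the diagonal entries produced by the conjugation — so that the quotient is polynomial; making this precise, together with the degree bookkeeping that pins down the threshold $2m$, is the technical heart of the argument.
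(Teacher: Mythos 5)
Your outline reproduces the paper's skeleton: induction on $m$, the use of Lemma \ref{lem j j'} (gauge invariance of $\frak m_i\circ\mu^J$ for $i\neq j_m$), the explicit form of $\der\mu^J/\der c_m$ from Lemma \ref{lem der c-m}, the $\Lambda^{-1}$-counting for the vanishing when $r>2m$ (the paper's Lemma \ref{lem r>2m}; note you need the $2m$ to come from the $m$ factors of $T^J$ \emph{plus} the $m$ factors of $(T^J)^{-1}$), and a monic-leading-coefficient argument for polynomiality (close in spirit to the Laurent-expansion argument of Proposition \ref{prop polyn}). But there is a genuine gap at the heart of the induction step. When you say you will ``solve for the single remaining coefficient $\Ga_{r,m}$ in the $H_{j_m}$-direction,'' you are presupposing that the discrepancy $Y=\frac{\der}{\der t_r}\big|_{\mu^J(c)}-\frac{\der\mu^J}{\der\Ga_{r,\tilde J}}(c)$ lies in the one-dimensional space spanned by $\der\mu^J/\der c_m$. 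What Lemma \ref{lem j j'}, Theorem \ref{thm gaugemiura} and Theorem \ref{thm mkdvtokdv} actually give you (this is the paper's Lemma \ref{lem ker 1}) is only that $Y$ is annihilated by the differentials $d\frak m_i$ of the Miura maps for all $i\neq j_m$. A priori $Y$ is an arbitrary trace-zero diagonal matrix of rational functions, and these $N-1$ conditions are systems of linear \emph{differential} equations, so their common kernel could well be larger than one-dimensional. The paper closes this gap with Proposition \ref{lem ker dm}: under the support condition that $Y_k=0$ unless $k=j_\ell$ or $j_\ell+1$ for some $\ell\le m$ (itself needing proof, Lemma \ref{lem vector is good}), any element of this common kernel is a scalar multiple of $\frac{y_{j_m-1}y_{j_m+1}}{y_{j_m}^2}H_{j_m}$. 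Proving that requires extracting the coefficients of $\der^{N-2}$ and $\der^{N-3}$ from the $(N-1)^2$ linearized equations, deducing $X_k'=0$ for $k\ge 3$, then $v_k'X_k=0$, and invoking Lemma \ref{lem v' v} to kill the extra components. None of this analysis, which is the technical heart of the paper's proof, appears in your proposal, and your alternative of directly computing $(T^J\Lambda^r(T^J)^{-1})^0$ and matching coefficients offers no mechanism that replaces it.

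A secondary error feeds this gap: your formula $\frac{\der\mu^J}{\der c_\ell}=\frac{\der g_\ell}{\der c_\ell}H_{j_\ell}$ is false for $\ell<m$, since the polynomials generated after step $\ell$ all depend on $c_\ell$ (the dependence runs from earlier constants to later polynomials, not the other way as your parenthetical suggests). Only the last derivative $\der\mu^J/\der c_m$ has the clean form of Lemma \ref{lem der c-m}, which is exactly why the paper's induction isolates the last step through Theorem \ref{prop ind} and reduces everything else to the inductive hypothesis rather than attempting a simultaneous coefficient match across all $\ell$.
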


\begin{cor}
\label{cor Main}
The family $\mu^J$ of Miura opers is invariant with respect to all mKdV flows and
is point-wise fixed by flows with $r>2m$.

\end{cor}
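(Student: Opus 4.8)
The plan is to derive Corollary \ref{cor Main} directly from Theorem \ref{thm main} by reading its two conclusions geometrically. Recall that $\mu^J:\C^m\to\mc M$ parametrizes the family of Miura opers by the integration constants $c=(c_1,\dots,c_m)$, and that for a vector field $\Gamma_r$ on $\C^m$ the symbol $\frac{\partial\mu^J}{\partial\Gamma_r}(c)$ is the directional derivative $(d\mu^J)_c\bigl(\Gamma_r(c)\bigr)\in T_{\mu^J(c)}\mc M$ (well defined since $\mc M$ is affine), which by construction is tangent to the image of $\mu^J$. The first step is simply to observe that \emph{invariance} means precisely that each mKdV vector field $\frac{\partial}{\partial t_r}$ is tangent to this image at every point $\mu^J(c)$, and that equation \Ref{formula main} of Theorem \ref{thm main} asserts exactly this: the mKdV field at $\mu^J(c)$ lies in the range of $(d\mu^J)_c$.

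Next I would upgrade this infinitesimal tangency to genuine flow-invariance. Since $\Gamma_r$ is a polynomial, hence smooth, vector field on $\C^m$, it has local integral curves; let $c(s)$ be the integral curve of $\Gamma_r$ with $c(0)=c$. Differentiating $s\mapsto\mu^J(c(s))$ and combining the chain rule with \Ref{formula main} gives
\be
\frac{d}{ds}\,\mu^J(c(s)) \,=\, (d\mu^J)_{c(s)}\bigl(\dot c(s)\bigr) \,=\, (d\mu^J)_{c(s)}\bigl(\Gamma_r(c(s))\bigr) \,=\, \frac{\partial}{\partial t_r}\Big|_{\mu^J(c(s))}.
\ee
Thus $s\mapsto\mu^J(c(s))$ is an integral curve of the $r$-th mKdV vector field through $\mu^J(c)$; by uniqueness of solutions it is the $t_r$-flow line of $\mu^J(c)$, and it stays inside the image of $\mu^J$. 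Hence the family $\mu^J$ is invariant under every mKdV flow wherever the flow exists.

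For the stationary part, Theorem \ref{thm main} states that when $r>2m$ the value $\frac{\partial}{\partial t_r}\big|_{\mu^J(c)}$ vanishes for all $c$. A point at which the vector field of a flow is zero is a fixed point of that flow, so every Miura oper in the family is left unmoved by the $t_r$-flow; that is, the family is point-wise fixed by all flows with $r>2m$.

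Since the substantive work — producing the vector fields $\Gamma_r$ and the bound $r>2m$ — is exactly the content of Theorem \ref{thm main}, there is no real obstacle at the level of the corollary. The only point requiring any care is the passage from the pointwise identity \Ref{formula main} to invariance of the image, which is handled by the integral-curve argument above and uses only that $\Gamma_r$ is smooth (indeed polynomial). One may additionally note, using Lemma \ref{lem uniqeness} and the fact that $\mu$ determines $y$ uniquely on generic tuples, that the image is honestly parametrized by $\C^m$ away from the finite non-generic locus, so the phrase \emph{the family} is unambiguous.
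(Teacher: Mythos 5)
Your proposal is correct and matches the paper's treatment: the paper states Corollary \ref{cor Main} as an immediate consequence of Theorem \ref{thm main}, reading equation \Ref{formula main} as tangency of each mKdV vector field to the image of $\mu^J$ and the vanishing for $r>2m$ as point-wise fixedness. Your integral-curve elaboration of the tangency-to-invariance step is a sound filling-in of what the paper leaves implicit, not a different route.
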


\subsection{Proof of Theorem \ref{thm main} for $m=1$}

Let $J=(j_1)$. Then
\bea
\mu^J(c_1) = e^{g_1E_{j_1}}\Ll^\emptyset e^{-g_1E_{j_1}}
= (1 + g_1e_{j_1,j_1}\La^{-1})\Ll^\emptyset (1 - g_1e_{j_1,j_1}\La^{-1})
= \der + \La + g_1 H_{j_1},
\eea
where $g_1 = \frac 1{x+c_1}$. By formula \Ref{T mkdv},
\bea
\frac \der{\der t_r}\big|_{\mu^J(c_1)}
=\frac {\der}{\der x}((1 + g_1e_{j_1,j_1}\La^{-1}) \Lambda^r (1 - g_1e_{j_1,j_1}\La^{-1}))^0.
\eea
It follows from Lemma \ref{lem lambda} that $\frac \der{\der t_r}\big|_{\mu^J(c_1)}=0$ for $r>1$ and hence
 $\Ga_r=0$. For $r=1$ we have $\frac \der{\der t_1}\big|_{\mu^J(c_1)} = -\frac 1{(x+c_1)^2}H_{j_1}$ and
 this equals $\frac\der{\der c_1}\mu^J(c_1)$. Hence  $\Ga_1= \frac \der{\der c_1}$.
Theorem \ref{thm main} for $m=1$ is proved.

\subsection{Proof of Theorem \ref{thm main} for $m>1$}
\begin{lem}
\label{lem r>2m}
If $r>2m$, then
$\frac \der{\der t_r}\big|_{\mu^J(c)}=0$ for all $c\in\C^m$.

\end{lem}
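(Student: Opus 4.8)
The plan is to deduce the vanishing directly from the explicit presentation of $\mu^J$ as a conjugate of $\Ll^\emptyset$, combined with a degree count in the $\La$-grading of $\text{Mat}$. By Corollary \ref{cor der}, the value of the $r$-th mKdV vector field at $\mu^J(c)$ equals
\[
\frac{\der}{\der t_r}\Big|_{\mu^J(c)} \;=\; \frac{\der}{\der x}\bigl(T^J(x,c)\,\La^r\,(T^J(x,c))^{-1}\bigr)^0,
\]
so everything reduces to showing that the $\La^0$-component of $T^J\La^r(T^J)^{-1}$ is identically zero once $r>2m$.

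First I would bound how far $T^J$ and its inverse reach in the $\La$-filtration. By Lemma \ref{lem exp} each elementary factor is $e^{g_\ell E_{j_\ell}} = 1 + g_\ell e_{j_\ell,j_\ell}\La^{-1}$, involving only the powers $\La^0$ and $\La^{-1}$ with diagonal coefficients. Since $T^J = e^{g_m E_{j_m}}\cdots e^{g_1 E_{j_1}}$ is a product of $m$ such factors, Lemma \ref{decomplemma} lets me write $T^J = 1 + \sum_{i=1}^m T_{-i}\La^{-i}$ with $T_{-i}\in\Bb(\di)$; that is, $T^J$ has $\La$-degrees only in $\{-m,\dots,0\}$. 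The inverse $(T^J)^{-1} = e^{-g_1 E_{j_1}}\cdots e^{-g_m E_{j_m}}$ is again a product of $m$ factors of exactly the same shape, so it too has $\La$-degrees in $\{-m,\dots,0\}$.

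Next I would track the degrees through the triple product. The key point is that the $\La$-grading is multiplicative: using Lemma \ref{lem lambda}, for diagonal $d,d'$ one has $d\La^i\cdot d'\La^{i'} = d\,(\La^i d'\La^{-i})\,\La^{i+i'}$ with $\La^i d'\La^{-i}$ again diagonal, so a factor with degrees in $[a_1,b_1]$ times a factor with degrees in $[a_2,b_2]$ has degrees in $[a_1+a_2,\,b_1+b_2]$. Applying this with $T^J$ of degrees $[-m,0]$, $\La^r$ of degree $r$, and $(T^J)^{-1}$ of degrees $[-m,0]$, I find that $T^J\La^r(T^J)^{-1} = \sum_i d_i\La^i$ with $d_i\in\Bb(\di)$ and $i$ confined to the range $r-2m\le i\le r$. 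When $r>2m$ this whole range consists of positive integers, so no $\La^0$ term can occur; hence $(T^J\La^r(T^J)^{-1})^0=0$ and $\der\mu^J/\der t_r=0$ at every $c\in\C^m$.

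There is no genuine obstacle in this argument: the entire content is the observation that $T^J$ descends only $m$ steps in the $\La$-filtration, so conjugating $\La^r$ by it can lower the lowest degree by at most $2m$. The one point I would state carefully is the multiplicativity of the $\La$-grading under the commutation rule of Lemma \ref{lem lambda}, since it is exactly this that makes the degree bound $[r-2m,\,r]$ rigorous and additive rather than merely heuristic.
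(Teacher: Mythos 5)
Your proposal is correct and is essentially the paper's own argument: the paper's proof likewise invokes formula \Ref{T mkdv} together with Lemmas \ref{lem exp} and \ref{lem lambda}, and your degree count in the $\La$-filtration (showing $T^J\La^r(T^J)^{-1}$ has components only in degrees $r-2m$ through $r$, hence no $\La^0$ term when $r>2m$) is exactly the detail the paper leaves implicit.
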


\begin{proof}
 The vector
 $\frac \der{\der t_r}\big|_{\mu^J(c)}$ equals the right hand side of formula  \Ref{T mkdv}.
By Lemmas \ref{lem exp} and \ref{lem lambda} the right hand side of \Ref{T mkdv} is zero if $r>2m$.
\end{proof}

We prove the first statement of Theorem \ref{thm main} by induction on $m$.
Assume that the statement is proved for ${\tilde J}=(j_1,\dots,j_{m-1})$.
Let
\bea
\label{J'2}
Y^{{\tilde J}}\ : \
{\tilde c}=(c_1,\dots,c_{m-1}) \ \mapsto \ (y_1(x,{\tilde c}),\dots, y_N(x,{\tilde c}))
\eea
be the generation of tuples in the ${\tilde J}$-th direction. Then the generation
of tuples in the $J$-th direction is
\bea
\label{J2}
Y^{J}\ :\ \C^m \mapsto (\C[x])^N, \quad
({\tilde c},c_m) \mapsto
\ (y_1(x,{\tilde c}),\dots,  y_{j_m,0}(x,{\tilde c}) + c_m y_{j_m}(x,{\tilde c}),\dots,
y_N(x,{\tilde c})),
\eea
see \Ref{J'} and \Ref{J}.
We have $g_m = \log'(y_{j_m,0}(x,{\tilde c}) + c_m y_{j_m}(x,{\tilde c}))- \log' (y_{j_m}(x,{\tilde c}))$,
see \Ref{g's}.

By the induction assumption,
there exists a polynomial vector field $\Gamma_{r,{\tilde J}}$ on $\C^{m-1}$ such that
\bean
\label{fromula main m-1}
\frac \der{\der t_r}\big|_{\mu^{{\tilde J}}({\tilde c})} = \frac {\der \mu^{{\tilde J}} }{\der \Ga_{r,{\tilde J}}}({\tilde c})
\eean
for all ${\tilde c}\in\C^{m-1}$.

\begin{thm}
\label{prop ind}
There exists a scalar polynomial
$\ga_{m}(\tilde c,c_m)$ on $\C^m$  such that the vector field
$\Ga_r = \Gamma_{r,{\tilde J}} + \ga_{m}({\tilde c},c_m)\frac \der{\der c_m}$
satisfies \Ref{formula main} for all $(\tilde c,c_m)\in\C^m$.
\end{thm}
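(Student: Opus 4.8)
We want to show that the $r$-th mKdV flow at $\mu^J(\tilde c,c_m)$ coincides with the derivative of $\mu^J$ along a vector field of the form $\Gamma_{r,\tilde J}+\gamma_m\,\partial/\partial c_m$. The natural strategy is to compare the flow with the induction hypothesis via the Miura maps $\frak m_i$, exploiting that $\mu^J$ and $\mu^{\tilde J}$ differ only by the last generation step in the $j_m$-th direction, and that by Lemma~\ref{lem j j'} they have the same image under $\frak m_i$ for all $i\ne j_m$. Since the mKdV flow is compatible with the KdV flow through $\frak m_i$ (Theorem~\ref{thm mkdvtokdv}, with $d\frak m_i:\partial/\partial t_r|_{\Ll}\mapsto \partial/\partial t_r|_{L_i}$), the plan is to show that $d\frak m_i$ applied to both sides of the desired identity~\Ref{formula main} agrees for every $i\ne j_m$, and then to handle the single remaining direction $i=j_m$ separately, which is where the scalar $\gamma_m$ gets determined.

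**Key steps.** First I would apply $d\frak m_i$ for $i\ne j_m$ to both sides of~\Ref{formula main}. On the left, $d\frak m_i(\partial/\partial t_r|_{\mu^J(c)})=\partial/\partial t_r|_{L_i}$ where $L_i=\frak m_i(\mu^J(\tilde c,c_m))$; by Lemma~\ref{lem j j'} this equals $\frak m_i(\mu^{\tilde J}(\tilde c))$, so the flow is the KdV flow at the $\tilde J$-level operator. On the right, the term $\ga_m\,\partial/\partial c_m$ is killed by $d\frak m_i$: indeed by Lemma~\ref{lem der c-m} the derivative $\partial\mu^J/\partial c_m$ is proportional to $H_{j_m}$, i.e.\ $\mu^J$ moves along the $j_m$-gauge direction as $c_m$ varies, and by Theorem~\ref{thm gaugemiura} the $i$-th Miura map is constant along $i$-gauge-equivalent (hence in particular $j_m$-gauge) deformations for $i\ne j_m$. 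Thus $d\frak m_i$ of the right side equals $d\frak m_i$ of $\partial\mu^{\tilde J}/\partial\Gamma_{r,\tilde J}$, which by the induction hypothesis~\Ref{fromula main m-1} equals the KdV flow at $\frak m_i(\mu^{\tilde J}(\tilde c))$. So both sides agree under $\frak m_i$ for every $i\ne j_m$. Second, since the only remaining freedom in a Miura oper not seen by the collection $\{\frak m_i\}_{i\ne j_m}$ is exactly the $j_m$-component direction $H_{j_m}$, the difference between the two sides of~\Ref{formula main} must be a scalar multiple of $H_{j_m}$. By Lemma~\ref{lem der c-m} this is precisely the span of $\partial\mu^J/\partial c_m$, so there is a unique scalar function $\gamma_m(\tilde c,c_m)$ making~\Ref{formula main} hold exactly. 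Third, I would establish polynomiality of $\gamma_m$: using Corollary~\ref{cor der} and the explicit exponential factors from Lemma~\ref{lem exp}, the left side~\Ref{T mkdv} is $\tfrac{d}{dx}(T^J\Lambda^r(T^J)^{-1})^0$, a rational function in $x$ whose $c$-dependence is polynomial; comparing the $H_{j_m}$-coefficient with the explicit rational form of $\partial\mu^J/\partial c_m$ from Lemma~\ref{lem der c-m} and matching the $\Gamma_{r,\tilde J}$-contribution should force $\gamma_m$ to be a polynomial in $(\tilde c,c_m)$.

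**The main obstacle.** The genuinely nontrivial point is the last step: ruling out $x$-dependence of $\gamma_m$ and proving it is a genuine polynomial in the constants $c$. Asserting that the difference of the two vector fields is \emph{a} multiple of $H_{j_m}$ follows formally from the Miura-map argument, but showing the coefficient is $x$-independent and polynomial in $c$ requires comparing the explicit rational expressions: the left side is a specific $x$-rational combination built from $T^J$, while $\partial\mu^J/\partial c_m$ from Lemma~\ref{lem der c-m} is $a\,y_{j_m-1}y_{j_m+1}/y_{j_m}^2\cdot H_{j_m}$, and $\partial\mu^{\tilde J}/\partial\Gamma_{r,\tilde J}$ contributes a further $H_{j_m}$-component through $g_m$. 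The heart of the matter is that these $x$-dependences conspire so that their ratio is a constant in $x$ (hence $\gamma_m$ is $x$-free) and depends polynomially on the integration constants. I expect this to require careful bookkeeping of degrees and of the pole structure at the roots of $y_{j_m}$, together with the degree-increasing hypothesis on $J$, which guarantees $\deg\tilde y_{j_m}>\deg y_{j_m}$ and thereby controls the normalizations~\Ref{tilde y} entering $g_m$.
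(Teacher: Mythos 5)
Your first step is correct and is exactly the paper's Lemma \ref{lem ker 1}: applying $d\frak m_i$ for $i\neq j_m$, using Lemma \ref{lem j j'} (equivalently Theorem \ref{thm gaugemiura}) to kill the $\partial/\partial c_m$ term and to transfer the induction hypothesis, one finds that the difference
\[
Y \;=\; \frac{\der}{\der t_r}\Big|_{\mu^J(\tilde c,c_m)} \;-\; \frac{\der\mu^J}{\der \Ga_{r,\tilde J}}(\tilde c,c_m)
\]
lies in the common kernel of $\{d\frak m_i\}_{i\neq j_m}$. Your third step (polynomiality of $\ga_m$) is also in the spirit of the paper's Proposition \ref{prop polyn}, which uses Laurent expansions at $x=\infty$ with polynomial coefficients in $c$ together with monicity of the $y_k$.

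The genuine gap is your second step. You assert that ``the difference must be a scalar multiple of $H_{j_m}$'' and that this ``follows formally from the Miura-map argument.'' It does not. The statement that the common kernel of $\{d\frak m_i\}_{i\neq j_m}$ is spanned by $\der\mu^J/\der c_m$ is the main technical content of the paper's proof (Proposition \ref{lem ker dm}), and it is \emph{false} as a purely formal claim about arbitrary tangent vectors: one must (i) write out the linearized equations $d\frak m_i(X)=0$ explicitly and extract the coefficients of $\der^{N-2}$ and $\der^{N-3}$ (Lemma \ref{lem N-3}), which yield $X_k'=0$ and $v_k'X_k=0$ for $k\neq j_m,\,j_m+1$, together with the first-order equation $(\der+v_{j_m}-v_{j_m+1})X_{j_m}=\sum_k(v_{j_m+1}-v_k)X_k$; and (ii) impose a \emph{support condition} on $X$ --- namely $X_k=0$ unless $k=j_\ell$ or $j_\ell+1$ for some $\ell\leq m$ --- without which the equation $v_k'X_k=0$ is vacuous in directions never touched by the generation (where $v_k\equiv 0$, by Lemma \ref{lem v' v}), and the kernel can be strictly larger. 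That the specific vector $Y$ satisfies this support condition is itself a lemma requiring proof (Lemma \ref{lem vector is good}, which checks both terms of $Y$ via formulas \Ref{T mkdv}, \Ref{formula exp}, \Ref{formula La} and \Ref{oper2}); your proposal never mentions it. Moreover, you mislocate the remaining difficulty: the fact that the $H_{j_m}$-coefficient is an $x$-independent constant times $y_{j_m-1}y_{j_m+1}/y_{j_m}^2$ is not obtained by ``bookkeeping of degrees and pole structure'' but falls out of the same linearized analysis --- once the other components vanish, $X_{j_m}$ satisfies $(\der+v_{j_m}-v_{j_m+1})X_{j_m}=0$, whose rational solutions are exactly constant multiples of $y_{j_m-1}y_{j_m+1}/y_{j_m}^2$, matching $\der\mu^J/\der c_m$ by Lemma \ref{lem der c-m}. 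So the skeleton of your argument matches the paper, but the pillar that carries the whole weight --- the kernel characterization with its support hypothesis --- is missing.
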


The first statement of Theorem \ref{thm main} follows from Theorem \ref{prop ind}.

\subsection{ Proof of Theorem \ref{prop ind}}
\label{sec proof prop ind}

\begin{lem}
\label{lem ker 1} We have
\bean
\label{formula for ker 1}
d\frak m_i\big|_{\mu^J({\tilde c},c_m)} \left(\frac{\der}{\der t_r}\big|_{\mu^J({\tilde c},c_m)} -
\frac{\der\mu^J}{\der \Ga_{r,{\tilde J}}}({\tilde c},c_m)\right) =0
\eean
for all $i\neq j_m$ and $({\tilde c},c_m)\in\C^m$.

\end{lem}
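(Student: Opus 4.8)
The plan is to prove the identity by tracking how both terms in the parenthesis behave under the Miura map $\frak m_i$ for $i \neq j_m$, exploiting that changing the $c_m$-coordinate only deforms the $j_m$-th polynomial of the tuple. The crucial structural fact I would invoke is Lemma \ref{lem j j'}, which states $\frak m_i \circ \mu^J(\tilde c, c_m) = \frak m_i \circ \mu^{\tilde J}(\tilde c)$ for all $i \neq j_m$. Geometrically this says: when we apply $\frak m_i$ with $i \neq j_m$, the whole family $\mu^J$ over $\C^m$ collapses onto the family $\mu^{\tilde J}$ over $\C^{m-1}$, and the collapse precisely forgets the $c_m$-direction. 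This is the key to the whole lemma.

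First I would rewrite the left-hand side as the difference of two images under $d\frak m_i$. Applying $d\frak m_i$ to the mKdV vector field $\frac{\der}{\der t_r}\big|_{\mu^J(\tilde c, c_m)}$ and using the commutation $d\frak m_i : \frac{\der}{\der t_r}\big|_{\Ll} \mapsto \frac{\der}{\der t_r}\big|_{L_i}$ (from Theorem \ref{thm mkdvtokdv}, as recorded at the start of Section \ref{sec statement}), the first term becomes $\frac{\der}{\der t_r}\big|_{\frak m_i(\mu^J(\tilde c, c_m))}$. By Lemma \ref{lem j j'} this equals $\frac{\der}{\der t_r}\big|_{\frak m_i(\mu^{\tilde J}(\tilde c))}$, i.e. the $r$-th KdV vector field evaluated at $\frak m_i(\mu^{\tilde J}(\tilde c))$, which depends only on $\tilde c$.

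Next I would handle the second term. The vector field $\Ga_{r,\tilde J} = \sum_{\ell} \gamma_{\ell,\tilde J} \frac{\der}{\der c_\ell}$ lives on $\C^{m-1}$ and involves no $\frac{\der}{\der c_m}$ component, so $\frac{\der \mu^J}{\der \Ga_{r,\tilde J}}(\tilde c, c_m)$ is the derivative of $\mu^J$ along the $\tilde c$-directions only. Since $\frak m_i \circ \mu^J = \frak m_i \circ \mu^{\tilde J}$ as maps (Lemma \ref{lem j j'}), differentiating both sides along $\Ga_{r,\tilde J}$ and applying the chain rule gives $d\frak m_i\big(\frac{\der \mu^J}{\der \Ga_{r,\tilde J}}\big) = \frac{\der (\frak m_i \circ \mu^{\tilde J})}{\der \Ga_{r,\tilde J}}(\tilde c)$. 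By the induction hypothesis \Ref{fromula main m-1} pushed through $d\frak m_i$, this is exactly $\frac{\der}{\der t_r}\big|_{\frak m_i(\mu^{\tilde J}(\tilde c))}$. Thus both terms map under $d\frak m_i$ to the same object, and their difference is annihilated.

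The main obstacle I anticipate is purely bookkeeping: one must be careful that $\Ga_{r,\tilde J}$, originally a vector field on $\C^{m-1}$, is being interpreted as a vector field on $\C^m$ with zero $\frac{\der}{\der c_m}$-component, and that the chain rule is applied to the \emph{composite} $\frak m_i \circ \mu^{\tilde J}$ rather than to $\mu^{\tilde J}$ alone. The identity $\frak m_i \circ \mu^J = \frak m_i \circ \mu^{\tilde J}$ must be differentiated as an equality of maps on $\C^m$, where the right-hand side is constant in $c_m$; this is what allows the $c_m$-dependence of $\mu^J$ to be invisible to $d\frak m_i$ and is the precise reason the lemma isolates the directions $i \neq j_m$. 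Once these identifications are made explicit, the cancellation is immediate.
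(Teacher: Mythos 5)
Your proof is correct and follows essentially the same route as the paper: both arguments rest on the identity $\frak m_i\circ\mu^J({\tilde c},c_m)=\frak m_i\circ\mu^{{\tilde J}}({\tilde c})$ for $i\neq j_m$ (you cite Lemma \ref{lem j j'}, the paper invokes Theorem \ref{thm gaugemiura} from which that lemma is derived), convert the mKdV vector field via Theorem \ref{thm mkdvtokdv}, handle the $\Ga_{r,{\tilde J}}$-derivative by the chain rule, and close by pushing the induction hypothesis \Ref{fromula main m-1} through $d\frak m_i$. Your added remarks about interpreting $\Ga_{r,{\tilde J}}$ as a vector field on $\C^m$ with zero $\frac{\der}{\der c_m}$-component make explicit a bookkeeping point the paper leaves implicit, but the substance is identical.
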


\begin{proof}

By Theorem \ref{thm gaugemiura}, we have $\frak m_i \circ\mu^J({\tilde c},c_m)=\frak m_i \circ\mu^{{\tilde J}}({\tilde c})$ for
all $i\neq j_m$.
Hence,
\bean
\label{Ga inv}
d\frak m_i\big|_{\mu^J({\tilde c},c_m)} \left(\frac{\der\mu^J}{\der \Ga_{r,{\tilde J}}}({\tilde c},c_m)\right)
=
\frac{\der (\frak m_i \circ\mu^J)}{\der \Ga_{r,{\tilde J}}} ({\tilde c},c_m)=
 \frac{\der(\frak m_i \circ\mu^{{\tilde J}})}{\der \Ga_{r,{\tilde J}}} ({\tilde c}).
 \eean
By Theorems \ref{thm mkdvtokdv} and \ref{thm gaugemiura}, we have
\bean
\label{d/dt inv}
d\frak m_i\big|_{\mu^J({\tilde c},c_m)} \left(\frac{\der}{\der t_r}\big|_{\mu^J({\tilde c},c_m)} \right) =
\frac{\der}{\der t_r}\big|_{\frak m_i \circ\mu^J({\tilde c},c_m)} = \frac{\der}{\der t_r}\big|_{\frak m_i \circ\mu^{{\tilde J}}({\tilde c})}.
\eean
By the induction assumption, we have
\bean
\label{formula Ga t}
\frac{\der}{\der t_r}\big|_{\frak m_i \circ\mu^{{\tilde J}}({\tilde c})} =
\frac{\der(\frak m_i \circ\mu^{{\tilde J}})}{\der \Ga_{r,{\tilde J}}} ({\tilde c}).
\eean
These three formulas prove the lemma.
\end{proof}

The space $\mc M$ of Miura opers is an affine space. The tangent space to $\mc M$ at any point
is canonically identified with the vector space $\B(\text{Diag}_0)$, whose elements have the form
\linebreak
$X=\sum_{k=1}^N X_ke_{k,k}$ with $X_k\in\B$ and $\sum_{k=1}^NX_k=0$.

\begin{prop}
\label{lem ker dm}
Let $X(c)= \sum_{k=1}^N X_k(x,c)e_{k,k}$ be a $\B(\text{Diag}_0)$-valued function on $\C^m$ such that
\bean
\label{ ker X=0}
d\frak m_i\big|_{\mu^J(c)} (X(c))=0
\eean
 for all $i\neq j_m$ and  all $c\in\C^m$. Assume that $X_k=0$ unless $k=j_\ell$ of $k=j_{\ell}+1$
for some $\ell\leq m$. Then
\bean
\label{formula ker}
X \ = \ a\,\frac{\der \mu^J}{\der c_m}
\eean
for some scalar function $a$ on $\C^m$.
\end{prop}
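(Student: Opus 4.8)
The plan is to characterize $\ker d\frak m_i|_{\mu^J(c)}$ concretely and then intersect over $i\neq j_m$. Write $\mu^J(c)=\der+\Lambda+V$ with $V=\sum_k v_ke_{k,k}$, and let $L_i=\frak m_i(\mu^J(c))$ be the scalar operator \Ref{miuramap}. A variation $X=\sum_kX_ke_{k,k}\in\B(\dio)$ produces $d\frak m_i(X)=\delta L_i$, a differential operator whose order is at most $N-2$: its $\der^{N-1}$-coefficient is $-\sum_kX_k=0$. Since $\ker L_i$ is $N$-dimensional while $\delta L_i$ has order $\le N-2$, the equation $d\frak m_i(X)=0$ is equivalent to $\delta L_i$ annihilating the whole solution space $\ker L_i$; equivalently, $X$ preserves $\ker L_i$ to first order. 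I would use this reformulation, testing $\delta L_i$ either coefficient-by-coefficient or against a basis of $\ker L_i$ coming from the factorization.

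First I record the easy inclusion. By Lemma \ref{lem j j'} the composite $\frak m_i\circ\mu^J(\tilde c,c_m)$ is independent of $c_m$ for every $i\neq j_m$; differentiating in $c_m$ gives $d\frak m_i\bigl(\frac{\der\mu^J}{\der c_m}\bigr)=0$ for all $i\neq j_m$. By \Ref{oper2} we have $\frac{\der\mu^J}{\der c_m}=\frac{\der g_m}{\der c_m}H_{j_m}$, which is the tangent to the $j_m$-gauge orbit. Thus the right-hand side of \Ref{formula ker} already lies in $\bigcap_{i\neq j_m}\ker d\frak m_i$; the content of the proposition is the reverse inclusion under the support hypothesis, asserting that this common kernel is no larger than the one-dimensional gauge direction.

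For the reverse inclusion I would exploit the explicit shape $V=\sum_{\ell=1}^mg_\ell H_{j_\ell}$ of \Ref{oper2}: one has $v_k=0$ whenever $k\notin S:=\{\,j_\ell,\,j_\ell+1:\ell\le m\,\}$, so in each product $L_i=\prod_k(\der-v_k)$ every factor at a position outside $S$ is simply $\der$. Combined with the support hypothesis $X_k=0$ for $k\notin S$, this confines both $V$ and $X$ to the finitely many active positions. I would then run through the indices $i\neq j_m$ in cyclic order, using each relation $\delta L_i=0$ to propagate the vanishing of $X_k$ from one block of active indices to the next; the aim is to conclude that $X_k=0$ for all $k\in S\setminus\{j_m,j_m+1\}$ and that $X_{j_m}=-X_{j_m+1}$, that is, $X=b\,H_{j_m}$ for a single scalar function $b=b(x,c)$.

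Once $X=b\,H_{j_m}$ is known, the surviving equations $d\frak m_i(b\,H_{j_m})=0$ vary only $v_{j_m}$ and $v_{j_m+1}$ and reduce to the linearization, about the solution $g_m$ of the Riccati equation \Ref{Ric} in the $j_m$-direction, of that deformation equation: a first-order linear homogeneous ODE in $b$ with respect to $x$. Its space of rational solutions is one-dimensional over $\C$ for each fixed $c$ and is spanned by $\frac{\der g_m}{\der c_m}\propto y_{j_m-1}y_{j_m+1}/y_{j_m}^2$ (Lemma \ref{lem der c-m}), so $b(x,c)=a(c)\frac{\der g_m}{\der c_m}(x,c)$ for a function $a$ of $c$ alone, which is exactly \Ref{formula ker}. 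The main obstacle is the middle step: disentangling the overdetermined system $\{\delta L_i=0\}_{i\neq j_m}$ to force $X$ to be proportional to $H_{j_m}$. This is where the cyclic structure of the Miura maps and the precise support set $S$ must be used, and where the degree-increasing hypothesis on $J$ — which controls the shape of $S$ and guarantees nonvanishing of the Wronskians governing the factorizations — should be decisive in making the propagation of vanishing go through.
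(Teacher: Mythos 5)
Your skeleton is right at both ends, but the heart of the proposition is missing. The opening (linearize the Miura maps, note that the $\der^{N-1}$-coefficient of $d\frak m_i(X)$ is $-\sum_k X_k=0$) and the endgame (once $X=b\,H_{j_m}$, the remaining equation is the first-order ODE $(\der+v_{j_m}-v_{j_m+1})b=0$, whose solutions for fixed $c$ are the constant multiples of $y_{j_m-1}y_{j_m+1}/y_{j_m}^2$, matching Lemma \ref{lem der c-m}) both agree with the paper. What you do not prove is precisely the reduction to $X=b\,H_{j_m}$: you state it as ``the aim'' and appeal to a ``propagation of vanishing'' along the cyclic order, bolstered by the degree-increasing hypothesis, without exhibiting any mechanism that forces $X_k=0$ at the \emph{active} positions $k\in S$, $k\neq j_m,\,j_m+1$. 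At those positions the support hypothesis gives nothing, so something else must kill $X_k$ there, and nothing in your sketch does.

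The paper's mechanism is concrete and rather different from a cyclic propagation. After normalizing $j_m=1$, it equates to zero the coefficients of $\der^{N-2}$ and of $\der^{N-3}$ in all the operators $d\frak m_i(X)$, $i=2,\dots,N$. Differencing the $\der^{N-2}$ equations \Ref{N-2 coeff} (using $\sum_kX_k=0$) yields $X_k'=0$ for $k\geq 3$ and $X_2'=-X_1'$, together with the single relation \Ref{N-2 one}; differencing the $\der^{N-3}$ equations (Lemma \ref{lem N-3}) yields $v_k'X_k=0$ for $k\geq 3$, see \Ref{sln 3}. At an active position $v_k$ is not identically zero on $\C^m$, and Lemma \ref{lem v' v} (for $v_k=\log'(y_k/y_{k-1})$ with polynomial $y$'s, $v_k'\equiv 0$ forces $v_k\equiv 0$) then gives $X_k=0$; at an inactive position $v_k\equiv 0$, so $v_k'X_k=0$ is vacuous, and it is exactly here --- and only here --- that the support hypothesis is invoked. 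So the roles are opposite to what you propose: the differential relations kill $X_k$ at active positions, while the hypothesis kills it at inactive ones. Only after this does \Ref{N-2 one} collapse to $(\der+v_1-v_2)X_1=0$, i.e.\ to your final step. Without producing relations of the type $X_k'=0$ and $v_k'X_k=0$ (or some substitute for them), the proposal is an outline with its central step unproved, not a proof.
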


\begin{proof}

Suppose $j_m = 1$.  For other values of $j_m$, the following argument
holds after a cyclic permutation of indices.

Let $\mu^J(c) = \der + \La + \sum_{k = 1}^N v_k(x,c) e_{k,k}$.
For $i=1,\dots,N$, the Miura map is
\bea
\frak m_i\ : \ \der + \Lambda + \sum_{k = 1}^N v_k e_{k,k}\ \mapsto\
(\der - v_i)(\der - v_{i-1})\dots(\der - v_1) (\der - v_N) \dots (\der - v_{i-1}).
\eea
For $i=2,\dots,N$, we have
$d\frak m_i\big|_{\mu^J(c)} (\sum_{k=1}^N X_k(x,c)) =  0.$
This means
\bean
\label{Xk}
&&
(-X_i)(\der - v_{i-1})\dots(\der - v_1) (\der - v_N) \dots (\der - v_{i-1})+
\\
\notag
&&
\phantom{aa}
+(\der - v_i)( - X_{i-1})\dots(\der - v_1) (\der - v_N) \dots (\der - v_{i-1})
+\dots+
\\
\notag
&&
\phantom{aaa}
+(\der - v_i)(\der - v_{i-1})\dots(- X_1) (\der - v_N) \dots (\der - v_{i-1})
+
\\
\notag
&&
\phantom{aaaa}(\der - v_i)(\der - v_{i-1})\dots(\der - v_1) (- X_N) \dots (\der - v_{i-1})+\dots+
\\
\notag
&&
\phantom{aaaaa}
+(\der - v_i)(\der - v_{i-1})\dots(\der - v_1) (\der - v_N) \dots ( - X_{i-1}) = 0.
\eean
For a given $i$, the left hand side is a differential operator of order $N-2$ and the right hand side is zero. Hence, for a given
$i$, we obtain $N-1$ linear differential equations on coordinates $X_1,\dots,X_N$. The index $i$ may take $N-1$ values $2,\dots,N$.
All together we get $(N-1)^2$ linear differential equations on $X_1,\dots,X_N$. Our goal is to show that under conditions
of Proposition \ref{lem ker dm} the space of solutions of this system of equations is at most one-dimensional and if the space is one-dimensional
then it is generated by
\bean
\label{ker formula}
\frac{y_{N}(x,{\tilde c}, m-1) y_{2}(x,{\tilde c},m-1)}
{y_{1}(x,{\tilde c},c_m,m)^2}\, H_{1},
\eean
see formula \Ref{form der c-m}.

The coefficient of $\der^{N-2}$ in equation \Ref{Xk} for $i=N,N-1,\dots,2$,
gives the following system of equations:
\bean
\label{N-2 coeff}
X'_{N-1} + 2X'_{N-2} + \dots + (N-1)X'_1 + \sum_{k=1}^N v_kX_k &=& 0,
\\
\notag
X'_{N-2} + 2X'_{N-3} + \dots + (N-2)X_1'+ (N-1)X'_N + \sum_{k=1}^N v_kX_k &=& 0,
\\
\notag
X'_{N-3} + 2X'_{N-4} + \dots + (N-3)X_1'+ (N-2)X'_N + (N-1)X_{N-1}+ \sum_{k=1}^N v_kX_k &=& 0,
\\
\notag
\dots \qquad \dots \dots \qquad \dots \qquad \dots \qquad \dots \qquad \qquad  && \phantom{aa}
\\
\notag
X'_{1} + 2X'_{N} + \qquad \dots  \qquad  \dots \qquad + (N-2)X_4'+ (N-1)X_3' + \sum_{k=1}^N v_kX_k &=& 0.
\eean
By subtracting the second equation from the first we get $(N-1)X_N' - X_1'-\dots-X_{N-1}'=0$. Since $\sum_{k=1}^NX_k=0$ we get
$X_N'=0$. Similarly we obtain
\bean
\label{N-2 one}
X_k'=0 \qquad \text{for} \quad k=3,\dots,N \qquad\text{and}\quad X_2'= - X_1'.
\eean
By using \Ref{N-2 one} and equation $X_2=\sum_{k\neq 2}X_k$ we obtain from \Ref{N-2 coeff}
the equation
\bean
\label{N-2 one}
(\der + v_1-v_2)X_1 = \sum_{k=3}^N(v_2-v_k)X_k.
\eean
If
\bean
\label{eqn zero sum}
\sum_{k=3}^N(v_2-v_k)X_k=0,
\eean
 then $(\der + v_1-v_2)X_1 = 0$ or
\bea
X_1\ =\ a\ \frac{y_{N}(x,{\tilde c}, m-1) y_{2}(x,{\tilde c},m-1)}
{y_{1}(x,{\tilde c},c_m,m)^2}
\eea
for some $a\in\C$. This equation proves Proposition \ref{lem ker dm}.

Now our goal is to show that under  assumptions of Proposition \ref{lem ker dm} equation \Ref{eqn zero sum}
holds.

\begin{lem}
\label{lem N-3}

The coefficient of $\der^{N-3}$ in equation \Ref{Xk} for $i=2,\dots,N$
gives the following system of equations:
\bean
\label{sln 2}
&&
\sum_{k=3}^i[(v_k-v_2)(v_1-v_k) + (k-2)v'_k-\sum_{j=2}^{k-1}v_j']\,X_k
+
\\
&&
\phantom{aaaaaa}
+ \sum_{k=i+1}^N[(v_k-v_2)(v_1-v_k) + (k-N-1)v'_k+v_1'+\sum_{j=k+1}^{N}v_j']\,X_k\,
=\, 0.
\notag
\eean
In particular, by subtracting  the consecutive equations we get equations
\bean
\label{sln 3}
v_k'\,X_k\,=\,0,
\qquad
k=3,\dots,N.
\eean
\end{lem}

\begin{proof}

Equate to zero the   coefficient of $D^{N-3}$ in \Ref{Xk}. We get
\bea
&&
\sum_{k=1}^N\,[a_{ik}\der^2 + b_{ik}\der +c_{ik}]\,X_k =
[a_{i1}\der^2 + b_{i1}\der +c_{i1}]\,X_1 +
[a_{i2}\der^2 + b_{i2}\der +c_{i2}]\,X_2 +
\phantom{aaaaaaa}
\\
&&
+ \sum_{k=3}^N\,c_{ik}\,X_k
= \ [(a_{i1}-a_{i2})\der^2 + (b_{i1}-b_{i2})\der +(c_{i1}-c_{i2})]\,X_1 +
\sum_{k=3}^N\,(c_{ik}-c_{i2})\,X_k
 =\ 0 .
\eea
The coefficient $c_{ik}$ has the form $c_{ik} = - q({k}) + d_{ik}$,
where
\bean
\label{formula q(k)}
q({k}) = \sum_{1\leq s<t\leq N\atop (s-k)(t-k)\neq 0} v_sv_t
\eean
 and $d_{ik}$ is given by
\bean
\label{d coe}
&&
d_{i\geq k}=v_{i-1}'+2v_{i-2}'+\dots +  (i-k-1)v_{k+1}'+ (i-k)v_{k-1}'+\dots \\
&&
\phantom{aaaaa}
+(i-3)v_2'+(i-2)v_1'
+(i-1)v_N' + \dots +(N-2)v_{i+1}',
\notag
\\
&&
d_{i<k}=v_{i-1}'+2v_{i-2}'+\dots  +(i-2)v_2'+(i-1)v_1'+
\notag
\\
&&
\phantom{aaaaa}
+iv_N' + \dots + (N+i-k-2)v_{k+1}'+ (N+i-k)v_{k-1}'+\dots +(N-2)v_{i+1}'.
\notag
\eean
It is easy to see that
\bean
\label{X1 coeff}
\phantom{aaa}
[(a_{i1}-a_{i2})\der^2 + (b_{i1}-b_{i2})\der +(c_{i1}-c_{i2})] =
(-(i-2)\der + \sum_{j=3}^Nv_j)(\der + v_1-v_2).
\eean
The formulas \Ref{N-2 one}, \Ref{formula q(k)}, \Ref{d coe}  , \Ref{X1 coeff} imply
Lemma \ref{lem N-3}.
\end{proof}

Let us finish the proof of Proposition \ref{lem ker dm}. Recall that we assumed that $j_m=1$ and it remains to be proved
that $\sum_{k=3}^N(v_2-v_k)X_k=0,$ see \Ref{eqn zero sum}. If $k$ is not equal to $j_\ell$ or $j_\ell+1$ for some $\ell\leq m$,
then $X_k=0$ by the assumption. Otherwise, we have $v'_kX_k=0$, see \Ref{sln 3}. For such $k$ the function $v_k$ is not identically zero
as a function on $\C^m$. By Lemma \ref{lem v' v} the function $v_k'$ is not identically zero on $\C^m$. Hence $X_k=0$ and equation
\Ref{eqn zero sum} holds.  Proposition \ref{lem ker dm} is proved.
\end{proof}

Denote
\bean
\label{ker vector}
Y = \frac{\der}{\der t_r}\big|_{\mu^J({\tilde c},c_m)} -
\frac{\der\mu^J}{\der \Ga_{r,{\tilde J}}}({\tilde c},c_m).
\eean
Let $Y_1,\dots, Y_N$ be coordinates of $Y$, \ $Y=\sum_{k=1}^NY_ke_{k,k}$.

\begin{lem}
\label{lem vector is good}
We have $Y_k=0$ unless $k=j_\ell$ or $j_\ell+1$ for some $\ell\leq m$.

\end{lem}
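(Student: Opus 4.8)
The plan is to prove the two vectors in \Ref{ker vector} whose difference is $Y$ are \emph{each} supported on the index set $S=\{\,j_\ell,\,j_\ell+1 : \ell\le m\,\}$. The second term is immediate: by \Ref{oper2} we have $\mu^J(c)=\der+\Lambda+\sum_{\ell=1}^m g_\ell(x,c)H_{j_\ell}$, and since $\der+\Lambda$ does not depend on $c$, differentiating along any vector field on the parameter space yields $\frac{\der\mu^J}{\der\Ga_{r,\tilde J}}=\sum_{\ell=1}^m\big(\Ga_{r,\tilde J}\,g_\ell\big)\,H_{j_\ell}$. Because $H_{j_\ell}=e_{j_\ell,j_\ell}-e_{j_\ell+1,j_\ell+1}$, this lies in the span of the $e_{k,k}$ with $k\in\{j_\ell,j_\ell+1\}$, hence is supported on $S$.

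The genuine content is the first term $\frac{\der}{\der t_r}\big|_{\mu^J(c)}$. By Corollary \ref{cor der} it equals $\frac{d}{dx}\big(T^J\Lambda^r(T^J)^{-1}\big)^0$, so it suffices to show that the diagonal matrix $\big(T^J\Lambda^r(T^J)^{-1}\big)^0$ is supported on $S$, the $x$-derivative then having the same support. Using Lemma \ref{lem exp} I will write $T^J=\prod_{\ell=m}^{1}\big(1+g_\ell e_{j_\ell,j_\ell}\Lambda^{-1}\big)$, see \Ref{T}, and put it in normal form $T^J=1+\sum_{i\ge1}A_i\Lambda^{-i}$ with $A_i\in\B(\di)$, and similarly $(T^J)^{-1}=\prod_{\ell=1}^{m}\big(1-g_\ell e_{j_\ell,j_\ell}\Lambda^{-1}\big)=1+\sum_{i\ge1}B_i\Lambda^{-i}$. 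The commutation rule $e_{a,a}\Lambda^{-1}e_{b,b}=\delta_{a,b-1}e_{a,a}\Lambda^{-1}$, coming from Lemma \ref{lem lambda}, forces every monomial of $A_i$ (and of $B_i$) to have the shape $g_{\ell_1}\cdots g_{\ell_i}\,e_{p,p}$ arising from a chain $p=j_{\ell_1},\ j_{\ell_1}+1=j_{\ell_2},\ \dots,\ p+i-1=j_{\ell_i}$ of indices all lying in $\{j_\ell\}$. Thus $\on{supp}A_i,\ \on{supp}B_i\subseteq\{j_\ell\}$, and, crucially, for each $p$ in such a support the \emph{top} index $p+i-1=j_{\ell_i}$ also lies in $\{j_\ell\}$.

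Collecting the $\Lambda^0$-part and using $\Lambda^{s}d=d^{[s]}\Lambda^{s}$ for diagonal $d$ (again Lemma \ref{lem lambda}), I obtain $\big(T^J\Lambda^r(T^J)^{-1}\big)^0=\sum_{i+j=r}A_i\,B_j^{[j]}$, where $A_0=B_0=1$ and $B_j^{[j]}=\Lambda^{j}B_j\Lambda^{-j}$. Each summand with $i\ge1$ is a product of diagonal matrices containing the factor $A_i$, hence supported on $\on{supp}A_i\subseteq\{j_\ell\}\subseteq S$. The single remaining summand is $i=0$, namely $\Lambda^{r}B_r\Lambda^{-r}$, which is supported on $\on{supp}B_r+r$; by the chain-top observation, for $p\in\on{supp}B_r$ the shifted index is $p+r=(p+r-1)+1\in\{j_\ell+1\}\subseteq S$. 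Therefore every summand, and hence $\big(T^J\Lambda^r(T^J)^{-1}\big)^0$ and its $x$-derivative, is supported on $S$. Combining with the first paragraph shows $Y_k=0$ unless $k=j_\ell$ or $j_\ell+1$ for some $\ell\le m$.

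The main obstacle is precisely this last step. For $i\ge1$ the inclusion $\on{supp}A_i\subseteq\{j_\ell\}$ is automatic, but the $i=0$ contribution $\Lambda^{r}B_r\Lambda^{-r}$ conjugates $B_r$ and shifts its support by $r$, which could a priori escape $S$. What rescues it is the exact combinatorics of the chains: the top index of any length-$r$ chain producing $B_r$ is itself some $j_\ell$, so after the shift the position is $j_\ell+1$. Establishing the normal-form expansions of $T^J$ and $(T^J)^{-1}$ together with this bookkeeping of chain endpoints is the one computational point; the rest reduces to direct applications of Corollary \ref{cor der} and \Ref{oper2}.
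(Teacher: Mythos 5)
Your proof is correct and follows essentially the same route as the paper's: the paper also splits $Y$ into its two terms, disposes of $\frac{\der\mu^J}{\der\Ga_{r,\tilde J}}$ via \Ref{oper2}, and handles $\frac{\der}{\der t_r}\big|_{\mu^J(c)}$ via \Ref{T mkdv} together with Lemmas \ref{lem exp} and \ref{lem lambda}. The paper states this in one line without details; your normal-form expansion of $T^J$ and $(T^J)^{-1}$ and the chain-top bookkeeping for the $\Lambda^r B_r\Lambda^{-r}$ term are precisely the computation the paper leaves implicit.
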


\begin{proof}
The fact that the $k$-th coordinate of $\frac{\der\mu^J}{\der \Ga_{r,{\tilde J}}}({\tilde c},c_m)$ equals zero unless
 $k=j_\ell$ or $j_\ell+1$ for some $\ell\leq m$ follows from \Ref{oper2}.
The fact that the $k$-th coordinate of $\frac{\der}{\der t_r}\big|_{\mu^J({\tilde c},c_m)} $ equals zero unless
 $k=j_\ell$ or $j_\ell+1$ for some $\ell\leq m$ follows from formulas \Ref{T mkdv}, \Ref{formula exp} and \Ref{formula La}.
\end{proof}

By Lemma \ref{lem vector is good} and Proposition \ref{lem ker dm} there exists a scalar function
$\ga_{m}(\tilde c,c_m)$ on $\C^m$  such that the vector field
$\Ga_r = \Gamma_{r,{\tilde J}} + \ga_{m}({\tilde c},c_m)\frac \der{\der c_m}$
satisfies \Ref{formula main} for all $(\tilde c,c_m)\in\C^m$. It remains to prove that the scalar function
is a polynomial.

\begin{prop}
\label{prop polyn}
The function $\ga_{m}({\tilde c},c_m)$ is a polynomial on $\C^m$.

\end{prop}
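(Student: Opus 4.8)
The plan is to prove Proposition \ref{prop polyn} in two stages: first establish that $\ga_m$ is a rational function on $\C^m$, and then show it has no poles, so that it must be a polynomial. Recall from Proposition \ref{lem ker dm} that the scalar $\ga_m$ is independent of $x$ and satisfies the identity $Y=\ga_m\,\frac{\der\mu^J}{\der c_m}$, where $Y$ is the vector \Ref{ker vector}. Comparing the $j_m$-th diagonal entries and using \Ref{form der c-m} together with $H_{j_m}=e_{j_m,j_m}-e_{j_m+1,j_m+1}$ gives
\be
\ga_m(\tilde c,c_m)\,=\,\frac{Y_{j_m}(x,c)\,y_{j_m}(x,c,m)^2}{a\,y_{j_m-1}(x,\tilde c,m-1)\,y_{j_m+1}(x,\tilde c,m-1)}\,,
\ee
the right-hand side being independent of $x$. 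First I would observe that numerator and denominator are rational functions of $(x,c)$: by Lemma \ref{lem gen procedure} every $y_{j_\ell}(x,c)$ is a polynomial in $x$ and $c$, monic in $x$; the functions $g_\ell$ of \Ref{g's} are logarithmic derivatives of such polynomials, hence rational in $(x,c)$; and by Corollary \ref{cor der}, formula \Ref{oper2}, and Lemma \ref{lem exp}, both the mKdV coordinate and the $\Ga_{r,\tilde J}$-derivative entering $Y_{j_m}$ are built from the $g_\ell$ by multiplication and by differentiation in $x$ and in $\tilde c$. Since the quotient does not depend on $x$, it lies in $\C(c)$.

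Next, because $\C^m$ is affine, it suffices to show that $\ga_m$ has no poles: a rational function on $\C^m$ that is regular at every point is a polynomial, since its reduced denominator then has empty zero locus and so is constant. I would therefore fix an arbitrary $c^0\in\C^m$ and choose a convenient value $x_0$ of $x$ at which to evaluate the quotient. The key point is that, for fixed $x_0$, every pole in $c$ of $Y_{j_m}(x_0,\,\cdot\,)$ and of the denominator lies on one of the finitely many hypersurfaces $\{y_{j_\ell}(x_0,c)=0\}$ or $\{y_{j_m\pm1}(x_0,c)=0\}$, because these are the only denominators occurring in the $g_\ell$ and in \Ref{form der c-m}. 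As each $y_{j_\ell}(\,\cdot\,,c^0)$ is a nonzero (indeed monic) polynomial in $x$, only finitely many values of $x_0$ are roots of one of them, so I can choose $x_0$ avoiding all of them. For this $x_0$ the numerator $Y_{j_m}(x_0,\,\cdot\,)$ and the denominator are regular at $c^0$ and the denominator is nonzero at $c^0$, whence $\ga_m$ is regular at $c^0$. As $c^0$ was arbitrary, $\ga_m$ is regular on all of $\C^m$ and hence a polynomial.

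The main obstacle is the verification in the previous paragraph that, at a fixed $x_0$, the only $c$-poles arise along the vanishing loci of the $y_{j_\ell}$ --- that is, that passing through the mKdV vector field of Corollary \ref{cor der} (the operation $M\mapsto\frac{d}{dx}(T^J\Lambda^r(T^J)^{-1})^0$) and through differentiation along the polynomial vector field $\Ga_{r,\tilde J}$ introduces no new denominators in $c$. This requires tracking the rational dependence on $c$ through the expansion of $T^J(x,c)\,\Lambda^r\,(T^J(x,c))^{-1}$ by means of Lemma \ref{lem exp}, noting that the factors of $T^J$ are $1+g_\ell e_{j_\ell,j_\ell}\Lambda^{-1}$, so the product is polynomial in the $g_\ell$, and that subsequent differentiation in $x$ and in $\tilde c$ can only raise the orders of the already present poles without creating new ones.
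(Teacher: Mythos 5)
Your proof is correct, but it takes a genuinely different route from the paper's. The paper argues via Laurent expansions at $x=\infty$: it first records that the logarithmic derivative of a polynomial which is monic in $x$ with coefficients polynomial in $c$ has an expansion $\sum_{i\geq 1}B_i(c)\,x^{-i}$ with all $B_i$ polynomial in $c$, then states (as auxiliary lemmas given without proof) that consequently each diagonal entry of both terms of the vector $Y$ in \Ref{ker vector} has an expansion of this form, and finally observes that in the identity $Y_{j_m}=\ga_m\,a\,y_{j_m-1}y_{j_m+1}/y_{j_m}^{2}$ the factor $y_{j_m-1}y_{j_m+1}/y_{j_m}^{2}$ has such an expansion with leading coefficient $1$ because the $y$'s are monic; hence $\ga_m$ is read off as a single Laurent coefficient of $Y_{j_m}$ and is therefore a polynomial. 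You instead show $\ga_m\in\C(c)$ (rational in $(x,c)$ and independent of $x$) and then rule out poles pointwise: for each $c^0$ you choose $x_0$ off the zero sets of the finitely many monic-in-$x$ denominators and deduce regularity at $c^0$ from the explicit fraction, concluding with the standard fact that an everywhere-regular rational function on $\C^m$ is a polynomial. Both arguments rest on exactly the same structural input --- the denominators occurring in $Y$ are products of the monic polynomials produced by the generation, and differentiation in $x$ or along the polynomial field $\Ga_{r,\tilde J}$ creates no new denominators --- which the paper leaves implicit in its unproved lemmas and which you correctly isolate as the point requiring verification. What each approach buys: the paper's is more constructive, exhibiting $\ga_m$ as an explicit Laurent coefficient and needing no general-position choice of $x_0$; yours is softer, using only that each denominator is not identically zero in $x$ for fixed $c$ (far less than monicity, which the paper needs to normalize the leading Laurent coefficient to $1$), at the modest cost of invoking the regular-implies-polynomial criterion on affine space.
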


\begin{proof}

Let $g = x^d + \sum_{i=0}^{d-1} A_i(c_1,\dots,c_m) x^i$ be a polynomial in $x,c_1,\dots,c_m$. Denote
$h = \log'g$ the logarithmic derivative of $g$ with respect to $x$. Consider the Laurent expansion
of $h$ at $x=\infty$, \
$h = \sum_{i=1}^\infty B_i(c_1,\dots,c_k) x^{-i}$.

\begin{lem}
\label{ lem Laurent}
All coefficients $B_i$ are polynomials in $c_1,\dots,c_m$.
\qed
\end{lem}

The vector $Y = \frac{\der}{\der t_r}\big|_{\mu^J({\tilde c},c_m)}$ is a diagonal matrix depending
on $x,c_1,\dots,c_m$.
Let $Y_1,\dots, Y_N$ be coordinates of $Y$, \ $Y=\sum_{k=1}^NY_ke_{k,k}$.

\begin{lem}
\label{ lem  dt Laurent}
Each coordinate $Y_k$ is a rational function of $x,c_1,\dots,c_m$ which  has a Laurent expansion of the form
$Y_k = \sum_{i=1}^\infty B_i(c_1,\dots,c_k) x^{-i}$ where all coefficients $B_i$ are polynomials in $c_1,\dots,c_m$.
\qed
\end{lem}

The vector $Y = \frac{\der \mu^J}{ \Ga_{r,\tilde J} }({\tilde c},c_m)$ is a diagonal matrix depending
on $x,c_1,\dots,c_m$.
Let $Y_1,\dots, Y_N$ be coordinates of $Y$, \ $Y=\sum_{k=1}^NY_ke_{k,k}$.

\begin{lem}
\label{ lem  mu Laurent}
Each coordinate $Y_k$ is a rational function of $x,c_1,\dots,c_m$ which  has a Laurent expansion of the form
$Y_k = \sum_{i=1}^\infty B_i(c_1,\dots,c_k) x^{-i}$ where all coefficients $B_i$ are polynomials in $c_1,\dots,c_m$.
\qed
\end{lem}

Let us finish the proof of Proposition \ref{prop polyn}. The function  $\ga_{m}({\tilde c},c_m)$ is determined from the
equation
\bea
\frac{\der}{\der t_r}\big|_{\mu^J({\tilde c},c_m)} -
\frac{\der\mu^J}{\der \Ga_{r,{\tilde J}}}({\tilde c},c_m) = \ga_{m}({\tilde c},c_m)\frac{y_{N}(x,{\tilde c}, m-1) y_{2}(x,{\tilde c},m-1)}
{y_{1}(x,{\tilde c},c_m,m)^2}\, H_{1}.
\eea
The function
$\frac{y_{N}(x,{\tilde c}, m-1) y_{2}(x,{\tilde c},m-1)}
{y_{1}(x,{\tilde c},c_m,m)^2}$ has a Laurent expansion of the form $\sum_{i=1}^\infty B_i(c_1,\dots,c_k) x^{-i}$ and the first nonzero coefficient
$B_i$ of this expansion is 1 since the polynomials $y_N, y_1, y_2 $ are all monic. Hence $g_m$ is a polynomial.
\end{proof}
Theorem \ref{thm main} is proved.

\subsection{Cyclic generation}
\label{sec cyclic generation}

Recall that a sequence  $J=(j_1,\dots,j_m)$ with  $1\leq j_i \leq N$ is called cyclic if
$j_1=1$ and $j_{i+1} = j_i+1$ mod $N$ for all $i$.
If the generation is cyclic, then it is  degree increasing, see Lemma \ref{lem cyclic increas}.

\begin{thm}
\label{thm cyclic}
If $J$ is cyclic, then the image of the generation map $\mu^J:\C^m\to \mc M$
does not have a proper subset invariant under the the flows
$exp\left(\sum_{r \in \N} t_r \frac{\der}{\der t_r}\right)$.
\end{thm}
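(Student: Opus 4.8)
The plan is to transfer everything to the parameter space $\C^m$ through the injective map $\mu^J$ (Lemma \ref{lem uniqeness}): by Corollary \ref{cor Main} the image of $\mu^J$ is preserved by every mKdV flow, and by Theorem \ref{thm main} the $r$-th flow is carried to the polynomial vector field $\Ga_r$ on $\C^m$. Thus a proper nonempty subset of the image invariant under all flows $\exp(\sum_r t_r\frac{\der}{\der t_r})$ pulls back, along the flow-intertwining injection $\mu^J$, to a proper nonempty subset of $\C^m$ invariant under all $\Ga_r$, and it suffices to show $\C^m$ admits no such subset. I would prove this by induction on $m$, using that the truncation $\tilde J=(j_1,\dots,j_{m-1})$ of a cyclic sequence is again cyclic, so the inductive hypothesis applies to it. The base case $m=1$ is the computation $\Ga_1=\frac{\der}{\der c_1}$ (with $\Ga_r=0$ for $r>1$) already carried out, whose single flow sweeps out all of $\C$.

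For the inductive step recall the recursion $\Ga_r=\Ga_{r,\tilde J}+\ga_m^{(r)}\frac{\der}{\der c_m}$ of Theorem \ref{prop ind}, where $\Ga_{r,\tilde J}$ is the corresponding field on $\C^{m-1}$ (independent of $c_m$) and $\ga_m^{(r)}$ is a polynomial. The projection $\pi:\C^m\to\C^{m-1}$, $(\tilde c,c_m)\mapsto\tilde c$, intertwines $\Ga_r$ with $\Ga_{r,\tilde J}$; hence if $S\subset\C^m$ is nonempty and invariant, then $\pi(S)$ is nonempty and invariant under the $\tilde J$-flows, so $\pi(S)=\C^{m-1}$ by induction. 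It then remains only to sweep out the fibres of $\pi$. For this I will produce an index $r^\star$ for which $\Ga_{r^\star}=a\,\frac{\der}{\der c_m}$ identically, with $a$ a nonzero constant: its flow fixes $\tilde c$ and translates $c_m$ over all of $\C$, so, since every fibre meets $S$, every fibre lies in $S$ and $S=\C^m$.

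Let $r(J)=\max\{r:\frac{\der}{\der t_r}\big|_{\mu^J}\not\equiv0\}$ and set $r^\star=r(J)$. If $r(J)>r(\tilde J)$, then $r^\star>r(\tilde J)$ forces $\Ga_{r^\star,\tilde J}=0$, so $\Ga_{r^\star}=\ga_m^{(r^\star)}\frac{\der}{\der c_m}$, and by Lemma \ref{lem der c-m} the identity $\frac{\der}{\der t_{r^\star}}\big|_{\mu^J}=\ga_m^{(r^\star)}\,a'\,(y_{j_m-1}y_{j_m+1}/y_{j_m}^2)\,H_{j_m}$ holds. Comparing top Laurent coefficients at $x=\infty$ of the two sides shows $\ga_m^{(r^\star)}$ is constant: by Corollary \ref{cor der} the left side is assembled from $T^J=\prod_\ell(1+g_\ell e_{j_\ell,j_\ell}\La^{-1})$, and each $g_\ell$ has leading coefficient equal to the degree jump of the $j_\ell$-th polynomial, a fixed positive integer independent of $c$ by Lemma \ref{lem cyclic increas}; the right-hand ratio is a quotient of monic polynomials with leading coefficient $1$. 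Since $r^\star=r(J)$ makes the left side nonzero, $\ga_m^{(r^\star)}=a$ is a nonzero constant, as needed.

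Everything therefore reduces to the inequality $r(J)>r(\tilde J)$: appending the cyclic step $j_m$ must activate a strictly higher mKdV flow. I would approach this through the single extra factor in $T^J=e^{g_m E_{j_m}}T^{\tilde J}$; since $E_{j_m}^2=0$ (Lemma \ref{lem exp}), writing $M=T^{\tilde J}\La^r(T^{\tilde J})^{-1}$ gives $T^J\La^r(T^J)^{-1}=M+g_m[E_{j_m},M]-g_m^2E_{j_m}ME_{j_m}$, whose $\La^0$-part must be shown $x$-nonconstant for some $r>r(\tilde J)$, where $(M)^0$ is already $x$-constant. Here $\La^N=\la\,\mathrm{Id}$ (immediate from Lemma \ref{lem lambda}) enters twice: it annihilates every flow with $N\mid r$, and it governs how the matrix-unit idempotents in the top-order terms of $T^J$ and $(T^J)^{-1}$ propagate under conjugation by $\La^r$. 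I expect this noncancellation—verifying that the highest surviving order of $T^J\La^{r}(T^J)^{-1}$ genuinely exceeds the one for $\tilde J$, which is delicate because the top powers of $T^J$ and $(T^J)^{-1}$ are asymmetric and the surviving idempotents frequently clash after conjugation—to be the one substantial computational point; the reduction above and the constancy argument are then formal.
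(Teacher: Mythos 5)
Your reduction is structurally sound and is in fact the same skeleton as the paper's own proof: transfer everything to $\C^m$ via the injective $\mu^J$, induct on the (still cyclic) truncation $\tilde J$ through the projection $(\tilde c,c_m)\mapsto\tilde c$, and sweep the fibres with one flow whose vector field on $\C^m$ is a nonzero \emph{constant} multiple of $\frac{\der}{\der c_m}$. The paper derives the theorem from exactly this last claim, stated as Lemma \ref{lem rm}: for cyclic $J$, writing $m=(N-1)p+q$, the index $r_m=m+p$ satisfies $\frac{\der}{\der t_r}\big|_{\mu^J(c)}=0$ for all $r>r_m$, while $\Ga_{r_m}=a\,\frac{\der}{\der c_m}$ with $a\neq 0$; this is obtained from the explicit form \Ref{T mkdv} of $T^J$ together with Lemmas \ref{lem exp}, \ref{lem lambda}, \ref{lem formula}. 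In your write-up this claim is split into (i) the inequality $r(J)>r(\tilde J)$ and (ii) a noncancellation of top coefficients, and both are explicitly deferred (``I expect this noncancellation \dots to be the one substantial computational point''). Since every other step --- yours and the paper's --- is formal, what you have deferred is not a finishing detail but the entire content of the theorem: without it you have no $r^\star$, no constant $a$, and hence no proof. That is a genuine gap, and it is precisely the gap that the paper's explicit identification $r_\ell=\ell+p$ (for $N=2$, the Adler--Moser indices $1,3,5,\dots$) closes.

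There is also a hole in the constancy argument itself, even granting (i). Since each chosen factor of $T^J$ and of $(T^J)^{-1}$ carries exactly one $\La^{-1}$, the $\La^0$-component of $T^J\La^{r^\star}(T^J)^{-1}$ is a signed sum of products of \emph{exactly} $r^\star$ of the $g_\ell$'s, so after applying $\frac d{dx}$ its coefficient at order $x^{-(r^\star+1)}$ is indeed a constant built from the degree jumps. But the identity $\frac{\der}{\der t_{r^\star}}\big|_{\mu^J}=\ga_m^{(r^\star)}\,a'\,\bigl(y_{j_m-1}y_{j_m+1}/y_{j_m}^2\bigr)H_{j_m}$ forces $\ga_m^{(r^\star)}$ to be constant only if that constant coefficient is nonzero \emph{and} the orders match, i.e. $r^\star+1=d$ where $x^{-d}$ is the leading order of the monic ratio; if the signed sum of products of jumps cancels to zero, the true leading coefficient of the left side sits at a lower order, is $c$-dependent, and your conclusion fails. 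Equivalently, what must actually be proved is that $r(J)$ equals the degree jump $\tilde k_{j_m}-k_{j_m}$ of the last cyclic step (note $d=\tilde k_{j_m}-k_{j_m}+1$), with nonvanishing top coefficient --- which is exactly what the formula $r_m=m+p$ of Lemma \ref{lem rm} encodes (one can check $r_\ell$ coincides with the $\ell$-th degree jump of the cyclic sequence, e.g. $1,2,4,5,7,8,\dots$ for $N=3$). Until that identification and nonvanishing are established, the argument is incomplete.
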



\begin{proof}
Let $\ell$ be a nonnegative number. Write $\ell=(N-1)p+q$ where $1\leq q\leq N$.
Define $r_\ell= \ell +p$.

\begin{lem}
\label{lem rm}

Let $J=(j_1,\dots,j_m)$ be a cyclic sequence.

\begin{enumerate}

\item[(i)]
Then the image of the map
$\mu^J :\C^m\to\mc M$ is point-wise fixed by the mKdV flow $\der_{t_r}$
for every $r>r_m$.

\item[(ii)] Let $\Ga_r$ be the vector fields on $\C^m$  described in
Theorem \ref{thm main}. Then $\Ga_{{r_m}} = a \frac\der{\der c_m}$ where $a$ is a nonzero number.

\end{enumerate}
\end{lem}

\begin{proof}
The lemma is a corollary of formula \Ref{T mkdv} and Lemmas \ref{lem exp}, \ref{lem lambda},
\ref{lem formula}.
\end{proof}

Theorem \ref{thm cyclic} follows from Lemma \ref{lem rm}.
\end{proof}

\section{Schur polynomials}
\label{sec SchuR poly}

\subsection{Definition}
By a {\it partition} we will mean an infinite sequence of nonnegative integers
$\la=(\la_0\geq\la_1\geq \dots)$ such that all except a finite number of the $\la_i$ are zero.
The number $|\la|=\sum_i\la_i$ is called the {\it weight of} $\la$.

Denote $t=(t_2,t_3,\dots).$
Define polynomials $h_i(t_1,t)$, $i=0,1,\dots$, by the relation
\bea
\text{exp}\,(-\sum_{j=1}^\infty t_jz^j)\,=\, \sum_{i=0}^\infty h_iz^i .
\eea
Set $h_i=0$ for $i<0$.
We have
\bean
\label{der h}
\frac {\der h_i}{\der t_1} = - h_{i-1}\, \qquad\text{and}\qquad
h_i(t_1,t=0) = \frac{(-t_1)^i}{i!} .
\eean
Define the Schur polynomial associated to a partition  $\la = (\la_0\geq\la_1\geq \dots\geq\la_n\geq \la_{n+1}=0)$ by
the formula
\bean
\label{def schur}
F_\la(t_1,t)\, =\, \text{det}_{i,j=0}^n\,(h_{\la_i-i+j}).
\eean

\begin{thm}[\cite{Fa}]
\label{thm irred}
For any $\la$, the Schur polynomial $F_\la$ is irreducible in
variables  $t_1, t$.
\end{thm}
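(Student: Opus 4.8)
The plan is to combine the weighted-homogeneity of $F_\la$ with an explicit principal specialization, and then to rule out a proper factorization using the full Jacobi--Trudi structure.

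First I would record that $F_\la$ is weighted homogeneous of weight $|\la|$ once one sets $\deg t_j = j$. Indeed $\exp(-\sum_j t_j z^j)=\sum_i h_i z^i$ shows $h_i$ is weighted homogeneous of weight $i$, and in the determinant \Ref{def schur} every permutation $\sigma$ contributes a product of weight $\sum_i(\la_i-i+\sigma(i))=|\la|$, since $\sum_i i=\sum_i\sigma(i)$. The ring $\C[t_1,t_2,\dots]$ is an integral domain graded by this weight, so in any factorization $F_\la=PQ$ I may assume $P,Q$ are weighted homogeneous of weights $p,q$ with $p+q=|\la|$.

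Second, I would compute the specialization $t_2=t_3=\dots=0$. By \Ref{der h} we have $h_i(t_1,0)=(-t_1)^i/i!$, so \Ref{def schur} becomes $\det_{i,j}\big((-t_1)^{\la_i-i+j}/(\la_i-i+j)!\big)$; pulling $(-t_1)^{\la_i-i}$ from row $i$ and $(-t_1)^j$ from column $j$ exhibits $F_\la(t_1,0)=c_\la\,t_1^{|\la|}$ with $c_\la$ a constant Cauchy-type determinant. This is nonzero: since $\sum_i h_i z^i=\exp(-\sum_k t_k z^k)=\exp(\sum_k p_k z^k/k)$ with $p_k=-k t_k$, the $h_i$ are the complete symmetric functions and $F_\la$ is the Schur function $s_\la$ written in power sums, so the classical principal specialization $s_\la(p_1,0,0,\dots)=f^\la p_1^{|\la|}/|\la|!$ gives $c_\la=(-1)^{|\la|}f^\la/|\la|!\ne 0$, where $f^\la\ge 1$ is the number of standard tableaux of shape $\la$. (In this language the theorem is exactly the irreducibility of $s_\la$ in the power-sum variables.) Restricting $F_\la=PQ$ to $t=0$ then forces $P(t_1,0)=a\,t_1^{p}$ and $Q(t_1,0)=b\,t_1^{q}$ with $ab=c_\la\ne 0$; in particular each of $P,Q$ involves $t_1$ and has $t_1$-degree equal to its weight.

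The remaining, and genuinely hard, step is to exclude a proper factorization. The crude degree bookkeeping above does not suffice, because for ``fat'' partitions no single variable occurs only linearly: for a square $\la=(m,\dots,m)$ with $m$ large the maximal hook length $h$ satisfies $2h<|\la|$, so $t_h$ may enter $F_\la$ quadratically and one cannot reduce to a one-variable Gauss-lemma argument. My plan here is to use the full Jacobi--Trudi/Pl\"ucker structure rather than any single specialization: from $\partial h_i/\partial t_m=-h_{i-m}$ one obtains the border-strip relations $\partial F_\la/\partial t_m=-\sum_\mu \pm F_\mu$, the sum running over $\mu$ obtained by deleting a rim hook of size $m$, and I would try to run an induction on $|\la|$ showing that a nonconstant weighted-homogeneous divisor $P$ with $P(t_1,0)=a\,t_1^{p}$ is incompatible with these relations unless $P=F_\la$ up to a scalar. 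The main obstacle is precisely this incompatibility argument for fat partitions, where the hypothetical factorization is invisible to any low-dimensional specialization and the determinantal identities must be used in an essential way.
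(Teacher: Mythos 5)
Your first two steps are correct but they are only preliminaries, and they cannot by themselves see irreducibility: weighted homogeneity and the nonvanishing principal specialization $F_\la(t_1,0)=c_\la t_1^{|\la|}$ with $c_\la=(-1)^{|\la|}f^\la/|\la|!\ne 0$ are both inherited by the factors of any hypothetical weighted-homogeneous factorization $F_\la=PQ$ (as you yourself note, each factor again has nonzero pure $t_1$-part), so no contradiction can come from them alone. The entire content of the theorem therefore sits in your third step, and there you do not give an argument: you state a plan (``I would try to run an induction on $|\la|$\dots'') and then explicitly identify its main obstacle---the incompatibility claim for fat partitions---without resolving it. That is a genuine gap, not a detail; the decisive step of the proof is left as an acknowledged open problem. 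For the record, the paper itself contains no internal proof to compare against: Theorem \ref{thm irred} is imported from Farahat \cite{Fa}, so your proposal has to stand on its own, and as written it does not.

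The idea your sketch is missing is a change of variables, after which the difficulty you flag disappears. By the generating function defining the $h_i$ (see \Ref{der h}), one has $h_i=-t_i+(\text{polynomial in }t_1,\dots,t_{i-1})$, so $t\mapsto h$ is a triangular polynomial automorphism and $\C[t_1,t_2,\dots]=\C[h_1,h_2,\dots]$ with $h_1,h_2,\dots$ algebraically independent; irreducibility of $F_\la$ in the $t_j$ is therefore equivalent to irreducibility of the determinant \Ref{def schur} as a polynomial in the $h_i$. In the $h$-variables your ``fat partition'' obstruction evaporates: the entry of maximal index, $h_{\la_0+n}$, occurs in the matrix exactly once (top-right corner, since $\la_i-i+j\le \la_0-1+n$ for $i\ge 1$), so $F_\la$ \emph{is} linear in one of the variables, with coefficient $\pm F_{\hat\la}$, where $\hat\la=(\la_1-1,\dots,\la_n-1)$ is $\la$ with its first hook removed. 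This is the lever for an induction on $|\la|$ in the style of Farahat: if $F_\la=PQ$ with $Q$ free of $h_{\la_0+n}$, then $Q$ divides $F_{\hat\la}$, hence by induction $Q$ is constant or a scalar multiple of $F_{\hat\la}$, and the second case must then be excluded by a separate argument showing $F_{\hat\la}\nmid F_\la$ (a suitable specialization or weight/length count). Your rim-hook derivative identities are true and might be usable in that exclusion step, but nothing of this kind is actually carried out in the proposal, so the proof is incomplete precisely where the theorem lives.
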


\begin{cor}
\label{cor schur and der}
For a generic fixed $t$, the roots of $F_\la(t_1,t)$ with respect to $t_1$ are all simple.
\end{cor}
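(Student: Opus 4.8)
The plan is to run the standard discriminant argument, with the only nontrivial input being the irreducibility of $F_\la$ supplied by Theorem \ref{thm irred}. View $F_\la(t_1,t)$ as a polynomial in the single variable $t_1$ with coefficients in $\C[t]=\C[t_2,t_3,\dots]$, and let $\Dl(t)=\on{disc}_{t_1}F_\la\in\C[t]$ be its discriminant with respect to $t_1$ (up to a power of the leading coefficient, the resultant of $F_\la$ and $\der F_\la/\der t_1$). For a fixed $t$ at which the leading $t_1$-coefficient and $\Dl(t)$ are both nonzero, the polynomial $t_1\mapsto F_\la(t_1,t)$ has all simple roots. Hence it suffices to show that $\Dl$ is not the zero polynomial: then the exceptional $t$ form a proper Zariski-closed subset of the $t$-space, and a generic $t$ works. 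If $\la=0$ then $F_\la\equiv1$ has no roots and the statement is vacuous, so assume $\la\neq0$.

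First I would pin down the behaviour in $t_1$. Assigning the weight $\deg t_j=j$, the generating relation shows that each $h_i$ is weighted-homogeneous of degree $i$, so by \Ref{def schur} the determinant $F_\la$ is weighted-homogeneous of weighted degree $|\la|$. Consequently the only monomial of $F_\la$ that is a pure power of $t_1$ is $t_1^{|\la|}$, so $\deg_{t_1}F_\la=|\la|$ and the leading $t_1$-coefficient is a constant; moreover, by the evaluation $h_i(t_1,t=0)=(-t_1)^i/i!$ in \Ref{der h}, one has $F_\la(t_1,t=0)=c\,t_1^{|\la|}$ with $c\neq0$ (the determinant of reciprocal factorials is nonzero). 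Thus the leading coefficient never vanishes, $\der F_\la/\der t_1\neq0$, and $\deg_{t_1}(\der F_\la/\der t_1)=|\la|-1<|\la|$.

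The heart of the argument is the claim $\Dl\not\equiv0$, and this is exactly where Theorem \ref{thm irred} enters. Suppose $\Dl\equiv0$. Then $F_\la$ and $\der F_\la/\der t_1$ have a nonconstant common factor in $\C(t)[t_1]$, and by Gauss's lemma a common factor of positive $t_1$-degree in $\C[t_1,t]$. Since $F_\la$ is irreducible in $\C[t_1,t]$ and has positive $t_1$-degree, this forces $F_\la$ to divide $\der F_\la/\der t_1$ in $\C(t)[t_1]$; but $\der F_\la/\der t_1$ is nonzero of strictly smaller $t_1$-degree, a contradiction. Therefore $\Dl\not\equiv0$, and the corollary follows. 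I expect the one delicate point to be precisely this passage from coprimality over the fraction field $\C(t)$ to the absence of a common factor in $\C[t_1,t]$, i.e.\ the correct combination of Gauss's lemma with irreducibility — everything else is bookkeeping about degrees.
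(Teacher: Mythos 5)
Your proof is correct and follows the same route the paper intends: the corollary is stated there as an immediate consequence of Theorem \ref{thm irred}, and the standard deduction left implicit is exactly your discriminant argument (irreducibility plus the nonvanishing constant leading $t_1$-coefficient, cf.\ Lemma \ref{deg t1}, forces $F_\la$ and $\der F_\la/\der t_1$ to be coprime, so $\on{disc}_{t_1}F_\la\not\equiv 0$). Your weighted-homogeneity derivation of the leading-coefficient fact and the Gauss's-lemma step are both sound, so the write-up simply supplies the details the paper omits.
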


\begin{cor}
\label{cor two schur}

Let $\la, \mu$ be partitions, $\la\ne\mu$. Then for a generic fixed $t$, the polynomials
$F_\la(t_1,t)$ and $F_\mu(t_1,t)$ with respect to $t_1$ have no common roots.

\end{cor}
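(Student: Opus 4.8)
The plan is to deduce the statement from the irreducibility of Schur polynomials (Theorem \ref{thm irred}) via a resultant argument. Viewing $F_\la(t_1,t)$ and $F_\mu(t_1,t)$ as polynomials in $t_1$ with coefficients in $\C[t]$, for a fixed value of $t$ they have a common root in $t_1$ only if the resultant $R(t)=\Res_{t_1}(F_\la,F_\mu)$ vanishes at that $t$. Since $R$ is a polynomial in $t$, it suffices to show $R\not\equiv 0$; the common-root locus is then contained in the proper Zariski-closed set $\{R=0\}$, and ``generic $t$'' means $t$ outside this set.

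To show $R\not\equiv 0$, I would first record that, as a polynomial in $t_1$, each $F_\la$ has a nonzero constant leading coefficient. This follows from the specialization $h_i(t_1,t=0)=(-t_1)^i/i!$ in \Ref{der h}: the top-degree-in-$t_1$ part of $\det_{i,j}(h_{\la_i-i+j})$ has degree $|\la|$ and its coefficient is a nonzero number independent of $t$. Consequently $F_\la$ is primitive as an element of $\C[t][t_1]$, and, being irreducible in $\C[t_1,t]$ by Theorem \ref{thm irred}, it remains irreducible in $\C(t)[t_1]$ by Gauss's lemma; the same holds for $F_\mu$.

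Now $\Res_{t_1}(F_\la,F_\mu)$ vanishes identically iff $F_\la$ and $F_\mu$ have a nonconstant common factor in $\C(t)[t_1]$. Since both are irreducible there, this can happen only if they are associates, i.e. $F_\la=r(t)F_\mu$ for some $r(t)\in\C(t)^\times$. Comparing the (constant) leading coefficients in $t_1$ forces $r(t)$ to be a nonzero constant, so $F_\la=c\,F_\mu$ as elements of $\C[t_1,t]$. If $|\la|\ne|\mu|$ this is impossible for degree reasons in $t_1$; if $|\la|=|\mu|$ but $\la\ne\mu$ it contradicts the linear independence of the Schur polynomials attached to distinct partitions of the same weight. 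Either way we reach a contradiction, so $R\not\equiv 0$, which proves the corollary.

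The main obstacle is justifying the two structural inputs I lean on: that the leading $t_1$-coefficient of $F_\la$ is a nonzero constant (so that the resultant is well-behaved and Gauss applies), and that distinct partitions yield non-proportional $F_\la$. Both are standard facts about Schur functions coming from the Jacobi--Trudi determinant, but they carry the actual content; the resultant and Gauss-lemma formalism wrapped around them is routine. Note that Corollary \ref{cor schur and der} gives only the simple-root statement for a single polynomial and does not by itself yield the common-root claim, so the irreducibility input of Theorem \ref{thm irred} appears to be essential here.
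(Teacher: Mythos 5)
Your proof is correct and takes essentially the same route as the paper: the paper states this corollary as an immediate consequence of Theorem \ref{thm irred}, and your resultant/Gauss-lemma argument is exactly the standard way of filling in that deduction, with coprimality of $F_\la$ and $F_\mu$ in $\C(t)[t_1]$ coming from irreducibility plus non-proportionality. The two structural inputs you flag are indeed available — the nonzero constant leading $t_1$-coefficient is the paper's Lemma \ref{deg t1}, and linear independence of Schur polynomials attached to distinct partitions is standard — so there is no gap.
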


\subsection{Wronskian determinant}

For   $n\in\N$ and any functions $f_1,\dots,f_n$ of $t_1$ define the Wronskian determinant
\bean
\label{wr det}
\Wr_{t_1}(f_1,\dots,f_n) = \left(\begin{matrix}
f_1(t_1) & f_1'(t_1) &\dots & f_1^{(n-1)}(t_1) \\
f_2(t_1) & f_2'(t_1) &\dots & f_2^{(n-1)}(t_1)\\
 \dots & \dots &\dots & \dots \\
f_n(t_1) & f_n'(t_1) &\dots & f_n^{(n-1)}(t_1)
\end{matrix}\right),
\eean
where derivatives are taken with respect to $t_1$.

\begin{lem} [\cite{MV1}]
\label{lem MV}
For functions $f_1,\dots,f_n,g_1,g_2$ of $t_1$ we have
\bean
\label{wr MV}
&&
\phantom{aaaaaa}
\\
\notag
&&
\Wr_{t_1}(\Wr_{t_1}(f_1,\dots,f_n,g_1),\Wr_{t_1}(f_1,\dots,f_n,g_2)) = \Wr_{t_1}(f_1,\dots,f_n)\Wr_{t_1}(f_1,\dots,f_n,g_1,g_2).
\eean
\end{lem}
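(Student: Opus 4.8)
The plan is to recognize all of the Wronskians in \Ref{wr MV} as complementary minors of a single matrix and then to apply the classical Desnanot--Jacobi (Sylvester) determinant identity. Set
\[
W=\Wr_{t_1}(f_1,\dots,f_n),\quad W_1=\Wr_{t_1}(f_1,\dots,f_n,g_1),\quad W_2=\Wr_{t_1}(f_1,\dots,f_n,g_2),
\]
and $W_{12}=\Wr_{t_1}(f_1,\dots,f_n,g_1,g_2)$. Since $\Wr_{t_1}(W_1,W_2)=W_1W_2'-W_1'W_2$, the assertion \Ref{wr MV} is the equality $W_1W_2'-W_1'W_2=W\,W_{12}$. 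Let $A$ be the $(n+2)\times(n+2)$ matrix whose $(i,j)$ entry is the $(j-1)$-st derivative in $t_1$ of the $i$-th entry of $(f_1,\dots,f_n,g_1,g_2)$, so that its rows are labeled by these $n+2$ functions and its columns by the derivative orders $0,1,\dots,n+1$. Then $\det A=W_{12}$, and the minor formed from the first $n$ rows and columns is $W$.

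The one computation I would record is the derivative of a Wronskian: differentiating $\Wr_{t_1}(\psi_1,\dots,\psi_k)$ by the column Leibniz rule, every term that differentiates a column other than the last produces two equal columns and so vanishes, leaving the determinant with derivative orders $0,\dots,k-2,k$. Writing $A_{i,j}$ for $A$ with row $i$ and column $j$ deleted and $A_{\{i,i'\},\{j,j'\}}$ for the corresponding two-row, two-column deletion, this formula lets me read off, with row $n+1$ the function $g_1$, row $n+2$ the function $g_2$, column $n+1$ the order-$n$ column, and column $n+2$ the order-$(n+1)$ column,
\[
\det A_{n+2,n+2}=W_1,\quad \det A_{n+1,n+2}=W_2,\quad \det A_{n+2,n+1}=W_1',\quad \det A_{n+1,n+1}=W_2',
\]
together with $\det A_{\{n+1,n+2\},\{n+1,n+2\}}=W$.

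Finally I would invoke the Desnanot--Jacobi identity in its sign-free minor form for the two distinguished rows $n+1,n+2$ and the two distinguished columns $n+1,n+2$,
\[
\det(A)\,\det\!\big(A_{\{n+1,n+2\},\{n+1,n+2\}}\big)=\det(A_{n+1,n+1})\det(A_{n+2,n+2})-\det(A_{n+1,n+2})\det(A_{n+2,n+1}).
\]
Substituting the identifications above turns this into $W_{12}\,W=W_2'W_1-W_2W_1'=W_1W_2'-W_1'W_2$, which is exactly \Ref{wr MV}. The only points requiring care are the bookkeeping that pairs each deleted row and column with the correct Wronskian or its derivative, using that differentiation affects only the top-order column, and the verification that the interior index pair $\{n+1,n+2\}$ incurs no sign in Desnanot--Jacobi; a direct $2\times2$ expansion of the condensation confirms the sign-free form, so I expect this routine matching, rather than any deep idea, to be the main obstacle.
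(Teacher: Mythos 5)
Your proof is correct, but it follows a genuinely different route from the paper's. The paper argues analytically in $g_2$: if $f_1,\dots,f_n,g_1$ are linearly dependent both sides vanish, and otherwise both sides of \Ref{wr MV} are linear combinations of $g_2,g_2',\dots,g_2^{(n+1)}$ which vanish whenever $g_2$ equals any of $f_1,\dots,f_n,g_1$ and which have the same coefficient $\Wr_{t_1}(f_1,\dots,f_n)\Wr_{t_1}(f_1,\dots,f_n,g_1)$ of $g_2^{(n+1)}$; since a linear differential operator of order at most $n$ cannot annihilate $n+1$ linearly independent functions, the two sides coincide. Your argument instead realizes all six quantities as minors of the single $(n+2)\times(n+2)$ Wronskian matrix $A$ of $(f_1,\dots,f_n,g_1,g_2)$ and invokes the Desnanot--Jacobi identity; the identifications $\det A_{n+2,n+2}=W_1$, $\det A_{n+1,n+2}=W_2$, $\det A_{n+2,n+1}=W_1'$, $\det A_{n+1,n+1}=W_2'$, $\det A_{\{n+1,n+2\},\{n+1,n+2\}}=W$ are all right, the key point being your observation that differentiating a Wronskian only shifts the top-order column, and the sign-free form of the condensation for the index pairs $\{n+1,n+2\}$, $\{n+1,n+2\}$ does hold (the cofactor signs $(-1)^{(n+1)+(n+1)}(-1)^{(n+2)+(n+2)}$ and $(-1)^{(n+1)+(n+2)}(-1)^{(n+2)+(n+1)}$ are both $+1$). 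What your approach buys: it is a formal polynomial identity in the entries of $A$, valid over any differential ring, so it needs no case split for linear dependence and no appeal to the dimension of kernels of differential operators. What the paper's approach buys: it is shorter, avoids the minor bookkeeping entirely, and the same "compare values and leading coefficients" template is what the paper reuses for the more general identities of Lemmas \ref{MV lem1} and \ref{MV lem2}, where a single condensation step no longer suffices.
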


\begin{proof}
If $f_1,\dots,f_n,g_1$ are linearly dependent, then both sides of \Ref{wr MV} are equal to zero.
Assume that $f_1,\dots,f_n,g_1$ are linearly independent.

Both sides of \Ref{wr MV} depend on $g_2$ linearly and
are linear combinations of $g_2, g_2',$ \dots, $g_2^{(n+1)}$.
 Both sides are equal to zero if $g_2=f_1,\dots,f_n,g_1$.
In both sides, the coefficient of $g_2^{(n+1)}$ equals
$\Wr_{t_1}(f_1,\dots,f_n)\Wr_{t_1}(f_1,\dots,f_n,g_1).$ This implies the lemma.
\end{proof}

\begin{lem}
\label{lem der h}
For a partition $\la = (\la_0\geq\la_1\geq \dots\geq\la_n\geq \la_{n+1}=0)$, we have
\bean
\label{sch wr}
F_\la(t_1,t) = \Wr_{t_1}(h_{\la_0+n}, h_{\la_1+n-1}, \dots, h_{\la_n}).
\eean

\end{lem}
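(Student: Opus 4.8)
The plan is to compute the right-hand Wronskian of \Ref{sch wr} directly from its determinant definition \Ref{wr det}, using only the differentiation rule $\der h_i/\der t_1 = -h_{i-1}$ from \Ref{der h}, and then to match the resulting determinant against the defining determinant \Ref{def schur} of $F_\la$ by a column reversal. The whole argument is a bookkeeping of signs; no analytic input beyond \Ref{der h} is required.

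First I would record that iterating $\der h_m/\der t_1 = -h_{m-1}$ gives $\der^{\,j} h_m/\der t_1^{\,j} = (-1)^j h_{m-j}$ for all $j\geq 0$ (with $h_i=0$ for $i<0$). Writing the $n+1$ arguments of the Wronskian as $g_i = h_{\la_i+n-i}$, $i=0,\dots,n$, the definition \Ref{wr det} shows that the entry of the Wronskian matrix in row $i$ and derivative-column $j$ (both indices running $0,\dots,n$) equals
\be
\frac{\der^{\,j} g_i}{\der t_1^{\,j}} = (-1)^j h_{\la_i+n-i-j}.
\ee
Factoring the scalar $(-1)^j$ out of the $j$-th column for each $j$ produces the common factor $\prod_{j=0}^n (-1)^j = (-1)^{n(n+1)/2}$, so that
\be
\Wr_{t_1}(h_{\la_0+n},\dots,h_{\la_n}) = (-1)^{n(n+1)/2}\,\det_{i,j=0}^n\bigl(h_{\la_i+n-i-j}\bigr).
\ee

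Next I would reverse the order of the columns, i.e.\ substitute $j\mapsto n-j$. Under this substitution the entry in row $i$, column $n-j$ becomes $h_{\la_i+n-i-(n-j)} = h_{\la_i-i+j}$, which is precisely the $(i,j)$ entry of the matrix in \Ref{def schur}. Reversing $n+1$ columns is a permutation of sign $(-1)^{n(n+1)/2}$, hence
\be
\det_{i,j=0}^n\bigl(h_{\la_i+n-i-j}\bigr) = (-1)^{n(n+1)/2}\,\det_{i,j=0}^n\bigl(h_{\la_i-i+j}\bigr) = (-1)^{n(n+1)/2} F_\la .
\ee
Combining the two displays, the two sign factors multiply to $(-1)^{n(n+1)}=1$, since $n(n+1)$ is even, and \Ref{sch wr} follows.

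The only point requiring care — and the one I would treat as the (minor) main obstacle — is the sign bookkeeping: matching the Wronskian convention in \Ref{wr det}, where the derivative order increases from left to right, with the Schur convention in \Ref{def schur}, where the column index adds to the subscript. Both the column-wise extraction of $(-1)^j$ and the column reversal contribute $(-1)^{n(n+1)/2}$, and it is exactly their cancellation that makes the identity hold on the nose, with no overall sign. I would double-check the reversal sign by noting that the reversal permutation on $\{0,\dots,n\}$, sending $j\mapsto n-j$, has $\binom{n+1}{2}=n(n+1)/2$ inversions, whence its sign is $(-1)^{n(n+1)/2}$.
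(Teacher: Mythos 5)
Your proof is correct and takes essentially the same approach as the paper: the paper's entire proof is the one-line remark that the lemma follows from \Ref{der h}, and your argument is precisely that computation carried out explicitly, with the two cancelling factors of $(-1)^{n(n+1)/2}$ (from extracting the signs column by column and from reversing the column order) made precise. The sign bookkeeping you flag as the main point of care is exactly what the paper leaves to the reader, and you have done it correctly.
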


\begin{proof}
The lemma follows from \Ref{der h}.
\end{proof}

\begin{lem}
\label{deg t1}
For a partition $\la$, the Schur polynomial $F_\la(t_1,t)$ has degree $|\la|$ with respect to
$t_1$. The coefficient of $t_1^{|\la|}$ in $F_\la$ is a nonzero rational number.
\qed

\end{lem}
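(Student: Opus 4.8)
The plan is to read off both assertions from the Wronskian presentation of $F_\la$ in Lemma \ref{lem der h} together with the specialization $h_i(t_1,t=0)=(-t_1)^i/i!$ recorded in \Ref{der h}. Write $m_i=\la_i+n-i$ for $i=0,\dots,n$; since $\la_0\ge\la_1\ge\dots\ge\la_n\ge 0$ these are strictly decreasing nonnegative integers, and Lemma \ref{lem der h} gives $F_\la=\Wr_{t_1}(h_{m_0},\dots,h_{m_n})$. The first step is the degree upper bound. Assigning to each variable $t_j$ the weight $j$, the generating relation $\exp(-\sum_j t_jz^j)=\sum_i h_iz^i$ shows that $h_i$ is weighted homogeneous of weight $i$; hence every entry $h_{\la_i-i+j}$ of the determinant \Ref{def schur} has weight $\la_i-i+j$, and each term of the permutation expansion has weight $\sum_i(\la_i-i+\si(i))=|\la|$, because $\sum_i\si(i)=\sum_i i$ for any permutation $\si$. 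Thus $F_\la$ is weighted homogeneous of weight $|\la|$. In particular its $t_1$-degree is at most $|\la|$, and since $t_1$ has weight $1$, the only weight-$|\la|$ monomial that is a pure power of $t_1$ is $t_1^{|\la|}$; therefore the coefficient of $t_1^{|\la|}$ in $F_\la$ is a constant and is unchanged by setting $t=0$.

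It then remains to identify that coefficient in $F_\la(t_1,t=0)$. Using \Ref{der h} I would substitute $h_{m_i}(t_1,0)=(-t_1)^{m_i}/m_i!$ into the Wronskian and pull the scalar factors $(-1)^{m_i}/m_i!$ out row by row, reducing the question to the Wronskian $\Wr_{t_1}(t_1^{m_0},\dots,t_1^{m_n})$ of distinct monomials. Differentiating and factoring $t_1^{m_i}$ from the $i$-th row and $t_1^{-j}$ from the $j$-th column turns this into $t_1^{\,\sum_i m_i-\binom{n+1}{2}}$ times the determinant of falling factorials $\det_{i,j}(m_i^{\underline{\,j}})$. A column reduction replaces the falling factorials by ordinary powers, so this determinant equals the Vandermonde $\prod_{i<i'}(m_{i'}-m_i)$, which is nonzero because the $m_i$ are distinct. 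Finally $\sum_i m_i=|\la|+\sum_{i=0}^n(n-i)=|\la|+\binom{n+1}{2}$, so the exponent is exactly $|\la|$; collecting the rational prefactor shows that $F_\la(t_1,0)$ is a nonzero rational multiple of $t_1^{|\la|}$.

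Combining the two steps, the coefficient of $t_1^{|\la|}$ in $F_\la$ equals that of $F_\la(t_1,0)$, which is a nonzero rational number; together with the bound $\deg_{t_1}F_\la\le|\la|$ this gives $\deg_{t_1}F_\la=|\la|$ with the asserted leading coefficient.

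The only place anything could go wrong is the nonvanishing of the leading coefficient, since a priori the determinant might suffer cancellations that lower the degree below $|\la|$. The main obstacle is therefore to rule out such cancellation, and the weighted-homogeneity observation is exactly what controls it: it guarantees that the top $t_1$-power is captured faithfully by the $t=0$ specialization, so that nonvanishing reduces to the elementary fact that a Wronskian of distinct monomials is a nonzero monomial (a disguised Vandermonde). Everything else is a routine determinant computation.
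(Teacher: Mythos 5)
Your proof is correct: the weighted-homogeneity observation (weight $j$ on $t_j$) gives the upper bound $\deg_{t_1}F_\la\le|\la|$ and shows the coefficient of $t_1^{|\la|}$ survives the specialization $t=0$, after which the Wronskian of the monomials $(-t_1)^{m_i}/m_i!$ reduces to a Vandermonde in the distinct exponents $m_i=\la_i+n-i$, yielding a nonzero rational leading coefficient. The paper states this lemma without proof (it is marked as immediate), and your argument is the natural complete justification, built from exactly the ingredients the paper supplies nearby, namely Lemma \ref{lem der h} and formula \Ref{der h}.
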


For a partition  $\la = (\la_0\geq\la_1\geq \dots\geq\la_n\geq \la_{n+1}=0)$ if $i\in\{0,\dots,n\}$
is such that $\la_i-1\geq\la_{i+1}$, then
$\la^i:=(\la_1\geq\dots\geq\la_{i-1}\geq\la_i-1\geq\la_{i+1}\geq\dots\geq\la_{n+1}=0)$ is a partition.
This partition will be called the {\it derivative partition of $\la$ at the $i$-th position}.

\begin{lem}
\label{lem der sCUr}
For a partition $\la = (\la_0\geq\la_1\geq \dots\geq\la_n\geq \la_{n+1}=0)$, we have
\bean
\label{sch dEr}
\frac {\der F_\la}{\der t_1} + \sum F_{\la^i} = 0
\eean
where the sum is over all derivative partitions $\la^i$ of the partition  $\la$.

\end{lem}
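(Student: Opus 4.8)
The plan is to differentiate the determinant formula \Ref{def schur} directly, using the defining relation $\der h_i/\der t_1 = -\,h_{i-1}$ recorded in \Ref{der h}. Write $F_\la=\det_{i,j=0}^n(M_{ij})$ with $M_{ij}=h_{\la_i-i+j}$. Since the $t_1$-derivative of a determinant distributes over its rows, I would start from
\bea
\frac{\der F_\la}{\der t_1}\ =\ \sum_{i=0}^n \det\bigl(M^{(i)}\bigr),
\eea
where $M^{(i)}$ is obtained from $M$ by replacing its $i$-th row $(h_{\la_i-i+j})_{j=0}^n$ by its $t_1$-derivative $(-h_{\la_i-i+j-1})_{j=0}^n=-\,(h_{(\la_i-1)-i+j})_{j=0}^n$.

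The key observation is that this differentiated $i$-th row is exactly $-1$ times the $i$-th row of the matrix that defines the partition obtained by lowering $\la_i$ by one. First I would treat the case $\la_i-1\ge\la_{i+1}$: here $\la^i$ is a genuine partition, namely the derivative partition at the $i$-th position, and since $M^{(i)}$ agrees with the defining matrix of $\la^i$ except that its $i$-th row is negated, one gets $\det(M^{(i)})=-\,F_{\la^i}$.

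The point requiring care is the complementary case $\la_i=\la_{i+1}$, in which lowering $\la_i$ does not yield a partition, so the corresponding term must vanish. For $i<n$ this follows because the equality $\la_i=\la_{i+1}$ forces $(\la_i-1)-i=\la_{i+1}-(i+1)$, so the differentiated $i$-th row of $M^{(i)}$ is $-1$ times its own unchanged $(i+1)$-th row $(h_{\la_{i+1}-(i+1)+j})_{j=0}^n$; two proportional rows give $\det(M^{(i)})=0$. The boundary subcase $i=n$ with $\la_n=\la_{n+1}=0$ I would handle directly: the differentiated last row has entries $-h_{j-n-1}$ with $j-n-1<0$ for every $j\le n$, hence it is identically zero because $h_m=0$ for $m<0$.

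Summing over $i$, only the indices with $\la_i>\la_{i+1}$ survive, each contributing $-\,F_{\la^i}$, and these are precisely the derivative partitions of $\la$; this yields $\der F_\la/\der t_1=-\sum F_{\la^i}$, which is \Ref{sch dEr}. I expect the only genuine obstacle to be the careful bookkeeping of the degenerate rows in the case $\la_i=\la_{i+1}$ (including the $i=n$ boundary), and no analytic difficulty beyond the row-coincidence argument.
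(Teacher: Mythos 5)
Your proof is correct and is essentially the paper's own argument: the paper's one-line proof cites the Wronskian representation (Lemma \ref{lem der h}) together with $\der h_i/\der t_1=-h_{i-1}$ from \Ref{der h}, and your row-by-row differentiation of the Jacobi--Trudi determinant, with the repeated-row cancellation when $\la_i=\la_{i+1}$ and the zero-row cancellation at $i=n$, is exactly the computation behind that citation (the Wronskian matrix is the same matrix up to column signs and reversal). So this is the same approach, just carried out explicitly on \Ref{def schur} rather than on \Ref{sch wr}.
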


\begin{proof}
The lemma follows from Lemma \ref{lem der h} and formula  \Ref{der h}.
\end{proof}

\subsection{Subsets of virtual cardinal zero}
\label{Subsets of virtual cardinal zero}

Following \cite{SW}, we say that a subset $S=\{s_0<s_1<s_2<\dots\} \subset \Z$ is of {\it virtual cardinal zero},
if $s_i=i$ for all sufficiently large $i$. Denote $\mc S$ the set of all subsets of virtual cardinal zero.

\begin{lem} [\cite{SW}]
\label{lem SiW}
There is a one to one correspondence between elements of $\mc S$ and partitions, given by
$S \leftrightarrow\la$ where $\la_i =i-s_i$.

\end{lem}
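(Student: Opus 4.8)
The plan is to exhibit an explicit two-sided inverse to the proposed assignment, so that the bijection is immediate once both directions are checked to land in the correct sets. Define the forward map $\Phi\colon\mc S\to\{\text{partitions}\}$ by $\Phi(S)=\la$, where $\la_i=i-s_i$, and the backward map $\Psi$ by $\Psi(\la)=S$, where $s_i=i-\la_i$. Since $\la_i=i-s_i$ holds if and only if $s_i=i-\la_i$, the two maps are tautologically mutually inverse at the level of formulas; the entire content of the lemma is therefore the verification that $\Phi$ sends a subset of virtual cardinal zero to a genuine partition, and that $\Psi$ sends a partition back to a subset of virtual cardinal zero.

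First I would verify that $\Phi(S)$ is a partition. The monotonicity $\la_i\ge\la_{i+1}$ follows directly from the strict increase of the enumeration: since $s_0<s_1<s_2<\dots$ are integers, $s_{i+1}\ge s_i+1$, whence
\[
\la_i-\la_{i+1}=(i-s_i)-\bigl((i+1)-s_{i+1}\bigr)=s_{i+1}-s_i-1\ge 0.
\]
The eventual vanishing is immediate from the defining property of virtual cardinal zero: $s_i=i$ for all sufficiently large $i$ gives $\la_i=0$ for all sufficiently large $i$. Nonnegativity $\la_i\ge 0$ then follows by combining these two facts, since a weakly decreasing sequence that is eventually zero satisfies $\la_i\ge\la_{i+1}\ge\dots\ge 0$ for every $i$.

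Next I would verify that $\Psi(\la)$ is of virtual cardinal zero. Strict monotonicity of the enumeration holds because
\[
s_{i+1}-s_i=\bigl((i+1)-\la_{i+1}\bigr)-(i-\la_i)=1+\la_i-\la_{i+1}\ge 1,
\]
using that $\la$ is weakly decreasing; and the condition $s_i=i$ for large $i$ holds precisely because $\la_i=0$ for large $i$. Thus $\Psi(\la)\in\mc S$, and combined with the first paragraph this establishes the bijection.

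I do not expect a genuine obstacle here: the statement reduces to a direct verification, and the only point requiring a moment's care is that nonnegativity of the $\la_i$ is not visible termwise from $\la_i=i-s_i$ but must be deduced from the interplay of weak monotonicity and eventual vanishing. (If desired, one can also remark that the enumeration $S=\{s_0<s_1<\dots\}$ is well defined precisely because the virtual-cardinal-zero condition forces $S$ to be bounded below and to contain a cofinite tail of $\Z_{\ge 0}$, so that $S$ is order-isomorphic to $\Z_{\ge 0}$; this is implicit in the notation inherited from \cite{SW}.)
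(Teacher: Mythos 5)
Your proof is correct. The paper itself gives no argument for this lemma---it is simply quoted from Segal--Wilson \cite{SW}---so your direct verification is precisely the routine argument being suppressed: the two formulas $\la_i=i-s_i$ and $s_i=i-\la_i$ are tautologically inverse, and the only content is checking that each map lands in the correct set, which you do correctly (weak monotonicity of $\la$ from $s_{i+1}\ge s_i+1$, strict monotonicity of the $s_i$ from $\la_i\ge\la_{i+1}$, and eventual agreement $s_i=i$ matching eventual vanishing $\la_i=0$ in both directions). Your remark that nonnegativity of the $\la_i$ is not termwise obvious but follows from weak decrease plus eventual vanishing is the one genuinely non-tautological point, and you handle it properly.
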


For a subset $S=\{s_0<s_1<s_2<\dots\} \subset \Z$  and an integer $k\in \Z$ we denote
$S+k$ the subset $S=\{s_0+k<s_1+k<s_2+k<\dots\} \subset \Z$.

Let $S$ be a subset of virtual cardinal zero. Let $A=\{a_1,\dots,a_k\} \subset \Z$ be a finite
subset of distinct integers.

\begin{lem}
\label{lem shift}
If $\{a_1,\dots,a_k\} \cap (S+k) = \emptyset$. Then
$\{a_1,\dots,a_k\} \cup (S+k)$ is a subset of virtual cardinal zero.
\qed

\end{lem}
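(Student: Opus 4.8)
The plan is to unwind the definition of virtual cardinal zero into a concrete normal form and then reduce the claim to a counting argument. Since $S$ has virtual cardinal zero, there is a threshold $M$ with $s_i = i$ for all $i \geq M$; equivalently $S = \{s_0 < \dots < s_{M-1}\} \cup \Z_{\geq M}$, where the initial block consists of exactly $M$ integers, all strictly less than $M$. First I would record the shifted set in the same normal form: $S + k = \{s_0 + k < \dots < s_{M-1} + k\} \cup \Z_{\geq M+k}$, so that $S+k$ contains the entire ray $\Z_{\geq M+k}$ and has exactly $M$ elements lying below $M + k$.

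The second step exploits the hypothesis $\{a_1,\dots,a_k\} \cap (S+k) = \emptyset$. Because $S+k \supseteq \Z_{\geq M+k}$, disjointness forces every $a_j$ to satisfy $a_j < M+k$; otherwise $a_j$ would lie in the ray and hence in $S+k$. Thus all $k$ of the adjoined integers fall below the threshold $M+k$.

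It then remains to count. Set $T = \{a_1,\dots,a_k\} \cup (S+k)$. On one hand $T \supseteq \Z_{\geq M+k}$, since this holds already for $S+k$. On the other hand, the elements of $T$ lying below $M+k$ are precisely the $M$ integers $s_0+k, \dots, s_{M-1}+k$ together with the $k$ integers $a_1,\dots,a_k$; as the $a_j$ are distinct and disjoint from $S+k$, these constitute $M+k$ distinct integers. Hence $T$ has exactly $M+k$ elements below $M+k$ and contains every integer $\geq M+k$. Writing $T = \{t_0 < t_1 < \dots\}$, the first $M+k$ terms exhaust the indices $0,\dots,M+k-1$, and from index $M+k$ onward the enumeration coincides with the ray, giving $t_i = i$ for all $i \geq M+k$. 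This is exactly the condition that $T$ has virtual cardinal zero.

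There is no serious obstacle here; the substance is the bookkeeping in the last step. The one point that makes the argument close cleanly, and which I would flag, is the observation that $S+k$ already contains the full upper ray $\Z_{\geq M+k}$: this single fact simultaneously forces the new elements $a_j$ to lie below the threshold and keeps the upper ray intact, so that adjoining exactly $k$ elements shifts the virtual cardinal back from $-k$ to $0$.
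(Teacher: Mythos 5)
Your proof is correct; the paper states this lemma with no proof at all (it is marked \qed as immediate), and your argument is exactly the counting one implicitly intended: $S+k$ contains the ray $\Z_{\geq M+k}$, disjointness forces the $a_j$ below the threshold, and adjoining $k$ distinct elements restores the enumeration $t_i=i$ for $i\geq M+k$. Nothing to add or correct.
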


To a subset $S=\{s_0<s_1<s_2<\dots< s_{n+1}=n+1<\dots\}$ of virtual cardinal zero we assign the Schur polynomial of the
partition $\la$ given by Lemma \ref{lem SiW},
\bean
\label{sch of virt}
F_S(t_1,t) : = F_\la(t_1,t) = \text{det}_{i,j=0}^n (h_{j-s_i}) = \Wr_{t_1}(h_{n-s_0},\dots, h_{n-s_n}).
\eean

Let $S$ be a subset of virtual cardinal zero. Assume that the two-element subset $A=\{a_1<a_2\}\subset \Z$ is such that
$A\cap (S+1)=\emptyset$. Then the subsets $S_1:=\{a_1\}\cup (S+1), S_2:=\{a_2\}\cup (S+1), S_3:=S,
S_4=\{a_1+1, a_2+1\}\cup (S+2)$ are of virtual cardinal zero, by Lemma \ref {lem shift}.

\begin{thm}
\label{thm new identity}
 We have
\bean
\label{formula new}
\Wr_{t_1}(F_{S_1}, F_{S_2})=F_{S_3}F_{S_4} .
\eean
\end{thm}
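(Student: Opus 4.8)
The plan is to realize all four Schur polynomials as Wronskians of the functions $h_i$ sharing a common initial segment, and then to apply the bilinear Wronskian identity of Lemma \ref{lem MV}. Recall from \Ref{sch of virt} and Lemma \ref{lem der h} that for a subset $T=\{t_0<t_1<\dots\}$ of virtual cardinal zero with $t_i=i$ for $i>p$ one has $F_T=\Wr_{t_1}(h_{p-t_0},\dots,h_{p-t_p})$, and that because of the relations $h_i'=-h_{i-1}$, $h_0=1$ in \Ref{der h} this expression is independent of the admissible choice of $p$ (enlarging $p$ adjoins a row $(0,\dots,0,1)$ to the Jacobi--Trudi determinant and leaves it unchanged). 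I would first fix $n$ large, larger than the index $m$ at which $S$ stabilizes (so $s_i=i$ for $i\ge m$) and larger than $a_2$; note $a_1<a_2\le m<n$, since $a_2\ge m+1$ would force $a_2\in S+1$. Writing the index set of $S$ computed with parameter $n-1$ as $J=\{(n-1)-s_0,\dots,(n-1)-s_{n-1}\}\subset\Z_{\ge0}$, I let $f_1,\dots,f_n$ be the functions $h_j$, $j\in J$, listed in decreasing order of $j$, so that $F_{S_3}=F_S=\Wr_{t_1}(f_1,\dots,f_n)$ exactly.

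The key combinatorial step is the computation of the index sets of $S_1,S_2,S_4$. Using $S+1=\{s_0+1<s_1+1<\dots\}$ and inserting $a_1$, a direct count shows that the index set of $S_1$ computed with parameter $n$ is exactly $\{n-a_1\}\cup J$; likewise that of $S_2$ is $\{n-a_2\}\cup J$, and that of $S_4$ (computed with parameter $n+1$, using $S+2$ and $\{a_1+1,a_2+1\}$) is $\{n-a_1,n-a_2\}\cup J$. The hypothesis $A\cap(S+1)=\emptyset$ enters precisely here: it guarantees $n-a_1,n-a_2\notin J$ (since $n-a_\nu=(n-1)-s_i$ would force $a_\nu=s_i+1\in S+1$) and $n-a_1\ne n-a_2$, so that in each case the $h$-functions entering the Wronskian are pairwise distinct. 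Setting $g_1=h_{n-a_1}$ and $g_2=h_{n-a_2}$ (with $n-a_1>n-a_2$ since $a_1<a_2$), I therefore obtain $F_{S_1}=\epsilon_1\Wr_{t_1}(f_1,\dots,f_n,g_1)$, $F_{S_2}=\epsilon_2\Wr_{t_1}(f_1,\dots,f_n,g_2)$ and $F_{S_4}=\epsilon_4\Wr_{t_1}(f_1,\dots,f_n,g_1,g_2)$, where each $\epsilon_\nu=\pm1$ records the sign of the permutation reordering the listed functions into the decreasing-index order demanded by \Ref{sch of virt}.

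With these identities and $F_{S_3}=\Wr_{t_1}(f_1,\dots,f_n)$ in hand, Lemma \ref{lem MV} applied to this $f_1,\dots,f_n$ and these $g_1,g_2$ yields $\epsilon_1\epsilon_2\,\Wr_{t_1}(F_{S_1},F_{S_2})=\epsilon_4\,F_{S_3}F_{S_4}$. The final point is that the signs cancel: writing $q_\nu=\#\{j\in J:\ j<n-a_\nu\}$, moving $g_\nu$ from the last slot into sorted position costs $q_\nu$ transpositions, so $\epsilon_1=(-1)^{q_1}$ and $\epsilon_2=(-1)^{q_2}$, while counting the inversions of the sequence $(f\text{-indices},\,n-a_1,\,n-a_2)$ gives $\epsilon_4=(-1)^{q_1+q_2}$; hence $\epsilon_1\epsilon_2=\epsilon_4$ and \Ref{formula new} follows with coefficient exactly $1$. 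I expect the main obstacle to be the bookkeeping in the index-set computation together with this sign analysis; everything else is a direct appeal to Lemma \ref{lem MV} and to the parameter-independence of the Wronskian representation. As an optional safeguard, one may instead establish the identity up to sign by this argument and then fix the sign by comparing the top-degree coefficients in $t_1$ of both sides via Lemma \ref{deg t1}.
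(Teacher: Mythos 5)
Your proposal is correct and follows essentially the same route as the paper's proof: both realize $F_{S_1},F_{S_2},F_{S_3},F_{S_4}$ as Wronskians of the functions $h_i$ sharing the common core coming from $S$, and then apply Lemma \ref{lem MV}. The only difference is that you carry out explicitly the sign bookkeeping (showing $\epsilon_1\epsilon_2=\epsilon_4$) that the paper leaves implicit by writing the inserted functions in their sorted positions.
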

 \begin{proof}
Let $S=\{s_0<s_1<s_2<\dots< s_{n+1}=n+1 <\dots\}$. We have
\bea
&&
F_{S_1} = \Wr_{t_1}(h_{n-s_0}, \dots, h_{n+1-a_1}, \dots, h_{n-s_n}),
\\
&&
F_{S_2} = \Wr_{t_1}(h_{n-s_0}, \dots, h_{n+1-a_2}, \dots, h_{n-s_n}),
\\
&&
F_{S_4} = \Wr_{t_1}(h_{n-s_0}, \dots, h_{n+1-a_1}, \dots, h_{n+1-a_2}, \dots, h_{n-s_n}).
\eea
Now the theorem follows from Lemma \ref{lem MV}.
 \end{proof}

 As was explained to us by A. Lascoux, this simple Theorem \ref{thm new identity}
 can be interpreted as an instance
of the Pl\"ucker relation [1; 2][3; 4] - [1; 3][2; 4] + [1; 4][2; 3] = 0, see Section 1.4 and
Complement A1 in \cite{L1}, cf. \cite{L2}.

\subsection{More  general Wronskian identities for Schur polynomials}
\label{sec more general}

Lemma \ref{lem MV} is a particular case of more general Wronskian identities
in \cite{MV1}. Namely, fix integers $0\leq k\leq s$ and functions $g_1,\dots,g_{s+1}$ of $t_1$.
Let
\bea
V_{s-k+1}(i)=\Wr_{t_1}(g_1,\dots,g_{s-k},g_i).
\eea

\begin{lem}[\cite{MV1}]
\label{MV lem1}
We have
\bean
\label{WR1}
&
\Wr_{t_1}(V_{s-k+1}(s-k+1),V_{s-k+1}(s-k+2), \dots, V_{s-k+1}(s+1)) =&
\\
&
\notag
=(\Wr_{t_1}(g_1,\dots,g_{s-k}))^k
\Wr_{t_1}(g_1,\dots,g_{s+1}).
&
\eean
\end{lem}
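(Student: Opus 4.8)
The plan is to derive the identity from a single Crum-type reduction formula for Wronskians. Set $W_0=\Wr_{t_1}(g_1,\dots,g_{s-k})$ and assume first that $W_0\neq 0$, so that $g_1,\dots,g_{s-k}$ are linearly independent. Let $L$ be the monic linear differential operator in $\der=\frac{d}{dt_1}$ of order $s-k$ whose kernel is spanned by $g_1,\dots,g_{s-k}$, namely $Lg=\Wr_{t_1}(g_1,\dots,g_{s-k},g)/W_0$. Then each function entering the outer Wronskian on the left of \Ref{WR1} is
\[
V_{s-k+1}(i)=\Wr_{t_1}(g_1,\dots,g_{s-k},g_i)=W_0\,Lg_i .
\]

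The heart of the argument is the following reduction formula, stated for an arbitrary independent base: if $b_1,\dots,b_p$ are linearly independent functions of $t_1$ and $L_b$ is the monic order-$p$ operator with kernel $\mathrm{span}(b_1,\dots,b_p)$, then for all functions $f_1,\dots,f_q$
\[
\Wr_{t_1}(L_bf_1,\dots,L_bf_q)=\frac{\Wr_{t_1}(b_1,\dots,b_p,f_1,\dots,f_q)}{\Wr_{t_1}(b_1,\dots,b_p)} .
\]
I would prove this by induction on $p$, using only two elementary Wronskian identities, $\Wr_{t_1}(u\,h_1,\dots,u\,h_q)=u^q\,\Wr_{t_1}(h_1,\dots,h_q)$ and $\Wr_{t_1}(w_1',\dots,w_q')=\Wr_{t_1}(1,w_1,\dots,w_q)$. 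For $p=1$ write $M=\der-\log'(b_1)$, so that $M\psi=b_1(\psi/b_1)'$; substituting $w_i=f_i/b_1$ and applying the two identities gives $\Wr_{t_1}(Mf_1,\dots,Mf_q)=\Wr_{t_1}(b_1,f_1,\dots,f_q)/b_1$, which is the case $p=1$. For the step $p-1\to p$ I would use the reduction-of-order factorization $L_b=\tilde L\,M$ with $M=\der-\log'(b_1)$ and $\tilde L$ the monic order-$(p-1)$ operator with kernel $\mathrm{span}(Mb_2,\dots,Mb_p)$; since $L_bf_i=\tilde L(Mf_i)$, the induction hypothesis for $\tilde L$ expresses $\Wr_{t_1}(L_bf_1,\dots,L_bf_q)$ as a ratio of two Wronskians in the functions $Mb_j$ and $Mf_i$, and applying the $p=1$ formula to numerator and denominator reinstates $b_1$ while cancelling the common factor $1/b_1$.

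Granting the reduction formula, the lemma is immediate. Applying $\Wr_{t_1}(W_0h_1,\dots,W_0h_{k+1})=W_0^{\,k+1}\Wr_{t_1}(h_1,\dots,h_{k+1})$ to the $k+1$ functions $V_{s-k+1}(i)=W_0\,Lg_i$ ($i=s-k+1,\dots,s+1$) and then the reduction formula with $b_j=g_j$ and $(f_1,\dots,f_{k+1})=(g_{s-k+1},\dots,g_{s+1})$ gives
\[
\Wr_{t_1}(V_{s-k+1}(s-k+1),\dots,V_{s-k+1}(s+1))=W_0^{\,k+1}\cdot\frac{\Wr_{t_1}(g_1,\dots,g_{s+1})}{W_0}=W_0^{\,k}\,\Wr_{t_1}(g_1,\dots,g_{s+1}),
\]
which is \Ref{WR1}. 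The degenerate case $W_0\equiv 0$ is handled separately: for $k\ge 1$ the functions $g_1,\dots,g_{s-k}$ are then dependent, every $V_{s-k+1}(i)$ vanishes, and both sides are $0$; for $k=0$ the asserted identity is a tautology. I expect the one genuinely delicate point to be justifying the factorization $L_b=\tilde L\,M$ together with the identification $\ker\tilde L=\mathrm{span}(Mb_2,\dots,Mb_p)$ — the classical reduction of order — after which everything is bookkeeping with the two elementary Wronskian identities. For $q=2$ this recovers Lemma \ref{lem MV}.
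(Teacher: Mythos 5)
Your proof is correct. Note first that this paper gives no proof of Lemma \ref{MV lem1} at all: the statement is imported from \cite{MV1}, and the only Wronskian identity actually proved here is the special case $k=1$, Lemma \ref{lem MV}, whose proof runs differently from yours — both sides of \Ref{wr MV} are viewed as linear differential operators of order $\le n+1$ acting on $g_2$; they annihilate the $n+1$ independent functions $f_1,\dots,f_n,g_1$ and have equal leading coefficients, hence coincide. Your route is genuinely different: you prove a single Crum-type reduction formula
\be
\Wr_{t_1}(L_bf_1,\dots,L_bf_q)=\frac{\Wr_{t_1}(b_1,\dots,b_p,f_1,\dots,f_q)}{\Wr_{t_1}(b_1,\dots,b_p)}\,,
\ee
by induction on $p$ through the reduction-of-order factorization $L_b=\tilde L M$, after which \Ref{WR1} falls out in two lines from $V_{s-k+1}(i)=W_0\,Lg_i$. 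Each step checks out: the two elementary identities you invoke are correct ($\Wr_{t_1}(1,w_1,\dots,w_q)=\Wr_{t_1}(w_1',\dots,w_q')$ by expansion along the first row), the linear independence of $Mb_2,\dots,Mb_p$ needed to define $\tilde L$ holds because $\ker M=\langle b_1\rangle$, and the final bookkeeping $W_0^{k+1}\cdot W_0^{-1}=W_0^{k}$ is right. As for what each approach buys: the paper's kernel-plus-leading-coefficient trick does extend to general $k$, but only via an induction on $k$ in which the leading-coefficient comparison turns out to be exactly the identity for $k-1$; your reduction formula gives all $k$ (and arbitrary $q$) in one stroke, and contains Lemma \ref{lem MV} as the case $p=n$, $q=2$, as you note.

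Two small points of rigor, neither fatal and both consistent with the paper's own level of care. First, your treatment of the degenerate case uses ``$W_0\equiv 0$ implies $g_1,\dots,g_{s-k}$ linearly dependent,'' which is valid for analytic functions — in particular for the polynomials $h_i$ to which the lemma is applied in this paper — but not for arbitrary smooth functions; since \Ref{WR1} is an identity of differential polynomials in the $g_i$, the cleanest formulation is that it suffices to verify it for generic analytic tuples. Second, when $W_0\not\equiv 0$ but has zeros, the coefficients of $L$ are singular at those zeros; you should say that \Ref{WR1} is first established on the open dense set where $W_0\ne 0$ and then extends everywhere by continuity.
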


Let $W_s(i) = \Wr_{t_1}(g_1,\dots,\widehat{g_i},\dots,g_{s+1})$ be the Wronskian of all functions except
$g_i$.

\begin{lem}[\cite{MV1}]
\label{MV lem2} We have
\bean
\label{WR2}
&
\Wr_{t_1}(W_{s}(s-k+1),W_{s}(s-k+2), \dots, W_{s}(s+1)) =
&
\\
&
\notag
=\Wr_{t_1}(g_1,\dots,g_{s-k})
(\Wr_{t_1}(g_1,\dots,g_{s+1}))^k.
&
\eean
\end{lem}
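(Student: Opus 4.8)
The plan is to deduce the identity from a few elementary Wronskian facts plus one genuinely nontrivial ``adjoint'' identity. First I would note that both sides of \Ref{WR2} are differential polynomials in the jets $\{g_i,g_i',g_i'',\dots\}$, so it suffices to verify the identity on a Zariski-dense family of tuples $(g_1,\dots,g_{s+1})$; in particular one may assume $g_1,\dots,g_{s+1}$ are linearly independent over $\C$ (for instance $g_i=e^{\la_i t_1}$ with distinct generic $\la_i$), and once the identity is established there it holds identically, covering the degenerate cases automatically. The case $k=0$ is the tautology $W_s(s+1)=\Wr_{t_1}(g_1,\dots,g_s)$, so from now on assume $k\ge 1$.

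Next I would peel off the common core. All the Wronskians $W_s(i)$ with $s-k+1\le i\le s+1$ contain the fixed functions $g_1,\dots,g_{s-k}$. Writing $\phi:=\Wr_{t_1}(g_1,\dots,g_{s-k})$ and letting $L$ be the monic order-$(s-k)$ differential operator in $t_1$ with kernel $\mathrm{span}(g_1,\dots,g_{s-k})$, so that $Lh=\Wr_{t_1}(g_1,\dots,g_{s-k},h)/\phi$, I would invoke the classical reduction identity $\Wr_{t_1}(g_1,\dots,g_{s-k},h_1,\dots,h_r)=\phi\,\Wr_{t_1}(Lh_1,\dots,Lh_r)$. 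This gives $W_s(i)=\phi\,\Wr_{t_1}(\dots,\widehat{Lg_i},\dots)$ and $\Wr_{t_1}(g_1,\dots,g_{s+1})=\phi\,\Wr_{t_1}(Lg_{s-k+1},\dots,Lg_{s+1})$. Setting $f_a:=Lg_{s-k+a}$ for $a=1,\dots,k+1$ and applying the scaling identity $\Wr_{t_1}(\phi a_1,\dots,\phi a_r)=\phi^{\,r}\Wr_{t_1}(a_1,\dots,a_r)$ (a one-line consequence of the Leibniz rule), the claim \Ref{WR2} collapses, after cancelling the common factor $\phi^{\,k+1}$, to the core-free identity
\beq
\Wr_{t_1}(V_1,\dots,V_{k+1})\ =\ \pm\,\big(\Wr_{t_1}(f_1,\dots,f_{k+1})\big)^{k},\qquad V_a:=\Wr_{t_1}(f_1,\dots,\widehat{f_a},\dots,f_{k+1}).
\eeq

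To finish I would prove this core-free identity through the adjoint equation. With $q=k+1$, $W:=\Wr_{t_1}(f_1,\dots,f_q)$, and $M$ the monic order-$q$ operator annihilating $f_1,\dots,f_q$ (well defined since $W\neq 0$), the functions $(-1)^aV_a/W$ form a fundamental system for the adjoint operator $M^{*}$. As the subleading coefficients of $M$ and of the monic normalization of $M^{*}$ are opposite, Abel's formula yields $\big(W\cdot\Wr_{t_1}((-1)^aV_a/W)\big)'=0$, whence $\Wr_{t_1}((-1)^aV_a/W)=\pm1/W$. Applying the scaling identity once more gives $\Wr_{t_1}(V_1,\dots,V_q)=\Wr_{t_1}\big(W\cdot[(-1)^aV_a/W]\big)=\pm\,W^{q}\cdot(1/W)=\pm W^{q-1}$, which is the displayed identity since $q-1=k$. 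Tracing the factors back through the reduction recovers \Ref{WR2} up to an overall sign, and this sign is pinned down once and for all by comparing the top-degree-in-$t_1$ coefficients of the two sides, equivalently by specializing to $f_a=t_1^{\,a-1}$.

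The reduction and scaling identities are routine; the crux is the core-free identity $\Wr_{t_1}(V_1,\dots,V_q)=\pm W^{q-1}$, so the main obstacle is supplying the adjoint-system input cleanly and keeping track of the signs. A self-contained alternative, avoiding adjoint theory altogether, would prove that core-free identity by induction on $q$ using the two-function identity of Lemma \ref{lem MV} together with the Desnanot--Jacobi determinant identity; this is more computational but stays entirely within the toolkit already used for Lemma \ref{lem MV} and Lemma \ref{MV lem1}.
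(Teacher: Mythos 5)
Your overall strategy is sound, and it is genuinely different from the source: the paper itself gives no proof of this lemma (it is quoted from [MV1], whose toolkit is the elementary Wronskian induction built on Lemma \ref{lem MV}), whereas you route the core identity through adjoint ODE theory. Your reduction step is correct: with $\phi=\Wr_{t_1}(g_1,\dots,g_{s-k})$, $f_a=Lg_{s-k+a}$, $V_a=\Wr_{t_1}(f_1,\dots,\widehat{f_a},\dots,f_{k+1})$ and $W=\Wr_{t_1}(f_1,\dots,f_{k+1})$, the division and scaling identities do collapse \Ref{WR2} to $\Wr_{t_1}(V_1,\dots,V_{k+1})=\pm W^{k}$, and the classical facts you invoke (the functions $(-1)^aV_a/W$ solve the adjoint equation; Abel's formula for $M$ and for the monic normalization of $M^{*}$) are true.

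There are, however, two gaps, and the first is fatal to the statement as printed. (1) The sign does not come out to be $+1$: carrying out your own advertised specialization $f_a=t_1^{a-1}$ gives $\Wr_{t_1}(V_1,\dots,V_{k+1})=(-1)^{k(k+1)/2}\,W^{k}$, so with the increasing ordering used in \Ref{WR2} the identity holds only up to the sign $(-1)^{k(k+1)/2}$ (it holds on the nose if the $W_s(i)$ are listed with $i$ decreasing). Already for $k=1$ this follows from the paper's own Lemma \ref{lem MV}: $\Wr_{t_1}(W_s(s),W_s(s+1))=-\Wr_{t_1}(W_s(s+1),W_s(s))=-\Wr_{t_1}(g_1,\dots,g_{s-1})\,\Wr_{t_1}(g_1,\dots,g_{s+1})$, the negative of the right-hand side of \Ref{WR2}. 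So your closing claim that the sign check "recovers \Ref{WR2}" is precisely the step that fails; what your argument proves is the up-to-sign version (consistent with the $\pm$ the authors write in Lemma \ref{lem wrO}), and no proof of the literal statement can exist. (2) A smaller, fixable gap: Abel's formula yields $\Wr_{t_1}((-1)^aV_a/W)=C/W$ with $C$ constant in $t_1$ but a priori depending on the tuple $(f_1,\dots,f_{k+1})$; a single specialization pins $C$ down only after you argue it is universal. This does follow, e.g., from the fact that a ratio of differential polynomials annihilated by $d/dt_1$ on all tuples is a complex constant, but that argument must be supplied. Relatedly, your parenthetical that one may simply verify the identity on pure exponentials $g_i=e^{\la_i t_1}$ is too strong — distinct differential polynomials can agree on all pure exponential tuples (their jets are not Zariski-dense); what the density principle legitimately buys you is only the reduction to linearly independent tuples, which is how you actually use it.
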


If $\{g_1,\dots,g_{s+1}\}$ is an arbitrary subset of the set
$\{h_1,h_2,\dots,\}$, then Lemma \ref{lem der h} and Lemmas \ref{MV lem1}, \ref{MV lem2}
give identities for Schur polynomials. For example if $k=2,s=3$ and
$\{g_1,g_2,g_3,g_4\}=\{h_4,h_3,h_2,h_1\}$, we get:
\bean
\label{EXAMPLE}
&&
\Wr_{t_1}(F_{(3,3)},F_{(3,2)},F_{(3,1)})=F_{(4)}^2F_{(1,1,1,1)},
\\
\notag
&&
\Wr_{t_1}(F_{(2,1,1)},F_{(2,2,1)},F_{(2,2,2)})=F_{(4)}F_{(1,1,1,1)}^2.
\eean

\subsection{KdV subsets}
\label{Sec KdV subsets}

Fix an integer $N>1$. We say that a subset $S$ of virtual cardinal zero is a
{\it KdV subset} if
$S+N\subset S$. For example, for any $N>1$
\bea
S^\emptyset = \{0,1,2,\dots\}
\eea
 is a KdV subset.

\begin{lem}
\label{lem leading term}
Let $S$ be a  KdV subset. Then there exists a unique $N$-element subset
$A=\{a_1< \dots <a_N\}\subset \Z$ such that
$S = A\cup (S+N)$.
\qed
\end{lem}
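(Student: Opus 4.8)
The plan is to take $A = S \setminus (S+N)$ as the candidate subset; the entire content of the lemma then reduces to the single counting statement $|S \setminus (S+N)| = N$. Indeed, once $A$ is defined this way, the equality $S = A \cup (S+N)$ and the disjointness $A \cap (S+N) = \emptyset$ are automatic, so existence amounts to verifying that this $A$ is finite of cardinality exactly $N$, and uniqueness will follow at the end from a cardinality comparison.

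First I would record the shape of a KdV subset forced by the virtual-cardinal-zero hypothesis. Writing $S = \{s_0 < s_1 < \dots\}$ with $s_i = i$ for all sufficiently large $i$, there is an integer $m$ with $\{m, m+1, m+2, \dots\} \subseteq S$, while $S$ contains only finitely many integers below any fixed bound. Shifting gives $S + N \supseteq \{m+N, m+N+1, \dots\}$, so upon setting $M = m+N$ both $S$ and $S+N$ contain the ray $\{M, M+1, \dots\}$ and in fact coincide with $\Z \cap [M,\infty)$ there. Consequently every element of $S \setminus (S+N)$ lies in $(-\infty, M)$.

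The counting then proceeds as follows. Since $S+N \subseteq S$ and the two sets agree on $[M,\infty)$, I have
\[
|S \setminus (S+N)| = |S \cap (-\infty, M)| - |(S+N) \cap (-\infty, M)|,
\]
each term being finite. The translation $s \mapsto s+N$ identifies $(S+N) \cap (-\infty, M)$ bijectively with $S \cap (-\infty, M-N)$, so the right-hand side equals $|S \cap [M-N, M)|$. As $M - N = m$ and the window $[m, m+N) = \{m, \dots, m+N-1\}$ consists of $N$ consecutive integers all lying in $S$, this count is exactly $N$. Hence $A = S \setminus (S+N)$ has $N$ elements and satisfies $S = A \cup (S+N)$, giving existence.

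For uniqueness, I would let $A'$ be any $N$-element subset with $S = A' \cup (S+N)$. The union forces $A' \subseteq S$, and every element of $S$ not in $S+N$ must be supplied by $A'$, whence $S \setminus (S+N) \subseteq A'$. Since both sides have cardinality $N$, they coincide, i.e. $A' = S \setminus (S+N) = A$. The only genuinely substantive step is the cardinality computation $|S \setminus (S+N)| = N$; everything else is bookkeeping, and the single point requiring care is that the tails of both $S$ and $S+N$ are full rays of $\Z$ — which is precisely where the virtual-cardinal-zero hypothesis enters.
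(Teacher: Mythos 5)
Your proof is correct. The paper itself gives no argument for this lemma (it is stated with a \qed\ as immediate), so there is nothing to match against; your write-up is a complete and sound verification of exactly what the paper takes for granted. Your route --- defining $A = S \setminus (S+N)$ and counting via the window $[m, m+N)$ where the tails of $S$ and $S+N$ become full rays --- is the natural "global counting" argument, and every step (finiteness of the truncated sets, the translation bijection, the reduction of uniqueness to a cardinality comparison) is justified. For comparison, there is a slightly more structural way to see the same fact, which is the one implicit in the paper's subsequent Lemma on leading terms: for each residue class $r$ modulo $N$, the set $S \cap (r + N\Z)$ is nonempty (since $S$ contains a full ray), bounded below, and closed under adding $N$ (the KdV property), hence is an arithmetic progression $\{a_r, a_r + N, a_r + 2N, \dots\}$; the set $A$ is then the set of the $N$ minima $a_r$, one per residue class. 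That viewpoint buys the description of $S$ as a union of $N$ arithmetic progressions for free, whereas your counting argument is more self-contained and makes the role of the virtual-cardinal-zero hypothesis (the full ray in the tail) completely explicit.
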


The subset $A$ of the Lemma \ref{lem leading term} will be called the
{\it  leading term of}  $S$.

\begin{lem}
\label{lem Lead}
The leading term $A$ uniquely determines the KdV subset $S$, since
 $S$  is the union of $N$ nonintersecting arithmetic progressions
$\{a_i, a_i+N, a_i + 2N, \dots\}$, $i=1,\dots,N$.
If $A=\{a_1<\dots<a_N\}\subset \Z$ is the leading term of a KdV subset $S$, then
$a_i-a_j$ are not divisible by $N$ for all
 $i\ne j$.
\qed
\end{lem}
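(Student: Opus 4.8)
The plan is to read off both assertions from the disjoint decomposition $S = A \sqcup (S+N)$, where $A = S\setminus(S+N)$ is the leading term furnished by Lemma \ref{lem leading term}. The mechanism I will exploit throughout is that the hypothesis $S+N\subset S$ lets the defining relation be iterated: applying the shift $+N$ to $S = A\cup(S+N)$ gives $S+N = (A+N)\cup(S+2N)$, and substituting back and repeating yields, for every $k\ge 1$,
\begin{equation*}
S \ =\ \Big(\bigcup_{m=0}^{k-1}(A+mN)\Big)\ \cup\ (S+kN).
\end{equation*}

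First I would establish the description of $S$ as a union of progressions. Fix an integer $n$ and choose $k$ so large that $\min S + kN > n$; this is possible since $N>1$ and $S$ is bounded below by its least element $\min S$. Every element of $S+kN$ is then $\ge \min S + kN > n$, so $n\notin S+kN$, and the displayed identity gives $n\in S$ if and only if $n\in\bigcup_{m=0}^{k-1}(A+mN)$. Letting $k$ grow, this means $S = \bigcup_{m\ge 0}(A+mN) = \bigcup_{i=1}^N\{a_i, a_i+N, a_i+2N,\dots\}$. This formula simultaneously exhibits $S$ as the union of the $N$ arithmetic progressions and shows that $A$ determines $S$.

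Next I would prove the congruence claim, which in turn makes the progressions nonintersecting. Suppose toward a contradiction that $a_i<a_j$ are elements of $A$ with $a_i\equiv a_j\pmod N$, say $a_j = a_i+\ell N$ with $\ell\ge 1$. Starting from $a_i\in S$ and using $S+N\subset S$ repeatedly, an induction on $\ell$ shows $a_i+\ell N\in S+N$: each shift keeps us inside $S$, and the final shift places the result in $S+N$. Hence $a_j\in S+N$, contradicting $a_j\in A = S\setminus(S+N)$. Therefore the integers $a_1,\dots,a_N$ are pairwise incongruent modulo $N$, i.e. $N\nmid a_i-a_j$ for $i\ne j$; being $N$ in number they form a complete residue system modulo $N$, and the progressions $\{a_i+mN\}_{m\ge 0}$, consisting of integers in distinct residue classes, are pairwise disjoint.

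The whole argument is elementary bookkeeping with no real obstacle. The only point meriting care is the passage ``letting $k$ grow'' in the first step, equivalently the fact that $A=S\setminus(S+N)$ really is a disjoint decomposition; both rest on the virtual-cardinal-zero hypothesis, which guarantees that $S$ is bounded below so that $S+kN$ eventually vacates any bounded region.
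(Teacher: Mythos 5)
Your proof is correct; the paper states this lemma without proof (it carries only a \verb|\qed|, being regarded as immediate), and your argument---iterating the decomposition $S = A \cup (S+N)$ to get $S=\bigcup_{m\ge 0}(A+mN)$, then using $S+N\subset S$ to rule out two elements of $A$ in the same residue class mod $N$---is exactly the routine verification the authors leave to the reader.
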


For example, for $N=3$ and $S=\{-3<0< 1<3<4<\dots\}$, the leading term is
$A=\{-3,1,5\}$ and $S$ is the union of three arithmetic progressions with step 3:\
$\{-3,,0,3, 6, \dots\}$, $\{1,4,7,\dots\}$, and $\{5,8,11,\dots\}$.

\begin{lem}
\label{lem lead a-a+N}
Let a KdV subset $S$ has the leading term
 $A=\{a,a+1,\dots,a+N-1\}$ for some integer $a$. Then $a=0$ and
 $S=S^\emptyset$.
 \qed

\end{lem}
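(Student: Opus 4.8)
The plan is to read off the complete structure of $S$ from its leading term using Lemma \ref{lem Lead}, and then to impose the virtual-cardinal-zero normalization to pin down the integer $a$. First I would invoke Lemma \ref{lem Lead}: since the leading term is $A=\{a,a+1,\dots,a+N-1\}$, the subset $S$ is the disjoint union of the $N$ arithmetic progressions
\[
\{a+i,\ a+i+N,\ a+i+2N,\ \dots\}, \qquad i=0,1,\dots,N-1,
\]
each with common difference $N$.

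The crucial observation is that the $N$ starting points $a,a+1,\dots,a+N-1$ form a complete system of residues modulo $N$. Hence every integer $m\geq a$ lies in exactly one of these progressions: writing $m-a=qN+r$ with $0\leq r\leq N-1$ and $q\geq 0$, one has $m=(a+r)+qN$, which sits in the progression started by $a+r$. Conversely, no integer smaller than $a$ can occur, since each progression begins at a point $\geq a$. Therefore $S=\{a,a+1,a+2,\dots\}$, so that $s_i=a+i$ for every $i\geq 0$.

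Finally I would use that $S$ is of virtual cardinal zero, that is, $s_i=i$ for all sufficiently large $i$. Since $s_i=a+i$, this equality forces $a=0$, whence $A=\{0,1,\dots,N-1\}$ and $S=\{0,1,2,\dots\}=S^\emptyset$, as claimed.

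I do not expect any genuine obstacle here: the entire content of the argument is the elementary remark that $N$ consecutive integers exhaust the residue classes modulo $N$, after which the description of $S$ as the half-line $\{m\in\Z:m\geq a\}$ is automatic, and the virtual-cardinal-zero normalization $s_i=i$ simply forces the left endpoint to sit at $0$. The only point to state carefully is the covering claim, which is the one-line division-with-remainder computation recorded above.
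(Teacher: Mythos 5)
Your proof is correct. The paper itself offers no argument for this lemma (it is stated with a \qed, the proof being regarded as immediate), and your reasoning --- decomposing $S$ into the $N$ arithmetic progressions of Lemma \ref{lem Lead}, noting that the $N$ consecutive starting points exhaust the residues modulo $N$ so that $S=\{m\in\Z : m\geq a\}$, and then using the virtual-cardinal-zero condition $s_i=i$ for large $i$ to force $a=0$ --- is exactly the routine verification the paper leaves implicit.
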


\begin{lem}
\label{lem mut kdv}
Let $S$ be a KdV subset with leading term $A$. For any $a\in A$ the subset
\bean
\label{j mut}
S[a] = \{a+1-N\} \cup (S+1)
\eean
is a KdV subset with leading term $A[a] = (A+1) \cup \{a + 1-N\} -\{a+1\}$
\qed
\end{lem}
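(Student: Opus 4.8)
The plan is to verify the two defining properties of a KdV subset for $S[a]$ --- membership in $\mc S$, i.e. virtual cardinal zero, and closure under $+N$ --- and then to read off the leading term. Throughout I use the description from Lemma~\ref{lem Lead}: writing $a=a_i$, the set $S$ is the disjoint union of the $N$ arithmetic progressions $\{a_j, a_j+N, a_j+2N,\dots\}$, $j=1,\dots,N$, whose initial terms $a_1,\dots,a_N$ represent all $N$ residues modulo $N$. Consequently $S+1$ is the disjoint union of the progressions starting at the points $a_j+1$, which again represent all residues.

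Closure is immediate: $S[a]+N=\{a+1\}\cup\big((S+1)+N\big)$, and since $S+N\subset S$ we have $(S+1)+N=(S+N)+1\subset S+1$, while $a+1\in S+1$ because $a\in S$; hence $S[a]+N\subset S[a]$. For the leading term, first note that $a+1-N$ is not already an element of $S+1$: it lies in the residue class of $a+1$, and $a=a_i$ is the smallest element of $S$ in its class, so $a+1$ is the smallest element of $S+1$ in that class and $a+1-N<a+1$ is absent. Thus adjoining $a+1-N$ merely lowers the initial term of the progression in the class of $a+1$ from $a+1$ to $a+1-N$, leaving the other $N-1$ progressions unchanged. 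This exhibits $S[a]$ as a disjoint union of $N$ progressions with initial terms $A[a]=\{a+1-N\}\cup(A+1)\setminus\{a+1\}$; once $S[a]$ is known to be a KdV subset, the uniqueness in Lemma~\ref{lem leading term} identifies $A[a]$ as its leading term.

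The one point requiring care --- and the crux of the argument --- is that $S[a]$ again has virtual cardinal zero. I would track this through the charge $d(S)=\#(S\cap\Z_{<0})-\#(\Z_{\ge 0}\setminus S)$, which vanishes exactly for subsets of virtual cardinal zero and can be written as the finite sum $\sum_{k}\big(\mathbf 1[k\in S]-\mathbf 1[k\ge 0]\big)$. Two elementary facts then suffice: shifting a set up by one changes the charge by $-1$ (reindex the sum by $k\mapsto k-1$, so the reference term $\mathbf 1[k\ge 0]$ becomes $\mathbf 1[k\ge -1]$, contributing an extra $k=-1$), while adjoining a single new element changes the charge by $+1$ (one indicator turns on). Since $a+1-N\notin S+1$ by the previous paragraph, these two changes apply in succession and cancel: $d(S[a])=d(S+1)+1=\big(d(S)-1\big)+1=d(S)=0$. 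Hence $S[a]\in\mc S$, and together with closure this shows $S[a]$ is a KdV subset. The main obstacle is precisely this virtual-cardinal bookkeeping; the advantage of the charge formulation is that it replaces an explicit re-enumeration of $S[a]$ by two one-line, mutually cancelling contributions.
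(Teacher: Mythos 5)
Your proof is correct; the paper states this lemma without proof (it is marked as immediate), and the intended justification consists of exactly the ingredients you supply --- closure under $+N$, the arithmetic-progression description of Lemma~\ref{lem Lead} to see that $a+1-N\notin S+1$ and to read off the leading term, and the preservation of virtual cardinal zero, which in the paper is Lemma~\ref{lem shift} with $k=1$. Your charge computation $d(S[a])=d(S+1)+1=\bigl(d(S)-1\bigr)+1=0$ is a clean, self-contained proof of that special case of Lemma~\ref{lem shift}, so your argument fills in precisely what the paper leaves implicit.
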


The subset $S[a]$ will be called the {\it mutation of the KdV subset} $S$ at $a\in A$.

\medskip

For example, let $N=3$. For a KdV subset $S=\{-3<0< 1<3<4<\dots\}$
with leading term $A=\{-3<1 <5\}$, we have
$A[1] = \{-2<-1< 6\}$ and $S[1]=\{-2<-1<1<2<4<5<6<\dots\}$.

\begin{lem}
\label{lem two step}
Let $S_1$ be a  KdV subset with leading term $A$. Let $S_2$ be a KdV subset such that $S_1+1\subset S_2$.
Then $S_2$ is the mutation of $S_1$ at some element $a\in A$.
\qed

\end{lem}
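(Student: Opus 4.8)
The goal is to produce an explicit $a\in A$ with $S_2=S_1[a]=\{a+1-N\}\cup(S_1+1)$, so the plan is first to show that $S_2$ differs from $S_1+1$ by exactly one element, and then to identify that element. Since $S_1$ has virtual cardinal zero, both $S_1$ and $S_2$ contain all sufficiently large integers, and hence so does $S_1+1$; as $S_1+1\subset S_2$, the set $S_2\setminus(S_1+1)$ is finite. To count it I would compare the two counting functions for large $n$: because $S_2$ has virtual cardinal zero one has $|S_2\cap(-\infty,n]|=n+1$ for $n\gg0$, while $|(S_1+1)\cap(-\infty,n]|=|S_1\cap(-\infty,n-1]|=n$ for $n\gg0$. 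Since the extra elements all lie below this range, subtracting gives $|S_2\setminus(S_1+1)|=1$, so $S_2=(S_1+1)\cup\{b\}$ for a unique $b\notin S_1+1$.

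Next I would use the hypothesis that $S_2$ is itself a KdV subset to locate $b$. The set $S_1+1$ is $+N$-stable because $S_1$ is, so the only constraint imposed by $S_2+N\subset S_2$ beyond this stability is that $b+N\in S_2=(S_1+1)\cup\{b\}$; since $b+N\neq b$ this forces $b+N\in S_1+1$, i.e. $a:=b+N-1\in S_1$.

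Finally I would check that $a\in A$. By Lemmas \ref{lem leading term} and \ref{lem Lead}, $A$ consists of the $N$ minimal elements of $S_1$, one in each residue class modulo $N$, so an element $a\in S_1$ lies in $A$ exactly when $a-N\notin S_1$. Here $a-N=b-1$, and $b-1\notin S_1$ precisely because $b\notin S_1+1$, which is how $b$ was chosen. Hence $a\in A$, and $S_2=(S_1+1)\cup\{b\}=(S_1+1)\cup\{a+1-N\}=S_1[a]$ by Lemma \ref{lem mut kdv}, as required. The statement is essentially the converse of Lemma \ref{lem mut kdv}, and the one point demanding care is the virtual-cardinal bookkeeping in the first step: the count comes out to a single extra element only because shifting $S_1$ up by one lowers its counting function by exactly one. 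Once that is pinned down, the position of $b$ and its membership in $A$ are both forced by the KdV condition on $S_2$.
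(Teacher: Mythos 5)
Your proof is correct. Note that the paper offers no argument for this lemma at all — it is stated with a \qed{} as immediate — so your write-up simply supplies the details the authors omitted, and it does so by the natural route: the virtual-cardinal count showing $|S_2\setminus(S_1+1)|=1$, the $+N$-stability of $S_1+1$ forcing the extra element $b$ to satisfy $b+N\in S_1+1$, and the characterization $A=\{a\in S_1: a-N\notin S_1\}$ (which follows from Lemmas \ref{lem leading term} and \ref{lem Lead}, since $A=S_1\setminus(S_1+N)$) giving $a=b+N-1\in A$. All three steps check out, so this is a complete and correct proof of exactly the statement the paper takes for granted.
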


\begin{thm}
\label{thm generation kDv}
Any KdV subset $S$ can be transformed to the KdV subset $S^\emptyset$ by a sequence of mutations.
\end{thm}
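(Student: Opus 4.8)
The plan is to argue by induction on the nonnegative integer $-\min S$, showing that one carefully chosen mutation strictly lowers it. First I would record the behavior of the minimal element $s_0=\min S$. By the correspondence of Lemma~\ref{lem SiW}, a subset $S=\{s_0<s_1<\cdots\}$ of virtual cardinal zero corresponds to the partition $\lambda$ with $\lambda_i=i-s_i$; in particular $\lambda_0=-s_0\ge 0$, so $s_0\le 0$, and $s_0=0$ forces $\lambda_0=0$, hence $\lambda=0$ and $S=S^\emptyset$. Thus every KdV subset has $\min S\le 0$, with equality exactly when $S=S^\emptyset$. This furnishes the base case $-\min S=0$ of the induction.

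For the inductive step I would assume $S\ne S^\emptyset$, so that $\min S=a_1<0$, where $A=\{a_1<\cdots<a_N\}$ is the leading term of $S$ (Lemma~\ref{lem leading term}). The crucial observation is that the \emph{largest} element satisfies $a_N\ge a_1+N$. Indeed, otherwise $A$ would be contained in the $N$ consecutive integers $\{a_1,a_1+1,\dots,a_1+N-1\}$ and, being an $N$-element set, would equal them; Lemma~\ref{lem lead a-a+N} would then give $S=S^\emptyset$, a contradiction. I would then mutate at $a_N$. By the definition of mutation (see \Ref{j mut}), $S[a_N]=\{a_N+1-N\}\cup(S+1)$, so $\min S[a_N]=\min(a_N+1-N,\ s_0+1)$. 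Both entries exceed $s_0$: clearly $s_0+1>s_0$, while $a_N+1-N\ge a_1+1=s_0+1>s_0$. Hence $\min S[a_N]>\min S$, i.e.\ $-\min S[a_N]<-\min S$. Since $S[a_N]$ is again a KdV subset of virtual cardinal zero by Lemma~\ref{lem mut kdv}, the induction hypothesis supplies a sequence of mutations carrying $S[a_N]$ to $S^\emptyset$, and prepending the mutation at $a_N$ yields the required sequence for $S$. As $-\min S$ is a nonnegative integer dropping by at least one at each step, the procedure terminates.

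The only substantive point is the inequality $a_N\ge a_1+N$ for $S\ne S^\emptyset$, together with the realization that mutating at the largest leading element is exactly the move that raises the minimum; this is where I expect the real content to sit, and it rests entirely on Lemma~\ref{lem lead a-a+N}, which identifies $S^\emptyset$ as the unique KdV subset whose leading term is a block of $N$ consecutive integers. Everything else is routine bookkeeping about how the minimum transforms under the shift-and-insert operation $S\mapsto S[a]$.
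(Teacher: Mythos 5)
Your proof is correct and takes essentially the same approach as the paper: both arguments mutate at the largest element $a_N$ of the leading term and use Lemma \ref{lem lead a-a+N} to recognize $S^\emptyset$ as the terminal case. The only difference is the induction measure --- the paper tracks the width $a_N-a_1$ of the leading term, which strictly decreases under the mutation at $a_N$, whereas you track $-\min S=-a_1$; the bookkeeping is equivalent.
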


\begin{proof}
Let $A=\{a_1<\dots<a_N\}$ be the leading term of $S$. The number $|S|=a_N-a_1$ will be called the {\it width of} $S$.
The width is not less than $N-1$. If the width is $N-1$, then
$S=S^\emptyset$ by Lemma \ref{lem lead a-a+N}.

Assume that $|S|>N-1$. Then $|S[a_N]|<|S|$.
Repeating this procedure we will make the width to be equal to $N-1$.
\end{proof}

\begin{cor}
If  $A=\{a_1<\dots<a_N\}$  is the leading term of a KdV subset, then
\bean
\label{sum lead}
\sum_{i=1}^N a_i = \frac {N(N-1)}2.
\eean
\end{cor}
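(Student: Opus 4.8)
The plan is to exploit the generation theorem (Theorem~\ref{thm generation kDv}) together with the explicit description of how a mutation acts on the leading term (Lemma~\ref{lem mut kdv}). First I would check that the quantity $\Sigma(S) := \sum_{i=1}^N a_i$, the sum of the elements of the leading term $A$ of a KdV subset $S$, takes the correct value on the base subset $S^\emptyset$. Indeed, $S^\emptyset + N = \{N, N+1, \dots\}$, so the leading term of $S^\emptyset$ is $A = \{0,1,\dots,N-1\}$ and $\Sigma(S^\emptyset) = 0 + 1 + \dots + (N-1) = N(N-1)/2$, which is exactly the asserted value.

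Second, and this is the crux, I would show that $\Sigma$ is invariant under every mutation. Fix $a \in A$ and consider the mutated leading term $A[a] = (A+1)\cup\{a+1-N\}-\{a+1\}$ from Lemma~\ref{lem mut kdv}. Passing from $A$ to $A+1$ raises the sum by $N$; deleting the element $a+1$ lowers it by $a+1$; inserting the element $a+1-N$ raises it by $a+1-N$. Adding these three contributions gives $\Sigma(S[a]) - \Sigma(S) = N - (a+1) + (a+1-N) = 0$, so $\Sigma(S[a]) = \Sigma(S)$.

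Finally, Theorem~\ref{thm generation kDv} guarantees that an arbitrary KdV subset $S$ can be joined to $S^\emptyset$ by a finite chain of mutations. Since $\Sigma$ is unchanged at each step of this chain, we conclude $\Sigma(S) = \Sigma(S^\emptyset) = N(N-1)/2$, which is the desired identity \Ref{sum lead}.

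I do not expect any serious obstacle here: the whole argument is an invariance computation riding on results already established in this section. The only point requiring care is the bookkeeping in the mutation step, namely verifying that $a+1$ is genuinely present in $A+1$ and hence removed, and that $a+1-N$ is genuinely a new element not already in $A+1$; both facts are immediate from Lemma~\ref{lem mut kdv}, so the middle step is the one to present with attention to detail.
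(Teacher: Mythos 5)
Your proof is correct and is exactly the paper's argument: the paper likewise observes that $S^\emptyset$ has leading-term sum $N(N-1)/2$ and that mutations preserve this sum, then invokes Theorem~\ref{thm generation kDv}. Your explicit verification $N-(a+1)+(a+1-N)=0$ just fills in the bookkeeping the paper leaves implicit.
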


\begin{proof} The leading term of $S^\emptyset$ has this property and mutations do not change the sum
of the elements of the leading term.
\end{proof}

\begin{thm}
\label{thm char lead}
A subset $A=\{a_1<\dots<a_N\}$ is the leading term of a KdV subset if and only if
equation \Ref{sum lead} holds and $a_i-a_j$ is not divisible by $N$ for any $i\ne j$.

\end{thm}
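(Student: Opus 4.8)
The necessity of the two conditions is already contained in earlier results: Lemma~\ref{lem Lead} shows that for a leading term the differences $a_i-a_j$ are not divisible by $N$, and the Corollary preceding this theorem establishes \Ref{sum lead}. So the plan is to prove the converse, that the two conditions together are sufficient.

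Suppose $A=\{a_1<\dots<a_N\}$ satisfies \Ref{sum lead} and $a_i-a_j$ is not divisible by $N$ for all $i\ne j$. The divisibility condition says the $a_i$ occupy $N$ distinct residue classes modulo $N$, hence \emph{all} of them. I would then write down the candidate subset
\be
S\ =\ \{\,a_i+kN\ :\ 1\le i\le N,\ k\ge 0\,\},
\ee
the union of the $N$ arithmetic progressions of step $N$ starting at the points of $A$, exactly as in Lemma~\ref{lem Lead}. By construction $S+N\subset S$, so $S$ will be a KdV subset as soon as it is shown to be of virtual cardinal zero. Moreover, since each residue class is hit exactly once, an element $m\in S$ lies in $S+N$ if and only if $m-N$ is still at least the starting point of its progression; thus $m\in S\setminus(S+N)$ precisely when $m$ is the smallest element of $S$ in its class, i.e.\ $S\setminus(S+N)=A$. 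By Lemma~\ref{lem leading term} this identifies $A$ as the leading term of $S$. So the whole argument reduces to one point: that $S$ is of virtual cardinal zero.

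To establish this I would compute the virtual cardinal of $S$ directly. Since $S$ is bounded below by $a_1$ and contains every integer $\ge a_N$, writing $S=\{s_0<s_1<\dots\}$ we have $s_i=i+c$ for all large $i$, where
\be
c\ =\ \#\bigl(\{0,1,2,\dots\}\setminus S\bigr)\ -\ \#\bigl(S\cap\{-1,-2,\dots\}\bigr),
\ee
and virtual cardinal zero is exactly the statement $c=0$. I would evaluate $c$ one residue class at a time: for $r\in\{0,\dots,N-1\}$ let $a_{(r)}$ be the unique element of $A$ congruent to $r$, and write $a_{(r)}=r+Nb_r$ with $b_r\in\Z$. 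A direct count shows that class $r$ contributes $\max(b_r,0)$ to the first term and $\max(-b_r,0)$ to the second, so its net contribution to $c$ is $\max(b_r,0)-\max(-b_r,0)=b_r$, and therefore $c=\sum_{r=0}^{N-1}b_r$.

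The theorem then falls out of the sum condition. Indeed $\sum_i a_i=\sum_{r=0}^{N-1}(r+Nb_r)=\tfrac{N(N-1)}2+N\sum_r b_r$, so \Ref{sum lead} is equivalent to $\sum_r b_r=0$, i.e.\ to $c=0$. Hence $S$ has virtual cardinal zero and $A$ is its leading term. I expect the virtual-cardinal computation to be the main obstacle: the real content of the theorem is that the global arithmetic constraint \Ref{sum lead} is precisely the balance between the nonnegative integers missing from $S$ and the negative integers lying in $S$, and it is the residue-class bookkeeping together with the identity $\max(b,0)-\max(-b,0)=b$ that makes this equivalence transparent. (One could instead argue by induction on the width $a_N-a_1$, reducing via an inverse mutation to the base case $A=\{0,\dots,N-1\}$, the leading term of $S^\emptyset$, as in the proof of Theorem~\ref{thm generation kDv}; but the direct construction above seems cleaner.)
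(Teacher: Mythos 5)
Your proof is correct, but it takes a genuinely different route from the paper. The paper disposes of the theorem in one line, saying the proof is ``similar to the proof of Theorem \ref{thm generation kDv}'': that is, an induction on the width $a_N-a_1$, using that the mutation-type move $A\mapsto (A+1)\cup\{a_N+1-N\}-\{a_N+1\}$ preserves both hypotheses (the sum is unchanged and residues mod $N$ are permuted), strictly decreases the width when it exceeds $N-1$, and reduces everything to the base case $A=\{0,1,\dots,N-1\}$, the leading term of $S^\emptyset$; one then climbs back up by inverse mutations of the corresponding KdV subsets. You instead construct the candidate subset $S=\bigcup_i\{a_i+kN:k\ge 0\}$ directly and verify virtual cardinal zero by an explicit index computation: your formula $c=\#(\Z_{\ge 0}\setminus S)-\#(S\cap\Z_{<0})$ is the correct expression for the shift $s_i=i+c$, your residue-class bookkeeping (contribution $\max(b_r,0)-\max(-b_r,0)=b_r$ per class) is sound, and the identification $S\setminus(S+N)=A$ together with the uniqueness in Lemma \ref{lem leading term} correctly pins down $A$ as the leading term. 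The two approaches trade off as follows: the paper's induction recycles machinery already built for Theorem \ref{thm generation kDv} and reinforces the theme that mutations act transitively, at the cost of an inverse-mutation step and a check that the hypotheses are preserved; your argument is self-contained, requires no induction, and has the conceptual merit of exhibiting the sum condition \Ref{sum lead} as exactly the vanishing of an index --- the balance between nonnegative integers missing from $S$ and negative integers present in $S$ --- which the inductive proof leaves implicit.
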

\begin{proof}
The proof is similar to the proof of Theorem
\ref {thm generation kDv}.
\end{proof}

\subsection{mKdV tuples of subsets}
\label{sec mkdv tuples of subsets}

 We say that an $N$-tuple $\bs S = (S_1,\dots,S_N)$ of KdV subsets is an {\it mKdV tuple of subsets}
 if $S_i+1\subset S_{i+1}$ for all $i$, in particular, $S_N+1\subset S_1$.

For example, for any $N$
\bea
\bs S^\emptyset = (S^\emptyset,\dots,S^\emptyset)
\eea
is an mKdV tuple of subsets.

\begin{lem}
\label{lem permut}
If  $\bs S = (S_1,\dots,S_N)$  is an
mKdV tuple, then for any $i$, $(S_i,S_{i+1},\dots,S_N,$ $ S_1,S_2,\dots,S_{i-1})$
is an mKdV tuple of subsets.
\qed
\end{lem}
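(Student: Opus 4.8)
The plan is to check directly that a cyclic permutation preserves both conditions in the definition of an mKdV tuple given in Section~\ref{sec mkdv tuples of subsets}. Write the shifted tuple as $\bs S' = (S'_1,\dots,S'_N)$, where $S'_k = S_{i+k-1}$ with the subscript read modulo $N$ in the range $\{1,\dots,N\}$; thus $S'_1 = S_i$, \dots, $S'_N = S_{i-1}$. First I would note that each $S'_k$ equals some $S_j$, and every $S_j$ is a KdV subset by hypothesis, so the requirement that all components be KdV subsets is immediate.

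Next I would verify the chain of inclusions $S'_k + 1 \subset S'_{k+1}$ for $k = 1,\dots,N$, where as usual $S'_{N+1} = S'_1$. By the definition of $S'_k$ this reads $S_{i+k-1} + 1 \subset S_{i+k}$, which is exactly the inclusion assumed for the original tuple $\bs S$ at the index $i+k-1$. Since the hypothesis that $\bs S$ is an mKdV tuple supplies $S_j + 1 \subset S_{j+1}$ for every index $j$ modulo $N$---including the wraparound $S_N + 1 \subset S_1$---each of the shifted inclusions holds, and therefore $\bs S'$ is an mKdV tuple.

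The argument has no genuine obstacle: the content is simply that the two defining conditions of an mKdV tuple are cyclically symmetric, the KdV property being attached to each individual subset and the chain $S_i + 1 \subset S_{i+1}$ being invariant under rotation of the indices. The only point demanding care is the bookkeeping with indices modulo $N$, in particular ensuring that the wraparound inclusion $S_N + 1 \subset S_1$ is among the hypotheses invoked when $k$ reaches the end of the cycle.
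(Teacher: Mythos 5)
Your proof is correct and matches the paper's intent: the paper states this lemma with the proof omitted (marked \qed) precisely because the defining conditions of an mKdV tuple---each component being a KdV subset and the cyclic chain of inclusions $S_j + 1 \subset S_{j+1}$ taken modulo $N$---are manifestly invariant under rotation of indices, which is exactly the observation you spell out. Your careful handling of the wraparound inclusion $S_N + 1 \subset S_1$ is the only point of substance, and you got it right.
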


\medskip

Let $S$ be a  KdV subset with leading term $A=\{a_1<\dots<a_N\}$. Let $\sigma$ be
an element of the permutation group $\Si_N$. Define an $N$-tuple $\bs S_{S,\sigma} = (S_1,\dots,S_N)$
where
\bean
\label{mkdv tuple}
S_i = \{a_{\sigma(1)}+i-N, a_{\sigma(2)}+i-N, \dots, a_{\sigma(i)}+i-N\} + (S+i),
\eean
in particular, $S_N = A \cup (S+N) = S$.

\begin{lem}
\label{lem mkdv constr}
The $N$-tuple $\bs S_{S,\sigma}$ is an mKdV tuple.
\qed
\end{lem}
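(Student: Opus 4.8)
The plan is to verify the two defining conditions of an mKdV tuple from Section~\ref{sec mkdv tuples of subsets} directly from the explicit formula \Ref{mkdv tuple}: that each $S_i$ is a KdV subset, and that $S_i+1\subset S_{i+1}$ for every $i$ (indices modulo $N$). Throughout I would fix the leading term $A=\{a_1<\dots<a_N\}$ of $S$ and use Lemma~\ref{lem Lead} to write $S$ as the union of the $N$ arithmetic progressions $\{a_j,a_j+N,a_j+2N,\dots\}$, $j=1,\dots,N$, whose starting points are pairwise incongruent modulo $N$.

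First I would show that each $S_i$ is a KdV subset. Shifting the progression decomposition of $S$ by $i$ and adjoining the $i$ points $a_{\sigma(\ell)}+i-N$, $\ell=1,\dots,i$, one sees that $S_i$ is again a union of $N$ arithmetic progressions of step $N$: for $j\in\{\sigma(1),\dots,\sigma(i)\}$ the adjoined point $a_j+i-N$ prepends one term to the shifted progression, giving initial point $a_j+i-N$, while for the remaining indices the initial point is $a_j+i$. Since the $a_j$ are distinct modulo $N$ (Lemma~\ref{lem Lead}), so are all $N$ of these initial points, and $S_i+N\subset S_i$ is then immediate. To conclude that $S_i$ has virtual cardinal zero I would compute the sum of its initial points, $\sum_{j}(a_j+i)-Ni=\sum_j a_j=N(N-1)/2$ by \Ref{sum lead}, where the $-Ni$ accounts for the $i$ indices lowered by $N$. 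Theorem~\ref{thm char lead} then certifies that this set of initial points is the leading term of a KdV subset, and by uniqueness (Lemma~\ref{lem Lead}) that KdV subset is exactly $S_i$.

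Next I would verify the chain condition $S_i+1\subset S_{i+1}$. For $1\le i\le N-1$ this is purely formal: comparing the formula \Ref{mkdv tuple} for $S_{i+1}$ with the shift $S_i+1$ gives $S_{i+1}=(S_i+1)\cup\{a_{\sigma(i+1)}+i+1-N\}$, so that $S_i+1\subset S_{i+1}$, the single new element accounting for the growth of the finite prefix. The wrap-around case $i=N$ is the only place requiring genuine input: here $S_N=S=A\cup(S+N)$, so $S_N+1=(A+1)\cup(S+N+1)$, and I would check $A+1\subset S+1$ (since $A\subset S$) and $S+N+1\subset S+1$ (since $S+N\subset S$ by the KdV property), whence $S_N+1\subset S+1\subset S_1$.

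The computation is routine bookkeeping; the only conceptual points are the sum identity \Ref{sum lead} that pins down virtual cardinal zero via Theorem~\ref{thm char lead}, and the observation that the wrap-around inclusion $S_N+1\subset S_1$ rests essentially on $S$ being a KdV subset rather than merely on the prefix structure used in the other cases. I expect this wrap-around step to be the main, if modest, obstacle.
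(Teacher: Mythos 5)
Your proof is correct and is precisely the routine verification that the paper leaves to the reader (Lemma \ref{lem mkdv constr} is stated without proof, as immediate from the definitions); in particular, your appeal to \Ref{sum lead} and Theorem \ref{thm char lead} to certify that the set of initial points of the $N$ step-$N$ progressions making up $S_i$ is the leading term of a KdV subset, hence that $S_i$ has virtual cardinal zero, is legitimate, since both results precede the lemma. One correction of emphasis only: the wrap-around case $i=N$ is in fact the most trivial one rather than the crux, because $S_N=S$ and $S_1=\{a_{\sigma(1)}+1-N\}\cup(S+1)$ contains $S+1$ by its very definition, so $S_N+1\subset S_1$ holds at once; your decomposition $S+1=(A+1)\cup(S+N+1)$ merely re-derives the tautology $S+1\subset S+1$.
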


For example for $S=S^\emptyset$, $\bs S= \bs S^\emptyset$, we have
$\bs S^\emptyset =\bs S_{S^\emptyset, \sigma}$, where
$\sigma=(N,N-1,\dots,2,1)$ is the longest permutation.

\begin{thm}
\label{thm mkdv description}
Every  mKdV tuple is of the form $\bs S_{S,\sigma}$ for some KdV subset $S$ and some element
$\sigma\in\Si_N$.

\end{thm}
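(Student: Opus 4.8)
The plan is to set $S:=S_N$, take $A=\{a_1<\dots<a_N\}$ to be its leading term, and reconstruct $\sigma$ from the cyclic chain of inclusions $S_N+1\subset S_1$, $S_1+1\subset S_2,\dots,S_{N-1}+1\subset S_N$. First I would record the progression picture of a KdV subset: since $S'+N\subset S'$ and the leading term meets every residue class mod $N$ exactly once (Lemma \ref{lem Lead}), each KdV subset $S'$ is, in every residue class $r\in\Z/N$, a single arithmetic progression $\{c_r,c_r+N,c_r+2N,\dots\}$ with $c_r\equiv r$, and $S'$ is completely determined by the data $(c_r)_r$. Write $\rho_k:=a_k \bmod N$; these are pairwise distinct by Lemma \ref{lem Lead}, and $A=\{c_0,\dots,c_{N-1}\}$ after sorting. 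I will track each of the $N$ progressions by its ``original index'' $k\in\{1,\dots,N\}$.

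Second, using the cyclic convention $S_0:=S_N=S$ and iterating $S_{i-1}+1\subset S_i$, I obtain $S+i\subset S_i$ for $i=0,\dots,N$. Since $S_i$ is itself a KdV subset it carries one progression per residue class, so in the class of $a_k+i$ its progression must start at $a_k+i-m^{(i)}_k N$ for a unique integer $m^{(i)}_k\ge 0$ (the inclusion $S+i\subset S_i$ forces the start to be no larger than $a_k+i$). Thus each $S_i$ is encoded by the depth vector $(m^{(i)}_1,\dots,m^{(i)}_N)$, with boundary values $m^{(0)}_k=0$ for all $k$ (as $S_0=S$) and, reading $S_N=S$ again through the shift by $N$, $m^{(N)}_k=1$ for all $k$.

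Third, I would analyse a single transition $S_{i-1}\to S_i$: by Lemma \ref{lem two step} it is a mutation, and by Lemma \ref{lem mut kdv} a mutation shifts every progression by $+1$ and extends exactly one progression one step backwards. The decisive bookkeeping point is that the $+1$ shift carries the progression of original index $k$ in $S_{i-1}$ (living in class $\rho_k+i-1$) precisely onto the progression of index $k$ in $S_i$ (class $\rho_k+i$). Hence $m^{(i)}_k=m^{(i-1)}_k$ for every $k$ except the single distinguished index $k^{*}_i$ whose progression is extended, for which $m^{(i)}_{k^{*}_i}=m^{(i-1)}_{k^{*}_i}+1$. So each coordinate $m^{(\bul)}_k$ is nondecreasing and changes by exactly one increment, occurring at exactly one transition per step.

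Fourth, I would assemble the global picture. Each $m^{(\bul)}_k$ runs from $0$ to $1$, so it is incremented exactly once; as there are $N$ transitions, each contributing one increment, and $\sum_k(m^{(N)}_k-m^{(0)}_k)=N$, the assignment $i\mapsto k^{*}_i$ is a bijection of $\{1,\dots,N\}$. Setting $\sigma(i):=k^{*}_i\in\Si_N$, we get $m^{(i)}_k=1$ iff $k\in\{k^{*}_1,\dots,k^{*}_i\}=\sigma(\{1,\dots,i\})$, so the progression of index $k$ in $S_i$ starts at $a_k+i-N$ when $k\in\sigma(\{1,\dots,i\})$ and at $a_k+i$ otherwise. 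This is exactly the description $S_i=\{a_{\sigma(1)}+i-N,\dots,a_{\sigma(i)}+i-N\}\cup(S+i)$ of \eqref{mkdv tuple}, giving $\bs S=\bs S_{S,\sigma}$. I expect the main obstacle to be the third step: justifying cleanly that the shift-and-mutate transition preserves the ``original index'' labelling of progressions, so that the depths obey the one-increment-per-step recursion. Once that is in place, the boundary conditions $m^{(0)}_k=0$ and $m^{(N)}_k=1$ force the increments to realise a permutation with essentially no further computation.
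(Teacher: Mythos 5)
Your proof is correct and takes essentially the same route as the paper: the paper's own (one-line) proof rests on Lemma \ref{lem two step}, which identifies each consecutive inclusion $S_{i-1}+1\subset S_i$ as a mutation, and the cyclic closure $S_N=S$ then forces the $N$ mutation points to sweep out the leading term $A$ exactly once, yielding the permutation $\sigma$ in \Ref{mkdv tuple}. Your residue-class depth vectors $(m^{(i)}_k)$, with the boundary values $m^{(0)}_k=0$, $m^{(N)}_k=1$ and one increment per transition, are simply a clean way of carrying out the bookkeeping that the paper leaves implicit.
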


\begin{proof}
The theorem follows from Lemma \ref{lem two step}.
\end{proof}

\begin{cor}
For any KdV subset $S$ there exists exactly $N!$ mKdV tuples $\bs S=(S_1,\dots,S_N)$ such that
$S_N=S$.
\end{cor}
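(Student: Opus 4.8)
The plan is to derive this count by pairing the surjectivity already contained in Theorem~\ref{thm mkdv description} with an injectivity statement for the parametrization $\sigma\mapsto\bs S_{S,\sigma}$. First I would note that in the construction \Ref{mkdv tuple} the last component is always $S_N=S$; consequently, if $\bs S=(S_1,\dots,S_N)$ is any mKdV tuple with prescribed last component $S_N=S$, then the representation $\bs S=\bs S_{S',\sigma}$ furnished by Theorem~\ref{thm mkdv description} is forced to have $S'=S_N=S$. Thus the mKdV tuples with last component $S$ are precisely the tuples $\bs S_{S,\sigma}$ with $\sigma\in\Si_N$, and all that remains is to check that distinct permutations yield distinct tuples.

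For the injectivity I would recover $\sigma$ from the tuple by reading off successive components. Write $A=\{a_1<\dots<a_N\}$ for the leading term of $S$ and adopt the cyclic convention $S_0:=S_N=S$. From \Ref{mkdv tuple} one gets $S_{i-1}+1=\{a_{\sigma(1)}+i-N,\dots,a_{\sigma(i-1)}+i-N\}\cup(S+i)$, so that $S_i=(S_{i-1}+1)\cup\{a_{\sigma(i)}+i-N\}$ for every $i=1,\dots,N$ (for $i=1$ the partial set is empty and this is the mKdV relation $S_N+1\subset S_1$). The point I would verify is that the adjoined element is genuinely new: it cannot equal any $a_{\sigma(j)}+i-N$ with $j<i$ since the $a$'s are distinct, and it cannot lie in $S+i$, for that would force $a_{\sigma(i)}\in S+N$, contradicting the disjointness of the leading term $A$ from $S+N$ (Lemma~\ref{lem leading term}, Lemma~\ref{lem Lead}). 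Hence
\[
S_i\setminus(S_{i-1}+1)=\{a_{\sigma(i)}+i-N\}\qquad(i=1,\dots,N),
\]
which extracts $a_{\sigma(i)}$, and therefore $\sigma(i)$, directly from the tuple; as this holds for every $i$, the tuple determines $\sigma$.

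Combining the two halves, $\sigma\mapsto\bs S_{S,\sigma}$ is a bijection from $\Si_N$ onto the set of mKdV tuples with $S_N=S$, whence that set has cardinality $|\Si_N|=N!$. (The well-definedness of each $\bs S_{S,\sigma}$ as an mKdV tuple is Lemma~\ref{lem mkdv constr}, so nothing extra is needed there.)

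The surjectivity is entirely absorbed into Theorem~\ref{thm mkdv description}, so the only genuine work is the injectivity computation above. Within it, the one delicate step I expect to require care is the verification that $a_{\sigma(i)}+i-N\notin S+i$, i.e.\ that $A$ meets $S+N$ only trivially; everything else is routine bookkeeping with the explicit formula \Ref{mkdv tuple}.
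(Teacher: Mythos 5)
Your proof is correct and takes essentially the same route the paper intends: the corollary is stated there as an immediate consequence of Theorem \ref{thm mkdv description} (surjectivity) together with Lemma \ref{lem mkdv constr}, with the injectivity of $\sigma\mapsto\bs S_{S,\sigma}$ left implicit. Your explicit verification that $S_i\setminus(S_{i-1}+1)=\{a_{\sigma(i)}+i-N\}$, using the disjointness $A\cap(S+N)=\emptyset$ guaranteed by Lemmas \ref{lem leading term} and \ref{lem Lead}, supplies precisely the detail the paper omits.
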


\subsection{Mutations of  mKdV tuples }
\label{Mutations of  mKdV tuples }
Let $\bs S=(S_1,\dots,S_N)$ be an mKdV tuple. By Theorem \ref{thm mkdv description},
we have $\bs S = \bs S_{S_N, \sigma}$ for some
 permutation $\sigma\in\Si_N$. Let $A=\{a_1,\dots,a_N\}$ be the leading term
of $S_N$. Then $S_i$ are given by formula \Ref{mkdv tuple}.

\begin{lem}
\label {lem constr}
For any $i=1,\dots,N$, there exists a unique mKdV tuple
\bean
\label{new mkdv tuple}
\bs S^{(i)}=(S_1,\dots,S_{i-1},\tilde S_i, S_{i+1},\dots,S_N)
\eean
which differs from $\bs S$ at the $i$-th position only.

\end{lem}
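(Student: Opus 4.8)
The plan is to reduce the statement to a finite count of the KdV subsets that can legally occupy the $i$-th slot once the two neighbours $S_{i-1}$ and $S_{i+1}$ are frozen. If we replace $S_i$ by another KdV subset $T$, the only mKdV conditions that are touched are the two incidences $S_{i-1}+1\subset T$ and $T+1\subset S_{i+1}$; every other relation in the definition of an mKdV tuple involves only the unchanged entries and so holds automatically. Thus Lemma \ref{lem constr} is equivalent to the assertion that there are exactly two KdV subsets $T$ satisfying $S_{i-1}+1\subset T$ and $T+1\subset S_{i+1}$, one of which is the given $S_i$; the second one is then the desired $\tilde S_i$, and its uniqueness is automatic.

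First I would dispose of the case $i=N$ by cyclic symmetry. By Lemma \ref{lem permut} the rotation $\rho(\bs S)=(S_2,\dots,S_N,S_1)$ is again an mKdV tuple, and $\rho$ is a bijection of $\mc S_{mKdV}$ that preserves the relation of differing from a fixed tuple in one prescribed slot. Since $\rho$ moves $S_N$ into slot $N-1$, keeping its two neighbours $S_{N-1}$ and $S_1$, a mutation of $\bs S$ at $i=N$ corresponds under $\rho$ to a mutation of $\rho(\bs S)$ at the interior slot $N-1\in\{1,\dots,N-1\}$. Hence it suffices to treat $1\le i\le N-1$.

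For such $i$ I would invoke the explicit form $\bs S=\bs S_{S_N,\sigma}$ from Theorem \ref{thm mkdv description}, with $A=\{a_1<\dots<a_N\}$ the leading term of $S_N$, so that by \Ref{mkdv tuple} (writing $j$ for the tuple index)
\[
S_j=\{\,a_{\sigma(k)}+j-N:1\le k\le j\,\}\cup(S_N+j).
\]
By Lemma \ref{lem two step} every candidate $T$ with $S_{i-1}+1\subset T$ is a mutation $S_{i-1}[b]$ at some element $b$ of the leading term $B$ of $S_{i-1}$, and there are exactly $N$ of these. Using $S_{i-1}[b]=\{b+1-N\}\cup(S_{i-1}+1)$ from Lemma \ref{lem mut kdv}, together with the fact that $S_{i-1}+2\subset S_{i+1}$ already holds (it follows from chaining the two mKdV incidences), the remaining condition $S_{i-1}[b]+1\subset S_{i+1}$ collapses to the single membership $b+2-N\in S_{i+1}$. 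Reading off $B$ from the displayed formula and testing this membership, I expect precisely the two values $b=a_{\sigma(i)}+(i-1)$ and $b=a_{\sigma(i+1)}+(i-1)$ to survive: the first reproduces $S_i$ and the second yields $\tilde S_i$. As $a_{\sigma(i)}\neq a_{\sigma(i+1)}$ these are distinct, which gives existence and uniqueness simultaneously.

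The main obstacle will be this membership computation. One must split $B$ into the block $\{k\le i-1\}$, whose elements have already been shifted down by $N$, and the block $\{k\ge i\}$, and check progression by progression that $b+2-N$ lands below the start of its arithmetic progression in $S_{i+1}$ for every $b$ except the two coming from $k=i,i+1$; here the key tool is Lemma \ref{lem Lead}, that distinct leading-term elements are incongruent mod $N$, which guarantees each residue class meets the leading term in a single entry. Once this is done, it is worth recording that the surviving $\tilde S_i$ equals $\bs S_{S_N,\sigma'}$, where $\sigma'$ is obtained from $\sigma$ by interchanging the values $\sigma(i)$ and $\sigma(i+1)$; by Lemma \ref{lem mkdv constr} this reconfirms that $\bs S^{(i)}$ is a genuine mKdV tuple, and it foreshadows the affine Weyl group action of $\widehat W_{A_{N-1}}$ on $\mc S_{mKdV}$.
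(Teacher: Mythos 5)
Your proposal is correct, but it establishes the lemma by a genuinely different mechanism than the paper. The paper's proof is construct-and-classify: for $i<N$ it simply \emph{defines} $\bs S^{(i)}=\bs S_{S_N,\,\sigma\circ s_{i,i+1}}$, i.e.\ it swaps the values $\sigma(i)$ and $\sigma(i+1)$ in the presentation of Theorem \ref{thm mkdv description}; Lemma \ref{lem mkdv constr} and formula \Ref{mkdv tuple} then show this is an mKdV tuple differing from $\bs S$ only in the $i$-th slot, and uniqueness is quoted from Theorem \ref{thm mkdv description} (any competitor has the same last entry $S_N$, hence equals $\bs S_{S_N,\sigma'}$ for some $\sigma'$, and matching the slots $j\neq i$ forces $\sigma'\in\{\sigma,\ \sigma\circ s_{i,i+1}\}$). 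You instead localize the problem to the two incidences $S_{i-1}+1\subset T$ and $T+1\subset S_{i+1}$, enumerate all candidates as the $N$ mutations $S_{i-1}[b]$ with $b$ in the leading term of $S_{i-1}$ (Lemma \ref{lem two step}), and count survivors of the single membership test $b+2-N\in S_{i+1}$; this yields existence and uniqueness in one stroke, without appealing to the uniqueness content of the classification. Both arguments use the same rotation trick (Lemma \ref{lem permut}) to reduce $i=N$ to an interior slot. The one place you leave as an expectation --- that exactly $b=a_{\sigma(i)}+i-1$ and $b=a_{\sigma(i+1)}+i-1$ survive --- does go through: the leading term of $S_{i+1}$ is $\{a_{\sigma(k)}+i+1-N:\ k\le i+1\}\cup\{a_{\sigma(k)}+i+1:\ k> i+1\}$, so for $b=a_{\sigma(k)}+i-1-N$ with $k\le i-1$ the number $b+2-N$ lies $N$ below the start of its arithmetic progression in $S_{i+1}$ and is excluded, while for $b=a_{\sigma(k)}+i-1$ with $k\ge i$ the number $b+2-N=a_{\sigma(k)}+i+1-N$ belongs to $S_{i+1}$ precisely when $k\le i+1$. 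Your closing observation that the survivor is $\bs S_{S_N,\,\sigma\circ s_{i,i+1}}$ recovers the paper's construction exactly. What the two routes buy: the paper's is shorter and makes the Weyl-group structure visible immediately, but its uniqueness step silently relies on reading the sets $\{a_{\sigma(1)},\dots,a_{\sigma(j)}\}$ off from the $S_j$; yours is longer and requires the residue computation, but is more self-contained and proves the ``exactly two neighbours'' count directly, which is the combinatorial heart of the matter.
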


The mKdV tuple $\bs S^{(i)}$ will be called the {\it mutation of the mKdV tuple $\bs S$ at the $i$-th position}.
Denote $w_i : \bs S \mapsto \bs S^{(i)}$ the mutation map.

\begin{proof} First assume that $i<N$.  Let $s_{i,i+1}\in\Si_N$ be the transposition
of $i$ and $i+1$. Then
\bean
\label{oermU}
&&
(\sigma\circ s_{i,i+1}(1),\dots,\sigma\circ s_{i,i+1}(N)) =
\\
\notag
&&
\phantom{aaaa}
= (\sigma(1),\dots,\sigma(i-1), \sigma(i+1), \sigma(i), \sigma(i+2), \dots, \sigma(N)).
\eean
Set $\bs S^{(i)} = \bs S_{S, \sigma\circ s_{i,i+1}}$.  By formulas \Ref{oermU} and \Ref{mkdv tuple}
the mKdV tuple $\bs S^{(i)}$ differs from $\bs S$  at the $i$-th position only and the $i$-th term of
$\bs S^{(i)}$ is
\bean
\label{Tilde}
&&
\\
&&
\notag
\tilde S_i = \{a_{\sigma(1)}+i-N, a_{\sigma(2)}+i-N, \dots, a_{\sigma(i-1)}+i-N, a_{\sigma(i+1)}+i-N\} + (S+i).
\eean
The fact that $\bs S^{(i)}$ is unique follows from Theorem \ref {thm mkdv description}.

In order to mutate $\bs S$ at the $N$-th position, we consider the mKdV tuple $(S_N,S_1,\dots,S_{N-1})$, see Lemma \ref{lem permut},
and then mutate this tuple at the first position.
\end{proof}

\begin{lem}
\label{lem wrO}
Let
$\bs S=(S_1,\dots,S_N)$ be an mKdV tuple. For any $i$ let
$\bs S^{(i)}=(S_1,\dots,S_{i-1},$ $\tilde S_i, S_{i+1},\dots,S_N)$ be the mutation at the $i$-th position.
Then the four KdV subsets $S_{i}, $ $\tilde S_i,$ $ S_{i-1}, S_{i+1}$ satisfy the conditions of Theorem \ref{thm new identity}
and
\bean
\label{mKDv schur}
\Wr_{t_1}(F_{S_i},F_{\tilde S_i})\, =\, \pm\, F_{S_{i-1}}F_{S_{i+1}}.
\eean
\qed
\end{lem}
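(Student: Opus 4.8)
The plan is to deduce the Wronskian identity \Ref{mKDv schur} directly from Theorem \ref{thm new identity}, by writing the four KdV subsets $S_i,\tilde S_i,S_{i-1},S_{i+1}$ explicitly in terms of a single base KdV subset, its leading term, and the permutation $\sigma$, and then matching them to the quadruple $S_1,S_2,S_3,S_4$ appearing in that theorem.

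First I would reduce to the case $1\le i\le N-1$. By Theorem \ref{thm mkdv description} I would write $\bs S=\bs S_{S_N,\sigma}$ and let $A=\{a_1<\dots<a_N\}$ be the leading term of $R:=S_N$, so that every $S_j$ is given by \Ref{mkdv tuple} (reading the ``$+$'' there as union, as in Lemma \ref{lem shift}, which is forced by the case $i=N$ of that formula). For $i=N$ I would pass, via Lemma \ref{lem permut}, to the cyclic rotation $(S_N,S_1,\dots,S_{N-1})$ and mutate at its first position, exactly as in the proof of Lemma \ref{lem constr}; since \Ref{mKDv schur} involves only the cyclic neighbors $S_{i-1},S_{i+1}$, this sends the boundary case $i=N$ to the case $i=1$. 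Thus it suffices to treat $1\le i\le N-1$, where \Ref{Tilde} gives $\tilde S_i$ explicitly.

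Next, setting $b_k=a_{\sigma(k)}+i-N$, I would record from \Ref{mkdv tuple} and \Ref{Tilde} that
\bea
S_i &=& \{b_1,\dots,b_{i-1},b_i\}\cup(R+i),\qquad
\tilde S_i = \{b_1,\dots,b_{i-1},b_{i+1}\}\cup(R+i),\\
S_{i-1} &=& \{b_1-1,\dots,b_{i-1}-1\}\cup(R+i-1),\qquad
S_{i+1} = \{b_1+1,\dots,b_{i+1}+1\}\cup(R+i+1).
\eea
The key observation is that with $P:=S_{i-1}$ one has $P+1=\{b_1,\dots,b_{i-1}\}\cup(R+i)$ and $P+2=\{b_1+1,\dots,b_{i-1}+1\}\cup(R+i+1)$. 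Taking $\{\al_1<\al_2\}=\{b_i,b_{i+1}\}$, the four subsets $P$, $\{\al_1\}\cup(P+1)$, $\{\al_2\}\cup(P+1)$, $\{\al_1+1,\al_2+1\}\cup(P+2)$ of Theorem \ref{thm new identity} become precisely $S_{i-1}$, the pair $S_i,\tilde S_i$ (in an order depending on the sign of $b_i-b_{i+1}$), and $S_{i+1}$. To apply the theorem I would check its hypothesis: since $S_i,\tilde S_i$ are members of an mKdV tuple they are genuine subsets of virtual cardinal zero, so their listed elements are distinct, whence $b_i,b_{i+1}\notin P+1$ and $b_i\ne b_{i+1}$ (because $a_{\sigma(i)}\ne a_{\sigma(i+1)}$); thus $\{b_i,b_{i+1}\}\cap(P+1)=\emptyset$. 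Theorem \ref{thm new identity} then gives $\Wr_{t_1}(F_{\{\al_1\}\cup(P+1)},F_{\{\al_2\}\cup(P+1)})=F_{S_{i-1}}F_{S_{i+1}}$, and the antisymmetry $\Wr_{t_1}(f,g)=-\Wr_{t_1}(g,f)$ converts this into \Ref{mKDv schur}, with sign $+$ when $b_i<b_{i+1}$ and $-$ when $b_i>b_{i+1}$.

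The main obstacle is purely the index bookkeeping in the middle step: one must verify that the ``doubled'' subset $\{\al_1+1,\al_2+1\}\cup(P+2)$ of Theorem \ref{thm new identity} reassembles exactly into $S_{i+1}=\{b_1+1,\dots,b_{i+1}+1\}\cup(R+i+1)$, and that the single base $P=S_{i-1}$ simultaneously produces $S_i$ and $\tilde S_i$ after the appropriate unit shift. Once the four subsets are lined up correctly the identity is immediate, and the only other point requiring care is the clean treatment of the cyclic boundary case $i=N$ via Lemma \ref{lem permut}.
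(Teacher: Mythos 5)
Your proof is correct and is precisely the argument the paper intends: the paper states this lemma with no written proof, treating it as an immediate consequence of the explicit mutation formulas \Ref{mkdv tuple} and \Ref{Tilde} together with Theorem \ref{thm new identity}, and your matching of the quadruple via the base subset $P=S_{i-1}$ and the inserted pair $\{b_i,b_{i+1}\}$ (with the sign determined by the order of $b_i,b_{i+1}$, and the case $i=N$ handled by the cyclic rotation of Lemma \ref{lem permut}) is exactly that verification, including the needed disjointness check $\{b_i,b_{i+1}\}\cap(P+1)=\emptyset$ coming from $A\cap(R+N)=\emptyset$.
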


Let   $\la^{i}, $ $\tilde \la^i$  be the partitions corresponding to the KdV subsets
$S_{i}, $ $\tilde S_i$, respectively. Recall that
$\deg_{t_1} F_{S_i} =|\la^i|$ and $\deg_{t_1} F_{\tilde S_i}=|\tilde \la^i|$.

The mutation $w_i : \bs S \mapsto \bs S^{(i)}$ will be called {\it degree decreasing} if
$\deg_{t_1} F_{\tilde S_i} <\deg_{t_1} F_{S_i}$.

\begin{thm}
\label{thm mutation all}

Any mKdV tuple $\bs S$ can be transformed to the mKdV tuple $\bs S^\emptyset = (S^\emptyset,\dots,S^\emptyset)$
by a sequence of degree decreasing mutations.
\end{thm}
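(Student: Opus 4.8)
The plan is to reduce the statement to a single numerical lemma and then to induct on the total degree. For an mKdV tuple $\bs S=(S_1,\dots,S_N)$ set $d_i=\deg_{t_1}F_{S_i}=|\la^i|\in\Z_{\ge 0}$, and let $\bs S^{(i)}$ be its mutation at the $i$-th position, with new entry $\tilde S_i$. Taking $t_1$-degrees in the identity $\Wr_{t_1}(F_{S_i},F_{\tilde S_i})=\pm F_{S_{i-1}}F_{S_{i+1}}$ of Lemma~\ref{lem wrO} gives $\deg_{t_1}F_{\tilde S_i}=d_{i-1}+d_{i+1}+1-d_i$, so $w_i$ is degree decreasing exactly when $2d_i>d_{i-1}+d_{i+1}+1$, and in that case the total degree $D(\bs S)=\sum_i d_i$ strictly drops. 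Because $d_i=0$ iff $S_i=S^\emptyset$, the tuple $\bs S^\emptyset$ is the unique mKdV tuple with $D=0$. Hence the theorem follows by induction on $D$ once one proves the \emph{Key Lemma}: if $\bs S\neq\bs S^\emptyset$, then $2d_i>d_{i-1}+d_{i+1}+1$ for some $i$.

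The computational heart is a formula for how a single KdV mutation changes the degree: if $S$ is a KdV subset with leading term $A$ and $a\in A$, then
\[
\deg_{t_1}F_{S[a]}=\deg_{t_1}F_{S}+(N-1)-a .
\]
Granting this, I argue as follows. Since $S_{i-1}+1\subset S_i$, Lemma~\ref{lem two step} writes $S_i=S_{i-1}[b_i]$ for a unique $b_i$ in the leading term of $S_{i-1}$, and likewise $\tilde S_i=S_{i-1}[b_i']$; the mutation being nontrivial (Lemma~\ref{lem constr}) forces $b_i'\neq b_i$. The displayed formula then gives $d_i-d_{i-1}=(N-1)-b_i$ and $c_i:=2d_i-d_{i-1}-d_{i+1}-1=d_i-\deg_{t_1}F_{\tilde S_i}=b_i'-b_i\neq 0$; this is the crucial \emph{regularity}, namely that no mutation preserves the degree.

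To finish the Key Lemma, note $\sum_{i=1}^N c_i=\sum_i(2d_i-d_{i-1}-d_{i+1})-N=-N$. If no $w_i$ were degree decreasing, then $c_i\le 0$ for all $i$; combined with $c_i\neq 0$ this gives $c_i\le -1$ for all $i$, and summing to $-N$ forces $c_i=-1$ for every $i$. Thus $2d_i-d_{i-1}-d_{i+1}=0$ on the cycle, so $\bs d=(d_1,\dots,d_N)$ is constant, and $d_i=d_{i-1}$ gives $b_i=N-1$ for all $i$. Then $S_i=S_{i-1}[N-1]=\{0\}\cup(S_{i-1}+1)$, and composing these $N$ relations around the cycle yields $S_i=\{0,1,\dots,N-1\}\cup(S_i+N)$; by Lemma~\ref{lem lead a-a+N} this means $S_i=S^\emptyset$ for all $i$, i.e. $\bs S=\bs S^\emptyset$, contradicting $\bs S\neq\bs S^\emptyset$. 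Hence some $w_i$ is degree decreasing, which proves the Key Lemma and the theorem.

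The main obstacle is the displayed degree-change formula $\deg_{t_1}F_{S[a]}=\deg_{t_1}F_S+(N-1)-a$; everything else is bookkeeping with Lemmas~\ref{lem wrO}, \ref{lem two step}, \ref{lem constr} and \ref{lem lead a-a+N}. I would prove it either by a direct count of the area $|\la|$ on the Maya diagram of $S[a]=\{a+1-N\}\cup(S+1)$ relative to that of $S$, or by first establishing the closed expression $|\la(S)|=\frac{1}{2N}\bigl(\sum_{b\in A}b^2-\sum_{\rho=0}^{N-1}\rho^2\bigr)$ for a KdV subset with leading term $A$; this holds for $S^\emptyset$, and under $A\mapsto A[a]=(A+1)\cup\{a+1-N\}\setminus\{a+1\}$ it changes by exactly $(N-1)-a$, using $\sum_{b\in A}b=N(N-1)/2$.
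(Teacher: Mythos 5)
Your argument is correct, but it takes a genuinely different route from the paper's. The paper proves Theorem \ref{thm mutation all} by a monovariant on minimal elements: writing $s_{\min}$ for the least element occurring in any $S_i$ and $d$ for the number of positions attaining it, it shows $d=N$ forces $\bs S=\bs S^\emptyset$ (Lemma \ref{lem d<N}), and when $d<N$ it picks an index $i$ with $a_{\sigma(i)}+i-N=s_{\min}<a_{\sigma(i+1)}+i+1-N$ and checks that the mutation $w_i$ is degree decreasing and decreases $d$ (if $d>1$) or increases $s_{\min}$ (if $d=1$); termination follows since $s_{\min}\le 0$. You instead prove directly the statement that the paper records only as a consequence of the theorem, namely Corollary \ref{cor deg decrea} (existence of a degree-decreasing mutation whenever $\bs S\ne\bs S^\emptyset$), and then induct on the total degree $\sum_i d_i$, so the logical order is reversed. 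Your proof of that key lemma is new relative to the paper: it rests on the degree-change formula $\deg_{t_1}F_{S[a]}=\deg_{t_1}F_S+(N-1)-a$, which is correct (both your Maya-diagram count and the closed formula $|\la(S)|=\frac{1}{2N}\bigl(\sum_{b\in A}b^2-\sum_{\rho=0}^{N-1}\rho^2\bigr)$ work; the latter holds at $S^\emptyset$, changes by $(N-1)-a$ under mutation using \Ref{sum lead}, and propagates to all KdV subsets via Theorem \ref{thm generation kDv}), together with the cyclic averaging argument: the integers $c_i$ are nonzero and sum to $-N$, so either some $c_i>0$ or all $c_i=-1$, and the latter forces $\bs S=\bs S^\emptyset$. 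Two points should be ordered carefully in a final write-up. First, you may read off $t_1$-degrees in the Wronskian identity of Lemma \ref{lem wrO} only after knowing $\deg_{t_1}F_{S_i}\ne\deg_{t_1}F_{\tilde S_i}$ (if the degrees are equal, the top terms of the Wronskian cancel); this is exactly what your own observations supply, since both $S_i=S_{i-1}[b_i]$ and $\tilde S_i=S_{i-1}[b_i']$ are mutations of $S_{i-1}$, so you should first establish $d_i-\deg_{t_1}F_{\tilde S_i}=b_i'-b_i\ne 0$ from the degree-change formula and only then deduce $\deg_{t_1}F_{\tilde S_i}=d_{i-1}+d_{i+1}+1-d_i$; as written, the first sentence of your proof assumes what is justified only later. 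Second, the appeal to Lemma \ref{lem lead a-a+N} needs the one-line remark that $S_i=\{0,\dots,N-1\}\cup(S_i+N)$ forces the leading term of $S_i$ to equal $\{0,\dots,N-1\}$, because the leading term is $S_i\setminus(S_i+N)$, an $N$-element subset of $\{0,\dots,N-1\}$. With these orderings made explicit your proof is complete; what it buys over the paper's is quantitative information (the explicit degree-change formula, and the rigidity $c_i\ne 0$, i.e.\ no mutation preserves degree) plus a direct, self-contained proof of Corollary \ref{cor deg decrea}, while the paper's argument is shorter and needs no degree formulas at all.
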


\begin{proof}

Let $\bs S=(S_1,\dots,S_N)$. Let $s_0^i$ be the minimal element in $S_i$. Denote
$s_{\min}=\min(s^1_0,$ $\dots,s_0^N)$.
Denote $d$ the number of $i$'s such that $s^i_0=s_{\min}$.

\begin{lem}
\label{lem d<N}
If $d=N$, then $\bs S = \bs S^\emptyset$.
\qed
\end{lem}

Thus we may assume that $d<N$. Now we will describe a degree decreasing mutation of $\bs S$ which will decreases $d$ by one
if $d>1$  and which will increase
$s_{\min}$ if $d=1$. That will prove the theorem.

Denote $S=S_N$. Then $\bs S=\bs S_{S,\sigma}$ for some $\sigma \in \Si_N$. Let
$A=\{a_1<\dots<a_N\}$ be the leading term of $S$.

\begin{lem}
\label{lem min}
We have
\bean
\label{miN}
s_{\text{min}}= \min\, \{a_{\sigma(1)}+1-N, a_{\sigma(2)}+2-N,\dots,a_{\sigma(N)}+N-N\} .
\eean
\qed

\end{lem}

Since $d<N$ we may choose $i$ such that $a_{\sigma(i)} + i-N=s_{\text{min}} < a_{\sigma(i+1)} + i+1-N$.
Then the mutation $w_i: \bs S\mapsto \bs S^{(i)}$ decreases $d$ by one
if $d>1$  and increases
$s_{\min}$ if $d=1$. The theorem is proved.
\end{proof}

\begin{cor}
\label{cor deg decrea}
Let $\bs S=(S_1,\dots,S_N)$ be an mKdV tuple of subsets such that $\bs S\ne \bs S^\emptyset$.
Let $(\la^1,\dots,\la^N)$ be the corresponding partitions.
Then there exists $i\in\{1,\dots,N\}$ such that $2|\la^i|>|\la^{i+1}|+|\la^{i-1}|+1$.
\qed
\end{cor}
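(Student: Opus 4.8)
The plan is to deduce the corollary from Theorem~\ref{thm mutation all} and the Wronskian identity of Lemma~\ref{lem wrO} by a degree count in $t_1$. First I would use Theorem~\ref{thm mutation all}: since $\bs S\ne\bs S^\emptyset$, there is a sequence of degree decreasing mutations carrying $\bs S$ to $\bs S^\emptyset$, and in particular its first mutation is degree decreasing. Thus there is an index $i\in\{1,\dots,N\}$ for which the mutation $w_i:\bs S\mapsto\bs S^{(i)}$ is degree decreasing, i.e. $|\tilde\la^i|<|\la^i|$, where $\tilde\la^i$ is the partition of the mutated subset $\tilde S_i$. This $i$ will be the index claimed in the corollary.

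Next I would invoke Lemma~\ref{lem wrO} for this $i$, namely
\bea
\Wr_{t_1}(F_{S_i},F_{\tilde S_i})=\pm\,F_{S_{i-1}}F_{S_{i+1}},
\eea
and compare the $t_1$-degrees of the two sides using Lemma~\ref{deg t1}, which identifies the degrees of $F_{S_i},F_{\tilde S_i},F_{S_{i-1}},F_{S_{i+1}}$ with $|\la^i|,|\tilde\la^i|,|\la^{i-1}|,|\la^{i+1}|$ and guarantees nonzero leading coefficients. The elementary fact I would use is that for polynomials $f,g$ with distinct $t_1$-degrees $a\ne b$, the Wronskian $\Wr_{t_1}(f,g)=fg'-f'g$ has degree exactly $a+b-1$: the top-degree contributions of $fg'$ and $f'g$ carry the coefficients $b$ and $a$, which cannot cancel when $a\ne b$. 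Because the mutation is degree decreasing we have $|\tilde\la^i|\ne|\la^i|$, so this applies, and the left-hand side has degree $|\la^i|+|\tilde\la^i|-1$ while the right-hand side has degree $|\la^{i-1}|+|\la^{i+1}|$.

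Finally I would equate the two degrees to obtain $|\la^i|+|\tilde\la^i|-1=|\la^{i-1}|+|\la^{i+1}|$, solve for $|\tilde\la^i|$, and substitute the result into the strict inequality $|\tilde\la^i|<|\la^i|$ coming from the degree decrease; this yields $|\la^{i-1}|+|\la^{i+1}|+1<2|\la^i|$, which is exactly the assertion. I do not expect a genuine obstacle here. The only point requiring care is the non-cancellation of leading terms in the Wronskian, and this is precisely supplied by the strict degree drop $|\tilde\la^i|<|\la^i|$, so that step is self-justifying; everything else is bookkeeping with the degree formula of Lemma~\ref{deg t1}.
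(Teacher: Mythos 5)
Your proof is correct and is essentially the paper's intended argument: the paper states this corollary with a \qed immediately after Theorem~\ref{thm mutation all}, the implicit reasoning being exactly what you wrote, namely that a first degree decreasing mutation $w_i$ exists, and the Wronskian identity of Lemma~\ref{lem wrO} together with the degree formula of Lemma~\ref{deg t1} forces $|\la^i|+|\tilde\la^i|-1=|\la^{i-1}|+|\la^{i+1}|$, whence $2|\la^i|>|\la^{i-1}|+|\la^{i+1}|+1$. Your care about non-cancellation of leading terms in $\Wr_{t_1}$ (valid precisely because $|\tilde\la^i|\ne|\la^i|$) is the one point the paper leaves unstated, and you handle it correctly.
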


For example, let $N=3$ and  $\bs S=(S_1,S_2,S_3)$ with
\bean
\label{eX}
&&
\phantom{aaaa}
\\
\notag
&&
S_1=\{-1<0<2<3<\dots\},\quad
 S_2=\{-2, 0,1,3,4,\dots\},
 \quad S_3=\{-1<1<2<\dots\}.
 \eean
Then  $s_{\text{min}} = -2$ and the mutation $w_2$ produces the triple
$\bs S^{(2)}=(S_1,\tilde S_2,S_3)$, $\tilde S_2=\{0<1<\dots \}$
and decreases $s_{\text{min}}$.

\subsection{mKdV tuples and critical points}

Let $\bs S=(S_1,\dots,S_N)$ be an mKdV tuple.
Let $(\la^1,\dots,\la^N)$ be the $N$-tuple of the corresponding partitions provided by Lemma \ref{lem SiW}.
 Let $(F_{\la^1}(t_1,t),\dots,$ $ F_{\la^N}(t_1,t))$ be the corresponding tuple
of Schur polynomials.

\begin{thm}
\label{thm schur-crit}
For a generic fixed $t$, the tuple $(F_{\la^1}(x,t),\dots, F_{\la^N}(x,t))$ of polynomials in $x$ represents
a critical point of the master function $\Phi$ given by formula \Ref{Master} where the numbers $k_1,\dots,k_N$
are the numbers $|\la^1|$,\dots, $|\la^N|$.
\end{thm}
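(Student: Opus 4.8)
The plan is to reduce the statement to Theorem~\ref{fertile cor}(i): it suffices to show that for generic fixed $t$ the $N$-tuple $y=(F_{\la^1}(x,t),\dots,F_{\la^N}(x,t))$, with $x$ playing the role of $t_1$, is both \emph{generic} and \emph{fertile}; then it automatically represents a critical point of the master function \Ref{Master}, and the degree count $k_j=\deg_{t_1}F_{\la^j}=|\la^j|$ follows from Lemma~\ref{deg t1}.

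First I would verify fertility, which is where the real input sits. For each $i=1,\dots,N$, Lemma~\ref{lem constr} produces the mutation $\bs S^{(i)}=(S_1,\dots,\tilde S_i,\dots,S_N)$, and Lemma~\ref{lem wrO} (built on the Pl\"ucker-type Wronskian identity of Theorem~\ref{thm new identity}) gives
\[
\Wr_{t_1}(F_{S_i},F_{\tilde S_i})=\pm\,F_{S_{i-1}}F_{S_{i+1}}.
\]
Setting $\tilde y_i:=\pm F_{\tilde S_i}$, with the sign absorbed since the polynomials representing a critical point are defined only up to a scalar, this is exactly equation \Ref{wronskian-critical eqn}. As such a $\tilde y_i$ exists for every $i$ (the cyclic cases $i=1,N$ being reached through Lemma~\ref{lem permut}), the tuple $y$ is fertile. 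Given the Schur-polynomial Wronskian identity, this step is essentially bookkeeping of the mKdV-tuple structure.

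Next I would check genericity. Each $F_{\la^j}$ has $t_1$-degree $|\la^j|$; if $\la^j$ is nonempty, Corollary~\ref{cor schur and der} gives simple $t_1$-roots for generic $t$, while if $\la^j$ is empty then $F_{\la^j}=1$ has no roots at all. For a consecutive pair either $\la^j\ne\la^{j+1}$, and Corollary~\ref{cor two schur} yields no common roots for generic $t$, or $\la^j=\la^{j+1}$, i.e.\ $S_j=S_{j+1}$; but then $S_j+1\subset S_{j+1}=S_j$ forces $S_j=S^\emptyset$ (a subset of virtual cardinal zero closed under $+1$ must be $\{0,1,2,\dots\}$), so both polynomials are the constant $1$ and share no roots trivially. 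Only finitely many such conditions are imposed, so they hold simultaneously on a nonempty Zariski-open set of $t$, establishing genericity and completing the proof via Theorem~\ref{fertile cor}(i).

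The hard part is not in this deduction but upstream, in the Wronskian identity Lemma~\ref{lem wrO}/Theorem~\ref{thm new identity}; once that is available, the only subtlety to watch here is the genericity argument, namely verifying that consecutive partitions can coincide only in the trivial constant case, so that Corollary~\ref{cor two schur} covers all remaining pairs.
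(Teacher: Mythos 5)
Your proof is correct and follows essentially the same route as the paper: genericity for generic $t$ via Corollaries \ref{cor schur and der} and \ref{cor two schur}, fertility via the Wronskian identity of Lemma \ref{lem wrO} (Theorem \ref{thm new identity}), and the conclusion via Theorem \ref{fertile cor}(i) with the degree count from Lemma \ref{deg t1}. Your explicit handling of the case $\la^j=\la^{j+1}$ (where $S_j+1\subset S_j$ forces $S_j=S^\emptyset$, so both polynomials are $1$) is a small point the paper's proof leaves implicit, but it does not change the argument.
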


For example, if $N=3$ and $\bs S=(S_1,S_2,S_3)$ is given by \Ref{eX},
then $\la^1=(1,1), \la^2=(2,1,1), \la^3=(1)$ is the corresponding triple of partitions.
For a generic fixed $t$ the triple
$(F_{\la^1}(x,t), F_{\la^2}(x,t), F_{\la^3}(x,t))$ represents a critical point
of the master function with 7 variables $(u^{(j)}_i)$, $j=1,2,3$, $i=1,\dots,k_j$, with  $(k_1,k_2,k_3)= (2,4,1)$.

\medskip
\noindent
{\it Proof of Theorem \ref{thm schur-crit}.}
For generic fixed $t$ and any $i$, all roots of $F_{\la^i}(x,t)$ are simple and the polynomials
$F_{\la^i}(x,t), F_{\la^{i+1}}(x,t)$ do not have common roots, see Corollaries
 \ref{cor schur and der} and \ref{cor two schur}. For any fixed $t$ the tuple  $(F_{\la^1}(x,t),\dots, F_{\la^N}(x,t))$
 is fertile by Lemma \ref{lem wrO}. Now the theorem follows from
Theorem \ref{fertile cor}.
\qed

\subsection{mKdV tuples and the generation of critical points}
Let $\bs S =(S_1,\dots,$ $S_N)$ be an mKdV tuple. Let
$(F_{S_1}(x,t), \dots,F_{S_N}(x,t))$ be the corresponding tuple of
Schur polynomials. By Lemma \ref{deg t1}, there exist numbers $\al_1,\dots,\al_N$
such that
\bea
y_{\bs S}(x,t)=(\al_1F_{S_1}(x,t), \dots,\al_NF_{S_N}(x,t))
\eea
 is a tuple of monic polynomials in $x$ depending on parameters $t$.

 Recall that $t=(t_2,t_3,\dots)$ is an infinite sequence of parameters
but the tuple $y_{\bs S}(x,t)$ depends only on finitely many of them, say on $t_{\bs S}=
(t_2,t_3,\dots,t_{d+1})$ for some integer $d$. We introduce the map
\bean
\label{schur map}
Y_{\bs S} : \C^{d} \to (\C[x])^N,\qquad
t_{\bs S} \mapsto y_{\bs S}(x,t_{\bs S}).
\eean

By Theorem \ref{thm mutation all}, there exists a sequence $J=(j_1,\dots,j_m)$,
 $1\leq j_\ell\leq N$, such that $w_{j_1}w_{j_{2}}\dots w_{j_m} : \bs S\mapsto \bs S^\emptyset$
and each mutation $w_{j_\ell} : w_{j_{\ell + 1}}\dots w_{j_{m}} \bs S \mapsto w_{j_{\ell}}\dots w_{j_m} \bs S$
is degree decreasing.

Recall the map $Y^J: \C^m \to (\C[x])^N$, the generation of $N$-tuples of polynomials in $x$
from the tuple $y^\emptyset=(1,\dots,1)$ in the $J$-th direction,
defined in Section \ref{sec generation procedure}.

\begin{thm}
\label{thm schur induced}
The map $Y_{\bs S}$ can be induced from the map $Y^J$ by a suitable polynomial map
$f: \C^d \to\C^m$, that is $Y_{\bs S}= Y^J\circ f$.
\end{thm}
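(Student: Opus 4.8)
The plan is to argue by induction on the length $m$ of the degree-decreasing mutation sequence $J=(j_1,\dots,j_m)$ produced by Theorem \ref{thm mutation all}, for which $w_{j_1}\cdots w_{j_m}\bs S=\bs S^\emptyset$. In the base case $m=0$ we have $\bs S=\bs S^\emptyset$, all $F_{S^\emptyset}$ are constant, and after normalization $y_{\bs S^\emptyset}=(1,\dots,1)=y^\emptyset$ is independent of $t$; here $d=0$ and $Y_{\bs S^\emptyset}=Y^\emptyset\circ f$ holds with the unique $f:\C^0\to\C^0$. For the inductive step I put $\bs S'=w_{j_m}\bs S$. Then $J'=(j_1,\dots,j_{m-1})$ reduces $\bs S'$ to $\bs S^\emptyset$ by the same (hence degree-decreasing) mutations, so by the induction hypothesis there is a polynomial map $f'$ with $Y_{\bs S'}=Y^{J'}\circ f'$.

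The two generation maps are linked by formula \Ref{Ja}: $Y^J(c_1,\dots,c_m)$ is obtained from $Y^{J'}(c_1,\dots,c_{m-1})$ by one elementary generation in the $j_m$-th direction with constant $c_m$. On the Schur side, $\bs S$ and $\bs S'$ differ only in the $j_m$-th slot, carrying $S_{j_m}$ and $\tilde S_{j_m}$ respectively, with $|\tilde\la^{j_m}|<|\la^{j_m}|$. By Lemma \ref{lem wrO}, $\Wr_{t_1}(F_{S_{j_m}},F_{\tilde S_{j_m}})=\pm F_{S_{j_m-1}}F_{S_{j_m+1}}$; reading $t_1$ as $x$ and fixing $t$, this says that $F_{S_{j_m}}(x,t)$ solves, up to a nonzero constant, the inhomogeneous Wronskian equation \Ref{wronskian-critical eqn} governing the $j_m$-th generation from the tuple $y_{\bs S'}$. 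Since the slots $i\ne j_m$ agree in $\bs S$ and $\bs S'$, one generation step in the $j_m$-th direction therefore carries $y_{\bs S'}(x,t)$ to $y_{\bs S}(x,t)$. The degrees are compatible: the identity yields $|\tilde\la^{j_m}|=|\la^{j_m-1}|+|\la^{j_m+1}|+1-|\la^{j_m}|$, which is exactly the degree transformation \Ref{s_i l-transf}, so the generation is degree increasing; since each $w_j$ is an involution on degree vectors, the degree vector of $\bs S$ equals $w_{j_m}\cdots w_{j_1}\bs k^\emptyset=\bs k^J$ (see \Ref{gen vector}), matching the degrees of $y_{\bs S}$ produced by $Y^J$ via Lemma \ref{lem gen procedure}.

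It remains to identify the constant $c_m$ and verify it is polynomial in $t$. With the normalization of \Ref{tilde y}, the particular solution $y_{j_m,0}(x,t)$ is monic of degree $|\la^{j_m}|$ with vanishing coefficient of $x^{|\tilde\la^{j_m}|}$, while the $j_m$-th entry $\hat y$ of $y_{\bs S'}$ is monic of degree $|\tilde\la^{j_m}|$. Writing $\alpha_{j_m}F_{S_{j_m}}=y_{j_m,0}+c_m\,\hat y$ and comparing coefficients of $x^{|\tilde\la^{j_m}|}$ gives $c_m=f_m(t):=\bigl[x^{|\tilde\la^{j_m}|}\bigr]\bigl(\alpha_{j_m}F_{S_{j_m}}(x,t)\bigr)$. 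As $F_{\la^{j_m}}(t_1,t)$ is a polynomial in all of $t_1,t_2,\dots$, every coefficient of a power of $t_1=x$ is a polynomial in $t$, so $f_m$ is polynomial. Choosing $d$ so large that $y_{\bs S}$ and $y_{\bs S'}$ depend only on $t_2,\dots,t_{d+1}$ (whence $d'\le d$), I precompose $f'$ with the coordinate projection $\C^d\to\C^{d'}$ and set $f=(f'\circ\mathrm{pr},f_m):\C^d\to\C^m$. This $f$ is polynomial and, by the above, $Y^J\circ f=Y_{\bs S}$, completing the induction.

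The main obstacle I anticipate is the precise matching of the two monic normalizations at the junction of the generation step. One must check that, once the lower constants are fixed by $f'$, the monic Schur polynomial $\alpha_{j_m}F_{S_{j_m}}$ and the monic generation output $y_{j_m,0}+c_m\hat y$ solve one and the same inhomogeneous Wronskian equation, so that their difference lies in the one-dimensional space of homogeneous solutions (the multiples of $\hat y$) and a single scalar $c_m$ suffices; the vanishing-coefficient normalization of $y_{j_m,0}$ in \Ref{tilde y} is exactly what pins $c_m$ down. The polynomiality of the extracted coefficient and the inequality $d'\le d$ are then routine, so the delicate point is this alignment of normalizations rather than any hard computation.
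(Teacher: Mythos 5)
Your proposal is correct and takes essentially the same route as the paper's proof: induction on the length of the degree-decreasing mutation sequence from Theorem \ref{thm mutation all}, applying the inductive hypothesis to $w_{j_m}\bs S$ and the sequence $(j_1,\dots,j_{m-1})$, invoking the Wronskian identity of Lemma \ref{lem wrO} to match the $j_m$-th generation step, and producing the last coordinate of $f$ as a scalar polynomial in $t_{\bs S}$. The only difference is that you spell out the leading-coefficient comparison that pins down $c_m$ and its polynomiality, a point the paper's proof dismisses as ``clear.''
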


Theorem \ref {thm schur induced} says that every family of critical points provided my Theorem \ref{thm schur-crit}
appears as a subfamily of critical points generated from the tuple $y^\emptyset$.

\begin{proof}
The proof is by induction on $m$. For $m=0$ we have
$\bs S = \bs S^\emptyset$, $y_{\bs S}=(1,\dots,1)$,
$Y_{\bs S} : \C^d \to (\C[x])^N$ is the constant map
$t_{\bs S} \mapsto (1,\dots,1)$. The generation map $Y^\emptyset : \C^0 \to (\C[x])^N$ is the map $(pt)\mapsto
(1,\dots,1)$. The map $f: \C^d\to (pt)$ has the property required in the theorem.

Now assume that the theorem is proved for the sequence $\tilde J=(j_1,\dots,j_{m-1})$.
We have the maps
\bea
&
Y^{\tilde J} : \C^{m-1} \to (\C[x])^N,
&
\quad
\tilde c \mapsto (y_1(x,\tilde c), \dots, y_N(x,\tilde c)),
\\
&
Y^{J} : \C^{m} \to (\C[x])^N,
&
\quad
(\tilde c, c_m) \mapsto (y_1(x,\tilde c), \dots, y_{j_m,0}(x,\tilde c) + c_m y_{j_m}(x,\tilde c), \dots, y_N(x,\tilde c)),
\eea
see formulas \Ref{tilde y}, \Ref{Ja}. Recall that $y_{j_m,0}$ is the monic polynomial in $x$
satisfying the equation
\bean
\label{eq 1}
\Wr_x(y_{j_m},  y_{j_m,0})=\,\on{const}\,y_{j_m-1}y_{j_m+1}
\eean
 and such that the coefficient of $x^{\deg y_{j_m}}$ in $\tilde y_{j_m,0}$ equals zero, see Section \ref{Degree increasing generation}.

 We also have the mKdV tuples $\bs S=(S_1,\dots,S_N)$, $\bs S^{(j_m)} = (S_1,\dots, \tilde S_{j_m},\dots,S_N)$ and
 the corresponding  tuples of monic polynomials in $x$ depending on parameters $t_S=(t_2,\dots,t_{d+1})$, namely,
\bea
(\al_1F_{S_1}(x,t_{\bs S}),\dots,\al_NF_{S_N}(x,t_{\bs S})), \quad
(\al_1F_{S_1}(x,t_{\bs S}),\dots,\tilde \al_{j_m}F_{\tilde S_{j_m}}(x,t_{\bs S}), \dots,
\al_NF_{S_N}(x,t_{\bs S})).
\eea
We know that
\bean
\label{eq 2}
\Wr_x(F_{S_{j_m}}, F_{\tilde S_{j_m}})\,=\,\on{const}\, F_{S_{j_m-1}}F_{S_{j_m+1}}.
\eean
We have the maps
\bea
&
Y_{\bs S} : \C^d \to (\C[x])^N,
&
\quad
t_{\bs S} \mapsto (\al_1F_{S_1}(x,t_{\bs S}),\dots,\al_NF_{S_N}(x,t_{\bs S})),
\\
&
Y_{\bs S^{(j_m)}} : \C^d \to (\C[x])^N,
&
\quad
t_{\bs S} \mapsto
(\al_1F_{S_1}(x,t_{\bs S}),\dots,\tilde \al_{j_m}F_{\tilde S_{j_m}}(x,t_{\bs S}), \dots,
\al_NF_{S_N}(x,t_{\bs S})).
\eea
By induction assumptions we have a polynomial map $\tilde f : \C^d \to \C^{m-1}$ such that
 $Y_{\bs S^{(j_m)}} = Y^{\tilde J}\circ \tilde f$.

From formulas \Ref{eq 1}, \Ref{eq 2}, it is clear that there exists a unique scalar polynomial $f_{j_m}(t_{\bs S})$
such that
\bea
\al_{j_m}F_{S_{j_m}}(x,t_{\bs S}) = y_{j_m,0}(x,\tilde f(y_{\bs S})) + f_{j_m}(t_{\bs S})
\tilde \al_{j_m}F_{\tilde S_{j_m}}(x,t_{\bs S}) .
\eea
Then the map $f : \C^d\to \C^m, y_{\bs S}\mapsto (\tilde f(y_{\bs S}),  f_{j_m}(t_{\bs S})) $ has the required property.
The theorem is proved.
\end{proof}

\subsection{mKdV tuples and differential operators}
Let $S$ be a KdV subset with the leading term $A=\{a_1<\dots<a_N\}$.
Recall the mutations $S[a]$, $a\in A$. These are KdV subsets defined in \Ref{j mut}.

For  a permutation
$\sigma \in \Si_N$ consider the mKdV tuple $\bs S_{S, \sigma}=(S_1,\dots,S_N)$.
Recall that $S_N=S$. Consider
 the corresponding tuple of the Schur polynomials $(F_{S_1}(x,t),\dots,F_{S_N}(x,t))$ and
the differential operator
\bean
\label{diff oper S}
&&
\\
\notag
&&
\D_S = \left(\!\frac d{dx} - \log'\!\left(\!\frac{F_{S_N}(x,t)}{F_{S_{N-1}}(x,t)}\!\right)\!\right)\!\!
\left(\!\frac d{dx} - \log'\!\left(\!\frac{F_{S_{N-1}}(x,t)}{F_{S_{N-2}}(x,t)}\!\right)\!\right)\!
\dots
\!\left(\!\frac d{dx} - \log'\!\left(\!\frac{F_{S_{1}}(x,t)}{F_{S_{N}}(x,t)}\!\right)\!\right)
\eean
with respect to $x$. Here  $()'$ denotes the derivative with respect to $x$.
The differential operator depends on $t$ as a parameter.

\begin{thm}
\label{thm diff oper SCur}
The differential operator $\D_S$ does not depend on the permutation $\sigma\in\Si_N$.
For every fixed $t$, the rational functions $F_{S[a]}(x,t)/F_{S}(x,t),$ $ a\in A$, form a basis
of the kernel of $\D_S$.
\end{thm}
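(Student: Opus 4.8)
The plan is to compute $\ker\D_S$ explicitly and to read off the independence of $\sigma$ as an immediate consequence. Writing $v_k:=\log'(F_{S_k}/F_{S_{k-1}})$ with indices mod $N$ (so $S_0=S_N=S$), the definition \Ref{diff oper S} is $\D_S=(\der-v_N)(\der-v_{N-1})\cdots(\der-v_1)$, a monic differential operator of order $N$, whose solution space is therefore $N$-dimensional. Since $|A|=N$, it suffices to prove that the $N$ rational functions $\psi_a:=F_{S[a]}/F_S$, $a\in A$, lie in $\ker\D_S$ and are linearly independent. They then form a basis of the kernel; and because this basis involves only $S$ and its leading term $A$, while a monic operator of order $N$ is determined by its $N$-dimensional kernel, it follows at once that $\D_S$ does not depend on $\sigma$.

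The heart of the matter is a direct verification that $\D_S\psi_a=0$, obtained by pushing $\psi_a$ through the factors one at a time. Using the elementary identity $(\der-\log' g)h=g\,(h/g)'$ together with the explicit shapes $S_i=\{a_{\sigma(1)}+i-N,\dots,a_{\sigma(i)}+i-N\}\cup(S+i)$ from \Ref{mkdv tuple} and $S[a]=\{a+1-N\}\cup(S+1)$ from \Ref{j mut}, I would prove by induction on $k$ that
\[
R_k:=(\der-v_k)\cdots(\der-v_1)\,\psi_a=\pm\,\frac{F_{U_k}}{F_{S_k}},\qquad U_k=\{a+k+1-N\}\cup(S_k+1).
\]
The subsets $S_{k+1}=\{a_{\sigma(k+1)}+k+1-N\}\cup(S_k+1)$ and $U_k$ share the common part $S_k+1$ and differ in a single distinguished element, so Theorem \ref{thm new identity}, applied with base $S_k$, gives $\Wr_{t_1}(F_{S_{k+1}},F_{U_k})=\pm F_{S_k}F_{U_{k+1}}$, which is exactly the identity that carries $R_k$ to $R_{k+1}$. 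This step is legitimate as long as the two distinguished elements are distinct, i.e.\ $a\ne a_{\sigma(k+1)}$. Letting $j$ be the unique index with $a=a_{\sigma(j)}$, at the stage $k+1=j$ the two distinguished elements coincide, so $U_{j-1}=S_j$, the relevant Wronskian is $\Wr_{t_1}(F_{S_j},F_{S_j})=0$, and hence $R_j=0$. Therefore $R_N=\D_S\psi_a=0$.

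The same computation gives linear independence for free. Put $K_k=\ker\bigl[(\der-v_k)\cdots(\der-v_1)\bigr]$, so that $K_0=0\subset K_1\subset\cdots\subset K_N=\ker\D_S$. For $a=a_{\sigma(j)}$ the calculation shows $R_j=0$ while $R_{j-1}=\pm F_{U_{j-1}}/F_{S_{j-1}}\ne 0$: the subset $U_{j-1}$ has pairwise distinct elements (because $a\notin\{a_{\sigma(1)},\dots,a_{\sigma(j-1)}\}$ and $a\notin S+N$), so $F_{U_{j-1}}$ is a nonzero Schur polynomial. Thus $\psi_a\in K_j\setminus K_{j-1}$, and as $a$ runs over $A$ the level $j$ runs bijectively over $\{1,\dots,N\}$; the standard filtration argument then forces every relation $\sum_a c_a\psi_a=0$ to have all $c_a=0$. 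The $\psi_a$ therefore span the $N$-dimensional $\ker\D_S$, proving the kernel description, and the $\sigma$-independence of $\D_S$ follows as explained.

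I expect the main obstacle to be purely bookkeeping: identifying the subsets $U_k$ correctly at each stage and checking that $S_{k+1}$ and $U_k$ genuinely satisfy the hypotheses of Theorem \ref{thm new identity} (common part exactly $S_k+1$, the two distinguished elements lying outside it), together with tracking the signs coming from the Wronskian identity and from reordering the distinguished elements; these signs do not affect the conclusion. I note that the $\sigma$-independence also admits a partial structural proof: recognizing $\D_S=\frak m_1(\mu(y_{\bs S}))$ via \Ref{miuramap}, the mutations $w_i$ with $i\ne 1$ connecting the $N!$ tuples with fixed $S_N=S$ induce $i$-gauge equivalences (Theorem \ref{thm deform}), which preserve $\frak m_1$ by Theorem \ref{thm gaugemiura}. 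This reduces the dependence to $S_1=S[a_{\sigma(1)}]$ alone, but does not by itself eliminate that last dependence, whereas the kernel computation above does.
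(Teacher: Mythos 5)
Your proof is correct, but it takes a genuinely different route from the paper's. The paper disposes of this theorem by citation: the $\sigma$-independence is deduced from Lemma 5.2 and Theorem 5.3 of \cite{MV1} --- the general fact that the fundamental differential operator attached to a population of critical points is unchanged under the generation procedure --- combined with Lemma \ref{lem wrO}, which identifies mutations of mKdV tuples with that generation; the kernel description is quoted from Lemma 5.6 of \cite{MV1}. You instead work entirely inside the present paper: your induction $R_k=\pm F_{U_k}/F_{S_k}$ with $U_k=\{a+k+1-N\}\cup(S_k+1)$ uses only the elementary identity $(\der-\log'g)\,h=g\,(h/g)'$ and Theorem \ref{thm new identity} applied over the base $S_k$; the required hypotheses do hold at every step actually used, because $A\cap(S+N)=\emptyset$ and because the induction only runs while $k<j$ (where $a=a_{\sigma(j)}$), so that $a\notin\{a_{\sigma(1)},\dots,a_{\sigma(k)}\}$ automatically --- and at $k=j$ one gets $U_{j-1}=S_j$, hence $R_j=0$, with no identity needed. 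The filtration argument plus the standard fact that a monic order-$N$ operator is recovered from an $N$-dimensional kernel via the Wronskian formula then give both assertions at once, with $\sigma$-independence as a corollary of the kernel description --- the opposite logical order from the paper, which obtains independence first. Your route buys a self-contained proof with explicit intermediate data $R_k$; the paper's buys brevity and embeds the result in the population machinery of \cite{MV1} (your closing remark about $\frak m_1$ and gauge equivalence is exactly the spirit of that machinery, and, as you note, it alone does not finish the job). One point you should make explicit, since the theorem is asserted for \emph{every} fixed $t$ rather than generic $t$: in the independence step, $F_{S_j}(x,t)$ is a nonzero polynomial in $x$ for each fixed $t$ because its leading coefficient in $t_1$ is a nonzero number independent of $t$ (Lemma \ref{deg t1}); with that remark your computation, which is an identity in $t$, applies verbatim for every fixed $t$.
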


\begin{proof}
The first statement of the theorem is a direct corollary of  Lemma 5.2 in \cite{MV1}, Theorem 5.3 in \cite{MV1}
and Lemma \ref {lem wrO} in Section \ref{Mutations of  mKdV tuples }. The second statement follows from Lemma 5.6 in
\cite{MV1}.
\end{proof}

\begin{example}
For $N=3$, consider the KdV subset $S=\{-1<0<1<2<\dots\}$ with leading term $A=\{-1<0<4\}$.
 Consider the mKdV triple $\bs S_{S,\sigma}=(S_1,S_2,S_3)$ with $(\sigma(1),\sigma(2),\sigma(3))=(2,3,1)$.
Here  $S_1=\{-2<0<1<3<\dots\}$,
 $S_2=\{-1<1<2<\dots\}$,  $S_3=\{-1<0<1<2<\dots\}$. We have
 $S[-1]=\{-3<0<1<3<\dots\}$,  $S[0]=\{-2<0<1<3<\dots\}$,  $S[4]=\{0<1<\dots\}$.
 Then the functions $F_{S[a]}/F_S, a\in \{-1<0<4\}$, form a basis of the kernel of the differential operator
 \bean
 \label{eX}
 \D_S = \left(\frac d{dx} - \log'\left(\frac{F_{S_3}}{F_{S_{2}}}\right)\right)
\left(\frac d{dx} - \log'\left(\frac{F_{S_{2}}}{F_{S_{1}}}\right)\right)
\left(\frac d{dx} - \log'\left(\frac{F_{S_{1}}}{F_{S_{3}}}\right)\right).
\eean
 \end{example}

\subsection{Identities for Schur functions} In Section \ref{sec SchuR poly} we proved
identities relating the  Schur polynomials and their derivatives with respect to $t_1$, see
formula \Ref{formula new}, Theorem \ref{thm diff oper SCur}, Section \ref{sec more general}. By formula \Ref{sch dEr}, the
derivatives of a Schur polynomial can be expressed as a linear combination with integer coefficients
of Schur polynomials. Thus, each of those identities can be written as a polynomial relation
with integer coefficients for Schur functions. Since the Schur polynomials are the characters of representations
of the general linear group, each of such identities can be interpreted as an isomorphism of two representations.

For example, we have  $W_{t_1}(F_{(2,1)}, F_{(0)}) = F_{(1)}F_{(1)}$ by formula \Ref{sch dEr}.
This identity can be written as
\bea
F_{(2)} + F_{(1,1)}=F_{(1)}F_{(1)},
\eea
by formula \Ref{sch dEr} and can be interpreted as the statement that the tensor square of the representation
with highest weight $(1)$ is the direct sum of two representations with highest weights $(2)$ and $(1,1)$.

\subsection{Mutations of mKdV tuples and the affine Weyl group $\widehat {W}_{A_{N-1}}$}
\label{sec Mutations of mKdV tuples and the affine Weyl group }
The affine Weyl group $\widehat {W}_{A_{1}}$ is generated by elements $w_1,w_2$ subject to the relations
$w_1^2=w_2^2=1$.

For $N>2$, the affine Weyl group $\widehat {W}_{A_{N-1}}$ is generated by elements
$w_1,\dots,w_N$. We consider the indices of the generators modulo $N$, in particular, $w_0:=w_N$ and
$w_{N+1}:=w_1$. The relations are  $w_i^2=1$, $w_iw_{i+1}w_i=w_{i+1}w_iw_{i+1}$ for all $i$ and
$w_iw_j=w_jw_i$ otherwise.

For $N>1$, denote $\mc S_{mKdV}$ the set of all mKdV $N$-tuples.
In Section \ref{Mutations of  mKdV tuples } we defined the mutation maps
$w_i : \mc S_{mKdV} \to \mc S_{mKdV}$ for $i=1,\dots,N$.

\begin{thm}
\label{lem weyl action}
The mutation maps define a transitive action of the  Weyl group $\widehat{W}_{A_{N-1}}$ on the set
$\mc S_{mKdV}$.
\qed
\end{thm}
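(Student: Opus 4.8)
The plan is to realize $\mc S_{mKdV}$ as a torsor for the affine symmetric group, presented as $N$-periodic bijections of $\Z$, in such a way that each mutation $w_i$ becomes right multiplication by the $i$-th simple reflection; once this is done the Coxeter relations hold automatically and we obtain a genuine $\widehat{W}_{A_{N-1}}$-action. Concretely, I would realize $\widehat{W}_{A_{N-1}}$ as the group of bijections $\pi:\Z\to\Z$ with $\pi(x+N)=\pi(x)+N$ and $\sum_{x=1}^N(\pi(x)-x)=0$, where $s_1,\dots,s_{N-1}$ are the periodic transpositions of the residues $i,i+1$ inside the window and $s_N=s_0$ is the periodic transposition exchanging residues $0$ and $1$ across the window boundary. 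By Theorem \ref{thm mkdv description} every mKdV tuple is $\bs S_{S,\sigma}$ for a unique KdV subset $S$ with leading term $A=\{a_1<\dots<a_N\}$ and a unique $\sigma\in\Si_N$, so I define $\Psi(\bs S_{S,\sigma})$ to be the $N$-periodic bijection $\pi$ with window values $\pi(i)=a_{\sigma(i)}$. By Lemma \ref{lem Lead} the $a_i$ are pairwise incongruent mod $N$, so $\pi$ is a bijection, and by Theorem \ref{thm char lead} (the relation $\sum a_i=N(N-1)/2$) its total displacement equals $-N$; thus $\Psi$ lands in a single right coset of $\widehat{W}_{A_{N-1}}$ and, again by Theorem \ref{thm char lead}, is a bijection onto that coset, which is a free transitive right $\widehat{W}_{A_{N-1}}$-set.

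For $i=1,\dots,N-1$ the intertwining is immediate from the construction in the proof of Lemma \ref{lem constr}: $w_i$ fixes $S=S_N$ (hence $A$) and replaces $\sigma$ by $\sigma\circ s_{i,i+1}$. Therefore $\Psi(w_i\bs S)(i)=a_{\sigma(i+1)}=\Psi(\bs S)(i+1)$ and $\Psi(w_i\bs S)(i+1)=\Psi(\bs S)(i)$, all other window values being unchanged, i.e. $\Psi(w_i\bs S)=\Psi(\bs S)\,s_i$.

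For the affine node I would invoke the cyclic symmetry of Lemma \ref{lem permut}. Writing $\rho$ for the rotation $(S_1,\dots,S_N)\mapsto(S_N,S_1,\dots,S_{N-1})$, the definition of the mutation at the last position gives $w_N=\rho^{-1}w_1\rho$, and I would check that $\Psi$ intertwines $\rho$ with the diagram rotation $\tau:x\mapsto x+1$, which satisfies $\tau s_i\tau^{-1}=s_{i+1}$, whence $\Psi(w_N\bs S)=\Psi(\bs S)\,s_0$. Equivalently one can compute the mutation at position $N$ directly: it replaces $S_N$ by $\tilde S_N=(S\setminus\{a_{\sigma(N)}\})\cup\{a_{\sigma(1)}-N\}$, with leading term $\tilde A=(A\setminus\{a_{\sigma(N)}\})\cup\{a_{\sigma(1)}-N\}$, and adjusts $\sigma$ correspondingly, and a short verification shows this equals right multiplication by $s_0$. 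This change-of-anchor bookkeeping, where the leading term and the permutation must be tracked simultaneously, is the main obstacle; the rest is formal.

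Once $\Psi(w_i\bs S)=\Psi(\bs S)\,s_i$ holds for all $i=1,\dots,N$, the maps $w_i$ are the transports under $\Psi$ of the right translations $R_{s_i}$ on the torsor. Since $R_{s_1},\dots,R_{s_N}$ satisfy exactly the defining relations of $\widehat{W}_{A_{N-1}}$ and generate a simply transitive action, the same is true of $w_1,\dots,w_N$, so the mutations define an action of $\widehat{W}_{A_{N-1}}$ on $\mc S_{mKdV}$. Transitivity follows at once, and indeed can be seen independently from Theorem \ref{thm mutation all}, which joins every mKdV tuple to $\bs S^\emptyset$ by a chain of (invertible) mutations.
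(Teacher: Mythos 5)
Your torsor framework and your treatment of the finite nodes are correct: for $i=1,\dots,N-1$ the proof of Lemma \ref{lem constr} does give $\Psi(w_i\bs S)=\Psi(\bs S)\,s_i$, and your identification of the image of $\Psi$ with the coset of displacement $-N$ is right. The argument breaks at the affine node, and both of your proposed ways of handling it assert false identities. Take the direct computation first: the leading term of $\tilde S_N=(S\setminus\{a_{\sigma(N)}\})\cup\{a_{\sigma(1)}-N\}$ is \emph{not} $(A\setminus\{a_{\sigma(N)}\})\cup\{a_{\sigma(1)}-N\}$; that set contains both $a_{\sigma(1)}$ and $a_{\sigma(1)}-N$, two elements congruent mod $N$, so by Lemma \ref{lem Lead} it cannot be the leading term of any KdV subset. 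The correct leading term is $\{a_{\sigma(j)}:2\le j\le N-1\}\cup\{a_{\sigma(1)}-N\}\cup\{a_{\sigma(N)}+N\}$, and carrying the permutation along one finds that the window of $\Psi(w_N\bs S)$ is $\bigl(\pi(N)+N,\,\pi(2),\dots,\pi(N-1),\,\pi(1)-N\bigr)$, whereas right multiplication by $s_0$ gives the window $\bigl(\pi(0),\pi(2),\dots,\pi(N-1),\pi(N+1)\bigr)=\bigl(\pi(N)-N,\,\pi(2),\dots,\pi(N-1),\,\pi(1)+N\bigr)$: the two $N$-shifts come out with opposite signs. Concretely, for $N=2$: $\Psi(\bs S^\emptyset)$ has window $(1,0)$, the mutation $w_2\bs S^\emptyset=(S^\emptyset,\{-1,1,2,\dots\})$ has $\Psi$-window $(2,-1)$, but $\Psi(\bs S^\emptyset)\,s_0$ has window $(-2,3)$. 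The rotation route fails for the same reason: while $w_N=\rho^{-1}w_1\rho$ is true by definition, $\Psi$ does not intertwine $\rho$ with conjugation by $\tau$; e.g.\ for $\bs S=w_1\bs S^\emptyset$ (window $(0,1)$) one has $\Psi(\rho\bs S)=(2,-1)$, while both $\tau^{\pm1}\Psi(\bs S)\tau^{\mp1}$ have window $(0,1)$.

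What is actually true is $\Psi(w_N\bs S)=\Psi(\bs S)\,g$, where $g$ is the \emph{non-simple} affine reflection interchanging $kN+1\leftrightarrow (k+2)N$ for all $k\in\Z$. Your strategy can be salvaged from this, but only with additional verifications that your write-up treats as automatic: one must check that $s_1,\dots,s_{N-1},g$ themselves satisfy the Coxeter relations of $\widehat{W}_{A_{N-1}}$ (they do: $g$ commutes with $s_j$ for $2\le j\le N-2$, and $gs_1g=s_1gs_1$, $gs_{N-1}g=s_{N-1}gs_{N-1}$, both sides being the reflections $2\leftrightarrow 2N$ and $1\leftrightarrow 2N-1$ respectively), and that $\langle s_1,\dots,s_{N-1},g\rangle$ is the whole affine symmetric group, which yields transitivity (alternatively, transitivity follows from Theorem \ref{thm mutation all}, as you note). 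As written, however, your proposal establishes the relations only for the set $\{s_1,\dots,s_{N-1},s_0\}$, which is not the set of right translations that the mutations realize, so the proof has a genuine gap exactly at the step you yourself flagged as the main obstacle.
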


\begin{proof}
According to the proof of Lemma \ref{lem constr}, the mutation at the $i$-th position corresponds to
the transposition $s_{i,i+1}\in\Si_N$, see formula \Ref{oermU}. Thus the relation 
$w_iw_{i+1}w_i=w_{i+1}w_iw_{i+1}$ follows from the relation 
$s_{i,i+1}s_{i+1,i+2}s_{i,i+1}=s_{i+1,i+2}s_{i,i+1}s_{i+1,i+2}$ in the symmetric group, and so on.
The transitivity follows from Theorem \ref{thm mutation all}.
\end{proof}

\section{Tau-functions and critical points}
\label{Critical points and tau functions}

\subsection{Subspaces of $H$}
\label{SUBS}
For a Laurent polynomial
\bean
\label{eqn wsum}
v = \sum_i  v_{i} z^i ,
\eean
the number
 $\ord \,v \,=\, \text{min} \{ i : v_{i} \neq 0 \}$ will be called
the {\it order} of $v$.

Following \cite{SW},
let $H$ be the Hilbert  space $L^2(S^1)$ with
natural orthonormal basis $\{z^j\}_{j\in \Z}$.  Let $H_+$ be the closure of
the span of $\{z^j\}_{j \geq 0}$ and $H_-$ the closure of the span of
$\{z^j\}_{j < 0}$. We have the orthogonal decomposition $H=H_+\oplus H_-$.

Denote $\GR$ the set of all  closed subspaces $W \subset H$ such that
\bean
\label{-k k}
z^q H_+ \subset W \subset z^{-q} H_+
\eean
for some $q > 0$. Such subspaces can be identified with subspaces
$W/z^qH_+$ of
$ z^{-q}H_+/z^qH_+$. Let $p_+: W/z^qH_+ \to H_+/z^qH_+$ be the orthogonal projection.
We say that  $W$ is of {\it virtual dimension zero}
if $\dim (\text{ker}\, p_+) = \dim (\text{coker}\, p_+),$
in other words if $\dim W/z^qH_+ = \dim H_+/z^qH_+ =q$.
 Denote $\Gr$ the set of all subspaces of
virtual dimension zero.

A subspace $W\in\GR$ has a basis $\{v_j\}_{j\geq 0}$ consisting of Laurent polynomials.
We may assume that the numbers $s_j = \ord\,v_j$
form a strictly  increasing sequence
$S_W =\{s_0<s_1<s_2<\dots\}$ and $v_j$ are just monomials for sufficiently large
 $j$. The assignment $W\mapsto S_W$ is well-defined. The  subset $S_W$ will be called the
 {\it order subset} of  $W$.

\begin{lem}
\label{lem strictly incr}
A subspace $W\in \GR$ lies in $\Gr$ if and only if the order subset $S_W$ is  of virtual cardinal zero, that is
$s_j=j$ for sufficiently large  $j$.
\qed
\end{lem}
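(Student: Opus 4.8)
The plan is to reduce the statement to a finite-dimensional dimension count, using the equality $\dim W/z^qH_+ = \dim H_+/z^qH_+ = q$ as the working form of ``virtual dimension zero'' supplied by the definition of $\Gr$.

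First I would record what the sandwich $z^qH_+\subset W\subset z^{-q}H_+$ tells us about $S_W$. The point is that $S_W$ is exactly the set of orders of nonzero elements of $W$: writing any $w\in W$ in the basis $\{v_j\}$, its order equals $s_{j_0}$ for the least index $j_0$ with nonzero coefficient, since the strictly increasing orders $s_j$ prevent the leading term from cancelling. Hence $W\subset z^{-q}H_+$ gives $s_0\ge -q$, while $z^qH_+\subset W$ forces $z^i\in W$, and so $i\in S_W$, for every $i\ge q$. Thus $S_W$ contains every integer $\ge q$ and is bounded below by $-q$; in particular $s_j\to\infty$, only finitely many $s_j$ are less than $q$, and the orders are eventually consecutive. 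Letting $m$ be the unique index with $s_m=q$ (it exists because $q\in S_W$), one gets $s_{m+k}=q+k$ for all $k\ge 0$, so $s_j=j+(q-m)$ for large $j$. Therefore $S_W$ has virtual cardinal zero, i.e.\ $s_j=j$ for large $j$, if and only if $m=q$.

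Next I would compute $\dim W/z^qH_+$. Since $z^{s_j}\in z^qH_+\subset W$ whenever $s_j\ge q$, I may choose the polynomial basis so that $v_j=z^{s_j}$ for all $j\ge m$; then the closed span of $\{v_j:j\ge m\}$ is exactly $z^qH_+$. The remaining finitely many vectors $v_0,\dots,v_{m-1}$ have orders $s_0<\dots<s_{m-1}<q$, so every nonzero combination of them has order $<q$ and hence lies outside $z^qH_+$. Consequently $W=z^qH_+\oplus\mathrm{span}\{v_0,\dots,v_{m-1}\}$ and $\dim W/z^qH_+=m$.

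Finally, by the definition of $\Gr$ we have $W\in\Gr$ precisely when $\dim W/z^qH_+=q$, that is $m=q$, which by the first step is exactly the condition that $S_W$ be of virtual cardinal zero. This gives the equivalence. I expect the only delicate point to be the bookkeeping that matches the two conventions, namely the counting index $j$ of the basis against the order value $s_j$; this mismatch $q-m$ is precisely what distinguishes ``virtual dimension zero'' from ``virtual cardinal zero,'' and the content of the lemma is that it vanishes in both cases simultaneously. There is no analytic obstacle, since everything takes place in the finite-dimensional quotient $z^{-q}H_+/z^qH_+$.
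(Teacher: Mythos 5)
Your proof is correct: the identification of $m=\dim W/z^qH_+$ (the number of basis vectors of order $<q$) and the observation that $s_j=j+(q-m)$ for large $j$ together reduce both conditions to $m=q$, which is exactly the content of the lemma. The paper supplies no argument at all here (the lemma is stated with an immediate \qed{} as a routine consequence of the definitions), and your finite-dimensional bookkeeping in $z^{-q}H_+/z^qH_+$ is precisely the verification being omitted.
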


\begin{lem}
\label{lem order empty}
If the order subset of  $W\in \GR$ is $S^\emptyset=\{0<1<\dots\}$, then $W=H_+$.
\qed
\end{lem}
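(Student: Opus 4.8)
The plan is to prove the two inclusions $W\subseteq H_+$ and $H_+\subseteq W$ separately and then conclude $W=H_+$, both being closed subspaces of $H$. The whole argument rests on the triangular shape of the Laurent-polynomial basis that the hypothesis $S_W=S^\emptyset$ forces.

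First I would fix a basis $\{v_j\}_{j\geq 0}$ of $W$ consisting of Laurent polynomials whose orders form the order subset $S_W=\{s_0<s_1<\dots\}$, with $v_j$ a monomial for all sufficiently large $j$, as provided in Section~\ref{SUBS}. By hypothesis $S_W=S^\emptyset$, so $\ord v_j=s_j=j$ for every $j\geq 0$. Since $\ord v_j=j\geq 0$, each $v_j$ is a finite combination of the monomials $z^i$ with $i\geq j\geq 0$, hence $v_j\in H_+$. As $H_+$ is closed and $W$ is the closed span of the $v_j$, this immediately gives $W\subseteq H_+$.

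For the reverse inclusion I would recover every monomial $z^i$, $i\geq 0$, by a downward induction. Normalizing so that the coefficient of $z^j$ in $v_j$ equals $1$, write $v_j=z^j+\sum_{i>j}c_{ij}z^i$, a finite sum since $v_j$ is a Laurent polynomial. Because $v_j=z^j$ for all $j\geq M$ with $M$ large, we already have $z^j\in W$ for $j\geq M$ (alternatively, $z^qH_+\subseteq W$ from the definition of $\GR$ supplies $z^j\in W$ for $j\geq q$). Proceeding downward from $M-1$ to $0$, I would assume $z^i\in W$ for all $i>j$ (for $i\geq M$ this is the monomial tail, for $j<i<M$ it is the inductive hypothesis) and then use $z^j=v_j-\sum_{i>j}c_{ij}z^i$ to express $z^j$ as a finite combination of elements of $W$, so $z^j\in W$. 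This yields $z^i\in W$ for all $i\geq 0$, and passing to closed spans gives $H_+\subseteq W$. Combining the two inclusions finishes the proof.

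All steps are elementary, and I do not expect a genuine obstacle. The only point needing a little care is the bookkeeping of the downward induction: one must check that each $v_j$ contributes only finitely many higher-order terms (true since the $v_j$ are Laurent polynomials) and that the induction is well-founded, anchored by the monomial tail $v_j=z^j$ for $j\geq M$. Once the basis is seen to be triangular with respect to the standard monomials $\{z^i\}_{i\geq 0}$, both inclusions follow directly.
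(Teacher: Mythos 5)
Your proof is correct; the paper itself gives no argument for this lemma (it is stated as immediate, with the proof omitted), and your triangularity argument — $W\subseteq H_+$ because all orders are nonnegative, and $H_+\subseteq W$ by downward induction from the monomial tail $v_j=z^j$ — is exactly the routine verification the authors leave implicit.
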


For $W\in\Gr$ we may assume that $v_j = z^j$ for sufficiently large $j$.
We say that a basis $\{v_j\}_{j\geq 0}$ of $W\in\Gr$ is {\it special} if it
 consists of Laurent polynomials such that
 $v_j = z^j$ for sufficiently large $j$ and the numbers $s_j=\text{ord}\,v_j $ form a strictly increasing
sequence.

\subsection{Subspaces in $\Gr$ and subspaces in $\C[x]$}
\label{subs in all}

Let $W\in\Gr$. Let $S=\{s_0<s_1<\dots \}$ be the order subset of $W$.
Let $\la=(\la_0\geq \la_1\geq\dots)$ with
$\la_j = j - s_j$ be the corresponding partition. Let $n$ be such that $s_j=j$ for $j>n$  \ and hence
$\la_j=0$ for $j>n$. Let $\{v_j=\sum_{i\geq s_j}v_{j,i}z^i\}_{j\geq 0}$ be a special basis
of $W$ such that $v_j=z^j$ for $j> n$ and $v_{j,i}=0$ if $i>n$ and $j\leq n$.

Introduce the $n+1$-dimensional subspace $V_{W,n}\subset \C[x]$
as the subspace spanned by the polynomials $f_{j,n}(x), j=0,\dots,n$, given by the formula
\bean
\label{fP}
 f_{j,n}(x)= \sum_{i=0}^{\la_j+n-j} v_{j,n-i} \frac{x^i}{i!},
 \qquad
 j=0,\dots,n.
 \eean
We have $\deg f_j = \la_j+n-j$ for all $j$. The relation between $V_{W,n}$ and $V_{W,n+1}$
is given by the formula
\bean
\label{fni}
f_{j,n+1}(x) = \int_{0}^x f_{j,n}(u)du\quad\text{for}\quad
j=0,\dots,n, \quad \text{and}\quad f_{n+1,n+1}(x)=1 .
\eean
In other words, $V_{W,n+1}=\int V_{W,n} dx$.

Conversely, let $\la=(\la_0\geq\la_2\geq \dots)$ be a partition and
$S=\{s_0 <s_1<\dots\}$, with $s_j=\la_j-j$, the corresponding subset of virtual cardinal zero.
Let $n$ be such that $\la_j=0$ for $j>n$. Denote $\text{Gr}(\la,n,\C[x])$ the variety of all
$n+1$-dimensional subspaces $V\subset \C[x]$ such that $V$ has a basis consisting of  polynomials
 $f_j, j=0,\dots,n$, with $\deg f_j=\la_j+n-j$.

 Denote $P=\{\la_0+n,\la_1+n-1,\dots,\la_n\}$. Every $V\in \text{Gr}(\la,n,\C[x])$
 has a unique basis
\bean
\label{f poly}
f_j(x) = \sum_{i=0}^{\la_j+n-j} v_{j,n-i}\frac{x^i}{i!}, \qquad j=0,\dots,n,
\eean
where $v_{j,n-i}$ are some numbers such that $v_{j, j-\la_j}=1$ and $v_{j,n-i}=0$ if $n-i\in P-\{\la_j+n-j\}$.
The variety $\text{Gr}(\la,n,\C[x])$ is an affine space of dimension $|\la|$ with coordinates $\{v_{j,n-i}\}$.
The basis of $V\in \text{Gr}(\la,n,\C[x])$ given by \Ref{f poly} will be called {\it special}.

To every $V\in \text{Gr}(\la,n,\C[x])$ we assign $W_V\in \Gr$ with the basis $\{v_j\}_{j\geq 0}$, where
\bean
\label{bAs}
v_j=\sum_i v_{j,i}z^i, \qquad \text{for}\quad j=0,\dots,n,
\eean
$v_j=z^j$ for $j>n$.

\begin{lem}
\label{lem Sp sp}
If $W$ is as above, then $W_{V_{W,n}}=W$. If $V\in \text{Gr}(\la,n,\C[x])$, then the order set of $W_V$ is
$S=\{s_0<s_1<\dots\}$ with $s_j=j-\la_j$ and $V_{W_V, n}=V$.
\qed

\end{lem}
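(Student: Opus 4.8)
The plan is to route both constructions through a single linear isomorphism and then read off the two claims as index bookkeeping. Introduce
\[
\phi_n \ :\ \on{span}_\C\{z^i : i\le n\}\ \to\ \C[x],
\qquad
z^i \ \mapsto\ \frac{x^{\,n-i}}{(n-i)!},
\]
a bijection sending the basis $\{z^i : i\le n\}$ onto $\{x^k/k! : k\ge 0\}$. First I would note that, after the substitution $k=n-i$ and using $v_{j,i}=0$ for $i>n$, formula \Ref{fP} is exactly $f_{j,n}=\phi_n(v_j)$ for $j=0,\dots,n$; symmetrically, comparing \Ref{f poly} with \Ref{bAs}, the reverse assignment $V\mapsto W_V$ sets $v_j=\phi_n^{-1}(f_j)$ for $j\le n$ and $v_j=z^j$ for $j>n$. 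Since $f_{j,n}$ has leading term of degree $n-s_j=\la_j+n-j$ with nonzero coefficient $v_{j,s_j}$, the tuple $(f_{0,n},\dots,f_{n,n})$ has the degrees required of a basis in $\on{Gr}(\la,n,\C[x])$, so $V_{W,n}$ indeed lies in $\on{Gr}(\la,n,\C[x])$ and $W_{V_{W,n}}$ is defined.

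The key preliminary step is to give an intrinsic, basis-free description of the ``low part'' of a subspace. Set $W_{\le n}:=W\cap\on{span}\{z^i:i\le n\}$. I would show $W_{\le n}=\langle v_0,\dots,v_n\rangle$: the vectors $v_0,\dots,v_n$ are supported in $(-\infty,n]$, while $v_j=z^j$ for $j>n$, so an element $\sum_j c_j v_j$ is supported in $(-\infty,n]$ precisely when $c_j=0$ for all $j>n$. Consequently $V_{W,n}=\phi_n(W_{\le n})$ is independent of the chosen special basis, and moreover $W=W_{\le n}\oplus\overline{\on{span}\{z^j:j>n\}}$. (That a special basis with $v_{j,i}=0$ for $i>n$, $j\le n$, exists at all is seen by subtracting suitable multiples of the tail monomials $z^{n+1},z^{n+2},\dots$ from an arbitrary special basis.)

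Granting this, the first assertion is immediate:
\[
W_{V_{W,n}}=\phi_n^{-1}(V_{W,n})\oplus\overline{\on{span}\{z^j:j>n\}}
=\phi_n^{-1}\phi_n(W_{\le n})\oplus\overline{\on{span}\{z^j:j>n\}}=W.
\]
For the converse, start from $V\in\on{Gr}(\la,n,\C[x])$ with its unique special basis $\{f_j\}$ and form $W_V$. Each $v_j=\phi_n^{-1}(f_j)$ ($j\le n$) has $\ord v_j=s_j=j-\la_j$, since $f_j$ has degree $n-s_j$ with nonzero leading coefficient; these orders strictly increase because $\la$ is a partition, and $v_j=z^j$ for $j>n$, so $\{v_j\}$ is a special basis and the order subset of $W_V$ is $S$. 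Finally each $v_j$ ($j\le n$) is supported in $[s_j,n]$, whence $(W_V)_{\le n}=\langle v_0,\dots,v_n\rangle$ and $V_{W_V,n}=\phi_n((W_V)_{\le n})=\langle f_0,\dots,f_n\rangle=V$.

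The only genuinely delicate point is the intrinsic identity $W_{\le n}=\langle v_0,\dots,v_n\rangle$ together with the accompanying splitting of the tail; everything else is the order-reversing bookkeeping built into $\phi_n$, where the normalization $v_{j,s_j}=1$ on the polynomial side is harmless because $\phi_n$ is a linear isomorphism acting on whole spans.
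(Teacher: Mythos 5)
Your proof is correct. There is nothing in the paper to compare it against line by line: Lemma \ref{lem Sp sp} is stated with a \,\textit{qed}\, mark and no argument, the authors treating it as immediate bookkeeping, so what you have done is supply the missing verification — and in the right way. The organizing device, the order-reversing linear isomorphism $\phi_n : z^i\mapsto x^{n-i}/(n-i)!$, under which \Ref{fP} reads $f_{j,n}=\phi_n(v_j)$ and \Ref{bAs} reads $v_j=\phi_n^{-1}(f_j)$, combined with the intrinsic identity $V_{W,n}=\phi_n\bigl(W\cap\on{span}\{z^i:i\le n\}\bigr)$, is exactly what makes both composites tautologically the identity on subspaces. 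This intrinsic description also settles a point the paper leaves implicit: a priori $V_{W,n}$ is defined through a \emph{chosen} special basis with the support condition, and $W_V$ through the normalized basis \Ref{f poly}; without basis-independence the equalities $W_{V_{W,n}}=W$ and $V_{W_V,n}=V$ would require checking that normalizations match, which your route avoids entirely since $\phi_n$ acts on whole spans. One small tightening of order: when you argue $W_{\le n}=\langle v_0,\dots,v_n\rangle$ by expanding an element of $W$ as $\sum_j c_j v_j$, you implicitly use that every element of the \emph{closed} subspace $W$ admits such an expansion. The cleaner sequence is to note first that $\langle v_0,\dots,v_n\rangle+\overline{\on{span}}\{z^j:j>n\}$ is closed (finite-dimensional plus closed), contains the span of all the $v_j$, and is contained in $W$, hence equals $W$; the disjoint-support argument then yields both the splitting and $W_{\le n}=\langle v_0,\dots,v_n\rangle$ at once. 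This is a presentational reordering, not a gap; the proof stands.
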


Let $\C_{|\la|}[x]$ be the affine space of monic polynomials of degree $|\la|$. It has dimension $|\la|$.
Define the Wronsky map
\bean
\label{mAP}
&&
\\
\notag
&&
\Wr_\la : \text{Gr}(\la,n,\C[x]) \to \C_{|\la|}[x], \quad
V \mapsto \frac 1{\prod_{0\leq i<j\leq n}(\la_j-\la_i+i-j)}\ \Wr(f_0,\dots,f_n),
\eean
where $f_0,\dots,f_n$ is the special basis of $V$. It is well-known that $\Wr_\la$ is a map
of finite degree $d_\la$, which is calculated in terms of Schubert calculus or representation theory,
see for example,  \cite{MTV3}, \cite{S}.

\subsection{Tau-functions}
\label{sec taU}

For a subspace $W \in \Gr$ with a special basis  $\{v_j\}_{j\geq 0}$ such that $v_j=z^j$ for $j>n$,
we  define the tau-function by the formula
\bean \label{eqn tau-definition}
\phantom{aaa}
\tau_{W}(t_1,t) = \text{det} \left(
\begin{matrix}
\sum_i v_{0,i} h_{-i} &
\sum_{i} v_{0,i} h_{-i + 1} &
\dots & \sum_{i} v_{0,i} h_{-i + n} \\
\sum_{i} v_{1,i} h_{-i} &
\sum_{i } v_{1,i} h_{-i + 1} &
\dots & \sum_{i} v_{1,i} h_{-i + n} \\
\dots & \dots & \dots & \dots \\
\sum_{i} v_{n,i} h_{-i} &
\sum_{i} v_{n,i} h_{-i + 1} &
\dots & \sum_{i} v_{n,i} h_{-i + n} \\
\end{matrix} \right),
\eean
see \cite{SW}, cf. \Ref{eqn wsum}.
The tau-function is a polynomial in a finite number of variables $t_1,t$.
For example,  $\tau_{H_+} = 1.$

The tau-function is independent of the choice of $n$ and
the choice of a special basis up to multiplication of the tau-function by a nonzero number.

\begin{example}
Let $S=\{s_0<s_1<\dots\}\subset \Z$ be a subset of virtual cardinal zero. Let $W_S$ be the subspace with
basis $\{z^{s_i}\}_{i\geq 0}$. Then the   tau-function $\tau_{W_S}$ of $W_S$ equals the Schur
polynomial $F_S$.

\end{example}

\begin{lem}
\label{lem wr tau}
We have
 \bean \label{eqn tau-wronskian}
\tau_W = \Wr_{t_1} \left(
\sum_{i } v_{i,0} h_{-i+n}
,\dots,
\sum_{i} v_{i,d} h_{-i+n}
\right) .
\eean
\qed
\end{lem}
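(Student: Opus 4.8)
The plan is to reduce the determinant \Ref{eqn tau-definition} to the Wronskian \Ref{eqn tau-wronskian} using only the single relation $\der h_i/\der t_1 = -h_{i-1}$ from \Ref{der h}. First I would name the functions appearing in the Wronskian, setting $f_k(t_1,t) = \sum_i v_{i,k}\,h_{-i+n}$ for $k=0,\dots,n$; these are exactly the entries of the last column ($j=n$) of the matrix in \Ref{eqn tau-definition}. Iterating \Ref{der h} gives $\der^m_{t_1} h_{-i+n} = (-1)^m h_{-i+n-m}$, and therefore $\der^m_{t_1} f_k = (-1)^m \sum_i v_{i,k}\, h_{-i+n-m}$ for every $m\geq 0$.

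Next I would write out $\Wr_{t_1}(f_0,\dots,f_n)$ as the determinant of the $(n+1)\times(n+1)$ matrix whose $(k,m)$ entry is $\der^m_{t_1} f_k = (-1)^m \sum_i v_{i,k}\, h_{-i+n-m}$, with $k$ indexing rows and $m=0,\dots,n$ indexing columns. Pulling the scalar $(-1)^m$ out of the $m$-th column produces the overall factor $(-1)^{0+1+\dots+n} = (-1)^{n(n+1)/2}$ times the determinant of the matrix with $(k,m)$ entry $\sum_i v_{i,k}\, h_{-i+n-m}$.

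Finally I would compare this with \Ref{eqn tau-definition}. Substituting $j = n-m$ shows that, as $m$ runs through $0,\dots,n$, the columns $\sum_i v_{i,k}\,h_{-i+n-m}$ are precisely the columns $\sum_i v_{i,k}\,h_{-i+j}$ of the tau-matrix taken in reverse order; the rows (indexed by $k$) are untouched. Reversing $n+1$ columns multiplies the determinant by the sign of the order-reversing permutation, which is again $(-1)^{n(n+1)/2}$. Hence $\Wr_{t_1}(f_0,\dots,f_n) = (-1)^{n(n+1)/2}\cdot(-1)^{n(n+1)/2}\,\tau_W = (-1)^{n(n+1)}\tau_W = \tau_W$, since $n(n+1)$ is even. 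The only point requiring care is that the two sign factors — one from extracting $(-1)^m$ column by column, the other from the column reversal — cancel exactly; both equal $(-1)^{n(n+1)/2}$, so no sign survives and the normalization works out on the nose. There is no substantive obstacle beyond this bookkeeping, as the entire content is the derivative rule \Ref{der h}.
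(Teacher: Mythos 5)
Your proof is correct: the paper states this lemma without proof, as the immediate consequence of \Ref{der h} it is intended to be, and your computation --- identifying the Wronskian entries as $\der^m_{t_1} f_k = (-1)^m \sum_i v_{i,k} h_{-i+n-m}$, extracting $(-1)^{n(n+1)/2}$ from the columns, and cancelling it against the equal sign $(-1)^{n(n+1)/2}$ of the column reversal $j=n-m$ --- is exactly the routine verification being omitted. The sign bookkeeping checks out, since the product of the two factors is $(-1)^{n(n+1)}=+1$, so the identity holds on the nose and not merely up to the constant ambiguity in the definition of $\tau_W$.
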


\begin{lem}
\label{lem Wr tau}
Let $W$ be as in Lemma \ref{lem wr tau}.
Let the polynomials $f_0,\dots,f_n$ be given by formula \Ref{fP}. Then
\bean
\label{tau X}
\tau_W(t_1=x,t=0) = \Wr (f_0(-x),\dots,f_n(-x)).
\eean
\end{lem}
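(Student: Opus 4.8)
The plan is to begin from Lemma~\ref{lem wr tau}, which already presents the tau-function as a Wronskian in $t_1$,
\[
\tau_W=\Wr_{t_1}(g_0,\dots,g_n),\qquad g_k=\sum_a v_{a,k}\,h_{-a+n}\quad(k=0,\dots,n),
\]
and then to specialize to $t=0,\ t_1=x$. Since $\Wr_{t_1}$ involves only derivatives in $t_1$, the operations ``differentiate in $t_1$'' and ``set $t=0$'' commute on the polynomials $g_k$; hence evaluating the right-hand side at $t_1=x,\ t=0$ equals the ordinary $x$-Wronskian of the specialized functions $\phi_k(x):=g_k(x,0)$. Using $h_i(t_1,t=0)=(-t_1)^i/i!$ and $h_i=0$ for $i<0$ from \Ref{der h}, I would record
\[
\phi_k(x)=\sum_{a\le n} v_{a,k}\,\psi_a(x),\qquad \psi_a(x):=\frac{(-x)^{n-a}}{(n-a)!}.
\]

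Next I would bring the target functions into the same form. Rewriting \Ref{fP} by the substitution $a=n-i$ gives $f_{j,n}(x)=\sum_a v_{j,a}\,x^{n-a}/(n-a)!$, so that $f_{j,n}(-x)=\sum_a v_{j,a}\,\psi_a(x)$. Thus both families $\{\phi_k\}$ and $\{f_{j,n}(-\,\cdot\,)\}$ are linear combinations of the single family $\psi_0,\dots,\psi_n$: the coefficient matrix of $\{\phi_k\}$ is $P_{k,a}=v_{a,k}$, whereas that of $\{f_{j,n}(-\,\cdot\,)\}$ is $Q_{j,a}=v_{j,a}$, so that $P=Q^{\mathsf T}$.

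The mechanism that closes the argument is the elementary fact that if $\chi_k=\sum_a c_{k,a}\psi_a$ then the Wronskian matrix of the $\chi_k$ equals $C$ times that of the $\psi_a$, whence $\Wr(\chi_0,\dots,\chi_n)=\det(C)\,\Wr(\psi_0,\dots,\psi_n)$. Applying this to both families yields
\[
\Wr_x(\phi_0,\dots,\phi_n)=\det(P)\,\Wr_x(\psi_0,\dots,\psi_n),\quad
\Wr_x(f_{0,n}(-\,\cdot\,),\dots,f_{n,n}(-\,\cdot\,))=\det(Q)\,\Wr_x(\psi_0,\dots,\psi_n).
\]
Because $\det P=\det Q^{\mathsf T}=\det Q$, the two Wronskians coincide, which is exactly the asserted identity $\tau_W(t_1=x,t=0)=\Wr(f_0(-x),\dots,f_n(-x))$. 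A pleasant feature of this route is that the common factor $\Wr_x(\psi_0,\dots,\psi_n)$ need never be evaluated.

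I expect the only genuine obstacle to be the index bookkeeping: the summation index occupies the \emph{first} slot of $v$ in $g_k$ but the \emph{second} slot in \Ref{fP}, and tracking this is precisely what produces the harmless transpose $P=Q^{\mathsf T}$. I would also verify the summation ranges carefully---using that $h_i$ vanishes for $i<0$ and that $v_{j,a}=0$ outside $s_j\le a\le n$---so that both sides are genuinely combinations of the same $\psi_0,\dots,\psi_n$, after which $\det P=\det Q$ finishes the proof.
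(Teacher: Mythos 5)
Your first two paragraphs reproduce the paper's actual proof: by Lemma \ref{lem wr tau}, $\tau_W$ is a $t_1$-Wronskian; setting $t=0$ commutes with $\der/\der t_1$; and \Ref{der h} turns each entry into a polynomial in $x$. The paper stops there, because at that point the two Wronskians agree \emph{entry by entry}: the symbol $v_{i,j}$ in \Ref{eqn tau-definition} and \Ref{eqn tau-wronskian} denotes the coefficient of $z^i$ in the basis vector $v_j$ (this reading is forced by the Example in Section \ref{sec taU}, where $\tau_{W_S}=F_S$ for $W_S$ spanned by the monomials $z^{s_i}$), and that is exactly the number which formula \Ref{fP}, written in the conventions of Section \ref{subs in all}, calls $v_{j,i}$. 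The two sections use opposite index orders for the \emph{same} data, so $\phi_k(x)=f_{k,n}(-x)$ for every $k$, and the lemma follows with no further work.

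The step you add to ``close'' the argument --- reading the two subscript orders as literal entries of one matrix $v$, deducing $P=Q^{\mathsf T}$, and cancelling via $\det P=\det Q$ --- is where the proof breaks, and it breaks in every nontrivial case. The identity $\Wr(\chi_0,\dots,\chi_n)=\det(C)\,\Wr(\psi_0,\dots,\psi_n)$ requires $C$ to be square, i.e.\ the $n+1$ functions must be combinations of exactly $n+1$ monomials. But whenever $W\neq H_+$ one has $s_0=-\la_0<0$: the special basis contains negative powers of $z$, so the index $a$ runs over $s_0\le a\le n$, producing $n+1+\la_0>n+1$ monomials $\psi_a$ (of degrees up to $n+\la_0$). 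Your own support condition ``$v_{j,a}=0$ outside $s_j\le a\le n$'' already says this, and it contradicts your claim that both families lie in the span of $\psi_0,\dots,\psi_n$. With rectangular coefficient arrays, $P=Q^{\mathsf T}$ does not even typecheck and $\det P$, $\det Q$ are undefined, so the cancellation is meaningless. The literal one-matrix reading is also untenable on independent grounds: if the subscripts in \Ref{eqn tau-wronskian} carried the convention of Section \ref{subs in all}, then for $S=\{-1<0<2<3<\dots\}$, $n=1$, $v_0=z^{-1}$, $v_1=1$, the second entry of the Wronskian would vanish identically, giving $\tau_W=\Wr_{t_1}(h_0,0)=0$, contradicting $\tau_{W_S}=F_{(1,1)}\neq 0$. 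The repair is not a better determinant argument but the correct identification of the coefficients, after which your first two paragraphs already constitute the entire proof.
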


\begin{proof}
The lemma follows from Lemma \ref{lem wr tau} and formula \Ref{der h}.
\end{proof}

\begin{thm}
\label{finiteness}
Let $\la$ be a partition, $S$ the corresponding set of virtual cardinal zero,
$g(x)$
\linebreak
 a polynomial of degree $|\la|.$
Then the set of subspaces $W\in\Gr$, with order subset $S$ and such that
$\tau_W(t_1=x,t=0) = g(x)$, is finite. The number of such subspaces  (counted with multiplicities)
equals $d_\la$.
\qed
\end{thm}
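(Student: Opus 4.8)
The plan is to reduce Theorem \ref{finiteness} to the known finiteness of the Wronsky map $\Wr_\la$ of \Ref{mAP}. First I would fix $n$ with $\la_j=0$ for $j>n$ and invoke the correspondence of Section \ref{subs in all}. By Lemma \ref{lem Sp sp}, the assignment $V\mapsto W_V$ is a bijection between the affine space $\text{Gr}(\la,n,\C[x])$ and the set of subspaces $W\in\Gr$ whose order subset is exactly $S$, with inverse $W\mapsto V_{W,n}$. Thus enumerating the $W$ in question is the same as enumerating the points $V\in\text{Gr}(\la,n,\C[x])$, and it remains to translate the tau-function condition across this bijection.

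Next I would rewrite the condition $\tau_W(t_1=x,t=0)=g(x)$ in terms of $V$. By Lemma \ref{lem Wr tau}, if $f_0,\dots,f_n$ is the special basis of $V=V_{W,n}$ given by \Ref{f poly}, then $\tau_W(t_1=x,t=0)=\Wr(f_0(-x),\dots,f_n(-x))$. Pulling the factor $(-1)^k$ out of the $k$-th derivative column of the Wronskian matrix gives
\[
\Wr(f_0(-x),\dots,f_n(-x))=(-1)^{n(n+1)/2}\,[\Wr(f_0,\dots,f_n)](-x).
\]
Writing $c_\la=\prod_{0\le i<j\le n}(\la_j-\la_i+i-j)$ for the normalizing constant in \Ref{mAP}, so that $\Wr(f_0,\dots,f_n)=c_\la\,\Wr_\la(V)$, I obtain
\[
\tau_W(t_1=x,t=0)=(-1)^{n(n+1)/2}\,c_\la\,[\Wr_\la(V)](-x).
\]
Hence $\tau_W(t_1=x,t=0)=g(x)$ is equivalent to $\Wr_\la(V)=\tilde g$, where $\tilde g(x)=(-1)^{n(n+1)/2}c_\la^{-1}g(-x)$ is a single polynomial of degree $|\la|$ determined by $g$. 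Since $\Wr_\la:\text{Gr}(\la,n,\C[x])\to\C_{|\la|}[x]$ is a finite morphism of degree $d_\la$ between two affine spaces of dimension $|\la|$, every fiber is finite and has length $d_\la$ counted with multiplicity. Transporting $\Wr_\la^{-1}(\tilde g)$ back through the bijection $V\leftrightarrow W_V$ of Lemma \ref{lem Sp sp} then shows that the set of $W\in\Gr$ with order subset $S$ and the prescribed tau-function is finite of cardinality $d_\la$ with multiplicities, as claimed.

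The main obstacle I anticipate is the bookkeeping in the middle step rather than any deep idea. One must check that the special basis entering $\tau_W$ via Lemma \ref{lem Wr tau} is literally the special basis of $V_{W,n}$ feeding the Wronsky map, that the constant $c_\la$ agrees with the normalization in \Ref{mAP} so both maps land in the same copy of $\C_{|\la|}[x]$, and that the affine substitution $x\mapsto-x$ keeps $\tilde g$ inside the target. In particular, because $\Wr_\la(V)$ is monic, the formula above forces the leading coefficient of $\tau_W(t_1=x,t=0)$ to equal $(-1)^{n(n+1)/2+|\la|}c_\la$; so the count $d_\la$ is meant for $g$ whose leading coefficient matches this value (equivalently $\tilde g\in\C_{|\la|}[x]$), the regime in which $\tilde g$ lies in the image of the finite surjection $\Wr_\la$. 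Once these identifications are pinned down, the finiteness and the exact count $d_\la$ follow immediately from the corresponding properties of $\Wr_\la$ recalled after \Ref{mAP}.
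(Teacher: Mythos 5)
Your proof is correct and is essentially the paper's own (implicit) argument: the theorem is stated without proof precisely because it is meant to follow from the correspondence $W\leftrightarrow V_{W,n}$ of Lemma \ref{lem Sp sp}, the formula $\tau_W(t_1=x,t=0)=\Wr(f_0(-x),\dots,f_n(-x))$ of Lemma \ref{lem Wr tau}, and the finite degree $d_\la$ of the Wronsky map \Ref{mAP}, which is exactly the chain you assembled. Your bookkeeping about the sign $(-1)^{n(n+1)/2}$, the constant $c_\la$, and the forced leading coefficient of $g$ (reflecting that $\tau_W$ is only defined up to a nonzero scalar, so the count $d_\la$ presumes the compatible normalization) is the only content the paper leaves unsaid, and you have handled it correctly.
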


\subsection{Properties of tau-functions}
\label{proP}

Let $S=\{s_0<s_1<\dots\}$,  $S'=\{s_0'<s_1'<\dots\}$ be subsets of $\Z$ of virtual cardinal zero.
We say that $S\leq S'$ if $s_j\leq s_j'$ for all $j$. Let $\la$, $\la'$ be the partitions
corresponding to $S$, $S'$, respectively.
 If $S\leq S'$ and $S\ne S'$, then $|\la|>|\la'|$.

\begin{lem}
\label{lem tau first}
Let a subspace  $W\in\Gr$ have order set $S=\{s_0<s_1<\dots\}$.
Then
\bean
\label{lead of tau}
\tau_W = \sum_{S'\geq S} w_{S'} F_{S'},
\eean
where the sum is over the subsets $S'$ of virtual cardinal zero
such that $S\leq S'$, $F_{S'}$ is the Schur polynomial associated with $S'$,
$w_{S'}$ is a suitable number, and $w_S\ne 0$.
\qed
\end{lem}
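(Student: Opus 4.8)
The plan is to compute $\tau_W$ directly from its Wronskian presentation and expand it into Schur polynomials by multilinearity. By Lemma~\ref{lem wr tau} we have $\tau_W=\Wr_{t_1}(g_0,\dots,g_n)$, where $g_l=\sum_i v_{i,l}\,h_{n-i}$ and $v_{i,l}$ is the coefficient of $z^i$ in the $l$-th vector of a special basis $\{v_j\}$ of $W$. Since $\ord v_l=s_l$ and, by the normalization of a special basis, the coefficients of $v_l$ vanish in degrees $>n$, the index $i$ runs only over the finite range $s_0\le i\le n$ (recall $s_0=-\la_0\le 0$, so this range is large enough). In particular the coefficient matrix $(v_{i,l})$ is triangular in the sense that $v_{i,l}=0$ whenever $i<s_l$; this triangularity is the structural fact driving the whole argument.

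First I would expand the Wronskian using its multilinearity and alternating property in the arguments $g_0,\dots,g_n$ (the Cauchy--Binet expansion of a Wronskian of linear combinations), obtaining
\be
\tau_W \ =\ \sum_{J} \det\bigl(v_{j_a,l}\bigr)_{a,l=0}^n \, \Wr_{t_1}\bigl(h_{n-j_0},\dots,h_{n-j_n}\bigr),
\ee
where $J=\{j_0<\dots<j_n\}$ runs over the $(n+1)$-element subsets of integers with $s_0\le j_0$ and $j_n\le n$; repeated-index terms vanish automatically. For each such $J$ I set $S'=J\cup\{n+1,n+2,\dots\}$, which is a subset of virtual cardinal zero with $s'_j=j$ for $j>n$, and by \Ref{sch of virt} the elementary Wronskian equals $F_{S'}$. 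Thus $\tau_W=\sum_{S'}w_{S'}F_{S'}$ with $w_{S'}=\det(v_{j_a,l})_{a,l}$.

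Next I would show that only $S'\ge S$ contribute and that $w_S\neq0$. If $w_{S'}\neq0$, some term of the permutation expansion of the minor is nonzero, giving a bijection $\rho$ of $\{0,\dots,n\}$ with $v_{j_{\rho(a)},a}\neq0$, hence $j_{\rho(a)}\ge s_a$ for all $a$. A pigeonhole (Hall-type) argument then forces $j_a\ge s_a$ for every $a$: if $j_b<s_b$ for some $b$, the $b+1$ rows $j_0<\dots<j_b$ all satisfy $j<s_b\le s_a$ for $a\ge b$, so they can only be matched to the $b$ columns $0,\dots,b-1$, contradicting the injectivity of $\rho$. Therefore $S'\ge S$. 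Finally, the coefficient of $F_S$ comes from $J=\{s_0,\dots,s_n\}$, and the matrix $(v_{s_a,l})_{a,l}$ is triangular (its $(a,l)$ entry vanishes for $a<l$ since $s_a<s_l$) with diagonal entries $v_{s_l,l}$ equal to the leading coefficients of the $v_l$; hence $w_S=\prod_{l=0}^n v_{s_l,l}\neq0$.

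The multilinear expansion and the identification of each elementary Wronskian with a Schur polynomial via \Ref{sch of virt} are routine. The main obstacle is the support argument of the third step: verifying that a nonvanishing minor forces the dominance $S'\ge S$. This is precisely the pigeonhole/Hall matching argument above, and once the triangular structure $v_{i,l}=0$ for $i<s_l$ is in hand, both the dominance and the nonvanishing $w_S=\prod_l v_{s_l,l}\neq0$ follow cleanly.
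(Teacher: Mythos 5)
Your proof is correct: the multilinear (Cauchy--Binet) expansion of the Wronskian from Lemma \ref{lem wr tau}, the identification of each elementary Wronskian with a Schur polynomial via \Ref{sch of virt}, the Hall/pigeonhole argument showing that a nonvanishing minor forces $S'\geq S$, and the triangularity argument giving $w_S=\prod_{l}v_{s_l,l}\neq 0$ are all sound (note also that the restriction to $i\leq n$ is automatic, since $h_{n-i}=0$ for $i>n$). The paper states this lemma without proof --- it is the standard Pl\"ucker-coordinate expansion of a tau-function into Schur polynomials going back to \cite{SW} --- so your argument supplies precisely the routine verification the paper takes for granted, in what is essentially the intended way.
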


\begin{cor}
\label{cor on deg tau}
We have
\bean
\label{deg tau}
\tau_W\, =\, {a}\, t_1^{|\la|}\  +\ (\text{low order terms in }\, t_1 ),
\eean
where $a$ is a nonzero number independent of $t_1, t$.
\end{cor}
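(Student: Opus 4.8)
The plan is to read the $t_1$-degree of $\tau_W$ directly off the finite expansion furnished by Lemma \ref{lem tau first}. Writing $\la'$ for the partition associated with a subset $S'$ of virtual cardinal zero, I would first invoke Lemma \ref{deg t1}: each Schur polynomial $F_{S'}=F_{\la'}$ is a polynomial in $t_1$ of degree exactly $|\la'|$, and the coefficient of $t_1^{|\la'|}$ in it is a nonzero rational number, in particular a constant independent of the remaining variables $t$.

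Next I would use the monotonicity observation recorded just before Lemma \ref{lem tau first}, namely that $S\le S'$ with $S'\ne S$ forces $|\la'|<|\la|$. Substituting into $\tau_W=\sum_{S'\ge S}w_{S'}F_{S'}$, every summand with $S'\ne S$ is then a polynomial in $t_1$ of degree strictly less than $|\la|$, while the single summand $w_S F_S$ has degree exactly $|\la|$ because $w_S\ne 0$. Hence no cancellation can occur at the top degree: the coefficient of $t_1^{|\la|}$ in $\tau_W$ comes solely from $w_S F_S$ and equals $a=w_S\,r$, where $r$ is the nonzero rational leading coefficient of $F_S$ supplied by Lemma \ref{deg t1}.

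Since $w_S$ and $r$ are both nonzero numbers independent of $t_1$ and $t$, their product $a$ is a nonzero constant, which is precisely the assertion \Ref{deg tau}. I would conclude here.

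There is no genuine obstacle in this step: all the content is packaged into Lemma \ref{lem tau first} (the triangular expansion of $\tau_W$ in the Schur basis with leading coefficient $w_S\ne0$) and Lemma \ref{deg t1} (the exact $t_1$-degree of each Schur polynomial). The only point deserving a moment of care is verifying the absence of top-degree cancellation, and this is guaranteed because $S$ is strictly minimal in the sum and therefore strictly dominates every other $S'$ in weight.
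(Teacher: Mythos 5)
Your argument is correct and is precisely the one the paper intends: the corollary is read off from the triangular expansion $\tau_W=\sum_{S'\geq S}w_{S'}F_{S'}$ of Lemma \ref{lem tau first}, the strict weight inequality $|\la|>|\la'|$ for $S'\neq S$ noted just before that lemma, and the exact $t_1$-degree with nonzero constant leading coefficient from Lemma \ref{deg t1}, so the top coefficient is $a=w_S\,r\neq 0$ with no possible cancellation. Nothing further is needed.
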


We define the {\it normalized tau-function} by the formula $\tilde\tau_W = \on{const}\,\tau_W$
where the const is chosen so that
\bean
\label{norm tau}
\tilde \tau_W  \, =\, t_1^{|\la|}\  +\ (\text{low order terms in }\, t_1 ).
\eean

\medskip

Let $S$ be a subset of virtual cardinal zero. Let $A=\{a_1,\dots,a_k\} \subset \Z$ be a finite
subset of distinct integers. Assume that  $\{a_1,\dots,a_k\} \cap (S+k) = \emptyset$.
Let $W\in\Gr$ be a subspace whose  order subset is $S$. Let $v_1,\dots,v_k$ be Laurent polynomials
such that $\ord \,v_i=a_i$ for $i=1,\dots,k$.

\begin{lem}
\label{lem shift subspace}
The subspace $z^kW \ + $\ \em span\em$\langle v_1,\dots,v_k\rangle$ is an element of\ $\Gr$ and
\linebreak
$\{a_1,\dots,a_k\} \cup (S+k)$ is its order subset.
\qed
\end{lem}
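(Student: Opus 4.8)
The plan is to compute the order subset of the new subspace explicitly and then deduce membership in $\Gr$ from the combinatorial Lemma \ref{lem shift} together with the characterization of $\Gr$ inside $\GR$ furnished by Lemma \ref{lem strictly incr}. Write $\tilde W = z^kW + \mathrm{span}\langle v_1,\dots,v_k\rangle$, and fix a basis $\{w_j\}_{j\geq 0}$ of $W$ by Laurent polynomials with strictly increasing orders $s_j$, so that $S=\{s_0<s_1<\dots\}$ and $w_j$ is a monomial for all large $j$. Then $\{z^kw_j\}_{j\geq 0}$ is a basis of $z^kW$ whose orders are $s_j+k$, that is, the order subset of $z^kW$ is $S+k$. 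Throughout I would use the elementary working description of the order subset: for a subspace with a basis of Laurent polynomials of strictly increasing orders, the order subset coincides with the set $\{\ord v : v\ne 0\}$ of all orders of nonzero elements, since at the least order occurring in a finite linear combination no cancellation can take place.

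First I would verify that $\tilde W\in\GR$. From $z^qH_+\subset W\subset z^{-q}H_+$ one gets $z^{q+k}H_+\subset z^kW\subset\tilde W$, while each $v_i$ and each of the finitely many non-monomial $z^kw_j$ lies in $z^{-M}H_+$ for some large $M$; combining these bounds with the eventually-monomial tail produces a single $q'>0$ with $z^{q'}H_+\subset\tilde W\subset z^{-q'}H_+$. Since $\tilde W$ is the sum of the closed subspace $z^kW$ and a finite-dimensional space and still admits an eventually-monomial Laurent basis, we have $\tilde W\in\GR$.

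The crux is identifying the order subset of $\tilde W$. The spanning set $\{z^kw_j\}_{j\geq 0}\cup\{v_1,\dots,v_k\}$ has orders forming $(S+k)\cup A$, and by hypothesis $A\cap(S+k)=\emptyset$, so these orders are pairwise distinct. I would then invoke the fact that Laurent polynomials with pairwise distinct orders are linearly independent (in any vanishing finite combination, the least order occurring with a nonzero coefficient has a leading term that nothing else can cancel, forcing that coefficient to be zero). Hence the spanning set is in fact a basis of $\tilde W$. The same no-cancellation argument shows that for any nonzero $v\in\tilde W$ the order $\ord v$ equals the minimum of the orders of the basis vectors appearing in $v$ with nonzero coefficient, so $\{\ord v : v\in\tilde W\setminus\{0\}\}=(S+k)\cup A$. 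After reindexing in increasing order this is an eventually-monomial basis with strictly increasing orders, so the order subset of $\tilde W$ is precisely $\{a_1,\dots,a_k\}\cup(S+k)$, which is the second assertion.

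Finally, Lemma \ref{lem shift} tells us that $\{a_1,\dots,a_k\}\cup(S+k)$ is of virtual cardinal zero, so Lemma \ref{lem strictly incr} immediately upgrades $\tilde W\in\GR$ to $\tilde W\in\Gr$, completing the proof. The step needing the most care is the order computation, and the point on which everything hinges is the disjointness hypothesis $A\cap(S+k)=\emptyset$: it is exactly what makes the combined orders distinct, which simultaneously delivers linear independence of the spanning set and the absence of cancellation needed to pin down every order, and it is also the precise hypothesis under which Lemma \ref{lem shift} applies.
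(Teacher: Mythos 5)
Your proof is correct. The paper offers no proof of this lemma at all (it is stated as immediate, with only a terminal QED mark), and your argument --- pairwise distinct orders forcing both linear independence of the combined spanning set and the absence of cancellation, so that $z^kW+\mathrm{span}\langle v_1,\dots,v_k\rangle$ has a Laurent basis with order set $\{a_1,\dots,a_k\}\cup(S+k)$, after which Lemma~\ref{lem shift} and Lemma~\ref{lem strictly incr} upgrade membership in $\GR$ to membership in $\Gr$ --- is exactly the routine verification via the paper's own lemmas that the authors evidently intend.
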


Let $S$ be a subset of virtual cardinal zero. Assume that the two element subset $A=\{a_1<a_2\}\subset \Z$ is such that
$A\cap (S+1)=\emptyset$.
Let $W\in\Gr$ be a subspace whose  order subset is $S$. Let $v_1,v_2$ be Laurent polynomials
such that $\ord \,v_i=a_i$ for $i=1,2$.
 Then the subspaces
$W_1  = zW \ + $\ \em span\em$\langle v_1\rangle$,
$W_2  = zW \ + $\ \em span\em$\langle v_2\rangle$
$W_3  = W$,
$W_4  = z^2W \ + $\ \em span\em$\langle zv_1, zv_2\rangle$
are elements of $\Gr$, by Lemma \ref {lem shift subspace}.

\begin{thm}
\label{thm new identity tau}
 We have
\bean
\label{formula new tau}
\Wr_{t_1}(\tau_{W_1}, \tau_{W_2})\,=\,\on{const}\, {} \ \tau_{W_3}\tau_{W_4} ,
\eean
where $\on{const}$ is a nonzero number independent of\  {} $t_1,t$.
\end{thm}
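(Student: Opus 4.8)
The plan is to deduce the identity from the Wronskian identity of Lemma~\ref{lem MV}, in exactly the way Theorem~\ref{thm new identity} was deduced from it, with the tau-functions now playing the role that the Schur polynomials $F_S$ played there. The mechanism is to express all four tau-functions $\tau_{W_1},\tau_{W_2},\tau_{W_3},\tau_{W_4}$ as Wronskians with respect to $t_1$ built from one common family of functions.

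First I would fix a special basis $w_0,w_1,\dots$ of $W$, with $w_j=\sum_i c_{j,i}z^i$ and $w_j=z^j$ for $j>n$, where $n$ is taken large enough that in addition $a_1,a_2\le n$. For an integer $m$, let $\Theta_m$ be the linear map sending a Laurent polynomial $\sum_i c_iz^i$ to the function $\sum_i c_i h_{m-i}(t_1,t)$ of $t_1$ (and $t$). By Lemma~\ref{lem wr tau} together with the multilinearity of \Ref{eqn tau-definition}, for any subspace in $\Gr$ its tau-function equals, up to a nonzero constant coming from the freedom in the choice of basis, the Wronskian $\Wr_{t_1}$ of the $\Theta_m$-images of any basis of its finite part (here $m$ is the level at which that finite part sits); the case of a monomial basis is the Example after \Ref{eqn tau-definition}. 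The single property I need is the shift compatibility $\Theta_{m+1}(zv)=\Theta_m(v)$, which is immediate from $\Theta_{m+1}(z^{i+1})=h_{m-i}=\Theta_m(z^i)$.

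Now I would write the four Wronskians in terms of the common functions $g_j:=\Theta_n(w_j)$, $j=0,\dots,n$, and $p_k:=\Theta_{n+1}(v_k)$, $k=1,2$. Since $W_3=W$, we get $\tau_{W_3}=\on{const}\,\Wr_{t_1}(g_0,\dots,g_n)$. The subspace $W_1=zW+\text{span}\langle v_1\rangle$ satisfies $z^{n+2}H_+\subset W_1$ and has $v_1,zw_0,\dots,zw_n$ as a basis of its finite part at level $n+1$ (these have distinct orders by $A\cap(S+1)=\emptyset$); since $\Theta_{n+1}(zw_j)=g_j$ by the shift compatibility, $\tau_{W_1}=\on{const}\,\Wr_{t_1}(p_1,g_0,\dots,g_n)$, and likewise $\tau_{W_2}=\on{const}\,\Wr_{t_1}(p_2,g_0,\dots,g_n)$. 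Finally $W_4=z^2W+\text{span}\langle zv_1,zv_2\rangle$ satisfies $z^{n+3}H_+\subset W_4$ and has $zv_1,zv_2,z^2w_0,\dots,z^2w_n$ as a basis of its finite part at level $n+2$; since $\Theta_{n+2}(z^2w_j)=g_j$ and $\Theta_{n+2}(zv_k)=p_k$, we get $\tau_{W_4}=\on{const}\,\Wr_{t_1}(p_1,p_2,g_0,\dots,g_n)$.

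With these formulas the conclusion is immediate. Applying Lemma~\ref{lem MV} with the tuple $(g_0,\dots,g_n)$ in place of $f_1,\dots,f_n$ and with $p_1,p_2$ in place of $g_1,g_2$ gives
\[
\Wr_{t_1}\bigl(\Wr_{t_1}(g_0,\dots,g_n,p_1),\Wr_{t_1}(g_0,\dots,g_n,p_2)\bigr)=\Wr_{t_1}(g_0,\dots,g_n)\,\Wr_{t_1}(g_0,\dots,g_n,p_1,p_2).
\]
Reordering the arguments of the three Wronskians that match $\tau_{W_1},\tau_{W_2},\tau_{W_4}$ costs only signs, and by the bilinearity of the outer $\Wr_{t_1}$ these signs cancel; absorbing all the nonzero basis constants into one nonzero factor then yields $\Wr_{t_1}(\tau_{W_1},\tau_{W_2})=\on{const}\,\tau_{W_3}\tau_{W_4}$. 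The only genuine work is the bookkeeping of the third paragraph—choosing $n$ and checking that the three displayed bases really are bases of the respective finite parts—so that is where the (routine) obstacle lies; the algebraic content is carried entirely by Lemma~\ref{lem MV}, and the argument specializes to the proof of Theorem~\ref{thm new identity} when $W=W_S$ has a monomial basis.
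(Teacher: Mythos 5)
Your proof is correct and takes essentially the same route as the paper, whose proof of this theorem is just the remark that it "follows from Lemma \ref{lem MV} and is similar to the proof of Theorem \ref{thm new identity}." You have merely made explicit the bookkeeping the paper leaves implicit: using Lemma \ref{lem wr tau} and the shift compatibility to write all four tau-functions as Wronskians $\Wr_{t_1}$ of the common family $g_0,\dots,g_n,p_1,p_2$, after which Lemma \ref{lem MV} gives the identity with a nonzero constant.
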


 \begin{proof}
 The proof follows from Lemma \ref{lem MV} and is similar to the proof of Theorem \ref{thm new identity}.
\end{proof}

Notice that Theorem \ref{thm new identity} is a particular case of Theorem \ref{thm new identity tau}.

Similarly to Section \ref{sec more general}, one can obtain more general Wronskian identities for tau-functions
from Lemmas \ref{MV lem1} and \ref{MV lem2}.

\begin{lem}
\label{lem inclusion}
Let $W_1,W_2\in \Gr$ and  $z^kW_1\subset W_2$ for some positive integer $k$.
Then $\dim W_2/z^kW_1 = k$.

\end{lem}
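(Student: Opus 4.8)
The plan is to reduce the statement to the additivity of a virtual dimension functional under inclusions, together with a shift computation. First I would make the bookkeeping behind the definition precise. For any $W\in\GR$ choose $q>0$ with $z^qH_+\subset W\subset z^{-q}H_+$; then $W/z^qH_+$ is a finite-dimensional subspace of $z^{-q}H_+/z^qH_+$, and I would observe that the integer
\[
\on{vd}(W)\ =\ \dim\bigl(W/z^qH_+\bigr)-q
\]
is independent of the admissible choice of $q$: replacing $q$ by $q+1$ increases $\dim(W/z^qH_+)$ by $\dim(z^qH_+/z^{q+1}H_+)=1$ and simultaneously increases the subtracted term by $1$. By the very definition of $\Gr$ as the subspaces of virtual dimension zero, $\on{vd}(W)=0$ for every $W\in\Gr$, and in particular for $W_1$ and $W_2$.

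The key elementary fact I would record next is additivity of $\on{vd}$ along inclusions. If $A\subset B$ with $A,B\in\GR$, then choosing a single $q$ large enough that $z^qH_+\subset A\subset B\subset z^{-q}H_+$ yields
\[
\dim\bigl(B/A\bigr)\ =\ \dim\bigl(B/z^qH_+\bigr)-\dim\bigl(A/z^qH_+\bigr)\ =\ \on{vd}(B)-\on{vd}(A).
\]

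I would then compute $\on{vd}(z^kW_1)$. Since multiplication by $z^k$ is injective on $H$, from $z^qH_+\subset W_1\subset z^{-q}H_+$ I get $z^{q+k}H_+\subset z^kW_1\subset z^{k-q}H_+\subset z^{-(q+k)}H_+$, which already shows $z^kW_1\in\GR$. Moreover $z^k$ carries $W_1/z^qH_+$ isomorphically onto $z^kW_1/z^{q+k}H_+$, so $\dim(z^kW_1/z^{q+k}H_+)=\dim(W_1/z^qH_+)=q$ because $\on{vd}(W_1)=0$. Hence, taking $q+k$ as the admissible parameter for $z^kW_1$,
\[
\on{vd}(z^kW_1)\ =\ q-(q+k)\ =\ -k.
\]
Applying the additivity formula to the inclusion $z^kW_1\subset W_2$ (both in $\GR$) then gives $\dim(W_2/z^kW_1)=\on{vd}(W_2)-\on{vd}(z^kW_1)=0-(-k)=k$, as desired.

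The only point demanding genuine care—the mild ``main obstacle''—is the uniform choice of $q$ together with the verification that $z^kW_1$ again lies in $\GR$, which is exactly what legitimizes the application of the additivity formula; once this is in place the argument is purely formal index bookkeeping.
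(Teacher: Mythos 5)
Your proof is correct, but it takes a genuinely different route from the paper's. The paper argues through order subsets: since $z^kW_1\subset W_2$, the order set $S_{W_2}$ contains $S_{z^kW_1}=S_{W_1}+k$, and because both $W_1$ and $W_2$ lie in $\Gr$ (so both order sets have $s_j=j$ for large $j$), a counting of elements below a large threshold shows the difference $S_{W_2}\setminus S_{z^kW_1}$ has exactly $k$ elements; the dimension of the quotient is then read off as the cardinality of this difference (a step the paper leaves implicit, justified by choosing special Laurent-polynomial bases adapted to the orders). You instead introduce the index functional $\on{vd}(W)=\dim\bigl(W/z^qH_+\bigr)-q$, check it is independent of the admissible $q$, observe that $\Gr$ is precisely its zero locus, prove additivity $\dim(B/A)=\on{vd}(B)-\on{vd}(A)$ for inclusions in $\GR$, and compute $\on{vd}(z^kW_1)=-k$ via the isomorphism induced by multiplication by $z^k$. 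Your argument is purely formal linear algebra: it needs no bases and no order-set combinatorics, it makes explicit the bookkeeping (well-definedness, additivity) that the paper's one-line proof suppresses, and the additivity formula you isolate is a slightly more general statement valid for all of $\GR$. The paper's proof, by contrast, is essentially immediate given the order-subset machinery already set up in that section (and used throughout for KdV subsets and mutations), so it integrates better with the surrounding development, at the cost of resting on the unproved identification of $\dim W_2/z^kW_1$ with the number of elements of $S_{W_2}\setminus S_{z^kW_1}$.
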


\begin{proof}
Since $z^kW_1\subset W_2$,  the order set $S_{W_2}$ contains the order set $S_{z^kW_1}$.
  Since $W_1,W_2\in \Gr$ the difference $S_{W_2}-S_{z^kW_1}$ consists of $k$ elements.
This proves the lemma.
\end{proof}

\subsection{KdV subspaces}
\label{Sec KdV subspaces}

Fix an integer $N>1$. We say that a subspace $W\in\Gr$ is a {\it KdV subspace} if
$z^NW \subset W$. For example, for any $N$ the subspace $H_+$
 is a KdV subspace.


\begin{lem}
\label{lem order KdV}
Let $W$ be a  KdV subspace with order subset $S$. Then $S$ is a KdV subset.
\qed
\end{lem}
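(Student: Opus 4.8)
The plan is to deduce $S+N\subset S$ from the intrinsic description of the order subset, namely that $S=S_W$ is exactly the set of orders of the nonzero elements of $W$. Concretely, I would fix a special basis $\{v_j\}_{j\ge 0}$ of $W$ with $\ord v_j=s_j$, so that $S=\{s_0<s_1<\dots\}$ and $v_j=z^j$ for $j>n$. Since $W$ is a KdV subspace, $z^NW\subset W$, and each Laurent polynomial $z^Nv_j$ lies in $W$ with $\ord(z^Nv_j)=s_j+N$. Thus, once I know that the order of every nonzero element of $W$ belongs to $S$, I can conclude that $s_j+N\in S$ for all $j$, which is precisely $S+N\subset S$.

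The one point needing care is the claim that $\ord w\in S$ for every nonzero $w\in W$, which I would prove by passing to a finite-dimensional quotient. Because $s_j=j$ for $j>n$, every integer exceeding $n$ already lies in $S$, so a potential exception $w$ must satisfy $\ord w=s\le n$. Note also that $z^{n+1}H_+\subset W$ (as $z^j=v_j\in W$ for $j>n$), so by virtual dimension zero the quotient $W/z^{n+1}H_+$ has dimension $n+1$ with basis the images $\bar v_0,\dots,\bar v_n$, whose leading orders $s_0<\dots<s_n\le n$ are pairwise distinct. For an exceptional $w$ the image $\bar w$ is nonzero (the $z^s$ term survives since $s\le n$) and expands as $\bar w=\sum_{j=0}^n c_j\bar v_j$. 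Since the leading orders are distinct, the lowest-index basis vector occurring cannot be cancelled, so $\ord\bar w=\min\{s_j:c_j\ne 0\}=s$; hence $s=s_{j_0}\in S$, contradicting $s\notin S$.

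With this established, the lemma follows immediately: applying the characterization to the elements $z^Nv_j\in W$ gives $s_j+N\in S$ for every $j$, that is $S+N\subset S$, so $S$ is a KdV subset. I expect the finite-dimensional quotient argument of the second step to be the only genuine obstacle, since it is what legitimizes treating $\ord$ as an intrinsic invariant of $W$ independent of the chosen special basis; the remainder is a direct consequence of the definition of a KdV subspace together with the elementary identity $\ord(z^Nv)=\ord v+N$.
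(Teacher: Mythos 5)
Your proof is correct. The paper offers no written proof of this lemma (it is stated as immediate), and your argument is precisely the intended one: multiplication by $z^N$ maps $W$ into itself and shifts the order of every element by $N$, so $S+N\subset S$ once one knows that $S=S_W$ is intrinsically the set of orders of all nonzero elements of $W$. Your finite-dimensional quotient step, which establishes that intrinsic characterization, is the only point of substance and is carried out correctly; it is also exactly the fact that underlies the paper's unproved assertion in Section \ref{SUBS} that the assignment $W\mapsto S_W$ is well-defined, i.e.\ independent of the choice of special basis.
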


Let $S$ be a KdV subset with leading term $A$. Recall the mutation KdV subsets
$S[a] = \{a+1-N\} \cup (S+1)$, $a\in A$.

\begin{lem}
\label{lem two step constr}
Let $W_1$ be a KdV subspace. Let $S_{W_1}$ be the order subset
of $W_1$ and  $A_1$  the leading term of $S_{W_1}$. Let $a\in A_1$ and let  $w\in W_1$ be
such that $\ord\, w=a$. Then $W_2=zW_1 + $\em span\em$\langle z^{1-N} w\rangle$ is a KdV
subspace with leading term $S_{W_1}[a]$ and
such that $zW_1\subset W_2$.
\qed

\end{lem}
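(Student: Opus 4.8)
The plan is to deduce everything from the order-subset lemmas already in hand, the key one being Lemma \ref{lem shift subspace} applied with $k=1$. First I would record that, since $\ord w = a$, the Laurent polynomial $z^{1-N}w$ has order $a+1-N$; thus $W_2 = zW_1 + \langle z^{1-N}w\rangle$ is exactly the subspace produced by Lemma \ref{lem shift subspace} upon taking the subspace $W_1$ (with order subset $S = S_{W_1}$, a KdV subset by Lemma \ref{lem order KdV}), $k=1$, and the single Laurent polynomial $v_1 = z^{1-N}w$.

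To invoke that lemma I must verify its hypothesis $\{a+1-N\}\cap(S_{W_1}+1)=\emptyset$, which is equivalent to $a\notin S_{W_1}+N$. This is the one point that genuinely requires an argument. By Lemmas \ref{lem leading term} and \ref{lem Lead}, $S_{W_1}$ is the union of the $N$ nonintersecting arithmetic progressions $\{a_i, a_i+N, a_i+2N,\dots\}$ with $a_i\in A_1$, so $S_{W_1}+N$ consists of the tails $\{a_i+N, a_i+2N,\dots\}$ and is disjoint from the leading term $A_1$. Since $a\in A_1$, we conclude $a\notin S_{W_1}+N$, as needed. Lemma \ref{lem shift subspace} then gives $W_2\in\Gr$ with order subset $\{a+1-N\}\cup(S_{W_1}+1)$, which is precisely $S_{W_1}[a]$ by the definition \Ref{j mut}; by Lemma \ref{lem mut kdv} this order subset is a KdV subset with leading term $A_1[a]$.

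It remains to check the two algebraic facts. The inclusion $zW_1\subset W_2$ holds by construction, since $zW_1$ is one of the two summands defining $W_2$. For the KdV property $z^N W_2\subset W_2$, linearity of multiplication by $z^N$ gives $z^N W_2 = z^{N+1}W_1 + \langle zw\rangle$; then $z^{N+1}W_1 = z\,(z^N W_1)\subset zW_1$ because $W_1$ is a KdV subspace, and $zw\in zW_1$ because $w\in W_1$, so $z^N W_2\subset zW_1\subset W_2$. Hence $W_2$ is a KdV subspace with order subset $S_{W_1}[a]$ (and leading term $A_1[a]$), containing $zW_1$, as claimed. I expect no real obstacle: the only delicate step is the disjointness hypothesis for Lemma \ref{lem shift subspace}, and once that is in place the remaining verifications are purely formal manipulations of the defining expression for $W_2$.
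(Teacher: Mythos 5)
Your proof is correct, and it supplies exactly the verification the paper leaves out: the lemma is stated there with a \qed and no argument, being regarded as immediate from Lemma \ref{lem shift subspace} (with $k=1$, $v_1=z^{1-N}w$), Lemma \ref{lem mut kdv}, and the disjointness of the progressions in Lemma \ref{lem Lead} --- precisely the chain you use, including the one genuinely nontrivial point, namely $a\notin S_{W_1}+N$. You also rightly read ``leading term $S_{W_1}[a]$'' in the statement as ``order subset $S_{W_1}[a]$'' (with leading term $A_1[a]$), which is a small imprecision in the paper's wording.
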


\begin{lem}
\label{lem two step W}
Let $W_1,W_2$ be KdV subspaces such that $zW_1\subset W_2$. Let $S_{W_1}$ be the order subset
of $W_1$ and  $A_1$  the leading term of $S_{W_1}$. Then the order subset $S_{W_2}$ is a mutation
of $S_{W_1}$, that is $S_{W_2}=S_{W_1}[a]$ for some $a\in A$.
Moreover there exists $w\in W_1$ such that $\ord\,w=a$ and $W_2=zW_1 + $\em span\em$\langle z^{1-N} w\rangle$.
\qed

\end{lem}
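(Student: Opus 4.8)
The plan is to deduce the identity of order subsets from the purely combinatorial Lemma~\ref{lem two step}, and then to pin down the extra generator of $W_2$ over $zW_1$ by an order (valuation) argument that uses the KdV property of $W_2$.

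First I would observe that multiplication by $z$ is injective on $H$ and raises the order of every Laurent polynomial by one, so the order subset of $zW_1$ is $S_{W_1}+1$. Recalling that the order subset of a subspace in $\GR$ is exactly the set of orders of its nonzero elements, the inclusion $zW_1\subset W_2$ gives $S_{W_1}+1\subset S_{W_2}$. By Lemma~\ref{lem order KdV} both $S_{W_1}$ and $S_{W_2}$ are KdV subsets, so Lemma~\ref{lem two step} applies verbatim and yields $S_{W_2}=S_{W_1}[a]$ for some $a\in A_1$, which is the first assertion.

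Next I would locate the single new order and choose the candidate generator. By Lemma~\ref{lem mut kdv} we have $S_{W_2}=\{a+1-N\}\cup(S_{W_1}+1)$, and since $a\in A_1=S_{W_1}\setminus(S_{W_1}+N)$ we get $a+1-N\notin S_{W_1}+1$; thus $a+1-N$ is the unique element of $S_{W_2}$ not lying in the order subset of $zW_1$. By Lemma~\ref{lem inclusion} with $k=1$ we have $\dim W_2/zW_1=1$, so choosing any $w_2\in W_2$ with $\ord w_2=a+1-N$ we obtain $w_2\notin zW_1$ and a direct-sum decomposition $W_2=zW_1\oplus\C\,w_2$.

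The key step, and the only nontrivial point, is to realize $w_2$ as $z^{1-N}w$ with $w\in W_1$. Since $W_2$ is a KdV subspace, $z^Nw_2\in W_2=zW_1\oplus\C\,w_2$. Comparing orders, $\ord(z^Nw_2)=a+1$, which exceeds $a+1-N$, whereas every nonzero element of $zW_1$ has order in $S_{W_1}+1$, a set avoiding $a+1-N$; hence the $w_2$-component of $z^Nw_2$ must vanish and $z^Nw_2\in zW_1$. As $z$ is injective this gives $w:=z^{N-1}w_2\in W_1$, with $\ord w=a$. Then $z^{1-N}w=w_2$, so $zW_1+\on{span}\langle z^{1-N}w\rangle$ contains both $zW_1$ and $w_2$ and therefore equals $W_2$ by the codimension-one decomposition, completing the proof. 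As a sanity check one may instead invoke Lemma~\ref{lem two step constr}, which identifies $zW_1+\on{span}\langle z^{1-N}w\rangle$ as a KdV subspace with order subset $S_{W_1}[a]=S_{W_2}$ containing $zW_1$; the dimension count then forces it to coincide with $W_2$.
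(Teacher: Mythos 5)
Your main argument is correct, and in fact it supplies a proof that the paper omits entirely: the lemma is stated there with a \qed{} and no argument, so there is nothing to compare against line by line. The ingredients you use are exactly the ones the surrounding text makes available: Lemma \ref{lem order KdV} plus Lemma \ref{lem two step} to get $S_{W_2}=S_{W_1}[a]$; Lemma \ref{lem inclusion} with $k=1$ to get $\dim W_2/zW_1=1$; and then the one genuinely new point, namely using the KdV property $z^NW_2\subset W_2$ together with an order comparison to force $z^Nw_2\in zW_1$, whence $w:=z^{N-1}w_2\in W_1$, $\ord w=a$, and $W_2=zW_1+\on{span}\langle z^{1-N}w\rangle$. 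The order-theoretic facts you rely on are all sound: the order subset of a subspace is the set of orders of its nonzero elements (the paper uses this implicitly, e.g.\ in the proof of Lemma \ref{lem inclusion}), the order subset of $zW_1$ is $S_{W_1}+1$, and $a+1-N\notin S_{W_1}+1$ precisely because $a$ lies in the leading term $A_1=S_{W_1}\setminus(S_{W_1}+N)$.

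One caution: your closing ``sanity check'' is not a valid substitute for the key step, and you should either delete it or restrict it to the specific $w=z^{N-1}w_2$ already constructed. If $w$ there denotes an arbitrary element of $W_1$ of order $a$, then Lemma \ref{lem two step constr} does produce a KdV subspace $W_2'=zW_1+\on{span}\langle z^{1-N}w\rangle$ containing $zW_1$ with order subset $S_{W_1}[a]=S_{W_2}$, but this data does not pin down the subspace, so the ``dimension count'' cannot force $W_2'=W_2$. Concretely, take $N=3$, $W_1=H_+$, $a=0$: the choices $w=1$ and $w=1+z$ give $W_2'=zH_+ +\C\, z^{-2}$ and $W_2''=zH_+ +\C\,(z^{-2}+z^{-1})$, which are distinct KdV subspaces, both containing $zH_+$ with codimension one and both with order subset $\{-2\}\cup\{1,2,3,\dots\}$. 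This is exactly why the realization $w_2=z^{1-N}w$ with $w\in W_1$ has to be extracted from $W_2$ itself, as your main argument does, rather than recovered from the combinatorial data.
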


\subsection{mKdV tuples of subspaces and mKdV flows, \cite{SW, W}}
\label{sec mkdv tuples of subspaces}

 We say that an $N$-tuple $\bs W = (W_1,\dots,W_N)$ of KdV subspaces is an {\it mKdV tuple of subspaces}
 if $zW_i\subset W_{i+1}$ for all $i$, in particular, $zW_N\subset W_1$.
 Denote $\GM$ the set of all mKdV tuples of subspaces.

For example, for any $N$
\bea
\bs W^\emptyset = (H_+,\dots,H_+)
\eea
is an mKdV tuple.

Following \cite{SW} consider the group $\Gamma_+$ of holomorphic maps $D_0\to\C^\times$, where
$D_0$ is the disc $\{z\in \C\ |\ |z|\leq 1\}$. The group $\Gamma_+$ acts on $\GM$ by multiplication operators
and induces  on $\GM$ commuting flows (called the mKdV flows) in the following sense: if $g =e^{\sum t_kz^k} \in \Gamma_+$, where
$t_1,t=(t_2,t_3,\dots)$ are numbers almost all zero, then $(gW_1,\dots,gW_N)$ is the mKdV tuple obtained from
$(W_1,\dots,W_N)$ by letting it flow for time $t_k$ along the $k$-th mKdV flow for each $k$.

These flows on $\GM$ project to the mKdV flows on Miura opers by the following construction.
Let $\bs W=(W_1,\dots,W_N)\in\GM$. Let $(\tau_{W_1},\dots,\tau_{W_N})$ be the tuple of the
corresponding tau-functions. If $g =e^{\sum t_k^0z^k} \in \Gamma_+$, then
$\tau_{gW_i}(t_1,t_2,\dots) = \tau_{W_i}(t_1+t_1^0,t_2+t_2^0,\dots)$ for all $i$, see \cite{SW}.
Define the Miura oper $\Ll_{\bs W} = \der + \Lambda + V$ by the formula
\bean
\label{Miura W}
&&
{}
\\
\notag
&&
V = \on{diag} \left( \log'\left(\frac{ \tau_{W_1}(x+t_1,t)}{ \tau_{W_N}(x+t_1,t)}\right),
\log'\left(\frac{ \tau_{W_2}(x+t_1,t)}{ \tau_{W_1}(x+t_1,t)}\right),\dots,
\log'\left(\frac{ \tau_{W_N}(x+t_1,t)}{ \tau_{W_{N-1}}(x+t_1,t)}\right)
\right).
\eean
This Miura oper depends on parameters $t_1,t$.

\begin{example}
Let $\bs S=(S_1,\dots,S_N)$ be an mKdV tuple of subsets. Then
$\bs W=(W_{S_1},\dots,W_{S_N})$ is an mKdV tuple of subspaces. For this $\bs W$
formula \Ref{Miura W} takes the form
\bea
V = \on{diag} \left( \log'\left(\frac{ F_{S_1}(x+t_1,t)}{ F_{S_N}(x+t_1,t)}\right),
\log'\left(\frac{ F_{S_{2}}(x+t_1,t)}{ F_{S_{1}}(x+t_1,t)}\right),\dots,
\log'\left(\frac{ F_{S_{N}}(x+t_1,t)}{ F_{S_{{N-1}}}(x+t_1,t)}\right)
\right).
\eea

\begin{thm} [\cite{W1}]
\label{thm Wilson}
For any $r\in\Z_{>0}$, the Miura oper $\Ll_{\bs W}$ satisfies the $r$-th mKdV equation, see
\Ref{mKdVr}.

\end{thm}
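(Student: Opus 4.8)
The plan is to reduce the mKdV statement for the matrix oper $\Ll_{\bs W}$ to the scalar (generalized KdV) content of the Segal--Wilson correspondence, and then to recover the full matrix flow by an injectivity argument for the Miura maps, leaning on the machinery of Sections \ref{sec kdv and mkdv} and \ref{Vector fields}. First I would rewrite the $r$-th mKdV equation in the form of Lemma \ref{lem der}, so that the assertion becomes the identity of diagonal matrices $\partial_{t_r} V = \frac{d}{dx}(T\Lambda^r T^{-1})^0$, where $T$ is the dressing matrix of Lemma \ref{Tlemma}. Since $\Ll_{\bs W}$ depends on the flow times only through the tau-functions $\tau_{W_i}(x+t_1,t)$, the left-hand side is explicit: $\partial_{t_r}V$ is the diagonal matrix with entries $\partial_x\partial_{t_r}\log(\tau_{W_k}/\tau_{W_{k-1}})(x+t_1,t)$. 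Writing $Y = \partial_{t_r}V - \frac{d}{dx}(T\Lambda^r T^{-1})^0 \in \B(\dio)$, the goal is to prove $Y=0$.

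The first main step is to show that for every $i=1,\dots,N$ the scalar operator $L_i=\frak m_i(\Ll_{\bs W})$ satisfies the $r$-th KdV equation \Ref{KdVr}. This is precisely the $N$-reduced scalar content of \cite{SW}: each $W_i$ is a KdV subspace ($z^N W_i\subset W_i$ by Lemma \ref{lem order KdV} and the definition), and the chain relations $zW_i\subset W_{i+1}$ identify the Segal--Wilson scalar operator attached to $W_i$ with the cyclic factorization $\frak m_i(\Ll_{\bs W})=(\der-v_{i-1})\cdots(\der-v_i)$. The tau-function shift property $\tau_{gW_i}(t)=\tau_{W_i}(t+t^0)$ quoted in the text is exactly the encoding of the generalized KdV flows, so each $L_i$ evolves by \Ref{KdVr}.

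The second step upgrades this to the matrix statement. By Theorem \ref{thm mkdvtokdv} the mKdV vector field at $\Ll_{\bs W}$ pushes forward under $d\frak m_i$ to the $r$-th KdV field at $L_i$; by Step~1 the \emph{actual} time derivative $\partial_{t_r}V$ also pushes forward under $d\frak m_i$ to that same KdV field, for every $i$. Hence $d\frak m_i\big|_{\Ll_{\bs W}}(Y)=0$ for all $i=1,\dots,N$. I would then adapt the kernel computation behind Proposition \ref{lem ker dm}, which for any fixed index $j$ shows that the $N-1$ equations $d\frak m_i(Y)=0$, $i\neq j$, constrain $Y$ to a single $H_j$-direction; imposing the remaining equation $d\frak m_j(Y)=0$ then forces $Y=0$. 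A cleaner, hypothesis-free version of the same fact is the intertwining relation $L_{i+1}(\der-v_i)=(\der-v_i)L_i$, which shows that the pair $(L_i,L_{i+1})$ already determines $v_i$, so the joint Miura map $(\frak m_1,\dots,\frak m_N)$ has injective differential on $\B(\dio)$ at a generic Miura oper; either way $Y=0$, which is the mKdV equation for $\Ll_{\bs W}$.

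The main obstacle I anticipate is exactly this injectivity claim: proving on the nose, rather than by a dimension count, that a diagonal traceless $Y$ annihilated by all $N$ tangent Miura maps must vanish, valid at the points where the factors $(\der-v_k)$ are regular. One must also be careful with normalizations---that the flow induced by the multiplication action of $\Gamma_+$, transported through \cite{SW}, is genuinely the $r$-th KdV flow in the sense of Section \ref{sec kdv and mkdv}, and that the $x$-versus-$t_1$ identification in \Ref{Miura W} is compatible with \Ref{der h}---before the two pushforwards can legitimately be equated. An alternative, more self-contained route avoids Step~2 altogether: build the common Baker--Akhiezer vector $(\psi_1,\dots,\psi_N)$ from the chain $zW_i\subset W_{i+1}$, check directly that it is a horizontal section of $\Ll_{\bs W}$, and read off $\partial_{t_r}\psi=(T\Lambda^rT^{-1})^+\psi$ from the $\Gamma_+$-action; this is closer to the argument in \cite{W}.
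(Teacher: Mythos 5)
The paper offers no proof of this statement to compare against: Theorem \ref{thm Wilson} is quoted from \cite{W} and used as a black box (together with Theorem \ref{thm induction}) to obtain the second proof of Corollary \ref{cor Main}. So the only meaningful comparison is with Wilson's own argument, and of the two routes you sketch, it is your closing ``alternative'' --- constructing the Baker--Akhiezer vector from the chain $zW_i\subset W_{i+1}$, checking it is horizontal for $\Ll_{\bs W}$, and reading the flows off the $\Gamma_+$-action --- that matches \cite{W}. Your main route, however, has two genuine gaps.

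First, Step 1 is essentially circular. To know that $L_i=\frak m_i(\Ll_{\bs W})$ evolves by the $r$-th KdV flow, you must identify the cyclic product \Ref{miuramap} of the first-order factors $\der-\log'\bigl(\tau_{W_k}/\tau_{W_{k-1}}\bigr)$ with the $N$-th order scalar operator that \cite{SW} attaches to the KdV subspace $W_i$ through its Baker function. The tau-shift property $\tau_{gW_i}(t)=\tau_{W_i}(t+t^0)$ does not yield this identification; this factorization statement \emph{is} the substance of Wilson's theorem, i.e.\ the very thing to be proved. Second, the injectivity claim in your Step 2 is false as stated. At $\bs W^\emptyset=(H_+,\dots,H_+)$ one has $V=0$ and all $L_i=\der^N$, and then for every \emph{constant} traceless diagonal matrix $Y=\sum_k Y_k e_{k,k}$ each summand of $d\frak m_i(Y)$ collapses, giving
\begin{equation*}
d\frak m_i\big|_{\Ll_{\bs W^\emptyset}}(Y)\;=\;-\Bigl(\sum_{k=1}^N Y_k\Bigr)\der^{\,N-1}\;=\;0
\qquad\text{for every } i ,
\end{equation*}
so the joint kernel of all $N$ maps is nonzero there; likewise your ``cleaner'' intertwining argument fails at such points because $L_i=L_{i+1}$, and the pair then no longer determines $v_i$. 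This is not a removable footnote: the paper's own kernel computation (Proposition \ref{lem ker dm}) is valid only under support restrictions on $Y$ and under the condition that the relevant $v_k'$ are not identically zero (via \Ref{sln 3} and Lemma \ref{lem v' v}), and even then it produces a one-dimensional kernel for $N-1$ of the maps rather than zero. To repair your argument you would need a genericity statement for the specific opers $\Ll_{\bs W}$ (identifying exactly which $v_k'$ can vanish identically when $\bs W\neq\bs W^\emptyset$), plus a continuity or polynomiality argument to extend the conclusion across the degenerate locus; neither is supplied, so the proposal as written does not constitute a proof.
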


This theorem is Proposition 4.9 in \cite{W1}, see also formula (4.2) in \cite{W2} and its proof in \cite{W2}.

\end{example}

\subsection{Properties of mKdV tuples of subspaces}
\label{Properties of mKdV tuples of subspaces}

\begin{lem}
\label{lem permut tau}
If  $\bs W = (W_1,\dots,W_N)\in \GM$, then for any $i$, $(W_i,W_{i+1},\dots,W_N,$ $ W_1,W_2,\dots,W_{i-1})\in \GM$.
\qed
\end{lem}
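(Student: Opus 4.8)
The plan is to observe that the definition of an mKdV tuple of subspaces is invariant under cyclic permutation, because the defining inclusions $zW_j\subset W_{j+1}$ form a closed cyclic chain that already contains the wrap-around inclusion $zW_N\subset W_1$. Concretely, I would fix $i$ and write the permuted tuple as $\bs W'=(W_1',\dots,W_N')$ with $W_j'=W_{i+j-1}$, indices reduced modulo $N$ to the range $1,\dots,N$ as usual. Each entry $W_j'$ is literally one of the original subspaces $W_k$.

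First I would check the KdV condition. By hypothesis every $W_k$ is a KdV subspace, that is $W_k\in\Gr$ and $z^NW_k\subset W_k$. Since each $W_j'$ equals some $W_k$, every $W_j'$ is automatically a KdV subspace, so the first requirement in the definition of an mKdV tuple is inherited with nothing to prove.

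Next I would verify the chain condition $zW_j'\subset W_{j+1}'$ for all $j=1,\dots,N$. For an index $j$ with $W_j'=W_k$ and $W_{j+1}'=W_{k+1}$, where $k=i+j-1$, this is exactly the original inclusion $zW_k\subset W_{k+1}$. The only case deserving explicit attention is the wrap-around $zW_N'\subset W_1'$: here $W_N'=W_{i-1}$ and $W_1'=W_i$, so the required inclusion reads $zW_{i-1}\subset W_i$, which is once more one of the defining inclusions of $\bs W$ (the degenerate case $i=1$ producing precisely $zW_N\subset W_1$). Hence $\bs W'$ satisfies both conditions and lies in $\GM$.

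I do not expect any genuine obstacle: the entire content is the bookkeeping observation that a cyclic list of inclusions is preserved under a cyclic shift of the indices, the only delicate point being to track the wrap-around term correctly. This is the exact analogue for subspaces of Lemma \ref{lem permut} for mKdV tuples of subsets, and I would prove it by the same one-line reduction to the defining inclusions.
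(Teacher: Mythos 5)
Your proof is correct and is exactly the immediate verification the paper has in mind: the paper states this lemma with no written proof (the \qed signals it is considered obvious), precisely because the defining conditions — each entry being a KdV subspace and the cyclic chain of inclusions $zW_j\subset W_{j+1}$ including the wrap-around $zW_N\subset W_1$ — are manifestly invariant under a cyclic shift of indices. Your careful tracking of the wrap-around term is the only point of substance, and you handle it correctly.
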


\begin{lem}
\label{lem order tau}
Let  $\bs W = (W_1,\dots,W_N)\in\GM$. Let  $S_i$ be the order subset of $W_i$ and
  $\bs S = (S_1,\dots,S_N)$. Then $\bs S$ is an mKdV tuple of subsets.
\qed
\end{lem}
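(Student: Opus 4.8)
The plan is to verify the two defining conditions of an mKdV tuple of subsets for $\bs S=(S_1,\dots,S_N)$: that each $S_i$ is a KdV subset, and that $S_i+1\subset S_{i+1}$ for all $i$ (indices mod $N$). The first condition is immediate: since $\bs W=(W_1,\dots,W_N)\in\GM$, each $W_i$ is a KdV subspace, so by Lemma~\ref{lem order KdV} its order subset $S_i$ is a KdV subset. The entire content of the lemma therefore lies in the shift relation $S_i+1\subset S_{i+1}$, and the engine for proving it will be a clean reinterpretation of the order subset.

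The key step I would isolate first is the identity
\[
S_W=\{\,\ord(w)\ :\ w\in W,\ w\neq 0\,\}
\]
valid for any $W\in\Gr$; that is, the order subset is exactly the set of orders attained by nonzero elements of $W$. To prove this I would fix a special basis $\{v_j\}_{j\ge0}$ of $W$ with $\ord v_j=s_j$ strictly increasing and $v_j=z^j$ for $j$ large. The inclusion $\supseteq$ (every $s_j$ is attained) is clear since $v_j\in W$. For $\subseteq$, I would expand a nonzero $w=\sum_j c_jv_j$, set $k=\min\{j:c_j\neq0\}$, and observe that because the $s_j$ are strictly increasing, the coefficient of $z^{s_k}$ in $w$ is a nonzero multiple of the leading coefficient of $v_k$, while no term of order below $s_k$ can occur; hence $\ord(w)=s_k\in S_W$.

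With this identity in hand the shift relation is essentially a one-line consequence of the structure of $\GM$. Fixing $i$ and $s\in S_i$, I would pick a nonzero $w\in W_i$ with $\ord(w)=s$; since $\bs W$ is an mKdV tuple of subspaces we have $zW_i\subset W_{i+1}$, so $zw\in W_{i+1}$ is a nonzero element of order $s+1$, whence $s+1\in S_{i+1}$. This yields $S_i+1\subset S_{i+1}$ and completes the proof. The only genuine work is establishing the order-set characterization above; the multiplication-by-$z$ argument that finishes the proof is then routine, and I expect no serious obstacle beyond being careful that the leading-term computation is unaffected by the (possibly infinite) basis expansion, which it is precisely because the orders $s_j$ are strictly increasing, so no cancellation at the lowest surviving order can occur.
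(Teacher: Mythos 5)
Your proof is correct and matches the intended argument: the paper states this lemma without proof (it is marked as immediate), and your two steps --- each $S_i$ is a KdV subset by Lemma \ref{lem order KdV}, and $S_i+1\subset S_{i+1}$ because multiplication by $z$ under $zW_i\subset W_{i+1}$ shifts orders by one --- are exactly the reasoning left implicit. The only genuine content, which you correctly isolate and establish, is that the order subset of $W\in\Gr$ coincides with the set of orders of all nonzero elements of $W$; this holds, as you note, because the orders of a special basis are strictly increasing, so no cancellation can occur at the lowest surviving order even in an infinite expansion.
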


\medskip

Let $W$ be a KdV subspace with order subset $S$. Let $A=\{a_1<\dots<a_N\}$ be the leading term of $S$.
 Let $v=(v_1,\dots,v_N)$ be a tuple of elements of $ W$  such that
$\ord v_i=a_i$ for all $i$.
Let $\sigma$ be
an element of the permutation group $\Si_N$. Define an $N$-tuple $\bs W_{W,v,\sigma} = (W_1,\dots,W_N)$
of subspaces by the formula
\bean
\label{mkdv tuple sp}
W_i = \{z^{i-N}v_{\sigma(1)}, z^{i-N}v_{\sigma(2)}, \dots, z^{i-N}v_{\sigma(i)}\} + z^iW,
\eean
in particular, $W_N = z^NW \ + $\ \em span\em$\langle v_1,\dots,v_N\rangle = W$.

\begin{lem}
\label{thm mkdv descr tau}
The $N$-tuple $\bs W_{W,v,\sigma}$ is an mKdV tuple of subspaces.
Every  mKdV tuple of subspaces is of the form  $\bs W_{W,v,\sigma}$ for suitable $W,v,\sigma$.
\qed

\end{lem}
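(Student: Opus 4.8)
The plan is to lift the subset‑level classification (Lemma \ref{lem mkdv constr} together with Theorem \ref{thm mkdv description}) to the level of subspaces, using the order‑subset dictionary of Lemma \ref{lem order tau}, the description of $\Gr$ under shifts in Lemma \ref{lem shift subspace}, and one rigidity observation: if $U\subseteq U'$ with $U,U'\in\Gr$ sharing the same order subset, then $U=U'$. Throughout, $S$ denotes the order subset of $W$ with leading term $A=\{a_1<\dots<a_N\}$, and I write $S_i$ for the $i$‑th component of the subset‑level tuple $\bs S_{S,\sigma}$ given by \Ref{mkdv tuple}.

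For the first assertion I would first identify the order subset of each $W_i$. Since $v_j\in W$ has order $a_j$ and $a_j-N\notin S$ (because, by Lemma \ref{lem Lead}, $S$ is the disjoint union of the progressions $\{a_j,a_j+N,\dots\}$, so $a_j$ is minimal in its class), the orders $a_{\sigma(1)}+i-N,\dots,a_{\sigma(i)}+i-N$ of the Laurent polynomials $z^{i-N}v_{\sigma(1)},\dots,z^{i-N}v_{\sigma(i)}$ are disjoint from $S+i$. Hence Lemma \ref{lem shift subspace} (with $k=i$) gives $W_i\in\Gr$ with order subset exactly $S_i$. That $W_i$ is a KdV subspace follows by inspecting generators: $z^NW_i$ is spanned by $z^{i}v_{\sigma(1)},\dots,z^{i}v_{\sigma(i)}$ together with $z^{i+N}W$, and each $z^iv_{\sigma(j)}\in z^iW\subset W_i$ while $z^{i+N}W\subset z^iW\subset W_i$ since $z^NW\subset W$; thus $z^NW_i\subset W_i$. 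The inclusion $zW_i\subset W_{i+1}$ for $i<N$ is immediate, as $zW_i$ is generated by $z^{i+1-N}v_{\sigma(1)},\dots,z^{i+1-N}v_{\sigma(i)}$ and $z^{i+1}W$, all of which are generators of $W_{i+1}$. For the cyclic step I would note $W_N=W$: by the above $W_N\in\Gr$ has order subset $S_N=A\cup(S+N)=S$ and is contained in $W$ (its generators $v_1,\dots,v_N$ and $z^NW$ all lie in $W$), so the rigidity observation gives $W_N=W$ and hence $zW_N=zW\subset W_1$. Therefore $\bs W_{W,v,\sigma}\in\GM$.

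For the converse, start from an arbitrary $\bs W=(W_1,\dots,W_N)\in\GM$. By Lemma \ref{lem order tau} the order subsets $(S_1,\dots,S_N)$ form an mKdV tuple of subsets, so by Theorem \ref{thm mkdv description} they equal $\bs S_{S,\sigma}$ for $S=S_N$ and some $\sigma\in\Si_N$; let $A=\{a_1<\dots<a_N\}$ be the leading term of $S$. Iterating $zW_j\subset W_{j+1}$ gives $z^{i-j}W_j\subset W_i$ for $j\le i$ and, cyclically, $z^iW_N\subset W_i$. Since $a_{\sigma(i)}+i-N\in S_i$, choose $u_i\in W_i$ of that order and set $v_{\sigma(i)}=z^{N-i}u_i$; then $v_{\sigma(i)}\in z^{N-i}W_i\subset W_N=W$ and $\ord v_{\sigma(i)}=a_{\sigma(i)}$, so $(v_1,\dots,v_N)$ is an admissible tuple. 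Writing $W_i'$ for the corresponding component of $\bs W_{W,v,\sigma}$, one checks $W_i'\subset W_i$: indeed $z^{i-N}v_{\sigma(j)}=z^{i-j}u_j\in z^{i-j}W_j\subset W_i$ for $j\le i$, and $z^iW=z^iW_N\subset W_i$. By the first part $W_i'\in\Gr$ has order subset $S_i$, which is also the order subset of $W_i$, so rigidity forces $W_i'=W_i$, whence $\bs W=\bs W_{W,v,\sigma}$.

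The main obstacle, and the only nonformal ingredient, is the rigidity observation used in both directions. I would prove it by choosing $q$ large enough that $z^qH_+\subset U\subset U'\subset z^{-q}H_+$ and passing to the finite‑dimensional quotients inside $z^{-q}H_+/z^qH_+$: virtual dimension zero gives $\dim(U/z^qH_+)=q=\dim(U'/z^qH_+)$, so the inclusion $U/z^qH_+\subseteq U'/z^qH_+$ of equidimensional spaces is an equality, whence $U=U'$. The subspace analogue of the two‑step subset result, Lemma \ref{lem two step W}, could be used instead to build the $v_j$ inductively, but the direct extraction $v_{\sigma(i)}=z^{N-i}u_i$ avoids tracking the mutating leading terms. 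The remaining points, namely the disjointness $a_j-N\notin S$ and the various generator inclusions, are routine bookkeeping with the leading‑term description of Lemmas \ref{lem leading term} and \ref{lem Lead}.
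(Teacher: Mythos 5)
Your proof is correct. Note that the paper offers no proof of this lemma at all: the statement is closed with a \qed, i.e.\ it is treated as routine, so there is nothing to compare line by line. Your argument supplies exactly the missing details, and along what is surely the intended route: Lemma \ref{lem shift subspace} identifies the order subset of each $W_i$ with the $S_i$ of \Ref{mkdv tuple} (the required disjointness $a_j-N\notin S$ coming from the leading-term description in Lemma \ref{lem Lead}), while Lemma \ref{lem order tau} and Theorem \ref{thm mkdv description} reduce the converse to the already-proved subset-level classification, the component subspaces then being matched by an inclusion-plus-rigidity argument. The one ingredient you make explicit that the paper nowhere states is the rigidity observation: an inclusion $U\subseteq U'$ of subspaces of virtual dimension zero is an equality. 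Your proof of it (equidimensional finite quotients modulo $z^qH_+$) is correct, and in fact shows that the common-order-subset hypothesis in your formulation is superfluous; alternatively, it is in effect the $k=0$ case of the paper's Lemma \ref{lem inclusion} (stated there only for positive $k$, but with the same proof), which you could cite instead of reproving. This rigidity is genuinely needed, and needed twice: already in the first half, the paper's formula \Ref{mkdv tuple sp} silently asserts $W_N=W$, which is an inclusion of two elements of $\Gr$ with equal order subsets rather than a tautology, and your treatment of this point, as well as the extraction $v_{\sigma(i)}=z^{N-i}u_i$ in the converse, is exactly what a complete write-up requires.
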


Here is another description of mKdV tuples of subspaces.

 \begin{thm}
 \label{thm mkdv spaces}

 Let $z^NW=V_0\subset V_1\subset V_2\subset \dots\subset V_{N-1}\subset V_N=W$
 be a complete flag of vector subspaces such that $\dim V_i/V_{i-1}=1$ for all $i$.
Set
\bean
\label{MKDV tuples}
W_i = z^{i-N}V_i,\qquad i=1,\dots,N.
\eean
Then  $\bs W = (W_1,\dots,W_{N-1}, W_N=W)$ is an mKdV tuple of subspaces. Every
 mKdV tuple of subspaces is of this form.
\qed

\end{thm}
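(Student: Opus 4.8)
The plan is to prove the two assertions separately: that the rule $W_i = z^{i-N}V_i$ produces an mKdV tuple, and conversely that every mKdV tuple arises from a flag in this way. For the forward direction the chain and KdV conditions are immediate from the single string of inclusions $z^N W = V_0 \subset V_i \subset V_N = W$. Indeed, for $i<N$ the inclusion $V_i\subset V_{i+1}$ gives $zW_i = z^{\,i+1-N}V_i \subset z^{\,i+1-N}V_{i+1} = W_{i+1}$, while $zW_N = zV_N = z^{\,1-N}(z^N W) = z^{\,1-N}V_0 \subset z^{\,1-N}V_1 = W_1$; so $zW_i\subset W_{i+1}$ holds cyclically. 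Likewise $z^N W_i\subset W_i$ is equivalent, after multiplying by $z^{N-i}$, to $z^N V_i\subset V_i$, which holds because $z^N V_i\subset z^N V_N = z^N W = V_0\subset V_i$. Thus each $W_i$ is a KdV subspace \emph{provided} it lies in $\Gr$.

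The one genuinely nontrivial point, and where I expect the main difficulty, is verifying $W_i\in\Gr$, i.e.\ that $W_i$ has virtual dimension zero. I would argue through the virtual-dimension (Fredholm index) bookkeeping of the Sato Grassmannian from \cite{SW}. From the flag, $\dim V_i/V_0 = i$; applying $z^{\,i-N}$ gives $z^i W = z^{\,i-N}V_0 \subset W_i$ with $\dim W_i/z^i W = i$. Since multiplication by $z$ lowers virtual dimension by one and $W\in\Gr$ has virtual dimension zero, $z^i W$ has virtual dimension $-i$; a finite-dimensional extension raises virtual dimension by its codimension, so $W_i$ has virtual dimension $-i+i=0$. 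Concretely this is the statement that $z^iW$ has order subset $S_W+i$ and that adjoining the $i$ lower-order vectors supplied by the flag restores virtual cardinal zero, which is exactly what Lemma \ref{lem strictly incr} needs. (Membership $W_i\in\GR$ itself is clear, since $z^iW\subset W_i\subset z^{\,i-N}W$ sandwiches $W_i$ between two shifts of $W$.)

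For the converse, given $\bs W=(W_1,\dots,W_N)\in\GM$ with $W_N=W$, I would set $V_0=z^N W$ and $V_i = z^{\,N-i}W_i$ for $i=1,\dots,N$. Then $V_N=W$, and the mKdV inclusions $zW_{i-1}\subset W_i$ (read cyclically, $W_0=W_N$) yield $V_{i-1}=z^{\,N-i+1}W_{i-1}=z^{\,N-i}(zW_{i-1})\subset z^{\,N-i}W_i=V_i$, so the $V_i$ form a flag from $z^N W$ to $W$. Multiplication by $z^{\,N-i}$ is an isomorphism $V_i/V_{i-1}\cong W_i/zW_{i-1}$, and because $W_{i-1},W_i\in\Gr$ with $zW_{i-1}\subset W_i$, Lemma \ref{lem inclusion} (with $k=1$) gives $\dim W_i/zW_{i-1}=1$; hence $\dim V_i/V_{i-1}=1$, so the flag is complete. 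Since $z^{\,i-N}V_i=W_i$, the tuple is of the asserted form, completing the proof.

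As a sanity check and an alternative route for the virtual-dimension step, I would note that this construction coincides with the description $\bs W_{W,v,\sigma}$ of Lemma \ref{thm mkdv descr tau}: choosing flag representatives $w_1,\dots,w_N$ of a basis of $W/z^N W$ with distinct orders in the leading term $A$ of $S_W$ (Lemma \ref{lem leading term}) identifies the flag $V_i=\langle w_1,\dots,w_i\rangle+z^N W$ with a choice of $v$ and a permutation $\sigma\in\Si_N$, so that $W_i=z^{\,i-N}V_i$ is precisely the $i$-th subspace of $\bs W_{W,v,\sigma}$, which already lies in $\Gr$ by that lemma. Either way, the crux is the order-set/index accounting that forces virtual dimension zero; everything else is formal manipulation of the inclusions.
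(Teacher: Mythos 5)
Your proof is correct. The paper states Theorem \ref{thm mkdv spaces} without proof, and your argument supplies the intended details entirely within the paper's own framework: the formal manipulation of the inclusions for the chain and KdV conditions, the virtual-dimension count showing each $W_i$ lies in $\Gr$, and Lemma \ref{lem inclusion} (with $k=1$) giving $\dim V_i/V_{i-1}=1$ in the converse direction.
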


Let $W$ be a KdV subspace.
It follows from Theorem \ref{thm mkdv spaces} that the set of mKdV tuples of subspaces with the prescribed last term $W_N=W$
is identified with the set of complete flags in  $W/z^NW$.

\subsection{Generation of new  mKdV tuples of subspaces}
\label{sec Gentions of  mKdV subspaces }

Let  $\bs W = (W_1,\dots,W_N)\in \GM$.  By Theorem \ref{thm mkdv spaces}, the tuple $\bs W$
 is determined by a flag $z^NW_N=V_0\subset V_1\subset V_2\subset \dots\subset V_{N-1}\subset W_N$.
The quotient $V_2/V_0$ is two-dimensional. Any line $\tilde V_1/V_0$ in $V_2/V_0$ determines a flag
$z^NW_N=V_0\subset \tilde V_1\subset V_2\subset \dots\subset V_{N-1}\subset W_N$ which in its turn
determines an mKdV tuple $\bs W^{(1)}=(\tilde W_1,W_2,\dots,W_N)$
with $\tilde W_1 = z^{1-N}\tilde V_1$.
Thus we get a family of mKdV tuples of subspaces parameterized by points
of  the projective line $P(V_2/V_0)$. The new tuples
are parametrized by points of the affine line $\A=P(V_2/V_0)-\{V_1/V_0\}$.
We get a map $X^{(1)}:\A\to \GM$ which sends   $a\in\A$ to the corresponding mKdV tuple
$\bs W^{(1)}(a)=(\tilde W_1(a),W_2,\dots,W_N)$.
This  map  will be called
the {\it generation of mKdV tuples from the tuple $\bs W$ in the first direction}.

Similarly, for any $i=1,\dots,N$, we construct a map $X^{(i)}:\A\to \GM$,
where $\A=P(V_{i+1}/V_{i-1}) - \{V_i/V_{i-1}\}$ which sends $a\in \A$
to the corresponding mKdV tuple
$\bs W^{(i)}(a)=(W_1,\dots,\tilde W_i(a),\dots,W_N)$.
This  map  will be called
the {\it generation of mKdV tuples of subspaces from the tuple $\bs W$ in the $i$-th direction.}

We will say that the generation in the $i$-th direction is {\it degree increasing} if
for any $a\in \A$, we have $\deg_{t_1} \tau_{\bs W^{(i)}(a)} > \deg_{t_1} \tau_{\bs W}$.

\medskip
The tau-function $\tau_{\tilde W_i(a)}$ depends on $a$ linearly in the following sense.
Let $\{v_i\}_{i\geq 1}$ be a basis of $V_{i-1}$. Let $v_0\in V_i$ be such that $\{v_i\}_{i\geq 0}$
is a basis of $V_i$. Let $\tilde v_{0}\in V_{i+1}$ be such that $\{\tilde v_{0}, v_0,v_1,v_2,\dots\}$ is a basis
of $V_{i+1}$. Then the points of $ \A=P(V_{i+1}/V_{i-1}) - \{V_i/V_{i-1}\}$ are parametrized by complex numbers $c$.
A  number $c$ corresponds to the line generated by the subspace $\tilde V_i(c)$ with basis $\{\tilde v_{0} + cv_0,
v_1,v_2\dots\}$. This $c$ is an affine coordinate on $\A$.  Calculating the tau-function of the  subspace
$\tilde W_i(c)=z^{i-N}\tilde V_i(c)$ with respect to the basis $\{z^{i-N}(\tilde v_{0} + cv_0),
z^{i-N}v_1, z^{i-N}v_2\dots\}$ we get the formula
\bean
\label{tau linear}
\tau_{\tilde W_i(c)}  = \tau_{\tilde W_i(0)} + c \tau_{W_i}.
\eean

\begin{thm}
\label{thm taU Wr}
For the generation in the $i$-th direction,
 the tau-functions of the subspaces $\tilde W_i(c), W_i,W_{i-1},W_{i+1}$ satisfy the
 equation
\bean
\label{wR tAu}
\Wr_{t_1}(\tau_{W_i}, \tau_{\tilde W_i(c)}) \,= \,\on{const}\, {} \ \tau_{W_{i-1}}\tau_{W_{i+1}},
\eean
where $\on{const}$ is a number independent of $t_1,t$.
\end{thm}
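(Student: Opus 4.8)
The plan is to realize the quadruple $W_{i-1},W_i,\tilde W_i(c),W_{i+1}$ as an instance of the configuration in Theorem \ref{thm new identity tau} and then quote that theorem. Write $\bs W$ in the flag form of Theorem \ref{thm mkdv spaces}, $z^NW_N=V_0\subset V_1\subset\dots\subset V_N=W_N$ with $W_k=z^{k-N}V_k$ (indices read cyclically, so $V_{N+1}=z^{-N}V_1$ etc.). Fix a vector $v_0$ spanning $V_i/V_{i-1}$ and a vector $\tilde v_0$ completing a basis of $V_{i+1}$ over $V_{i-1}$, both reduced modulo $V_{i-1}$, so that the affine coordinate $c$ of \Ref{tau linear} parametrizes the line $\langle\tilde v_0+cv_0\rangle$ and $\tilde V_i(c)=V_{i-1}+\mathrm{span}\langle\tilde v_0+cv_0\rangle$. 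Put $W:=W_{i-1}$, $v_1:=z^{i-N}v_0$ and $v_2:=z^{i-N}\tilde v_0$.

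Using $W_k=z^{k-N}V_k$ one gets $zW=z^{i-N}V_{i-1}$ and $z^2W=z^{i+1-N}V_{i-1}$, whence a direct computation yields
\[
W_i=zW+\mathrm{span}\langle v_1\rangle,\qquad \tilde W_i(c)=zW+\mathrm{span}\langle v_2+cv_1\rangle,\qquad W_{i+1}=z^2W+\mathrm{span}\langle zv_1,zv_2\rangle,
\]
so at $c=0$ we have $\tilde W_i(0)=zW+\mathrm{span}\langle v_2\rangle$. Thus, with $S=S_{i-1}$ the order subset of $W$ and $a_1=\ord v_1$, $a_2=\ord v_2$, the quadruple $(W,W_i,\tilde W_i(0),W_{i+1})$ is exactly the quadruple $(W_3,W_1,W_2,W_4)$ of Theorem \ref{thm new identity tau}, provided $a_1\ne a_2$ and $\{a_1,a_2\}\cap(S+1)=\emptyset$. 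The membership $a_1,a_2\notin S+1$ is a routine consequence of Lemma \ref{lem shift subspace} (equivalently, $W_i$ and $\tilde W_i(0)$ lie in $\Gr$ and contain $zW$ with codimension one by Lemma \ref{lem inclusion}, so their extra order lies outside $S+1$). Theorem \ref{thm new identity tau} then gives $\Wr_{t_1}(\tau_{W_i},\tau_{\tilde W_i(0)})=\on{const}\,\tau_{W_{i-1}}\tau_{W_{i+1}}$ (reordering $v_1,v_2$ if $a_1>a_2$, which only flips the sign of the constant).

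The one point requiring care — and the main obstacle — is the hypothesis $a_1\ne a_2$: if $W_i$ and $\tilde W_i(c)$ had the same order subset, Theorem \ref{thm new identity tau} would degenerate to the trivial identity $0=0$. For a generic line $\tilde V_i(c)$ the reduced order equals that of the generic line $V_i/V_{i-1}$, so $a_1=a_2$ genuinely occurs. To circumvent this I would first note, from \Ref{tau linear} together with bilinearity and antisymmetry of the Wronskian, that
\[
\Wr_{t_1}(\tau_{W_i},\tau_{\tilde W_i(c)})=\Wr_{t_1}(\tau_{W_i},\tau_{\tilde W_i(0)}+c\,\tau_{W_i})=\Wr_{t_1}(\tau_{W_i},\tau_{\tilde W_i(0)})
\]
is independent of $c$, while the right-hand side $\on{const}\,\tau_{W_{i-1}}\tau_{W_{i+1}}$ is manifestly independent of $c$. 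Hence it suffices to verify the identity for one convenient value of $c$. Since $V_{i+1}/V_{i-1}$ is two-dimensional, its nonzero classes realize exactly two distinct reduced orders $o_1<o_2$; choosing $\tilde v_0$ to realize the order different from that of $v_0$ forces $a_1\ne a_2$ and makes $\langle\tilde v_0\rangle$ a line distinct from $\langle v_0\rangle$, hence a legitimate point $c=0$ of $\A$. For this $c$ the previous paragraph applies verbatim, and the $c$-independence just established propagates the identity to all $c\in\A$, completing the proof.
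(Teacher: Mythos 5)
Your argument is correct in substance, but its architecture differs from the paper's, and the difference is instructive. The paper's one-line proof does not quote Theorem \ref{thm new identity tau} as a black box: it re-runs the underlying identity, Lemma \ref{lem MV}, directly. Namely, fixing a basis $\{w_j\}$ of $W_{i-1}$ and using Lemma \ref{lem wr tau}, each of the four tau-functions is a Wronskian of the functions attached to the $w_j$, augmented by the function attached to $v_1$ (for $W_i$), to $v_2+cv_1$ (for $\tilde W_i(c)$), and to both (for $W_{i+1}$); Lemma \ref{lem MV} then yields \Ref{wR tAu} at once. Since Lemma \ref{lem MV} is an identity for arbitrary functions, this route never needs to identify order subsets, so it is completely insensitive to the coincidence $\ord v_1=\ord v_2$ --- precisely the difficulty that consumes most of your write-up. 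Your route instead cites Theorem \ref{thm new identity tau}, whose hypotheses ($a_1\ne a_2$ and $\{a_1,a_2\}\cap(S+1)=\emptyset$) can genuinely fail for a fixed parametrizing pair $(v_0,\tilde v_0)$; you correctly diagnose this, and your two repairs --- the $c$-independence of both sides of \Ref{wR tAu} via \Ref{tau linear} and antisymmetry of the Wronskian, and the choice of representatives realizing the two distinct ``new'' orders in $(S_{W_{i+1}}-1)\setminus(S_{W_{i-1}}+1)$ --- do close the reduction. What each approach buys: yours is fully reduced to a previously stated theorem, with the reduction made explicit; the paper's is shorter and hypothesis-free.

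Two local statements in your proof are inaccurate, though neither is fatal to the assembled argument. First, the claim that $a_1,a_2\notin S+1$ is ``a routine consequence of Lemma \ref{lem shift subspace}'' conflates the order of the chosen spanning vector with the extra element of the order subset $S_{W_i}\setminus(S+1)$. These need not agree: if $W$ is spanned by $\{z^{-1},1,z^2,z^3,\dots\}$ and $v_1=1+z^2$, then $W_i=zW+\on{span}\langle v_1\rangle=H_+$ has extra order $2$, while $\ord v_1=0\in S+1$. The membership becomes true only after one replaces $v_0,\tilde v_0$ by representatives of maximal order in their classes modulo $zW$ --- which is exactly what your final paragraph arranges, so that paragraph (not Lemma \ref{lem shift subspace} or Lemma \ref{lem inclusion}) carries the actual burden of proof. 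Second, when $a_1=a_2$ Theorem \ref{thm new identity tau} does not ``degenerate to the trivial identity $0=0$''; its hypotheses simply fail to hold, and in that case $\Wr_{t_1}(\tau_{W_i},\tau_{\tilde W_i(c)})$ is in fact not identically zero --- identity \Ref{wR tAu} still holds with a nonzero constant, as the paper's direct argument via Lemma \ref{lem MV} shows.
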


\begin{proof}
The proof follows from Lemma \ref{lem MV} and is similar to the proofs of Theorems \ref{thm new identity} and \ref{thm new identity tau}.
\end{proof}

\begin{cor}
\label{cor fertiL}
For any mKdV tuple $\bs W = (W_1,\dots,W_N)$ and any fixed $t$, the tuple
$(\tau_{W_1}(x,t),\dots,\tau_{W_N}(x,t))$
of polynomials in $x$ is fertile.
\qed
\end{cor}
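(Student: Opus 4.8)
The plan is to read fertility off directly from the Wronskian identity of Theorem \ref{thm taU Wr}. Recall from Section \ref{Elementary generation} that a tuple $(y_1,\dots,y_N)$ of polynomials in $x$ is fertile precisely when, for every $i=1,\dots,N$, there is a polynomial $\tilde y_i$ satisfying $\Wr(y_i,\tilde y_i)=y_{i-1}y_{i+1}$, with indices taken cyclically. Thus for the tuple $(\tau_{W_1}(x,t),\dots,\tau_{W_N}(x,t))$ I must produce, in each direction $i$, a polynomial in $x$ solving this Wronskian equation.

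First I would fix $t$ and, for each $i\in\{1,\dots,N\}$, invoke the generation of mKdV tuples of subspaces in the $i$-th direction from Section \ref{sec Gentions of  mKdV subspaces }. Since every mKdV tuple $\bs W$ is determined by a complete flag in $W_N/z^NW_N$ (Theorem \ref{thm mkdv spaces}), the two-dimensional quotient $V_{i+1}/V_{i-1}$ always contains lines other than $V_i/V_{i-1}$, so the generation map $X^{(i)}:\A\to\GM$ exists and yields subspaces $\tilde W_i(c)$, $c\in\C$, whose tau-functions $\tau_{\tilde W_i(c)}(x,t)$ are polynomials in $x=t_1$ for fixed $t$.

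Next I would apply Theorem \ref{thm taU Wr} to this generation, which gives
\[
\Wr_{t_1}(\tau_{W_i},\tau_{\tilde W_i(c)})=\on{const}\,\tau_{W_{i-1}}\tau_{W_{i+1}}
\]
with a nonzero constant independent of $t_1,t$. Identifying $t_1$ with $x$ and absorbing the constant by setting $\tilde y_i:=\on{const}^{-1}\tau_{\tilde W_i(c)}$ — still a polynomial in $x$, by bilinearity of the Wronskian — I obtain $\Wr_x(\tau_{W_i}(x,t),\tilde y_i)=\tau_{W_{i-1}}(x,t)\tau_{W_{i+1}}(x,t)$. Since this works for every $i$, the tuple $(\tau_{W_1}(x,t),\dots,\tau_{W_N}(x,t))$ meets the defining condition of fertility.

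The argument is essentially immediate once Theorem \ref{thm taU Wr} is in hand, so there is no serious obstacle; the only points needing care are the bookkeeping of cyclic indices — matching the convention $W_0=W_N$, $W_{N+1}=W_1$ with $y_0=y_N$, $y_{N+1}=y_1$ from Section \ref{PRCP} — and the observation that fertility does not require a matching constant, since any nonzero scalar can be absorbed into $\tilde y_i$.
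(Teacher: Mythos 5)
Your proposal is correct and is essentially the paper's own argument: the corollary is stated there with a \qed precisely because it follows immediately from Theorem \ref{thm taU Wr}, exactly as you spell out, by running the generation of Section \ref{sec Gentions of  mKdV subspaces } in each direction $i$ and reading off the Wronskian identity. The details you add --- existence of the generation in every direction via the flag description of Theorem \ref{thm mkdv spaces}, and absorption of the nonzero constant into $\tilde y_i$ by linearity of the Wronskian in its second argument --- are the routine points the paper leaves implicit.
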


\subsection{Normalized generation}
\label{sec norm gen}

 If  the generation in the $i$-th direction is degree increasing,  then the generation procedure
 can be normalized as follows.

 \begin{lem}
 \label{lem normal}
 Assume that the generation in the $i$-th direction is degree increasing. Then
 there exists a unique line  $\tilde V_{i,0}/V_{i-1} \in V_{i+1}/V_{i-1}$ such that
 the  subspace $\tilde W_{i,0}=z^{i-N}\tilde V_{i,0}$ has the following property. Consider
  the polynomial
 $\tilde\tau_{\tilde W_{i,0}} (t_1,t=0)$ in $t_1$. Then the monomial $t_1^{\deg_{t_1} \tilde \tau_{W_i}}$
 enters this polynomial
 with the zero coefficient.
 \qed
 \end{lem}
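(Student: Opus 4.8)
The plan is to read off the assertion directly from the linear dependence \Ref{tau linear} of the tau-function on the affine parameter $c$, together with the non-vanishing of the top $t_1$-coefficient guaranteed by Corollary \ref{cor on deg tau}. First I would set $d = \deg_{t_1}\tilde\tau_{W_i} = \deg_{t_1}\tau_{W_i}$ (the two degrees agree since normalization only rescales by a nonzero constant), and by \Ref{deg tau} write $\tau_{W_i}(t_1,t=0) = a\, t_1^{d} + (\text{lower order in } t_1)$ with $a \neq 0$. Recall that the points of $\A = P(V_{i+1}/V_{i-1}) - \{V_i/V_{i-1}\}$ are parametrized by $c \in \C$, the value $c$ corresponding to the line $\tilde V_i(c)/V_{i-1}$ and hence to $\tilde W_i(c) = z^{i-N}\tilde V_i(c)$.

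Next, by \Ref{tau linear} we have $\tau_{\tilde W_i(c)} = \tau_{\tilde W_i(0)} + c\,\tau_{W_i}$. Applying the degree-increasing hypothesis at the point $c=0\in\A$ gives $\tilde d := \deg_{t_1}\tau_{\tilde W_i(0)} > d$, so the top term $t_1^{\tilde d}$ of $\tau_{\tilde W_i(c)}$ is unaffected by adding $c\,\tau_{W_i}$. Letting $b$ be the coefficient of $t_1^{d}$ in $\tau_{\tilde W_i(0)}(t_1,t=0)$, the coefficient of $t_1^{d}$ in $\tau_{\tilde W_i(c)}(t_1,t=0)$ equals $b + c\,a$, an affine-linear function of $c$ with nonzero slope $a$. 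Hence it vanishes for exactly one value $c_0 = -b/a \in \C$, which singles out the unique line $\tilde V_{i,0}/V_{i-1}$ and the subspace $\tilde W_{i,0}=z^{i-N}\tilde V_{i,0}$. Since passing from $\tau$ to the normalized $\tilde\tau$ multiplies by a nonzero constant and leaves the top degree $\tilde d > d$ intact, the coefficient of $t_1^{d} = t_1^{\deg_{t_1}\tilde\tau_{W_i}}$ in $\tilde\tau_{\tilde W_{i,0}}(t_1,t=0)$ is again zero, as required.

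For uniqueness over all of $P(V_{i+1}/V_{i-1})$, and not merely over $\A$, I would note that the one excluded line $V_i/V_{i-1}$ corresponds to $c=\infty$ and yields $\tilde W_i = W_i$, whose normalized tau-function $\tilde\tau_{W_i}$ has $t_1^{d}$-coefficient equal to $1$ by \Ref{norm tau}, hence fails the vanishing condition; so no competitor arises there. The only genuine input beyond this bookkeeping is the degree-increasing hypothesis, and this is the step I would be most careful about: it is exactly what forces $d < \tilde d$, so that the coefficient being annihilated is an interior coefficient rather than the leading one. This keeps the slope $a$ effective (so that $c_0$ is finite and the line lies in $\A$) and makes the subsequent normalization $\tilde\tau_{\tilde W_{i,0}} = t_1^{\tilde d} + (\text{lower order})$ fully compatible with the vanishing of the $t_1^{d}$ term.
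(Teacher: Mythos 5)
Your proof is correct and is exactly the argument the paper treats as immediate: Lemma \ref{lem normal} carries no written proof, being regarded as direct bookkeeping from the linear dependence \Ref{tau linear} of the tau-function on the affine coordinate $c$, the nonzero constant leading $t_1$-coefficient from Corollary \ref{cor on deg tau}, and the normalization \Ref{norm tau}, which is precisely what you use. Your extra remark about the excluded point $V_i/V_{i-1}$ (whose subspace is $W_i$ itself, with coefficient of $t_1^{\deg_{t_1}\tilde\tau_{W_i}}$ equal to $1$) cleanly settles uniqueness over all of $P(V_{i+1}/V_{i-1})$ rather than just over $\A$, which is the only detail beyond the paper's implicit reasoning.
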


 Now choose  $\{v_i\}_{i\geq 1}$,  a basis of $V_{i-1}$. Choose $v_0\in V_i$ such that $\{v_i\}_{i\geq 0}$
 is a basis of $V_i$ and the tau-function of $W_i$ with respect to the basis
 $\{z^{i-N}v_i\}_{i\geq 0}$ is the normalized tau-function $\tilde \tau_{W_i}$.
 Choose $\tilde v_0\in V_{i+1}$ such that $\{\tilde v_0, v_i\}_{i\geq 1}$
 is a basis of $V_{i,0}$ and the tau-function of $\tilde W_{i,0}=z^{i-N}\tilde V_{i,0}$ with respect to the basis
 $\{z^{i-N}\tilde v_0,z^{i-N}v_i\}_{i\geq 1}$ is the normalized tau-function $\tilde \tau_{\tilde W_{i,0}}$.
Then we will parametrize the points of $ \A=P(V_{i+1}/V_{i-1}) - \{V_i/V_{i-1}\}$ by complex numbers $c$, where
a  number $c$ will correspond to the line generated by the subspace $\tilde V_i(c)$ with basis $\{\tilde v_0 + cv_0,
v_1,v_2\dots\}$.  Calculating the tau-function of the subspace
$\tilde W_i(c)=z^{i-N}\tilde V_i(c)$ with respect to the basis $\{z^{i-N}(\tilde v_{0} + cv_0),
z^{i-N}v_1, z^{i-N}v_2\dots\}$ we get the formula
\bean
\label{tau linear}
\tilde \tau_{\tilde W_i(c)}  = \tilde \tau_{\tilde W_{i,0}} + c \tilde \tau_{W_i}.
\eean
We get a map
\bean
\label{ti X}
 \tilde X^{(i)} : \C \to \GM, \qquad c \mapsto \bs W^{(i)}(c)=(W_1,\dots,\tilde W_i(c),\dots,W_N).
 \eean
The map $\tilde X^{(i)}$ is independent of the choice of vectors $v_0, \tilde v_{0}$ and the basis $\{ v_i\}_{i\geq 1}$.
This map will be called the {\it normalized generation in the $i$-th direction.}

\subsection{Multistep normalized generation in $\GM$}
\label{sec generation proced in GM}

Let $J = (j_1,\dots,j_m)$ be a degree increasing sequence of integers.
Starting from the mKdV tuple $\bs W^\emptyset=(H_+,\dots,H_+)$ and $J$, we will
construct by induction on $m$
a map
\bea
\tilde X^J : \C^m \to \GM.
\eea
If $J=\emptyset$, the map $\tilde X^\emptyset$ is the map $\C^0=(pt)\ \mapsto \bs W^\emptyset$.
If $m=1$ and $J=(j_1)$,  the map
$\tilde X^{(i_1)} :  \C \to \GM$ is given by formula \Ref{ti X}
for $\bs W=\bs W^\emptyset$ and $i=j_1$. In this case equations \Ref{wR tAu} and
\Ref{tau linear} take the form
\bea
\Wr_{t_1}(1, \tau_{ \tilde W_i(c)}) \,= \,\on{const},
\quad \text{and}\quad
\tilde \tau_{\tilde W_i(c)}  = t_1 + c.
\eea
Assume that for ${\tilde J} = (j_1,\dots,j_{m-1})$,  the map
$\tilde X^{{\tilde J}}$ is constructed. To obtain  $\tilde X^J$ we apply the normalized
generation procedure in the $j_m$-th
direction to every tuple of the image of $\tilde X^{{\tilde J}}$. More precisely, if
\bean
\label{J'}
\tilde X^{{\tilde J}}\ : \
{\tilde c}=(c_1,\dots,c_{m-1}) \ \mapsto \ (W_1({\tilde c}),\dots, W_N({\tilde c})).
\eean
Then
\bean
\label{J}
\tilde X^{J} : \C^m \mapsto \GM, \quad
({\tilde c},c_m) \mapsto
(W_1({\tilde c}),\dots,  \tilde W_{j_m}({\tilde c,c_m}),\dots,
W_N({\tilde c})),
\eean
where $\tilde W_{j_m}({\tilde c,c_m})$ is constructed as in Section \ref{sec norm gen}.
The map  $\tilde X^J$ will be called  the {\it normalized generation   from $ \bs W^\emptyset$ in the $J$-th direction}.

It follows from the construction, that
\bean
\label{induction tau}
\phantom{aaa}
\Wr_{t_1}(\tilde \tau_{W_{j_m}(\tilde c)}(t_1,t), \tilde \tau_{\tilde W_{j_m}(\tilde c, c_m)}(t_1,t))
 \,= \,\on{const}\,  \tilde\tau_{W_{{j_m}-1}(\tilde c)}(t_1,t)\, \tilde \tau_{W_{{j_m}+1}(\tilde c)}(t_1,t) ,
\eean
where const is a nonzero integer depending on $J$ only, and
\bean
\label{induction dependence}
\tilde \tau_{\tilde W_{j_m}(\tilde c, c_m)}(t_1,t) = \tilde \tau_{\tilde W_{j_m}(\tilde c, c_m=0)}(t_1,t) + c_m \tilde \tau_{W_{j_m}(\tilde c)}(t_1,t)).
\eean
For any $(\tilde c,c_m)\in\C^m$, consider the tuple
\bean
\label{tau-polynomials}
(\tilde \tau_{W_1(\tilde c)}(t_1,t),\dots,\tilde \tau_{\tilde W_i(\tilde c, c_m)}(t_1,t), \dots,
\tilde \tau_{W_N(\tilde c)}(t_1,t))
\eean
 of the normalized tau-functions of the tuple
 \bean
 \label{iMage}
 \tilde X^J(\tilde c,c_m)=(W_1({\tilde c}),\dots,  \tilde W_{j_m}({\tilde c,c_m}),\dots,
W_N({\tilde c}))   \in\GR.
 \eean
 Then each of the functions in \Ref{tau-polynomials} is a monic polynomial in $t_1$ and
the  degree vector of this tuple of polynomials in $t_1$
equals $\bs k^J$, see \Ref{gen vector}.

\subsection{Relations with the generation of critical points}
\label{sec relations}

In Section \ref{sec generation procedure} for any degree increasing sequence of integers
 $J = (j_1,\dots,j_m)$ we constructed a map
 \bean
 \label{Y map}
 Y^J : \C^m \to (\C[x])^N,
 \qquad
 c\mapsto (y_1(x,c),\dots,y_N(x,c)),
 \eean
   called  the  generation  of $N$-tuples
 from $ y^\emptyset$ in the $J$-th direction.
In Section \ref{sec generation proced in GM}
for any degree increasing sequence of integers $J$
we constructed a map
\bea
\tilde X^J : \C^m \to \GM,
\qquad
c\mapsto (W_1(c),\dots,W_N(c)),
\eea
 called  the  normalized generation   from $ \bs W^\emptyset$ in the $J$-th direction.
Introduce a map
\bean
\label{Ti tau}
\tilde \tau : \GM \to (\C[t_1,t])^N,
\qquad
(W_1,\dots,W_N) \mapsto
(\tilde\tau_{W_1}(t_1,t),\dots,\tilde \tau_{W_N}(t_1,t)) ,
\eean
which assigns the tuple of nomalized tau-functions to an mKdV tuple of subspaces.
We get the composed map
\bean
\tilde \tau\circ \tilde X^J : \C^m\to(\C[t_1,t)])^N,
\qquad
c \mapsto
(\tilde\tau_{W_1(c)}(t_1,t),\dots,\tilde \tau_{W_N(c)}(t_1,t)) .
\eean

\begin{lem}
\label{lem X=Y}
For any $c\in\C^m$, we have
\bean
\label{X=Y}
Y^J(c) = (\tilde\tau_{W_1(c)}(t_1=x,t=0),\dots,\tilde \tau_{W_N(c)}(t_1=x,t=0)) .
\eean
\qed
\end{lem}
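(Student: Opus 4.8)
The plan is to prove the identity \Ref{X=Y} by induction on $m$, running the two inductive constructions of $Y^J$ and $\tilde X^J$ in parallel and specializing $t_1=x$, $t=0$ at each stage. For the base case $m=0$ both maps are constant: $Y^\emptyset$ returns $(1,\dots,1)$, while $\tilde X^\emptyset$ returns $\bs W^\emptyset=(H_+,\dots,H_+)$. Since $\tau_{H_+}=1$ (the Example in Section \ref{sec taU}), every normalized tau-function equals $1$, and its specialization is $1$, so the right-hand side of \Ref{X=Y} is $(1,\dots,1)=Y^\emptyset(\text{pt})$.

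For the inductive step I would assume \Ref{X=Y} for $\tilde J=(j_1,\dots,j_{m-1})$. At every position $k\ne j_m$ the tuples $\tilde X^J(\tilde c,c_m)$ and $Y^J(\tilde c,c_m)$ reproduce the $k$-th entries of $\tilde X^{\tilde J}(\tilde c)$ and $Y^{\tilde J}(\tilde c)$, so these components match by the induction hypothesis. It therefore remains to handle position $j_m$. Writing $P(x):=\tilde\tau_{\tilde W_{j_m}(\tilde c,\,c_m=0)}(t_1=x,t=0)$, the linearity \Ref{induction dependence} gives that the specialized $j_m$-th normalized tau-function equals $P(x)+c_m\,\tilde\tau_{W_{j_m}(\tilde c)}(x,0)=P(x)+c_m\,y_{j_m}(x,\tilde c)$, where the last equality is the induction hypothesis applied to $W_{j_m}(\tilde c)$. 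By \Ref{Ja} and \Ref{tilde y} the $j_m$-th entry of $Y^J(\tilde c,c_m)$ is $y_{j_m,0}(x,\tilde c)+c_m\,y_{j_m}(x,\tilde c)$, so it suffices to prove $P=y_{j_m,0}$.

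To identify $P$ with $y_{j_m,0}$ I would specialize the Wronskian identity \Ref{induction tau} at $t=0$, $t_1=x$. Since $\der/\der t_1$ commutes with the substitution $t=0$, the left-hand side becomes $\Wr_x(y_{j_m},\,P+c_m y_{j_m})=\Wr_x(y_{j_m},P)$ (the $c_m$-term vanishes because $\Wr_x(f,f)=0$), while the right-hand side becomes $\on{const}\,y_{j_m-1}(x,\tilde c)\,y_{j_m+1}(x,\tilde c)$ by the induction hypothesis. This is exactly the Wronskian equation defining $y_{j_m,0}$ in Section \ref{Degree increasing generation}. Comparing leading coefficients shows the two constants agree: for monic $f,g$ of degrees $k_{j_m}<\tilde k_{j_m}$ one has $\Wr_x(f,g)=(\tilde k_{j_m}-k_{j_m})\,x^{k_{j_m}+\tilde k_{j_m}-1}+\dots$, so both relations carry the constant $\tilde k_{j_m}-k_{j_m}$. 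Hence $\Wr_x(y_{j_m},\,P-y_{j_m,0})=0$ and $P-y_{j_m,0}=a\,y_{j_m}$ for some constant $a$. Finally, both $P$ and $y_{j_m,0}$ are monic of degree $\tilde k_{j_m}$ with vanishing coefficient of $x^{k_{j_m}}$ — for $y_{j_m,0}$ this is its defining normalization, and for $P$ it is Lemma \ref{lem normal}, using that $\deg_{t_1}\tilde\tau_{W_{j_m}}=k_{j_m}$ by Corollary \ref{cor on deg tau} together with the induction hypothesis — so the coefficient of $x^{k_{j_m}}$ in $a\,y_{j_m}$ is $a=0$. Thus $P=y_{j_m,0}$ and the induction closes.

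The step I expect to be the main obstacle is the last one: reconciling the two a priori different normalizations. One must check that the ``forbidden monomial'' $t_1^{k_{j_m}}$ in the tau-function normalization of Lemma \ref{lem normal} corresponds under $t_1=x$ to the ``forbidden monomial'' $x^{k_{j_m}}$ in the definition of $y_{j_m,0}$, which requires verifying the degree identity $\deg_{t_1}\tilde\tau_{W_{j_m}}=\deg_x y_{j_m}=k_{j_m}$ and that specialization at $t=0$ does not lower this degree (guaranteed by the $t$-independent leading coefficient in Corollary \ref{cor on deg tau}). The auxiliary facts — commutation of $\Wr_{t_1}$ with the substitution $t=0$ and the equality of the two Wronskian constants — are routine but should be recorded explicitly.
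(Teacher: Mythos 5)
Your proof is correct and is essentially the argument the paper leaves implicit: Lemma \ref{lem X=Y} carries a \qed with no written proof precisely because the normalized generation $\tilde X^J$ (equations \Ref{induction tau}, \Ref{induction dependence}, Lemma \ref{lem normal}) was constructed to mirror, step by step, the polynomial generation $Y^J$ of Section \ref{sec generation procedure}, and your induction on $m$ — matching the unchanged components, specializing the Wronskian identity at $t_1=x$, $t=0$, equating the two constants via leading coefficients, and reconciling the two normalizations through the vanishing coefficient of $x^{k_{j_m}}$ — spells out exactly that correspondence. The details you flag as delicate (degree preservation under $t=0$ via Corollary \ref{cor on deg tau}, and the matching of the forbidden monomials) are handled correctly.
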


Lemma \ref{lem X=Y} says that if a tuple $(y_1(x),\dots,y_N(x))$
represents a critical point which was generated from $y^\emptyset$ by a degree increasing generation, then there exists
an mKdV tuple $(W_1,\dots,W_N)$ such that $(y_1(x),\dots,y_N(x))$ equals the right hand side in \Ref{X=Y}.

\begin{lem}
\label{lem Uniqness}
The map
$\tilde X^J$ sends distinct points of $\C^m$ to distinct points of $\GM$.
\end{lem}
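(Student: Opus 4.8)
The plan is to reduce the injectivity of $\tilde X^J$ to the already-established injectivity of the polynomial generation map $Y^J$ (Lemma \ref{lem uniqeness}), using the bridge between the two maps furnished by Lemma \ref{lem X=Y}.

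\begin{proof}
Suppose $\tilde X^J(c) = \tilde X^J(c')$ for two points $c,c'\in\C^m$. By the definition of $\tilde X^J$ this means that the two mKdV tuples of subspaces coincide term by term, $W_i(c) = W_i(c')$ in $\Gr$ for every $i=1,\dots,N$. Recall from Section \ref{sec taU} and Section \ref{proP} that $\tau_W$ is determined by the subspace $W$ up to a nonzero scalar, so that the normalized tau-function $\tilde\tau_W$, obtained by rescaling $\tau_W$ so that the coefficient of $t_1^{|\la|}$ equals $1$ as in \Ref{norm tau}, is a genuine invariant of $W$. This normalization is well posed because that leading coefficient is nonzero, see Corollary \ref{cor on deg tau}. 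Consequently $\tilde\tau_{W_i(c)} = \tilde\tau_{W_i(c')}$ as polynomials in $(t_1,t)$ for all $i$.

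Now specialize these polynomial equalities to $t_1 = x$, $t = 0$. By Lemma \ref{lem X=Y}, the resulting $N$-tuple equals $Y^J(c)$ on the one side and $Y^J(c')$ on the other, so $Y^J(c) = Y^J(c')$ in $(\C[x])^N$. Since $Y^J$ sends distinct points of $\C^m$ to distinct points by Lemma \ref{lem uniqeness}, we conclude $c = c'$.
\end{proof}

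I expect essentially no serious obstacle here, since the statement is a formal consequence of Lemmas \ref{lem X=Y} and \ref{lem uniqeness}; the only point that genuinely requires care is the observation that the normalized tau-function is canonically attached to a subspace $W\in\Gr$ rather than to a choice of basis, so that equality of subspaces does yield equality of normalized tau-functions. As noted above, this follows at once from the scaling ambiguity of $\tau_W$ together with the normalization \Ref{norm tau}, the scalar being pinned down uniquely by Corollary \ref{cor on deg tau}.
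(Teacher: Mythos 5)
Your proof is correct and follows exactly the route the paper takes: the paper's own proof consists of the single line that the lemma follows from Lemmas \ref{lem X=Y} and \ref{lem uniqeness}, which is precisely your reduction. Your additional remark that the normalized tau-function is canonically attached to the subspace (via Corollary \ref{cor on deg tau} and the normalization \Ref{norm tau}) is a correct and worthwhile spelling-out of the implicit step.
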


\begin{proof}
The lemma follows from Lemmas \ref{lem X=Y} and \ref{lem uniqeness}.
\end{proof}

\smallskip
The polynomials in the image of $\tilde \tau\circ \tilde X^J $ depend all together only
on  finitely many variables in $t=(t_2,t_3,\dots)$.
Denote them by $t_J=(t_2,t_3, \dots,t_{d+1})$. Then we get a map
\bean
\label{T map}
T^J : \C^{m+d} \to (\C[x])^N,
\qquad
(c,t_J) \mapsto (\tilde\tau_{W_1(c)}(x,t_J),\dots,\tilde \tau_{W_N(c)}(x,t_J)) .
\eean

\begin{thm}
\label{thm induction}
The map $T^J$ can be induced from the map $Y^J$ by a suitable epimorphic polynomial map
$f: \C^{m+d} \to\C^m$, that is $T^J= Y^J\circ f$.
\end{thm}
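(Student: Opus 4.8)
The plan is to induct on $m=|J|$, running the induction in parallel with the inductive definitions of $Y^J$ (Section \ref{sec generation procedure}) and of $\tilde X^J$ (Section \ref{sec generation proced in GM}). For $m=0$ both $Y^\emptyset$ and $T^\emptyset$ output the constant tuple $(1,\dots,1)$, and $f$ is the identity map of a point. Assume the theorem for $\tilde J=(j_1,\dots,j_{m-1})$, so that $T^{\tilde J}=Y^{\tilde J}\circ\tilde f$ for an epimorphic polynomial map $\tilde f$; I regard $\tilde f$ as a map on the full parameter space $\C^{(m-1)+d}$ that does not depend on the finitely many extra $t$-variables entering only at the $m$-th step. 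Fix a point $(\tilde c,c_m,t_J)$ and put $\tilde b=\tilde f(\tilde c,t_J)$. Since the $m$-th normalized generation alters only the $j_m$-th subspace, the components of $T^J(\tilde c,c_m,t_J)$ with index $i\ne j_m$ equal $\tilde\tau_{W_i(\tilde c)}(x,t_J)$, which by the inductive hypothesis are the corresponding components $y_i(x,\tilde b)$ of $Y^{\tilde J}(\tilde b)$; these agree with the index-$i$ entries of $Y^J(\tilde b,s)$ for every $s$.

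It remains to compare the $j_m$-th entries. Write $q(x)=\tilde\tau_{\tilde W_{j_m}(\tilde c,c_m)}(x,t_J)$ and let $k_i=\deg y_i(x,\tilde b)$ be the entries of $\bs k^{\tilde J}$. Specializing \Ref{induction tau} to $t_1=x$, $t=t_J$ and using the already-matched neighbours $y_{j_m\pm1}(x,\tilde b)$ together with $y_{j_m}(x,\tilde b)=\tilde\tau_{W_{j_m}(\tilde c)}(x,t_J)$, I get $\Wr_x(y_{j_m}(x,\tilde b),q)=\on{const}\,y_{j_m-1}(x,\tilde b)\,y_{j_m+1}(x,\tilde b)$; the $j_m$-th entry $y_{j_m,0}(x,\tilde b)+s\,y_{j_m}(x,\tilde b)$ of $Y^J(\tilde b,s)$ satisfies the same equation by \Ref{tilde y}. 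Both $q$ and $y_{j_m,0}(\cdot,\tilde b)$ are monic of the common degree $\tilde k_{j_m}=k_{j_m-1}+k_{j_m+1}+1-k_{j_m}>k_{j_m}$, so matching leading coefficients forces both constants to equal $\tilde k_{j_m}-k_{j_m}$; subtracting yields $\Wr_x(y_{j_m},\,q-y_{j_m,0})=0$ and hence $q=y_{j_m,0}(\cdot,\tilde b)+s\,y_{j_m}(\cdot,\tilde b)$ for a unique scalar $s=s(\tilde c,c_m,t_J)$. Comparing coefficients of $x^{k_{j_m}}$, where $y_{j_m,0}$ vanishes and $y_{j_m}$ is monic, identifies $s$ with the coefficient of $x^{k_{j_m}}$ in $q$; by \Ref{induction dependence} this is $s=\lambda(\tilde c,t_J)+c_m$, where $\lambda(\tilde c,t_J)$ is the coefficient of $x^{k_{j_m}}$ in $\tilde\tau_{\tilde W_{j_m}(\tilde c,0)}(x,t_J)$. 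Thus $T^J=Y^J\circ f$ with $f(\tilde c,c_m,t_J)=(\tilde f(\tilde c,t_J),\ \lambda(\tilde c,t_J)+c_m)$, and $f$ is surjective because $\tilde f$ is and because $c_m\mapsto\lambda(\tilde c,t_J)+c_m$ is a bijection of $\C$ for each fixed $(\tilde c,t_J)$.

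The step I expect to be the real obstacle is showing $f$ is polynomial, i.e.\ that $\lambda$ is. I would isolate this in the auxiliary claim that $\tilde\tau\circ\tilde X^J$ is a polynomial map, meaning every coefficient in $x$ of every component of $T^{J}$ is a polynomial in the generation parameters. Granting this claim, $\lambda$ is obtained from $q$ (a component of $T^J$) by the linear substitution \Ref{induction dependence} and a single coefficient extraction, and is therefore polynomial. The claim itself I would prove by the same induction: at the $m$-th step a normalizing basis of the new subspace $\tilde W_{j_m}(\tilde c,c_m)$ is built from bases of the subspaces in $\tilde X^{\tilde J}(\tilde c)$ by the explicit linear recipe of Section \ref{sec norm gen} (the distinguished line $\tilde V_{j_m,0}/V_{j_m-1}$ of Lemma \ref{lem normal} is cut out by a polynomial vanishing-coefficient condition), so a normalizing basis depends polynomially on $(\tilde c,c_m,t_J)$; feeding these basis coefficients into the determinantal tau-function formula \Ref{eqn tau-definition} produces coefficients that are polynomials in the parameters. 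The delicate points are that the normalization of Section \ref{sec norm gen} is arranged so that no division by a parameter-dependent leading coefficient occurs, and that the polynomial dependence of a suitable basis on the earlier constants propagates through the flag construction of Theorem \ref{thm mkdv spaces}; once these are checked the induction closes and the theorem follows.
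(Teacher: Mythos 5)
Your proposal is correct and takes essentially the same approach as the paper: the paper's proof of Theorem \ref{thm induction} is precisely this induction on $m$, comparing equations \Ref{wronskian-critical eqn} (via \Ref{tilde y}) with \Ref{induction tau}, modeled on the proof of Theorem \ref{thm schur induced}. Your leading-coefficient matching, the identification of the scalar as $\lambda(\tilde c,t_J)+c_m$, and your treatment of surjectivity and of the polynomiality of $f$ supply details that the paper asserts without elaboration.
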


\begin{proof}
The proof is by induction on $m$ and follows from comparing equations \Ref{wronskian-critical eqn} and
\Ref{induction tau}, cf.  the proof of Theorem \ref{thm schur induced}.
\end{proof}

Recall the family of Miura opers $\mu^J :\C^m \to \mc M, \ c\mapsto \mu(Y^J(c))$, assigned to the map $Y^J$ in Section \ref{Miura opers with cr points}.
In Section \ref{Vector fields} we proved that the family
 $\mu^J$ is invariant with respect to all mKdV flows, see Corollary \ref{cor Main}.
Theorems \ref{thm Wilson} and \ref{thm induction} clearly give a new proof of this fact.

\subsection{Transitivity of the generation procedure}
\label{sec transit}

\begin{thm}
\label{thm exist}
Let $\bs W\in \GM$ be an mKdV tuple of subspaces.
 Then there exists a degree increasing sequence $J=(j_1,\dots,j_m)$ and a point
$c\in \C^m$ such that $\bs W=\tilde X^J(c)$.

\end{thm}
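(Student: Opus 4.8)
The plan is to induct on the total degree $d(\bs W)=\sum_{i=1}^N\deg_{t_1}\tau_{W_i}$, which by Corollary \ref{cor on deg tau} equals $\sum_{i=1}^N|\lambda^i|$, where $\lambda^1,\dots,\lambda^N$ are the partitions attached to the order subsets $S_1,\dots,S_N$ of $W_1,\dots,W_N$. If $d(\bs W)=0$, then every $\lambda^i$ is empty, so every order subset equals $S^\emptyset$, and Lemma \ref{lem order empty} forces $W_i=H_+$ for all $i$; thus $\bs W=\bs W^\emptyset=\tilde X^\emptyset(\mathrm{pt})$ and we are done. So assume $d(\bs W)>0$.

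First I would pass to the combinatorial shadow of $\bs W$. By Lemma \ref{lem order tau} the order subsets $\bs S=(S_1,\dots,S_N)$ form an mKdV tuple of subsets, and $\bs S\neq\bs S^\emptyset$ because $d(\bs W)>0$. By Corollary \ref{cor deg decrea} there is an index $i$ with $2|\lambda^i|>|\lambda^{i-1}|+|\lambda^{i+1}|+1$. The tuples obtained from $\bs W$ by generation in the $i$-th direction are governed by the two-dimensional quotient $z^{-1}W_{i+1}/zW_{i-1}$: indeed $zW_{i-1}\subset W_i\subset z^{-1}W_{i+1}$ with each successive quotient one-dimensional by Lemma \ref{lem inclusion}, so replacing $W_i$ by the preimage of any line of this quotient again yields an mKdV tuple (this is the flag description of Theorem \ref{thm mkdv spaces}), and these preimages run over a $\mathbb{P}^1$.

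Next I would track how $\deg_{t_1}\tau$ varies over this $\mathbb{P}^1$. Writing $d'=|\lambda^{i-1}|+|\lambda^{i+1}|+1-|\lambda^i|$, the Wronskian relation of Theorem \ref{thm taU Wr} together with a degree count shows that $\deg_{t_1}\tau$ takes only the two values $|\lambda^i|$ and $d'$, while the linearity (\ref{tau linear}) of the tau-function along a generation shows that one of these values is attained at a single line and the other at every remaining line. Since the point $W_i$ itself realizes the value $|\lambda^i|$, and $|\lambda^i|>d'$ by the choice of $i$, the exceptional line is distinct from $W_i$ and carries the strictly smaller degree $d'$; call the corresponding subspace $\tilde W_i$. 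Set $\bs W'=(W_1,\dots,\tilde W_i,\dots,W_N)$. Then $\bs W'\in\GM$ and $d(\bs W')=d(\bs W)-|\lambda^i|+d'<d(\bs W)$. Moreover the generation family from $\bs W'$ in the $i$-th direction consists of all lines other than $\tilde W_i$, hence entirely of lines of degree $|\lambda^i|>d'=\deg_{t_1}\tau_{\tilde W_i}$; so this generation is degree increasing and its family contains $\bs W$. By Lemma \ref{lem normal} and the construction of Section \ref{sec norm gen}, the normalized generation $\tilde X^{(i)}$ from $\bs W'$ reparametrizes exactly this family by $\C$, so there is a unique $c_m\in\C$ with $\tilde X^{(i)}(\bs W')(c_m)=\bs W$.

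Finally I would close the induction. The hypothesis applied to $\bs W'$ gives a degree increasing sequence $\tilde J=(j_1,\dots,j_{m-1})$ and a point $\tilde c\in\C^{m-1}$ with $\bs W'=\tilde X^{\tilde J}(\tilde c)$. Setting $J=(j_1,\dots,j_{m-1},i)$ and $c=(\tilde c,c_m)$, the inductive definition of the multistep normalized generation in Section \ref{sec generation proced in GM} gives $\tilde X^J(\tilde c,c_m)=\tilde X^{(i)}(\bs W')(c_m)=\bs W$; and $J$ is degree increasing since its first $m-1$ steps are (by induction) and its last step is degree increasing by construction. I expect the crux to be the third paragraph: translating the purely combinatorial inequality of Corollary \ref{cor deg decrea} into the geometric statement that the $\mathbb{P}^1$ of direction-$i$ generations carries the smaller tau-degree at a \emph{single} line and the larger degree $|\lambda^i|$ at $W_i$, so that the reverse generation from $\bs W'$ is degree increasing in the strict sense the normalization requires. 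This is precisely where Theorem \ref{thm taU Wr} and the linearity (\ref{tau linear}) of tau-functions do the essential work.
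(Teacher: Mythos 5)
Your proof is correct and follows essentially the same route as the paper's: find a degree-decreasing direction combinatorially (you cite Corollary \ref{cor deg decrea}, the paper cites Theorem \ref{thm mutation all}, from which that corollary is derived), locate the unique degree-dropping member of the generation pencil via Theorem \ref{thm taU Wr} and the linearity of tau-functions, and descend to $\bs W^\emptyset$, reversing to get the degree increasing sequence $J$. Your induction on the total $t_1$-degree is a formalization of the paper's ``after finitely many steps'' descent, and the details you add (the base case via Lemma \ref{lem order empty}, the pencil-degree analysis) are points the paper leaves implicit.
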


\begin{proof}
Let $\bs W=(W_1,\dots,W_N)$.
Let $\bs S=(S_1,\dots,S_N)$ be the tuple of the corresponding  order subsets.
The tuple $\bs S$ is an mKdV tuple of subsets by Lemma \ref{lem order tau}.
By Theorem \ref{thm mutation all}, there exists $j\in\{1,\dots,N\}$ such that
the mutation $w_j : \bs S \to \bs S^{(j)}=(S_1,\dots,\tilde S_j,\dots,S_N)$
is degree decreasing.
Let us consider the generation
\bea
X^{(j)}: A \to \GM, \qquad
a\mapsto (W_1,\dots,\tilde W_j(a),\dots,W_N),
\eea
 from $\bs W$ in the $j$-th direction.
By Corollary \ref{cor on deg tau}  and Theorem \ref{thm taU Wr}
there exists a unique $a\in \A$ such that $\deg_{t_1} \tau_{\tilde W_j(c)}<\deg_{t_1} \tau_{\tilde W_j}$.
It is clear that the generation from $X^{(j)}(a)$ in the $j$-th direction is degree increasing.
If the tuple $X^{(j)}(a)$ is not $\bs W^\emptyset$, then we may apply the same reason to
$X^{(j)}(a)$ and decrease the ${t_1}$-degree of another coordinate tau-function.
After finitely many steps of this procedure we will obtain  $\bs W^\emptyset$. The theorem is proved.
\end{proof}

\begin{thm}
\label{cr=mkdv}
If $\bs W= (W_1,\dots,W_n), \tilde {\bs W}=(\tilde W_1,\dots,\tilde W_n) \in \GM$ are such that
\bea
(\tilde\tau_{W_1}(t_1=x,t=0),\dots,\tilde \tau_{W_N}(t_1=x,t=0))
=(\tilde\tau_{\tilde W_1}(t_1=x,t=0),\dots,\tilde \tau_{\tilde W_N}(t_1=x,t=0)) ,
\eea
then $\bs W =\tilde{\bs W}$.

\end{thm}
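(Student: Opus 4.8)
The plan is to use Wilson's Theorem \ref{thm Wilson} together with the determinism of the mKdV flows to promote the hypothesis, which lives only at $t=0$, to an equality of tau-functions for all times, and then to recover the subspaces from their tau-functions. Write $y=(y_1,\dots,y_N)$ for the common tuple $y_k=\tilde\tau_{W_k}(t_1=x,t=0)=\tilde\tau_{\tilde W_k}(t_1=x,t=0)$. First I would note that the Miura oper \Ref{Miura W} involves the tau-functions only through logarithmic derivatives, so it is insensitive to their normalization, and that at $t_1=0,\ t=0$ both $\Ll_{\bs W}$ and $\Ll_{\tilde{\bs W}}$ reduce to
\[
\der+\Lambda+\on{diag}\!\left(\log'(y_1/y_N),\,\dots,\,\log'(y_N/y_{N-1})\right)=\mu(y).
\]
Thus the two Miura opers agree at the origin of the time variables.

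Next, by Theorem \ref{thm Wilson} both $\Ll_{\bs W}$ and $\Ll_{\tilde{\bs W}}$ satisfy every mKdV equation \Ref{mKdVr}. By Lemma \ref{lem der} the $r$-th flow reads $\der_{t_r}V=\frac{d}{dx}(T\Lambda^rT^{-1})^0$, whose right-hand side is a universal differential polynomial in the entries of $V$, since $T$ is built from $V$ by the recursion of Lemma \ref{Tlemma}. Because the tau-functions are polynomials, the entries of $V$ are rational in $(x,t)$, hence analytic in $t$ for generic $x$; their Taylor coefficients in $t$ at $t=0$ are therefore determined recursively by these flow equations from the single datum $V|_{t=0}$, the commutativity of the flows making the multi-time expansion consistent. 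As the initial data coincide, I conclude $\Ll_{\bs W}=\Ll_{\tilde{\bs W}}$ for all $(t_1,t)$.

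Equating the $k$-th diagonal entries gives $\log'(\tilde\tau_{W_k}/\tilde\tau_{W_{k-1}})=\log'(\tilde\tau_{\tilde W_k}/\tilde\tau_{\tilde W_{k-1}})$ (with $'=d/dx$), so the ratio $R_k:=\tilde\tau_{W_k}/\tilde\tau_{\tilde W_k}$ is independent of $k$; denote it $R(x,t)$. Fix a generic value $t=t^*$. The normalized tau-functions are monic in $x$ with $\deg_x\tilde\tau_{W_k}=|\la^k|=\deg_x\tilde\tau_{\tilde W_k}$, the degrees agreeing because the polynomials agree at $t=0$, so $R(x,t^*)$ is a ratio of two monic polynomials of equal degree and tends to $1$ as $x\to\infty$. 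For generic $t^*$ the tuple $(\tilde\tau_{\tilde W_j}(x,t^*))_j$ represents a critical point (it is fertile by Corollary \ref{cor fertiL}, and generic for generic $t^*$ by the genericity of the family, cf. Corollaries \ref{cor schur and der} and \ref{cor two schur}), hence its consecutive entries have no common root. Writing $R(x,t^*)$ in lowest terms, its denominator then divides every $\tilde\tau_{\tilde W_k}(x,t^*)$ and so divides the coprime pair $\tilde\tau_{\tilde W_1},\tilde\tau_{\tilde W_2}$; thus it is constant and $R(x,t^*)\equiv1$. Since this holds for generic $t^*$, we get $R\equiv1$ and $\tilde\tau_{W_k}=\tilde\tau_{\tilde W_k}$ for all $k$ and all $(t_1,t)$.

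Finally, by Lemma \ref{lem tau first} the normalized tau-function $\tilde\tau_{W_k}$ determines all its coefficients $w_{S'}$ in the linearly independent Schur basis, that is, the Plücker coordinates of $W_k$, and hence determines $W_k\in\Gr$ (see \cite{SW}); therefore $W_k=\tilde W_k$ for every $k$ and $\bs W=\tilde{\bs W}$. The main obstacle is the passage through the flows: one must turn Wilson's theorem into a genuine uniqueness-of-evolution statement, so that the entire trajectory, and with it the full tau-functions, is pinned down by the initial Miura oper, and one must then recover the individual normalized tau-functions from that oper, which a priori records only their consecutive ratios. The latter is exactly where the genericity of the tuple (for generic $t$) is needed to annihilate the common factor $R$; an alternative, more combinatorial route would instead write $\bs W=\tilde X^J(c)$ via Theorem \ref{thm exist} and invoke the injectivity Lemmas \ref{lem uniqeness} and \ref{lem Uniqness}, but that forces one to reconcile the possibly different generation sequences producing $\bs W$ and $\tilde{\bs W}$, which the flow argument sidesteps.
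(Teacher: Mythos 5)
Your strategy is genuinely different from the paper's. The paper proves this theorem by a purely combinatorial induction on the generation sequence: by Theorem \ref{thm exist} one writes $\bs W=\tilde X^J(c)$ with $J=(j_1,\dots,j_m)$ degree increasing, applies to $\tilde{\bs W}$ the single degree-decreasing generation step in the direction $j_m$ (which exists and is unique), identifies the result with $\tilde X^{J'}(c')$, $J'=(j_1,\dots,j_{m-1})$, by the induction hypothesis, and concludes $\bs W=\tilde{\bs W}$; the flows never enter. Incidentally, your stated reason for setting this route aside --- having to reconcile two different generation sequences for $\bs W$ and $\tilde{\bs W}$ --- is not an actual obstacle: in the paper's argument only $\bs W$ is presented by a sequence, while $\tilde{\bs W}$ is merely mutated once and fed into the induction hypothesis.

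Your proof, however, has a genuine gap at the step that kills the common factor $R$. You claim that for generic $t^*$ the tuple $(\tilde\tau_{\tilde W_j}(x,t^*))_j$ is generic, citing Corollaries \ref{cor schur and der} and \ref{cor two schur}; those corollaries are about Schur polynomials, i.e.\ tau-functions of the special subspaces $W_S$, whereas your $\tilde W_j$ are arbitrary elements of $\Gr$, so they do not apply. For general mKdV tuples of subspaces the paper establishes only fertility (Corollary \ref{cor fertiL}), never genericity, and genericity genuinely fails at special times: for $N=2$, $J=(1,2)$, the paper's own example gives $Y^J(c_1,c_2)=(x+c_1,\,x^3+3c_1x^2+3c_1^2x+c_2)$, which at $c_2=c_1^3$ is the non-generic tuple $(x+c_1,(x+c_1)^3)$; by Lemma \ref{lem X=Y} this tuple is exactly the $t=0$ specialization of the normalized tau-functions of the honest mKdV pair $\tilde X^{(1,2)}(c_1,c_1^3)\in\GM$ (a direct computation gives these tau-functions as $t_1+c_1$ and $(t_1+c_1)^3-3t_3$). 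So ``generic for generic $t$'' is a real assertion that the higher flows separate colliding roots; it holds in this example because $-3t_3\neq 0$, but neither the paper nor your argument proves it in general, and without it $R$ may a priori be a nonconstant rational function of $x$ and the proof stalls. This is precisely where the mKdV structure must do work, since for a general $W\in\Gr$ the $t=0$ tau-function does not determine $W$ at all (Theorem \ref{finiteness} gives only finiteness; e.g.\ $F_{(2)}$ and $F_{(1,1)}$ coincide at $t=0$). A secondary, repairable point: your determinism step needs that each mKdV flow is an evolution equation whose right-hand side is a universal differential polynomial in $V$; Lemma \ref{Tlemma} and Lemma \ref{lem der} as stated assert only the existence of $T$ and the form of the flow, not that $(T\Lambda^rT^{-1})^0$ depends differential-polynomially on $V$ --- this is true and standard in \cite{DS}, but it is an input your Taylor-coefficient recursion rests on and must be quoted or proved.
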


\begin{proof}

By Theorem \ref{thm exist}, there exists a degree increasing sequence $J=(j_1,\dots,j_m)$ and a point
$c=(c',c_m)\in \C^m=\C^{m-1}\times\C$ such that $\bs W=\tilde X^J(c)$. The proof of Theorem \ref{cr=mkdv} is by induction on
$m$.

If $J=\emptyset$, then $\tilde\tau_{W_1}(t_1=x,t=0),\dots,\tilde \tau_{W_N}(t_1=x,t=0)) = (1,\dots,1)$.
This condition determines $\bs W$ uniquely, $\bs W=(H_+,\dots,H_+)$.

Assume that Theorem \ref{cr=mkdv} is proved for $m-1$.
Starting from $\tilde {\bs W}$ we generate in the
$j_m$-th direction the one-parameter
family $\hat {\bs W}(s)$ of tuples in $\GM$ as explained in Section \ref{sec Gentions of  mKdV subspaces }.
That family has exactly one value $s_0$ of the parameter $s$ such that
$\deg_{t_1} \tilde \tau_{\hat {W}_{j_m}(s_0)}< \deg_{t_1} \tilde \tau_{\tilde {W}_{j_m}}$.
By induction assumption, we have  $\hat {\bs W}(s_0)= \tilde X^{J'}(c')$, where $J'=(j_1,\dots,j_{m-1})$.
Therefore, $\bs W =\tilde{\bs W}$.
\end{proof}

\begin{cor}
\label{cor last}
The  points of the set $\GM$ of mKdV tuples are in one-to-one correspondence with the tuples $(y_1,\dots,y_n)$ of the population of tuples
 generated  from $y^\emptyset$, see Section \ref{sec generation procedure}. The correspondence is
 \bean
(W_1,\dots,W_n)\mapsto (\tilde\tau_{W_1}(t_1=x,t=0),\dots,\tilde \tau_{W_N}(t_1=x,t=0)) .
\eean
\qed
\end{cor}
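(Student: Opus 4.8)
The plan is to show that the assignment
$\phi:\GM\to(\C[x])^N$, $(W_1,\dots,W_N)\mapsto(\tilde\tau_{W_1}(t_1=x,t=0),\dots,\tilde\tau_{W_N}(t_1=x,t=0))$, is a well-defined bijection onto the population of tuples generated from $y^\emptyset$. I would establish this in two independent pieces: injectivity of $\phi$, and the identification of its image with the population. Both pieces are essentially formal consequences of results already at hand, so the proof is an assembly rather than a new computation.

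For injectivity I would simply invoke Theorem \ref{cr=mkdv}, which states precisely that if two mKdV tuples $\bs W,\tilde{\bs W}\in\GM$ have the same image under $\phi$, then $\bs W=\tilde{\bs W}$. Hence $\phi$ is injective on $\GM$ with no further argument.

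For the image I would prove set equality in both directions. On the one hand, by Theorem \ref{thm exist} every $\bs W\in\GM$ is of the form $\tilde X^J(c)$ for some degree increasing sequence $J=(j_1,\dots,j_m)$ and some $c\in\C^m$; applying Lemma \ref{lem X=Y} then gives $\phi(\bs W)=\phi(\tilde X^J(c))=Y^J(c)$, which lies in the population by the definition of the latter as the union of the images of all maps $Y^J$ over degree increasing $J$ (Section \ref{sec generation procedure}). Thus $\phi(\GM)$ is contained in the population. On the other hand, every element of the population is by definition $Y^J(c)$ for some degree increasing $J$ and some $c$; by Lemma \ref{lem X=Y} this equals $\phi(\tilde X^J(c))$, hence lies in $\phi(\GM)$. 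Therefore $\phi(\GM)$ equals the population, and combined with injectivity this yields the asserted bijection.

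I expect no serious obstacle here, since all the substantive content is already carried by Theorems \ref{thm exist} and \ref{cr=mkdv} together with Lemma \ref{lem X=Y}. The only point requiring a moment of care is bookkeeping: although a given tuple of polynomials may arise from several different sequences $J$, this redundancy is irrelevant for the set-theoretic equality of images; and the normalization built into $\tilde\tau$ (fixing the leading $t_1$-coefficient to be $1$, see Section \ref{proP}) is exactly what makes the generation on subspaces via $\tilde X^J$ and the generation on polynomials via $Y^J$ correspond under $\phi$, as encoded in Lemma \ref{lem X=Y}.
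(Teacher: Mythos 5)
Your proposal is correct and matches the paper's intended argument: the corollary is stated with no written proof precisely because it is the assembly of Theorem \ref{thm exist}, Theorem \ref{cr=mkdv}, and Lemma \ref{lem X=Y} that you carry out. Your two-sided image computation and your appeal to Theorem \ref{cr=mkdv} for injectivity are exactly the implicit proof.
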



\bigskip
\noindent
{\bf Acknowledgments.}

\medskip
The authors thank B. Dubrovin, E. Frenkel, E. Mukhin, A. Lascoux, V. Schechtman, R. Stanley, G. Wilson, A. Zelevinsky for useful discussions.
The first author thanks the Hausdorff Institute and  IHES for hospitality. The second author thanks
the Hausdorff Institute for hospitality.  This work was supported in part by the National Science
Foundation:  NSF grant DMS--1101508.

\bigskip

\end{document}